\newcommand{\C}{\mathbb C}
\newcommand{\R}{\mathbb R}
\newcommand{\Z}{\mathbb Z}
\def\a'{\`a}
\def\e'{\`e}
\def\o'{\`o}
\def\u'{\`u}
\begin{document}

 \title{$Sp(n)$-orbits in the Grassmannians of complex and $\Sigma$-complex subspaces of an Hermitian quaternionic vector space.}

\author{Massimo Vaccaro}

\address{Dipartimento dell'Ingegneria di Informazione e Matematica Applicata, Universit\a' di Salerno, 84084 - Fisciano (SA) , Italy}
\email{massimo\_vaccaro@libero.it}
\thanks{Work done under the programs of GNSAGA-INDAM of C.N.R. and PRIN07 "Riemannian metrics and differentiable structures" of MIUR (italy)}
\keywords{Hermitian hypercomplex structure,  Hermitian quaternionic structure, complex subspaces, principal angles, K\"{a}hler angles}
\subjclass[2000]{57S25,16W22,14L24,14L30} 

\maketitle

\markboth{ \rm{MASSIMO VACCARO}} {\rm{$Sp(n)$-orbits in the Grassmannians  of complex and $\Sigma$-complex subspaces of an Hermitian quaternionic vector space}}


\newtheorem{teor}{Theorem}[section]
\newtheorem{coro}[teor]{Corollary}
\newtheorem{lemma}[teor]{Lemma}
\newtheorem{prop}[teor]{Proposition}
\newtheorem{samp}[teor]{Example} 
\newtheorem{defi}[teor]{Definition}
\newtheorem{exer}[teor]{Exercise}
\newtheorem{remk}[teor]{Remark}
\newtheorem{conj}[teor]{Conjecture}
\newtheorem{claim}[teor]{Claim}

\newtheorem{intteo}{Theorem}
\newtheorem{intcoro}{Corollary}

\textbf{Abstract.}
We determine the invariants characterizing the $Sp(n)$-orbits in the real Grassmannian $Gr^\R(2k,4n)$ of  the $2k$-dimensional  complex and $\Sigma$-complex subspaces of a $4n$-dimensional  Hermitian quaternionic vector space. A $\Sigma$-complex subspace is the orthogonal sum of complex subspaces by different, up to sign, compatible complex structure. The result is obtained by considering two main features of such subspaces. The first is that any such subspace admits a decomposition  into an Hermitian orthogonal sum of 4-dimensional complex addends plus a 2-dimensional totally complex subspace if $k$ is odd,  meaning that the quaternionification of the addends are orthogonal in pairs. The second is that any 4-dimensional complex addend $U$  is an isoclinic subspace  i.e.   the principal angles of the pair $(U,AU)$ are all the same for any compatible complex structure $A$. Using these properties we determine the full set of the invariants characterizing the $Sp(n)$-orbit of  such subspaces in $Gr^\R(2k,4n)$.

\vskip .3cm


\textbf{Summary.}
In this paper we study the $Sp(n)$-orbits in the real Grassmannians  $Gr^\R(2k,4n)$ of  some special $2k$-dimensional subspaces of a $4n$-dimensional real vector space $V$. We endow $V$ with  an Hermitian quaternionic structure ($\mathcal{Q},<,>)$, an  Hermitian product $( \,  \cdot \,)$  and denote by $S(\mathcal{Q})$ the 2-sphere of complex structures $J \in \mathcal{Q}$. The  invariants characterizing the $Sp(n)$-orbit of a generic subspace $U \subset V$ appear in \cite{Vacpreprint} where we find some equivalent statements. The first one  we report here  is given in the Theorem (\ref{transitivity of Sp(n) on subspaces with same Hermitian matrices}) where it is stated that   a pair of subspaces $U$ and $W$ of real dimension $m$ in the $\mathbb{H}$-module $V^{4n}$ belong to the same $Sp(n)$-orbit  iff there exist
  bases $\mathcal{ B}_U=(X_1, \ldots, X_m)$ and  $\mathcal {B}_W=(Y_1, \ldots, Y_m)$ of $U$ and $W$ respectively w.r.t. which for the Hermitian products one has  $(X_i \cdot X_j)= (Y_i \cdot Y_j), \; i=1, \ldots,m$ for one and hence any (hypercomplex) admissible  basis of $\mathcal{Q}$.

Let $\bm{\theta}^A(U)$ be the vector of the principal angles between the  pair $(U,AU), \; A \in S(\mathcal{Q})$ in non decreasing order. A consequence of the Theorem (\ref{transitivity of Sp(n) on subspaces with same Hermitian matrices}) is that a necessary condition for $U$ and $W$ to share the same $Sp(n)$-orbit is that, for one and hence any admissible basis $(I,J,K)$, one has   $\bm{\theta}^I(U)=\bm{\theta}^I(W),\; \bm{\theta}^J(U)=\bm{\theta}^J(W),\;, \bm{\theta}^K(U)=\bm{\theta}^K(W)$.

The determination of the  principal angles between a pair of subspaces $S,T$ is a well know problem solved by the singular value decomposition of the orthogonal projector  of $S$ onto $T$. Here, for a chosen $U \subset V$ we consider  the pairs $(U,AU), A \in S(\mathcal{Q})$  and denote by  $Pr^{AU}$ the orthogonal projector of $U$ onto $AU$. In this case the singular values of  $Pr^{AU}$ are always degenerate which implies that they have non-unique singular vectors. In terms of principal vectors of the pair $(U,AU)$ we can equivalently say that the  principal vectors are never uniquely defined.

Another way to obtain the principal angles and the associated principal vectors between the pair of subspaces $(U,AU)$,
for any $A \in S(\mathcal{Q})$,  is through the  standard decompositions of the restriction to $U$ of the $A$-K\"{a}hler skew-symmetric form  $\omega^A: (X,Y) \mapsto <X,AY>,  \; X,Y \in U$.
Calling \textit{standard basis} any orthonormal basis w.r.t. which $\omega^A|_U$ assumes standard form, with the non-negative entries ordered in non increasing order, and denoting by $\mathcal{B}^A(U)$ the set of all such bases one has that any $B \in \mathcal{B}^A(U)$ consists of  principal vectors of the pair $(U,AU)$.
As stated beforehand, for any $A \in S(\mathcal{Q})$  the standard bases of $\omega^A$ are never unique not even if $U$ is  2-dimensional.
This is the main problem in determining the $Sp(n)$-orbits in $Gr^\R(k,4n)$ as it is evident from
an equivalent conditions to the one stated in Theorem (\ref{transitivity of Sp(n) on subspaces with same Hermitian matrices}) appearing in  \cite{Vacpreprint} and  here reported in Theorem  (\ref{main_theorem 1_ existance of canonical bases with same mutual position}). According to it, a pair of subspaces $U,W$ are in the same $Sp(n)$-orbit iff, for one and hence any  admissible  basis $(I,J,K)$ of $\mathcal{Q}$ the following 2 conditions are satisfied:
\begin{itemize}
\item  $\bm{\theta}^I(U)=\bm{\theta}^I(W),\; \bm{\theta}^J(U)=\bm{\theta}^J(W),\; \bm{\theta}^K(U)=\bm{\theta}^K(W)$;
    \item there exist three orthonormal bases $(\{ X_i \} , \{Y_i \} ,\{ Z_i \}) \in (\mathcal{B}^I(U) \times \mathcal{B}^J(U) \times \mathcal{B}^K(U))$   and  $(\{X'_i \},\{Y'_i \},\{ Z'_i \}) \in (\mathcal{B}^I(W) \times \mathcal{B}^J(W) \times \mathcal{B}^K(W))$ whose relative position is the same or equivalently  \[A= A', \qquad B= B'\]
         where $A=(<X_i, Y_j>), \qquad A'=(<X_i', Y_j'>), \qquad B=(<X_i, Z_j>),\qquad  B'=(<X_i', Z_j'>)$.
\end{itemize}

The problem to determine the $Sp(n)$-orbits turns therefore into the one of determining the existence of such triple of  bases. In  \cite{Vacpreprint} we set up a procedure  to determine a triple of  \textit{canonical bases} w.r.t. which one  computes the matrices $A$ and $B$. Namely, fixed an admissible basis $(I,J,K)$, a triple of canonical bases of a subspace $U$ is constituted by
a triple of standard bases of $\omega^I,\omega^J,\omega^K$
that either  are  uniquely determined by the procedure aforementioned or, if this is not the case, nevertheless the associated matrices $A,B$ are unique.

We call the matrices  obtained thereof  \textit{canonical matrices} and denote them by  $C_{IJ}$ and $C_{IK}$. In \cite{Vacpreprint}, Chosen an admissible basis  $(I,J,K)$  we associate then to any subspace $U \subset V$ the  following invariant
\[Inv(U)=\{\bm{\theta}^I(U), \, \bm{\theta}^J(U), \,   \bm{\theta}^K(U),  C_{IJ}, \; C_{IK}  \},\] 
 and in the Theorem (\ref{main_theorem 1_ with respect canonical bases})  we affirm that 
the subspaces  $U$ and $W$ of $V$ are in the same $Sp(n)$-orbit  iff $Inv(U)=Inv(W)$. 
If $Inv(U)=Inv(W)$ w.r.t. the admissible basis $(I,J,K)$ then $Inv(U)=Inv(W)$ for any admissible basis.


After recalling  the definition of some  special subspaces of $V$, in Proposition (\ref{Decomposition of a generic subspace}), we show that a generic subspace $U$ of $(V,\mathcal{Q},< , >)$ admits a decomposition
\[
\begin{array}{l}
U^m= U_{Q} \stackrel{\perp} \oplus U^\Sigma \stackrel{\perp} \oplus U_R    \hskip 2.0cm \text{ with} \qquad \qquad
U^\Sigma=(U_1,I_1) \stackrel{\perp} \oplus \ldots \stackrel{\perp} \oplus (U_p,I_p)
\end{array}
\]
into an orthogonal sum of the maximal quaternionic subspace $U_Q$, a $\Sigma$-complex  subspace $U^\Sigma$, defined as the orthogonal sum of maximal $I_i$-complex subspaces $(U_i,I_i)$ with $I_i \in S(\mathcal{Q})$ and a totally real subspace $U_R$. In Proposition (\ref{orthogonal complex subspaces by different structures}) we prove that the complex addends $U_Q$ and $(U_i,I_i)$  are Hermitian orthogonal i.e. their quaternionifications $U_Q$ and
 $(U_i)^\mathbb{H}$  are orthogonal in pairs. In  general this  is not  true for the orthogonal totally real addend $U_R$.\\

  The subspaces we consider in this paper  are the $I$-complex subspaces $(U,I)$ with $I \in S(\mathcal{Q})$ and the $\Sigma$-complex subspaces.  It will turn out that  the analysis of the  4-dimensional complex case is fundamental to determine the $Sp(n)$-orbit of such subspaces.
   Any 4-dimensional $I$-complex subspace $(U,I)$   is characterized by the fact that, for any $A \in S(\mathcal{Q})$ the pair $(U,AU)$ is isoclinic.  In \cite{Vac_isoclinic} we denoted by $\mathcal{IC}^4$ the set of all 4-dimensional subspaces sharing this property and we called them  \textit{isoclinic subspaces}.
 We then recall the main results which concern a subspace $U \in  \mathcal{IC}^4$ referring to  \cite{Vac_isoclinic} for proofs and a wider treatment.



    In  \cite{Vac_isoclinic} we  determined the triple of canonical bases  $\{X_i\},\{Y_i\},\{Z_i\}, \; i=1, \ldots,4$  of $U$.
    Given an  admissible basis $(I,J,K)$ and chosen  $X_1 \in U$, we considered the standard 2-planes $U^I=L(X_1,X_2), \; U^J=L(X_1,Y_2), \; U^K=L(X_1,Z_2)$  centered on  a common unitary vector $X_1$ of the skew-symmetric forms $\omega^I,\omega^J,\omega^K$ respectively where
    \[X_2=\frac{I^{-1} Pr_U^{IU} X_1}{\cos \theta^I}, \quad Y_2=\frac{J^{-1} Pr_U^{JU}X_1}{\cos \theta^J}, \quad Z_2=\frac{K^{-1} Pr_U^{KU} X_1}{\cos \theta^K}.\]



    Denoted by $\xi =<X_2,Y_2>, \; \chi=<X_2,Z_2>, \; \eta=<Y_2,Z_2>$, in Corollary (\ref{invariance of <X_2,Y_2>})  we proved that the triple $(\xi,\chi,\eta)$ is an invariant of $U$.
  We introduced   $\Gamma$  as a function of $(\xi,\chi,\eta)$ and $\Delta= \pm \sqrt{1-\Gamma^2}$.  After proving that the pair $(\Gamma, \Delta)$ itself is an invariant of $U$,  in  the Proposition (\ref{canonical matrices of any  subspace of ${IC}^(4)$}) we affirm that the invariants $(\xi,\chi,\eta, \Delta)$ determine the canonical matrices $C_{IJ}$ and $C_{IK}$ which are given in (\ref{canonical matrices $C_{IJ}$ $C_{IK}$ and of 4-dimensional isoclinic subspace w.r.t. the associated chains}) w.r.t. the canonical bases.
  Therefore, according to the statement of the Theorem (\ref{main_theorem 1_ with respect canonical bases}), in the Theorem 
   (\ref{Sp(n) orbit of a 4 dimensional isoclinic subpace}) we state that the invariants $(\xi,\chi,\eta, \Delta)$ together with the angles $(\theta^I, \theta^J, \theta^K)$ determine the $Sp(n)$-orbit of any $U \in \mathcal{IC}^4$ in $G^\R(4,4n)$.



  The set of 4-dimensional complex subspaces is a subset of   $\mathcal{IC}^4$.  Let then  $(U,I)$ be a 4-dimensional $I$-complex subspace with $I \in S(\mathcal{Q})$.
  Fixed an adapted basis $(I,J,K)$,  we associate to $(U,I)$ the $I^\perp$- K\"{a}hler angle $\theta^{I^\perp}$ which is one of the four identical principal angles of the pair $(U,KU)$ observing that $KU$ is the same for any $K \in I^\perp \cap S(\mathcal{Q})$.
In this case  the angles of isoclinicity  $(\theta^I, \theta^J,\theta^K) =(0, \theta^{I^\perp},\theta^{I^\perp})$ and we denote such subspace by the triple $(U,I,\theta^{I^\perp})$. Furthermore, considered a triple of canonical bases $\{X_i\},\{Y_i\},\{Z_i\}, i=1, \ldots,4$  of the skew-symmetric forms $\omega^I,\omega^J,\omega^K$ centered on $X_1$  one has $\xi=\chi=\eta=0$ and the matrices $C_{IJ}$ and $C_{IK}$  of all 4-dimensional  complex subspaces are given in (\ref{canonical matrices of a 4 dimensional complex subspace}).
Then, according to the Theorem (\ref{main_theorem 1_ with respect canonical bases}), the pair $(I,\theta^{I^\perp})$  determines the $Sp(n)$-orbit of $U$ i.e. all and only the 4-dimensional $I$-complex subspaces with same $I^\perp$- K\"{a}hler angle constitute one $Sp(n)$-orbit in $G^\R(4,4n)$ as stated in Theorem (\ref{orbit of a complex 4-plane under Sp(n)}).



In Theorem (\ref{canonical decomposition of a complex subspace}) we affirm that a $2m$-dimensional $I$-complex subspace admits an Hermitian orthogonal   decomposition into 4-dimensional $I$-complex subspaces. Although such decomposition is not unique  we can associate to $U$ the canonically defined   \textbf{$I^\perp$-K\"{a}hler multipleangle} $\boldsymbol{\theta^{I^\perp}}=(\theta_1^{I^\perp}, \ldots, \theta_{[m/2]}^{I^\perp})$ ($\boldsymbol{\theta^{I^\perp}}=(\theta_1^{I^\perp}, \ldots, \theta_{{m/2}}^{I^\perp}, \pi/2)$ if $m$ is odd)  of the $I$-complex $2m$-dimensional subspace $(U,I)$  being $\boldsymbol{\theta}^{I^\perp}$  the set of the $I^{\perp}$-K\"{a}hler angle of the Hermitian orthogonal 4-dimensional $I$-complex addends (plus  the $K$-K\"{a}hler angle of an Hermitian orthogonal totally $I$-complex plane if $m$ is odd with $K \in I^\perp$).
Denoted by $Gr_{(I,\boldsymbol{\theta}^{I^\perp})}^\R(2m,4n)$ the set of $2m$-dimensional  pure $I$-complex subspaces in $(V^{4n}, <,>, \mathcal{Q})$ with $I^\perp$-K\"{a}hler multipleangle $\boldsymbol{\theta^{I^\perp}}$, in  Theorem (\ref{orbit of a complex 2m-plane under Sp(n)}) we state that the group $Sp(n)$ acts transitively on $Gr_{(I,\boldsymbol{\theta}^{I^\perp})}^\R(2m,4n)$ i.e.
the pair $(I, \boldsymbol{\theta}^{I^\perp})$ composed by the complex structure $I \in \mathcal{Q}$ and the $I^\perp$-K\"{a}hler multipleangle  $\boldsymbol{\theta}^{I^\perp}$ of the $I$-complex subspace $U$ determines completely its $Sp(n)$-orbit in the Grassmannian $Gr^\R(2m,4n)$.

In particular, all totally $I$-complex subspaces of same dimension  form one orbit in $G^\R(2m,4n)$.


We  then consider a $\Sigma$-complex subspace $U$.  
 From Proposition (\ref{unicity of the decomposition of a subspace with trivial real addend}) the  decomposition of $U$ into an orthogonal sum of maximal pure complex subspaces by different (up to sign) structures  is unique. Moreover, from Proposition (\ref{orthogonal complex subspaces by different structures}), the $2m_i$-dimensional $I_i$-complex subspaces are Hermitian orthogonal and to determine the $Sp(n)$-orbit we can deal separately with each  $I_i$-complex  addend. In Proposition (\ref{orbit af a generic subspace with trivial real part under Sp(n)}) we state that the pair $(\mathcal{I}, \Theta)$, where $\mathcal{I}= \{ I_i\}$ and $\Theta= \{\boldsymbol{\theta_i^{I_i^\perp}}\}$ is the vector whose elements are the $I_i^\perp$-K\"{a}hler multipleangle of the $I_i$-complex addend, completely determines the orbit in the real Grassmannian.\\

\section{Decomposition of a generic subspace of an Hermitian quaternionic vector space}
\subsection{The Hermitian quaternionic structure}

Let $V$ be a real vector space of dimension $4n$.
\begin{defi} $ $
\begin{enumerate}
\item A triple $\mathcal{H}=\{J_1,J_2, J_3 \}$ of anticommuting complex structures  on $V$ with $J_1J_2= J_3$ is called a \textbf{hypercomplex structure} on $V$.
\item The 3-dimensional subalgebra
\[ \mathcal{Q}= span_\R(\mathcal{H})= \R J_1 + \R J_2 + \R J_3 \approx \mathfrak{sp}_1 \]
of the Lie algebra $End(V)$ is called a \textbf{quaternionic structure} on $V$.
\end{enumerate}
\end{defi}
Note that two hypercomplex structures $\mathcal{H}= \{J_1,J_2,J_3\}$ and $\mathcal{H'}= \{J'_1,J'_2,J'_3\}$ generate the same quaternionic structure $\mathcal{Q}$ iff they are related by a rotation, i.e.
\[
J'_\alpha= \sum_\beta A^\beta_\alpha J_\beta,\qquad (\alpha= 1,2,3)
\]
with $(A_\alpha^\beta) \in SO(3)$. A  hypercomplex structure  generating  $\mathcal{Q}$ is called an \textbf{admissible} \textbf{basis} of $\mathcal{Q}$.
\noindent We denote by $S(\mathcal{Q})$ the 2-sphere of complex structures $J \in \mathcal{Q}$  i.e.  \mbox{$S(\mathcal{Q})= \{aJ_1 + bJ_2 + cJ_3,\;  a,b,c \in \R, \; a^2 + b^2 + c^2=1 \}$.}

A real vector space $V$ endowed with a hypercomplex structure $(J_1,J_2,J_3)$ is an  $\mathbb{H}$-module  by defining scalar multiplication by a quaternion $q$ as follows:
\[
qX= (a+ib+jc+dk)X= aX + bJ_1X +cJ_2X+dJ_3X, \quad X \in V, \; a,b,c,d, \in \R
\]
and $(i,j,k)$ a basis of $Im(\mathbb H)$ satisfying
\begin{equation} \label{multiplication table of quaternions}
i^2=j^2=k^2=-1; \; ij=-ji=k.
\end{equation}

\begin{defi}
An Euclidean scalar product $< \, , \,>$ in $V$ is called \textbf{Hermitian} w.r.t. a hypercomplex basis  $\mathcal{H}= (J_\alpha)$ (resp. the quaternionic structure $\mathcal{Q}= span(\mathcal{H})_\R$) iff for any $X,Y \in V$
\[
<J_\alpha X, J_\alpha Y>=  < X,  Y> \text{or equivalently }  \; <J_\alpha X,  Y>=  -< X, J_\alpha Y>, \qquad (\alpha=1,2,3)
\]
(respectively
\[
<J X, J Y>=  < X,  Y> \text{or equivalently }  \; <J X,  Y>=  -< X,  JY>, \qquad (\forall J \in \mathcal{Q}) \text{)}\]
\end{defi}

\begin{defi}
  A hypercomplex structure  $\mathcal{H}$ (resp. quaternionic structure $\mathcal{Q}$) together with an Hermitian scalar product $< \, , \,>$ is called an \textbf{Hermitian hypercomplex} (resp. \textbf{Hermitian quaternionic}) \textbf{structure} on $V$  and the triple $(V^{4n}, \mathcal{H},<,>)$ (resp. $(V^{4n}, \mathcal{Q}, <, >)$) is  an \textbf{Hermitian  hypercomplex} (resp. \textbf{  quaternionic} ) \textbf{ vector space}.
\end{defi}

The prototype of an Hermitian  hypercomplex vector space is the $n$-dimensional quaternionic numerical space
$\mathbb{H}^n$ which
is a real vector space of dimension $4n$, a
$\mathbb{H}$-module with respect to  right (resp. left)
multiplication  by quaternions
and is endowed with the canonical positive definite Hermitian product
\begin{equation} \label{canonical Hermitian product in $H^n$}
\begin{array}{ll}
\mathbf{h} \cdot \mathbf{h'}= \sum_{\alpha=1}^{n} \overline{h_\alpha} h'_\alpha
& (\text{resp.} \; \mathbf{h} \cdot \mathbf{h'}= \sum_{\alpha=1}^{n} h_\alpha \overline{h'_\alpha})\\
& \quad \mathbf{h}=(h_1, \ldots,h_n),\; \mathbf{h'}=(h_1', \ldots,h_n') \in
 \mathbb{H}^{n} .
\end{array}
\end{equation}

 The real part of the Hermitian product defines an Euclidean   scalar product $< \, , \,>= Re(\cdot)$
on the real vector space  $\mathbb{H}^{n} \simeq \R^{4n}$.
If we consider the basis $(1,i,j,k)$ of  $\mathbb{H}$ satisfying the multiplication table obtainable from the conditions (\ref{multiplication table of quaternions})
 one has that the right multiplications by $-i, -j, -k$ (resp. left multiplication by $i,j,k$) induce real endomorphisms $(I=R_{-i},J=R_{-j},K=R_{-k})$
 (resp. $(I=L_{i},J=L_{j},K=L_{k})$) of the $\mathbb{H}$-module
$\mathbb{H}^n$  satisfying $I^2=J^2=K^2=-Id, \; IJ=K=-JI$
 and skew-symmetric with respect to the metric $< \, , \,>$  i.e. an Hermitian hypercomplex structure on $\mathbb{H}^{n}$.

We recall that
a new basis $(1,i',j',k')$ of $\mathbb{H}$ give rise to the  multiplication table (\ref{multiplication table of quaternions}) iff  $(i',j',k')=(i,j,k)C$ with $C \in SO(3)$.
 We will denote by $\mathcal{B}$ the set of bases of $\mathbb{H}$ satisfying the relations (\ref{multiplication table of quaternions})  and call it \textbf{canonical system of bases}.
In \cite {BR1} it has been proved that
\begin{prop}  \label{intrinsic meaning of Hermitian and scalar product in H^n}\cite{BR1} Both the Hermitian product and the scalar product of $\mathbb{H}^n$ have intrinsic meaning (SPIEGARE) with respect to the canonical system of bases $\mathcal{B}$.
\end{prop}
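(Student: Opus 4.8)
The statement to prove is Proposition \ref{intrinsic meaning of Hermitian and scalar product in H^n}, asserting that both the Hermitian product and the real scalar product on $\mathbb{H}^n$ have intrinsic meaning with respect to the canonical system of bases $\mathcal{B}$. "Intrinsic" here should mean: the value of these products does not depend on which basis $(1,i,j,k) \in \mathcal{B}$ one uses to identify $\mathbb{H}$ with a quaternion algebra — equivalently, the Hermitian and scalar products are invariant under the change-of-basis transformations that relate two elements of $\mathcal{B}$.

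\medskip

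\textbf{Plan.} The key observation is that two bases in $\mathcal{B}$ differ by an element $C \in SO(3)$ acting on $(i,j,k)$, as recalled just before the statement. So I would first make precise what "intrinsic meaning" asserts: for $\mathbf{h}, \mathbf{h'} \in \mathbb{H}^n$, the quantity $\sum_\alpha \overline{h_\alpha} h'_\alpha$ computed in one basis of $\mathcal{B}$ equals the corresponding quantity computed in any other — more carefully, one tracks how the coordinate expressions of a fixed pair of vectors transform. The heart of the matter is that conjugation $q \mapsto \bar q$ on $\mathbb{H}$ is characterized purely by the splitting $\mathbb{H} = \R \oplus \mathrm{Im}(\mathbb{H})$, acting as $+1$ on $\R$ and $-1$ on $\mathrm{Im}(\mathbb{H})$, and this splitting is preserved by every $C \in SO(3)$ since such $C$ only mixes the imaginary units $i,j,k$ among themselves. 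Hence $\overline{h_\alpha} h'_\alpha$ is unchanged, and so is its sum.

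\medskip

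Concretely, the steps I would carry out: (1) Recall/cite that a change of canonical basis is $(i',j',k') = (i,j,k)C$ with $C \in SO(3)$, and that the real part of a quaternion and the imaginary part are each intrinsically defined (independent of $C$), so conjugation is intrinsic. (2) Deduce that for any $q \in \mathbb{H}$, $\bar q$ is the same element regardless of which basis in $\mathcal{B}$ is used to write $q$ in components; likewise the quaternion product $q q'$ is intrinsic because the multiplication table \eqref{multiplication table of quaternions} is, by definition of $\mathcal{B}$, identical in every such basis (that is precisely the defining property of $\mathcal{B}$). (3) Conclude that $\mathbf{h} \cdot \mathbf{h'} = \sum_\alpha \overline{h_\alpha} h'_\alpha$ is a well-defined element of $\mathbb{H}$ independent of the chosen basis in $\mathcal{B}$, hence so is its real part $\langle \mathbf{h}, \mathbf{h'} \rangle = \mathrm{Re}(\mathbf{h} \cdot \mathbf{h'})$. (4) If one wants invariance of the induced real endomorphisms $(I,J,K)$ as well, note these depend on the choice of basis in $\mathcal{B}$, but the quaternionic structure $\mathcal{Q}$ they span and the sphere $S(\mathcal{Q})$ do not — this is the earlier remark that two admissible bases generate the same $\mathcal{Q}$ iff related by $SO(3)$.

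\medskip

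I expect the main (though modest) obstacle to be stating "intrinsic" rigorously: one must be careful that what is asserted invariant is the pairing as a function of actual vectors in $V \cong \mathbb{H}^n$, not of coordinate $n$-tuples — changing the basis of $\mathbb{H}$ also changes the coordinates of a fixed vector, and the content of the proposition is that these two changes cancel. Once that is set up, the proof is essentially the two facts that (a) $\mathrm{Re}$ and conjugation on $\mathbb{H}$ are basis-independent, and (b) the bases in $\mathcal{B}$ are by definition exactly those with a common multiplication table; I would then simply refer to \cite{BR1} for the detailed verification, as the excerpt does.
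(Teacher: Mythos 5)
The paper offers no proof of this proposition: it is quoted from \cite{BR1} (note the surviving editorial placeholder ``(SPIEGARE)''), so there is no argument of the author's to measure yours against. Your reconstruction is correct and is surely the intended one: conjugation and the real part depend only on the intrinsic splitting $\mathbb{H}=\R\oplus \mathrm{Im}(\mathbb{H})$, which every $C\in SO(3)$ mixing $(i,j,k)$ preserves, and all bases in $\mathcal{B}$ share the multiplication table (\ref{multiplication table of quaternions}) by definition, so $\mathbf{h}\cdot\mathbf{h'}=\sum_\alpha \overline{h_\alpha}h'_\alpha$ and $<\,,\,>=Re(\cdot)$ are unambiguous. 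The only point worth stressing --- which you do raise in your closing paragraph --- is that the nontrivial content is not the invariance of the quaternion $\mathbf{h}\cdot\mathbf{h'}$ itself (it is defined with no reference to a basis), but the covariance of its coordinate expression $<L,M>+<L,IM>i+<L,JM>j+<L,KM>k$ under the simultaneous change $(I',J',K')=(I,J,K)C$, $(i',j',k')=(i,j,k)C$; that is precisely the remark the paper makes right after equation (\ref{relation between scalar product and hermitian product of H^n}), and your steps (1)--(4) cover it.
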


Let $(1,i,j,k) \in \mathcal{B}$ be a chosen basis in $\mathbb{H}$  and denote by $I=R_{-i},\; J=R_{-j}, \;K=R_{-k}$ the real endomorphisms of  the $\mathbb{H}$-module $\mathbb{H}^{n}$. Let $\mathcal{Q}= span_\R(I,J,K)$.
\begin{prop} For the scalar product and the Hermitian product of a pair of vectors $L,M \in \mathbb{H}^n$ the following relation holds:
\begin{equation} \label{relation between scalar product and hermitian product of H^n}
L \cdot M=<L,M> +<L,IM>i +<L,JM>j +<L,KM>k
\end{equation}
\end{prop}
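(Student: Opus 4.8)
The plan is to deduce the stated formula (\ref{relation between scalar product and hermitian product of H^n}) from a single elementary identity in $\mathbb{H}$: every $p = p_0 + p_1 i + p_2 j + p_3 k \in \mathbb{H}$ satisfies
\[
p = \mathrm{Re}(p) + \mathrm{Re}\big(p(-i)\big)\, i + \mathrm{Re}\big(p(-j)\big)\, j + \mathrm{Re}\big(p(-k)\big)\, k .
\]
This is immediate from the multiplication table (\ref{multiplication table of quaternions}): one computes $p(-i) = p_1 - p_0 i - p_3 j + p_2 k$, so that $\mathrm{Re}(p(-i)) = p_1$, and likewise $\mathrm{Re}(p(-j)) = p_2$ and $\mathrm{Re}(p(-k)) = p_3$.

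Next I would record that the canonical Hermitian product is compatible with right multiplication. For $L, M \in \mathbb{H}^n$ and $q \in \mathbb{H}$, writing $R_q$ for componentwise right multiplication by $q$, the defining formula $\mathbf{h}\cdot\mathbf{h'} = \sum_\alpha \overline{h_\alpha} h'_\alpha$ together with associativity of $\mathbb{H}$ gives
\[
L \cdot (R_q M) \;=\; \sum_{\alpha=1}^n \overline{L_\alpha}\,(M_\alpha q) \;=\; \Big(\sum_{\alpha=1}^n \overline{L_\alpha}\,M_\alpha\Big) q \;=\; (L \cdot M)\, q .
\]
Taking real parts and recalling $\langle\,\cdot\,,\,\cdot\,\rangle = \mathrm{Re}(\,\cdot\,)$, this yields $\langle L, R_q M\rangle = \mathrm{Re}\big((L\cdot M)\, q\big)$ for every $q \in \mathbb{H}$.

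The final step is to specialize $q$ to $1, -i, -j, -k$. Since by construction $R_1 = \mathrm{Id}$, $R_{-i} = I$, $R_{-j} = J$, $R_{-k} = K$, the previous line reads $\langle L, M\rangle = \mathrm{Re}(L\cdot M)$, $\langle L, IM\rangle = \mathrm{Re}\big((L\cdot M)(-i)\big)$, $\langle L, JM\rangle = \mathrm{Re}\big((L\cdot M)(-j)\big)$ and $\langle L, KM\rangle = \mathrm{Re}\big((L\cdot M)(-k)\big)$. Substituting $p = L \cdot M$ into the quaternionic identity of the first step then produces exactly (\ref{relation between scalar product and hermitian product of H^n}).

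There is essentially no obstacle here; the only point requiring care is the bookkeeping of conventions and signs. Because the endomorphisms $I, J, K$ in the statement are \emph{right} multiplications by $-i, -j, -k$, one must pair them with the Hermitian product in its matching right-multiplication form $\mathbf{h}\cdot\mathbf{h'} = \sum_\alpha \overline{h_\alpha} h'_\alpha$ (in the left-module conventions $I = L_i$, $J = L_j$, $K = L_k$ one would use $\mathbf{h}\cdot\mathbf{h'} = \sum_\alpha h_\alpha \overline{h'_\alpha}$ instead, and the symmetric argument applies); and one must keep the relations $ij = k$, $ji = -k$, $ki = j$, etc., straight when verifying $\mathrm{Re}(p(-i)) = p_1$ and its two companions.
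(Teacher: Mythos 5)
Your proposal is correct and follows essentially the same route as the paper: the paper likewise writes $\langle L, IM\rangle = \mathrm{Re}(L\cdot(-Mi)) = \mathrm{Re}((L\cdot M)(-i))$ and identifies this with the $i$-coefficient of $L\cdot M$, which is exactly your combination of the identity $L\cdot(R_qM)=(L\cdot M)q$ with the coefficient-extraction formula $\mathrm{Re}(p(-i))=p_1$. Your version merely makes explicit the two lemmas the paper leaves implicit.
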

\begin{proof}
We prove that $<L,IM>,<L,JM>,<L,KM>$ are respectively the coefficients of $i,j,k$ in the Hermitian product $L \cdot M$. In fact
$<L,IM>=Re(L \cdot -Mi)=  -Re(L \cdot M)i$ which is exactly the coefficient of $i$ of the quaternion  $L \cdot M$ and analogously for $<L,JM>$ and $<L,KM>$.
\end{proof}

After identifying an admissible  hypercomplex  structures  $(I,J,K)$ of $V$  with $(R_{-i},R_{-j},R_{-k})$ of $\mathbb{H}$, we can endow a quaternionic Hermitian vector space with the Hermitian product given in  (\ref{relation between scalar product and hermitian product of H^n}). It has an intrinsic meaning w.r.t. the admissible bases, that is,  using a different admissible basis $(I',J',K')=  (I,J,K)C, \;  \, C \in SO(3)$,  the coordinates of the obtained quaternion are w.r.t. the basis $(i',j',k')=(i,j,k)C$. In the following we consider such an identification.

\subsection{Special subspaces of an Hermitian quaternionic vector space}

 Let $(V^{4n}, \mathcal{Q}, <, >)$ be an Hermitian quaternionic vector space endowed with  the Hermitian product given in (\ref{relation between scalar product and hermitian product of H^n}). In the following, given a finite set $\{M_1, \ldots, M_s \}$ of vectors of $V$, we denote by $L(M_1, \ldots,M_s)$ or equivalently  by  $span_\R(M_1, \ldots, M_s)$ their linear span over  $\R$. We will denote by $(I,J,K)$ a generic admissible basis of $\mathcal{Q}$ and by $I^\perp=L(J,K)\cap S(\mathcal{Q})$ 
 i.e. $I^\perp= \{\beta J + \gamma K, \; \beta^2 + \gamma^2=1\}$. Moreover, given a subspace $U \subset V$,   $U^\mathbb{H}$  denotes its quaternionification  i.e. the subspace spanned  on $\mathbb{H}$ by some basis of $U$. In particular, given a vector $X \in V$ by $\mathcal{Q} X$ we denote the 4-dimensional real subspace real image of the 1-dimensional subspace generated by $X$ over $\mathbb{H}$ i.e. $\mathcal{Q} X= span_\R(X,IX,JX,KX)= (\R X)^\mathbb{H}$.

\begin{defi}
Let $(V^{4n}, \mathcal{Q}, <,>)$ be an Hermitian quaternionic vector space.
A subspace $U \subset V$ is \textbf{quaternionic} if $AU=U, \; \forall A \in S (\mathcal{Q})$.
A subspace is \textbf{pure} if it does not contain any non trivial quaternionic subspace.
Let $I \in S (\mathcal{Q})$, then $U$ is  \textbf{$I$-complex} and we denote it by $(U,I)$ if $U=IU$. In particular $(U,I)$ is a \textbf{totally $I$-complex} subspace  if, for any $J \in I^\perp$,  the pair of subspaces $(U,JU)$ are strictly orthogonal  \footnote{ A pair of subspaces $A,B$ are  \textit{orthogonal}   if the angle between them is $\pi/2$ i.e. if there exists a line in $A$ orthogonal to $B$. In particular they are  \textit{strictly orthogonal} and we wrote $A \perp B$ if any line in $A$ is orthogonal to $B$. In terms of principal angles (whose definition we recall in (\ref{definition of principal angles})), we can say that  a pair of subspaces $A,B$  of dimensions $m,n, \; m \leq n$ is orthogonal if at least one of the principal angle is $\pi/2$ and is strictly  orthogonal of all $m$ principal angles equal $\pi/2$. Clearly   the pair $(A,B)$ is strictly orthogonal iff if is orthogonal and isoclinic (see the Definition (\ref{definition of isoclinicity}). For instance, for any $T \in S(\mathcal{Q})$ and $U \subset V$ a 2-plane, the pair $U,TU$ is  isoclinic. Then in this case one can speak  indifferently of orthogonality or strictly orthogonality.}.
A subspace is \textbf{totally real} if it does not contain any complex subspace  or equivalently if $AU \cap U= \{0 \}, \, \forall A \in S(\mathcal{Q})$.   In particular it is a \textbf{r.h.p.s.} (real hermitian product subspace) if,  for any pair of vectors  $X,Y \in U$, one has $X \cdot Y \in \R$.  
\end{defi}
A totally $I$-complex subspace is   a \textbf{c.h.p.s.}  (complex hermitian product subspace) i.e.,  for any pair of vectors  $X,Y \in U$ , $X \cdot Y = a + ib, \; a,b \in \R$.
Clearly a 2-dimensional $I$-complex subspace is totally $I$-complex.  Furthermore a totally real subspace is a r.h.p.s. iff for any admissible basis,  the pairs $(U,IU), \,  (U,JU), \,  (U,KU)$ are strictly orthogonal.
We recall some results regarding the subspaces just defined:
\begin{claim} \label{properties of some quternionic,complex and real subspaces} $\,$
\begin{itemize}
\item For quaternionic subspaces:
\end{itemize}
\begin{enumerate}
\item \label{1} The sum and the intersection of quaternionic subspaces is a quaternionic subspace.
\begin{proof}
In fact if $0 \neq X \in U_1 \cap U_2$ with $U_1,U_2$ quaternionic subspaces  then $\mathcal{Q}X \in U_1$ and $\mathcal{Q}X \in U_2$ then  $\mathcal{Q}X \in U_1\cap U_2$. For the sum, one has that for every vector $Z=X+Y \in U_1 + U_2$  and every $A \in S(\mathcal{Q})$ one has $AZ=AX+AY \in U_1 + U_2$.
\end{proof}
\item \label{2} The orthogonal complement of a quaternionic subspace of a quaternionic space is quaternionic.
\begin{proof}
Let $U$ be quaternionic and $W \subset U$ quaternionic as well. Let consider the orthogonal decomposition $U=W  \oplus W^\perp$. For $X \in W^\perp, \; A \in S(\mathcal{Q})$, let
$AX=X_1 + X_2$ with $X_1 \in W=AW$ and $X_2 \in W^\perp$. Being $A(AX)=-X=AX_1 + AX_2$ it follows that $X_1=\textbf{0}$ and any $A \in S(\mathcal{Q})$ preserves $W^\perp$ i.e. $W^\perp$ is quaternionic.
\end{proof}

\begin{itemize}
\item For $I$-complex subspaces:
\end{itemize}
\item The subspace $U$ is $I$-complex iff it is $(-I)$-complex.  In the following, when we will speak of an $I$-complex subspace we will always imply "up to sign".
\item \label{A complex subspace does not contain a complex subspace by a different complex structure} Let $(U,I)$ be pure and suppose $(U',I') \subseteq U$. Then $I'=\pm I$. In other words an $I$-complex subspace does not contain any complex subspace by a different  complex structure (up to sign).
\begin{proof}
Suppose $0 \neq Y \in U \cap U'$ then $ I(I'Y)= I (\alpha IY + \beta JY + \gamma KY)= -\alpha Y + \beta KY - \gamma JY \in U$ which implies $\beta KY - \gamma JY= K(\beta Y - \gamma IY) \in U$. From pureness of $U$ it follows  $\beta=\gamma=0$ i.e. $I'=\pm I$.
\end{proof}
\item   \label{3} The orthogonal complement to a complex subspace $W$ of an $I$-complex space $U$ is an $I$-complex subspace.
\begin{proof}
From previous statement,  $W$ is necessarily $\pm I$-complex. Consider then  $(W,I) \subset (U,I)$ and the orthogonal decomposition $(U,I)= (W,I) \stackrel{\perp} \oplus \tilde U$ where $\tilde U$ is the orthogonal complement to $W$ in $U$.
Let $Z \in \tilde U$ non null and consider the vector $IZ=X + Y$ with $X \in W$ and $Y \in \tilde U$. Then $\tilde U \ni I(IZ)=IX + IY $ implies that $X=0$ i.e. $\tilde U$ is $I$-complex.
\end{proof}

\item Sum and intersection of $I$-complex subspaces is an $I$-complex subspace.
\begin{proof}

Let $W=(U_1,I) \cap (U_2,I)$. If $Z \in W$ then $IZ \in U_1$ and $IZ \in U_2$ then $W$ is $I$-complex. Let now consider $U_1 + U_2= \bar{U_1} \stackrel{\perp} \oplus W \stackrel{\perp} \oplus \bar{U_2}$ where $\bar{U_1}$  (resp.  $\bar{U_2}$) is  the orthogonal complement to $W$ in $U_1$ (in $U_2$).
From (\ref{3}), the subspaces $\bar{U_1}$ and $\bar{U_2}$ are $I$-complex.
If $T \in (U_1 + U_2)= X + Y + Z $ with $X \in \bar{U_1}, \; Y \in W,\; Z \in \bar{U_2}$ one has $IT \in U_1 + U_2$.
\end{proof}

\item  \label{for any K in I perp  KU=JU is I-complex} Let $(U,I)$ be an $I$-complex subspace. An \textbf{adapted basis} of $U$ is an admissible basis containing  $I$. For any $K,K' \in I^\perp$ one has that $KU=K'U$ is $I$-complex.
 \begin{proof}
  Let $(I,J,K)$ be an adapted bases. One has  $JU=KIU=KU$. Let $K'= \alpha J + \beta K  \in I^\perp$. 
  For any  $X \in U$ it is $K' X= \alpha J X + \beta K X \in J U$. The subspace $KU$ is clearly $I$-complex since $IKU=KIU=KU$.

 \end{proof}
\item  If $(U,I)$ is pure, then $JU \cap U= \{0 \}$ for any $J \in I^\perp$.
\begin{proof}
In fact suppose  $JY \in U \cap JU$ with $Y \in U$. Then $Y,IY,JY$ are in $U$ as well as  $I(JY)=KY$ which is absurd by the hypothesis of pureness.
\end{proof}
 \item \label{intersection} The intersection of a pair of pure complex subspaces by different, up to sign, complex structures  is a totally real subspace.
  \begin{proof}
Let $(U,I)$ and $(U',I')$ be a pair of pure complex subspaces with $I' \neq \pm I$ and denote $W=U \cap U'$. Suppose $W$ is complex. Applying the previous result, $W$ as a subspace of $U$  can only be a pure $\pm I$-complex and as a subspace of $U'$ can only be  pure $\pm I'$-complex. Then $W$ is totally real.
\end{proof}
\begin{itemize}
\item For totally real subspaces:
\end{itemize}
\item By definition, a totally real subspace  $U$ is pure.
\item \label{9} Given a hypercomplex basis $(I,J,K)$ it is $IU \cap JU= IU \cap KU= JU \cap KU=\{0 \}$.
\begin{proof}
In fact, suppose $0 \neq Y= Iv=Jw \in IU \cap JU$ for the non null vectors, $v,w \in U$. Then  $I^2v= -v= IJ(w)=Kw$ which is absurd since by definition $U \cap KU= \{0\}$.
\end{proof}
 \item Let $U$ be a totally real subspace.
 If $U$ is  a r.h.p. subspace   $\dim (U)^\mathbb{H} =4 \dim U$ otherwise   $\dim (U)^\mathbb{H} \geq 2 \dim U$.
 \begin{proof}
If $U$ is  a r.h.p.s. for any admissible basis $(I,J,K)$,  any pair of the 4 subspaces $(U,IU,JU,KU)$ is strictly orthogonal.
If instead $U$ is totally real  
 then clearly $\dim (U)^\mathbb{H} \geq 2 \dim (U)$ since, from (\ref{9}), any pair of the subspaces $U,IU,JU,KU$ have trivial intersection.
 \end{proof}

\item Any  subspace of a r.h.p. subspace is a r.h.p.s.

\begin{proof}
 It's straightforward.
\end{proof}

\end{enumerate}
\end{claim}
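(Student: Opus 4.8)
The plan is to observe that the defining property of a r.h.p.s.\ is imposed on each individual pair of vectors, and that such pointwise conditions are automatically inherited by subspaces. First I would fix a r.h.p.s.\ $U$ and an arbitrary vector subspace $W \subseteq U$, and recall that $W$ is a r.h.p.s.\ precisely when two clauses hold: $W$ is totally real, and $X \cdot Y \in \R$ for every $X,Y \in W$. The strategy is simply to verify that each clause passes from $U$ to $W$.

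For the Hermitian-product clause I would note that any pair $X,Y \in W$ is in particular a pair of vectors of $U$; since $U$ is a r.h.p.s.\ we already have $X \cdot Y \in \R$, so the same holds for every pair in $W$. For the totally-real clause I would argue that it too is inherited: if $v \in AW \cap W$ for some $A \in S(\mathcal{Q})$, then $v \in AU \cap U$ because $AW \subseteq AU$ and $W \subseteq U$, and total reality of $U$ gives $AU \cap U = \{0\}$, whence $v = 0$; thus $AW \cap W = \{0\}$ for all $A \in S(\mathcal{Q})$ and $W$ is totally real. Alternatively, using (\ref{relation between scalar product and hermitian product of H^n}), the reality of $X \cdot Y$ for all $X,Y \in W$ forces the imaginary coefficients to vanish, i.e. $<X,IY> = <X,JY> = <X,KY> = 0$, and hence $<X,AY> = 0$ for every $A \in S(\mathcal{Q})$; this says $AW \perp W$ and in particular $AW \cap W = \{0\}$, recovering total reality directly from the Hermitian-product condition.

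Combining the two inheritances yields that $W$ is a r.h.p.s. There is no genuine obstacle here; the only point worth making explicit is that \emph{both} clauses of the definition—total reality and the reality of the Hermitian product—must be checked, the former by the inclusion of the images $AW \subseteq AU$ (or equivalently by the vanishing of the imaginary parts via (\ref{relation between scalar product and hermitian product of H^n})) and the latter by the inclusion of pairs of vectors $W \times W \subseteq U \times U$.
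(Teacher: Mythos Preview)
Your argument is correct and is exactly what the paper has in mind; the paper's own proof is simply ``It's straightforward,'' and you have written out the obvious verification that the defining pointwise condition $X\cdot Y\in\R$ (and hence, via (\ref{relation between scalar product and hermitian product of H^n}), total reality) passes to any subset of pairs. Nothing further is needed.
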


\subsection{Decomposition of a generic subspace} \label{Decomposition}

The following proposition shows that, by using quaternionic, pure complex
and totally real subspaces as building blocks, we can build up any subspace $U$ of  an Hermitian quaternionic vector space  $(V^{4n}, \mathcal{Q}, <, >)$).

\begin{prop} \label{Decomposition of a subspace}
Let $U \subseteq V$ be a subspace  and let $U_{Q}$ be its  maximal quaternionic subspace. Then $U$ admits an orthogonal   decomposition of the form
\begin{equation}\label{Decomposition of a generic subspace}
\begin{array}{l}
U^m= U_{Q} \stackrel{\perp} \oplus U^\Sigma \stackrel{\perp} \oplus U_R    \hskip 2.0cm \text{ with} \\
U^\Sigma=(U_1,I_1) \stackrel{\perp} \oplus \ldots \stackrel{\perp} \oplus (U_p,I_p)
\end{array}
\end{equation}
where
$(U_i,I_i)$ are maximal pure $I_i$-complex and $U_R$ is totally real.
\end{prop}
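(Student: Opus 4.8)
The plan is to build the decomposition in three stages, peeling off a quaternionic part, then a $\Sigma$-complex part, and checking that what remains is totally real. First I would take $U_Q\subseteq U$ to be the maximal quaternionic subspace; this is well-defined since by Claim \ref{properties of some quternionic,complex and real subspaces}(\ref{1}) the sum of quaternionic subspaces contained in $U$ is again quaternionic and contained in $U$, so there is a unique largest one. Write $U=U_Q\stackrel{\perp}\oplus \widetilde U$ for the orthogonal complement of $U_Q$ in $U$. The point of maximality is that $\widetilde U$ contains no nontrivial quaternionic subspace: if $W\subseteq\widetilde U$ were quaternionic, then $U_Q\oplus W\subseteq U$ would be quaternionic (again by (\ref{1})) and strictly larger, a contradiction. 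So $\widetilde U$ is pure.

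Next I would extract the $\Sigma$-complex part from $\widetilde U$ by induction on $\dim\widetilde U$. If $\widetilde U$ is totally real we set $U^\Sigma=\{0\}$, $U_R=\widetilde U$ and are done. Otherwise $\widetilde U$ contains a complex subspace, i.e. there is some $I\in S(\mathcal Q)$ and a nonzero $X$ with $IX\in\widetilde U$; let $(U_1,I_1)\subseteq\widetilde U$ be a maximal pure $I_1$-complex subspace for some fixed $I_1$ (maximal among all $I_1$-complex subspaces contained in $\widetilde U$; this exists since, by Claim \ref{properties of some quternionic,complex and real subspaces}, the sum of two $I_1$-complex subspaces is $I_1$-complex, and it is pure because a pure ambient cannot contain a quaternionic subspace). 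Split $\widetilde U=(U_1,I_1)\stackrel{\perp}\oplus\widetilde U'$. The essential step is that $\widetilde U'$ is again pure of strictly smaller dimension, so the induction hypothesis applies and gives $\widetilde U'=(U_2,I_2)\stackrel{\perp}\oplus\cdots\stackrel{\perp}\oplus(U_p,I_p)\stackrel{\perp}\oplus U_R$; concatenating produces the claimed decomposition. (Here $I_1$ need not differ from the later $I_i$; the statement only asks for an orthogonal sum of maximal pure complex addends and a totally real remainder, so we need not prove the $I_i$ are pairwise distinct up to sign — that refinement is the content of the separately cited Proposition \ref{orthogonal complex subspaces by different structures} and Proposition \ref{unicity of the decomposition of a subspace with trivial real addend}.)

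The main obstacle is verifying that the orthogonal complement of a maximal pure $I_1$-complex subspace inside a pure subspace is still pure — the naive analogue of Claim \ref{properties of some quternionic,complex and real subspaces}(\ref{3}) fails because $\widetilde U$ itself need not be $I_1$-complex, so one cannot directly invoke that the orthogonal complement of a complex subspace of an $I_1$-complex space is $I_1$-complex. I would argue by contradiction: suppose $W\subseteq\widetilde U'$ is quaternionic (nonzero). Since $\widetilde U$ is pure this is already impossible, giving purity of $\widetilde U'$ for free; the more delicate point is only that $\widetilde U'$ has strictly smaller dimension, which is automatic as long as $(U_1,I_1)$ is nonzero, i.e. as long as $\widetilde U$ is not totally real — exactly the case we are in. One should also record that the whole construction terminates: each step strictly decreases dimension, and the process stops precisely when the remaining summand is totally real (by definition, a pure subspace containing no complex subspace). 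Finally, the orthogonality of all the addends is built into the construction since at every stage we pass to an orthogonal complement, and orthogonality is transitive for orthogonal direct summands, which yields the displayed Hermitian orthogonal decomposition (\ref{Decomposition of a generic subspace}).
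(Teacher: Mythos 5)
Your argument is correct and follows essentially the same route as the paper's: peel off the maximal quaternionic subspace, then iteratively extract maximal $I_i$-complex subspaces from successive orthogonal complements until the remainder contains no complex subspace and is therefore totally real by definition. The only quibble is your closing phrase calling (\ref{Decomposition of a generic subspace}) a \emph{Hermitian} orthogonal decomposition: the proposition asserts only Euclidean orthogonality, and the Hermitian orthogonality of the complex addends is established separately in Proposition (\ref{orthogonal complex subspaces by different structures}), as you yourself correctly note earlier.
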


\begin{proof}
For any $A \in S(\mathcal{Q})$ we denote by $U_A$ the maximal  $A$-invariant subspace in $U$.
Let $U^1$ be the orthogonal complement to  $U_{Q}$ in $U$  and choose a complex structure $I_1$ such that $(U_1,I_1):=U^1_{I_1} \neq \{0 \}$. Then we can write $U=
U_{Q} \stackrel{\perp} \oplus (U_1,I_1) \stackrel{\perp} \oplus U^2$ where $U^2$ is the orthogonal complement in $U$  to  $U_{Q} \stackrel{\perp} \oplus (U_1,I_1)$.

Let now choose a complex structure $I_2$ such that $(U_2,I_2):=U^2_{I_2} \neq \{0 \}$. Then  $U=
U_{Q} \stackrel{\perp} \oplus (U_1,I_1) \stackrel{\perp} \oplus (U_2,I_2) \stackrel{\perp} \oplus U^3$ where $U^3$ is the orthogonal complement to  $U_{Q} \stackrel{\perp} \oplus (U_1,I_1) \stackrel{\perp} \oplus (U_2,I_2)$.

Denote by $p+1$ the step in which  $U^{p+1}$ has no invariant complex subspace. Then  $U^{p+1}= U_R$ is totally real.
\end{proof}

\begin{prop}
It is $(U_R)^\mathbb{H} \cap U_Q= \{ 0\}$.
\end{prop}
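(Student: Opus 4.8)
The plan is to prove the stronger fact that the quaternionification $(U_R)^{\mathbb H}$ is strictly orthogonal to $U_Q$; the claimed trivial intersection is then immediate, since $\langle\,,\,\rangle$ is positive definite and a vector lying in both subspaces would be orthogonal to itself. First I would record the one piece of input from the decomposition (\ref{Decomposition of a generic subspace}): the summands there are pairwise strictly orthogonal, so in particular $\langle X,Y\rangle=0$ for all $X\in U_R$ and $Y\in U_Q$, i.e. $U_R\perp U_Q$.

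The key step is the observation that strict orthogonality to a \emph{quaternionic} subspace propagates to the quaternionification, using that the metric is Hermitian. Fix an admissible basis $(I,J,K)$ of $\mathcal{Q}$, so that $(U_R)^{\mathbb H}=L(U_R,\,IU_R,\,JU_R,\,KU_R)$. For $A\in\{I,J,K\}$, $X\in U_R$ and $Y\in U_Q$, skew-symmetry of $A$ w.r.t.\ $\langle\,,\,\rangle$ gives $\langle AX,Y\rangle=-\langle X,AY\rangle$; since $U_Q$ is quaternionic we have $AY\in U_Q$, and $X\perp U_Q$, hence $\langle AX,Y\rangle=0$. Together with $\langle X,Y\rangle=0$ this shows that every one of the generators $X,IX,JX,KX$ of $(U_R)^{\mathbb H}$ is orthogonal to every vector of $U_Q$, so $(U_R)^{\mathbb H}\perp U_Q$.

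Finally, if $Z\in(U_R)^{\mathbb H}\cap U_Q$ then by the previous step $\langle Z,Z\rangle=0$, whence $Z=\mathbf 0$. The argument is essentially formal; the only point requiring care is that one must invoke the quaternionicity of $U_Q$ (to keep $AY$ inside $U_Q$), not any special property of $U_R$ beyond $U_R\perp U_Q$. In fact the very same computation shows $(U^1)^{\mathbb H}\perp U_Q$ for the whole orthogonal complement $U^1$ of $U_Q$ in $U$, and a fortiori $(U^\Sigma)^{\mathbb H}\perp U_Q$, which dovetails with Proposition (\ref{orthogonal complex subspaces by different structures}).
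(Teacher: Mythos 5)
Your proof is correct, but it follows a genuinely different route from the paper's. The paper argues softly: it sets $W=(U_R)^\mathbb{H}\cap U_Q$, notes that $W$ is quaternionic as an intersection of quaternionic subspaces (point (\ref{1}) of Claim (\ref{properties of some quternionic,complex and real subspaces})), splits $(U_R)^\mathbb{H}=W\oplus W^\perp$ with $W^\perp$ quaternionic (point (\ref{2})), and observes that $U_R\perp U_Q\supseteq W$ forces $U_R\subset W^\perp$, hence $(U_R)^\mathbb{H}\subseteq W^\perp$ and $W=\{0\}$. You instead compute directly: $\langle AX,Y\rangle=-\langle X,AY\rangle=0$ for $X\in U_R$, $Y\in U_Q$, $A\in S(\mathcal{Q})$, using that $AY$ stays in $U_Q$ by quaternionicity. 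This buys you the strictly stronger conclusion $(U_R)^\mathbb{H}\perp U_Q$ (i.e.\ $U_R\stackrel{H}{\perp}U_Q$), from which the trivial intersection is immediate via positive definiteness, and, as you note, the same computation applies verbatim to any subspace Euclidean-orthogonal to a \emph{quaternionic} subspace. This is consistent with the paper's later caveat that Hermitian orthogonality fails in general between $U_R$ and the complex addends $(U_i,I_i)$: those addends are not quaternionic, so your argument does not apply to them. The paper's lattice-theoretic argument is more in the spirit of its Claim on quaternionic subspaces but yields only the intersection statement; yours is more elementary and quantitatively sharper.
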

\begin{proof}
Suppose $W=(U_R)^\mathbb{H} \cap U_Q$. From point (\ref{1}) of the Claim (\ref{properties of some quternionic,complex and real subspaces}), $W$ is quaternionic.  Let $(U_R)^\mathbb{H}= W \oplus W^\perp$. From point (\ref{2}) of the same Claim, $W^\perp$ is quaternionic and since $U_R \subset W^\perp$ being by construction $U_Q \perp U_R$ one has that $(U_R)^\mathbb{H}= W^\perp$ i.e. $W= \{\textbf{0}\}$.
\end{proof}

In the following, given a pair of subspaces $(A,B)$, by $A \stackrel{H} \perp B$ we intend that $A$ and $B$ are  orthogonal in Hermitian sense i.e. their quaternionifications $A^\mathbb{H},B^\mathbb{H}$ are strictly orthogonal (in other words $A \stackrel{H} \perp B$ is equivalent to $A^\mathbb{H} \perp B^\mathbb{H}$).
We now prove the following facts:
\begin{enumerate}
\item In the orthogonal sum $U_{Q} \stackrel{\perp} \oplus (U_1,I_1) \stackrel{\perp} \oplus \ldots \stackrel{\perp} \oplus (U_p,I_p)$ any pair of the complex addends    are orthogonal in Hermitian sense. 
\item If in (\ref{Decomposition of a subspace}) one has $U_R= \{0 \}$ the given decomposition is unique.
\end{enumerate}

To prove (1) we need the following
\begin{lemma} \label{orthogonal complex 2-planes by different complex structures are Hermitian othogonal}
Let $U_1=L(X,IX)$  and $U_2=L(Y,I'Y)$ be a pair of 2-dimensional complex subspaces .
Then if $I' \neq  \pm I$ we have that
\[
U_1 \perp U_2 \Leftrightarrow U_1 \stackrel{H} \perp U_2, \quad \text{( i.e.} \Leftrightarrow    \mathcal{Q} X  \perp \mathcal{Q} Y \text{)}.
\]
\end{lemma}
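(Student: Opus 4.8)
The plan is to prove the two implications separately, using the relation \eqref{relation between scalar product and hermitian product of H^n} which expresses the Hermitian product in terms of the real scalar product and the action of an admissible basis $(I,J,K)$.

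\medskip
\noindent\textbf{Setup and easy direction.} First I would fix an admissible basis $(I,J,K)$ with $I$ the complex structure of $U_1=L(X,IX)$, so that $U_1$ is spanned over $\R$ by $X,IX$ and $\mathcal{Q}X=\mathrm{span}_\R(X,IX,JX,KX)$. Since $I'\neq\pm I$, write $I'=aI+bJ+cK$ with $(a,b,c)$ a unit vector and $(b,c)\neq(0,0)$; then $U_2=L(Y,I'Y)$ and $\mathcal{Q}Y=\mathrm{span}_\R(Y,IY,JY,KY)$. The implication $U_1\stackrel{H}{\perp}U_2\Rightarrow U_1\perp U_2$ is immediate because $U_1\subseteq\mathcal{Q}X$ and $U_2\subseteq\mathcal{Q}Y$, so strict orthogonality of the quaternionifications forces strict orthogonality of the subspaces (a fortiori $U_1\perp U_2$). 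So the content is in the reverse implication.

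\medskip
\noindent\textbf{The main direction.} Assume $U_1\perp U_2$, i.e. every vector of $U_1$ is orthogonal to every vector of $U_2$; in particular $\langle X,Y\rangle=\langle X,I'Y\rangle=\langle IX,Y\rangle=\langle IX,I'Y\rangle=0$. I want to deduce that all sixteen inner products $\langle AX,BY\rangle$ with $A,B\in\{\mathrm{Id},I,J,K\}$ vanish. The key observation is that $U_1\perp U_2$ already says $L(X,IX)\perp L(Y,I'Y)$, and since $I$ commutes with nothing in an obvious way, one exploits that $U_1=IU_1$: orthogonality of $U_1$ to any single vector $Z$ is equivalent to orthogonality of $U_1$ to $L(Z,IZ)$, because $\langle X,IZ\rangle=-\langle IX,Z\rangle$ and $\langle IX,IZ\rangle=\langle X,Z\rangle$ by the Hermitian property. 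Hence $U_1\perp Y$ gives $U_1\perp L(Y,IY)$, i.e. $U_1\perp IY$ too. Now one uses $U_1\perp I'Y$: since $I'Y=aIY+bJY+cKY$ and we already know $U_1\perp Y$ and $U_1\perp IY$, applying $I'$ once more, $I'(I'Y)=-Y$, does not give new info, but applying $I$ to the relation $\langle U_1, I'Y\rangle=0$ and using $U_1$-invariance under $I$ gives $\langle U_1, II'Y\rangle=0$ where $II'Y=aI^2Y+bIJY+cIKY=-aY+bKY-cJY$. Combined with $\langle U_1,Y\rangle=0$ this yields $\langle U_1,\,bKY-cJY\rangle=0$, i.e. $U_1\perp L(bKY-cJY, bIKY-cIJY)=U_1\perp(bKY-cJY)$ and its $I$-image. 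Together with the orthogonality to $Y$ and to $bJY+cKY$ (which follows from $I'Y$ and $IY$), and using $(b,c)\neq(0,0)$, one solves the linear system to conclude $U_1\perp JY$ and $U_1\perp KY$. Thus $U_1\perp\mathcal{Q}Y$. Finally, applying the same $I$-invariance trick on the $U_1$ side — i.e. $U_1=L(X,IX)$ but we still need orthogonality of $JX$ and $KX$ to $\mathcal{Q}Y$ — one repeats the argument with the roles of $U_1$ and $U_2$ exchanged, now using that $\mathcal{Q}Y$ is $I'$-invariant and $I'\neq\pm I$, to get $\mathcal{Q}X\perp\mathcal{Q}Y$, which by \eqref{relation between scalar product and hermitian product of H^n} is exactly $U_1\stackrel{H}{\perp}U_2$.

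\medskip
\noindent\textbf{Expected obstacle.} The main technical point is the linear-algebra bookkeeping in the step where orthogonality of $U_1$ to the three vectors $Y$, $aIY+bJY+cKY$, and $-aY+bKY-cJY$ (plus their $I$-images) is leveraged to force orthogonality to all of $IY,JY,KY$; one must check that the $2\times 3$ (or $3\times 3$) coefficient system in the unknowns $\langle X,JY\rangle,\langle X,KY\rangle,\ldots$ has only the trivial solution precisely when $(b,c)\neq(0,0)$, i.e. when $I'\neq\pm I$. This is where the hypothesis $I'\neq\pm I$ is essential, and it is the only place where real care is needed; everything else is a routine application of the Hermitian identities $\langle AX,AZ\rangle=\langle X,Z\rangle$ and $\langle AX,Z\rangle=-\langle X,AZ\rangle$ for $A\in S(\mathcal{Q})$.
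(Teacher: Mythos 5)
Your proof is correct and follows essentially the same route as the paper: expand $I'=\alpha I+\beta J+\gamma K$ in an adapted basis, use the Hermitian identities to reduce the four orthogonality conditions to a $2\times 2$ linear system in the two unknown products involving $J$ and $K$, and observe that its determinant is $\pm(\beta^2+\gamma^2)\neq 0$ precisely because $I'\neq\pm I$. The only cosmetic difference is that the paper projects $Y$ and $I'Y$ onto $X,IX$ while you project $U_1$ onto the components of $\mathcal{Q}Y$, and your final passage from $U_1\perp\mathcal{Q}Y$ to $\mathcal{Q}X\perp\mathcal{Q}Y$ could be shortened by noting that $\mathcal{Q}Y$ is $\mathcal{Q}$-invariant, so $X\perp\mathcal{Q}Y$ already forces $AX\perp\mathcal{Q}Y$ for every $A\in\mathcal{Q}$.
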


\begin{proof}
Since for any complex structure  $A \in  S(\mathcal{Q})$, the 2-plane $L(X,AX) \subset  \mathcal{Q} X$,
then clearly if $ \mathcal{Q} X \perp  \mathcal{Q} Y$, we have $U_1 \perp U_2$.  Viceversa let suppose that $I$ is a complex structure and $(I,J,K)$ an adapted basis.
Let $I'= \alpha I + \beta J + \gamma K$, with $\alpha^2 + \beta^2 + \gamma^2= 1$. Then $U_1 \perp U_2$ if
\[
\begin{array}{l}
0= <Y,X>\\
0= <Y,IX>\\
0=<I'Y,X>= <\alpha IY + \beta JY + \gamma KY,X> \Rightarrow  -\beta <Y,JX> - \gamma <Y,KX>=0\\
0=<I'Y,IX>= <\alpha IY + \beta JY + \gamma KY,IX> \Rightarrow  -\gamma <Y,JX> +  \beta <Y,KX>=0,
\end{array}
\]
This implies
\[
\left\{
\begin{array}{l}
 Y \perp U_1\\
 (-\beta^2 - \gamma^2) <Y,JX>=0.
\end{array}
\right.
\]
 Then
\begin{equation} \label{condition for existence of orthogonal complex}
U_2 \perp U_1 \Leftrightarrow  \left\{
\begin{array}{l}
I'= \pm I \; \; \text{and} \; \; Y \perp U_1 \qquad  \text{ or} \\
Y \perp  \mathcal{Q} X 
\end{array}
\right.
\end{equation}
since from above  $<Y,JX>=0 \Rightarrow <Y,KX>=0$.

Then a pair of complex  2-planes $U_1=L(X,IX), \; U_2=L(Y,I'Y)$  are orthogonal iff  whether $I=\pm I'$ and $Y \perp U_1$
 (but not necessarily to $ \mathcal{Q} X$)  or for the  quaternionic subspaces $ \mathcal{Q} X \perp  \mathcal{Q} Y$. \footnote{This Lemma is true also  in a para-quaternionic Hermitian vector space. In that case the Lemma applies not only to a pair of 2-dimensional complex subspaces  but also to a pair of 2-dimensional para-complex subspaces or  to a pair made of a 2-dimensional complex and a 2-dimensional para-complex subspace. For interested readers, in \cite {MV} and \cite{MV2} it is possible to find the  analogue decomposition  of a para-quaternionic Hermitian vector space which differs from (\ref{Decomposition of a generic subspace}) because of the existence, in that case, of (weakly) para-complex and nilpotent subspaces.}
\end{proof}

It follows that if 
$Y \in \mathcal{Q} X $ then  $L(Y,\tilde I Y)$ is never orthogonal to $U_1$ unless $\tilde I=\pm I$
(in which case $L(Y, \pm I Y)= L(JX,KX)$, i.e.
a pair of complex 2-planes by different (up to sign) complex structures and belonging to  the same quaternionic line are never orthogonal to each other.
Viceversa, since  any 2-plane in a quaternionic line is $\tilde I$-complex for some $\tilde I \in S(\mathcal{Q})$, if $U_1$ and $U_2$ are orthogonal 2-planes belonging to  the same quaternionic line  then they are complex by the same complex structure. We can now state the
\begin{prop} \label{orthogonal complex subspaces by different structures}
Strictly orthogonal complex subspaces by different complex structures are orthogonal in Hermitian sense i.e. their quaternionifications  are strictly orthogonal in pair. In particular the different complex addends in $U^\Sigma$ given in (\ref{Decomposition of a generic subspace}) are orthogonal in Hermitian sense.
\end{prop}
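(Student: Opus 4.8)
The plan is to reduce the general statement to the 2-dimensional case already handled by Lemma \ref{orthogonal complex 2-planes by different complex structures are Hermitian othogonal}, by slicing the complex addends into quaternionic-line pieces. Concretely, suppose $(U_i, I_i)$ and $(U_j, I_j)$ are strictly orthogonal with $I_i \neq \pm I_j$ (this is the situation for distinct addends of $U^\Sigma$, since by the construction in Proposition \ref{Decomposition of a subspace} the structures $I_1, \dots, I_p$ are pairwise non-collinear — otherwise two addends would have been absorbed into one maximal $I_\ell$-complex subspace). To show $U_i^{\mathbb H} \perp U_j^{\mathbb H}$ it suffices to show $\mathcal{Q}X \perp \mathcal{Q}Y$ for every $X \in U_i$ and every $Y \in U_j$, since $U_i^{\mathbb H}$ is spanned by the $\mathcal{Q}X$ as $X$ ranges over a basis of $U_i$, and similarly for $U_j^{\mathbb H}$.

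So fix an arbitrary nonzero $X \in U_i$ and nonzero $Y \in U_j$. Since $(U_i, I_i)$ is $I_i$-complex, the 2-plane $L(X, I_i X)$ lies in $U_i$; likewise $L(Y, I_j Y) \subseteq U_j$. Strict orthogonality of $U_i$ and $U_j$ forces $L(X, I_i X) \perp L(Y, I_j Y)$ — indeed every line in $U_i$ is orthogonal to all of $U_j$, in particular to the sub-2-plane $L(Y, I_j Y)$. Now apply Lemma \ref{orthogonal complex 2-planes by different complex structures are Hermitian othogonal} with the roles $U_1 = L(X, I_i X)$, $U_2 = L(Y, I_j Y)$ and $I = I_i$, $I' = I_j$: since $I_j \neq \pm I_i$, the Lemma's dichotomy (displayed in \eqref{condition for existence of orthogonal complex}) leaves only the alternative $\mathcal{Q}X \perp \mathcal{Q}Y$. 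The first branch ($I' = \pm I$) is excluded by hypothesis. This is exactly what we wanted for this pair $(X,Y)$.

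It remains to assemble the pointwise conclusion into the subspace-level statement. Pick orthonormal (or merely linearly independent) bases $X^{(1)}, \dots, X^{(2m_i)}$ of $U_i$ and $Y^{(1)}, \dots, Y^{(2m_j)}$ of $U_j$. Then $U_i^{\mathbb H} = \sum_a \mathcal{Q} X^{(a)}$ and $U_j^{\mathbb H} = \sum_b \mathcal{Q} Y^{(b)}$. By the previous paragraph $\mathcal{Q} X^{(a)} \perp \mathcal{Q} Y^{(b)}$ for all $a, b$, and since strict orthogonality is preserved under taking spans on each side (a line in $\sum_a \mathcal{Q}X^{(a)}$ is a linear combination of vectors each orthogonal to all of $\sum_b \mathcal{Q}Y^{(b)}$), we conclude $U_i^{\mathbb H} \perp U_j^{\mathbb H}$, i.e.\ $U_i \stackrel{H}{\perp} U_j$. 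The same slicing argument, applied with $U_i$ replaced by the quaternionic addend $U_Q$, handles the orthogonality of $U_Q$ with each $(U_i, I_i)$: there $U_Q^{\mathbb H} = U_Q$ is already a sum of quaternionic lines $\mathcal{Q}X$, each of which is orthogonal to every $\mathcal{Q}Y$, $Y \in U_i$, by the remark following the Lemma (a quaternionic line and a complex 2-plane in a different quaternionic line that are orthogonal have orthogonal quaternionifications).

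The only genuine subtlety — and the step I would be most careful about — is verifying that the complex structures $I_i$ appearing in \eqref{Decomposition of a generic subspace} are pairwise non-proportional, so that the hypothesis $I' \neq \pm I$ of the Lemma is actually available; this is where maximality in the construction of Proposition \ref{Decomposition of a subspace} is used, together with the fact (point \ref{A complex subspace does not contain a complex subspace by a different complex structure} of Claim \ref{properties of some quternionic,complex and real subspaces}) that a pure $I$-complex subspace contains no complex subspace by a different structure. Everything else is the routine bilinearity bookkeeping of passing from lines to spans, which I would not belabor.
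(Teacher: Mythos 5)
Your proposal is correct and follows essentially the same route as the paper: both reduce to Lemma \ref{orthogonal complex 2-planes by different complex structures are Hermitian othogonal} applied to complex 2-planes $L(X,I_iX)\subseteq U_i$ and $L(Y,I_jY)\subseteq U_j$, and then assemble the resulting orthogonality of quaternionic lines into $U_i^{\mathbb H}\perp U_j^{\mathbb H}$. Your added remarks on why the $I_i$ are pairwise non-proportional and on the $U_Q$ case are sound but are details the paper leaves implicit.
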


\begin{proof}
Let consider the pair $(U_1,I_1)$ and $(U_2,I_2)$, $I_1 \neq \pm I_2$, of strictly orthogonal complex subspaces. Any $I$-complex subspace can be decomposed into the orthogonal sum of $I$-complex 2 planes. From (\ref{orthogonal complex 2-planes by different complex structures are Hermitian othogonal}), any 2-plane of the decomposition of $(U_1,I_1)$ is Hermitian orthogonal to any 2-plane of the decomposition of $(U_2,I_2)$ then ${U_1}^\mathbb{H} \perp {U_2}^\mathbb{H}$. 
We conclude  then the different addends of  $U^\Sigma$  belong to strictly  orthogonal quaternionic subspaces.
\end{proof}

\begin{defi}
We call \textbf{$\Sigma$-complex subspace} a pure subspace $ U \subset (V^{4n}, \mathcal{Q}, <, >)$   orthogonal sum of maximal complex subspaces $(U_i,I_i)$ with $I_i \in S(\mathcal{Q})$. We denote it by $(U, \mathcal{I}$)  where $\mathcal{I}= \{I_i\}$.
\end{defi}
Fixed an admissible basis $(I,J,K)$, we need to order the set  $\mathcal{I}= \{I_i= \alpha_i I + \beta_i J + \gamma_i K\}$. A way to do it is for instance by  using  the lexicographic 
order of the coefficients $\alpha_i,\beta_i,\gamma_i$. 
By Proposition (\ref{orthogonal complex subspaces by different structures}), the complex addends of  a $\Sigma$-complex subspace  are  orthogonal in Hermitian sense.

To prove the following proposition concerning the decomposition of  a $\Sigma$-complex subspace, we need the
\begin{lemma} \label{the direct sum of a pair of complex subspaces by different structure contains no other complex subspace}
The orthogonal  sum of a pair of  pure complex subspaces $(U_1,I)$, $(U_2,I'), \, I \neq \pm I'$  contains no other 
complex subspace (not contained in $U_1$ or $U_2)$.
\end{lemma}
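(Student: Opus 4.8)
The plan is to argue by contradiction: suppose $W \subseteq U_1 \stackrel{\perp}\oplus U_2$ is a pure complex subspace, say $W$ is $\tilde I$-complex, with $W$ not contained in either $U_1$ or $U_2$. Pick a nonzero $X \in W$ not lying in $U_1$ and decompose it according to the orthogonal sum as $X = X_1 + X_2$ with $X_1 \in U_1$, $X_2 \in U_2$, and $X_2 \neq 0$ (we may also assume $X_1 \neq 0$, otherwise $X \in U_2$ and we instead work there; if $W \subseteq U_2$ we are done, so some vector of $W$ has nonzero $U_1$-component too, and after a further reduction both components of a suitable vector are nonzero). Since $W$ is $\tilde I$-complex, $\tilde I X = \tilde I X_1 + \tilde I X_2 \in W \subseteq U_1 \stackrel{\perp}\oplus U_2$. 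The subtlety is that $\tilde I X_1$ need not lie in $U_1$: I would instead use that $U_1$ is $I$-complex (maximal pure), so $\tilde I X_1 \in \mathcal{Q}X_1$, and split $\tilde I X_1$ into its $U_1$-component and the component orthogonal to $U_1$.

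The cleaner route is to exploit Hermitian orthogonality: by Proposition (\ref{orthogonal complex subspaces by different structures}) we have $U_1 \stackrel{H}\perp U_2$, i.e. $U_1^{\mathbb H} \perp U_2^{\mathbb H}$, so $\mathcal{Q}X_1 \perp \mathcal{Q}X_2$ and in particular $AX_1 \perp BX_2$ for all $A,B \in S(\mathcal{Q}) \cup \{\mathrm{Id}\}$. Then for any $A \in S(\mathcal{Q})$, $AX = AX_1 + AX_2$ is the orthogonal decomposition of $AX$ relative to $U_1^{\mathbb H}\stackrel{\perp}\oplus U_2^{\mathbb H}$ (actually relative to $\mathcal{Q}X_1 \stackrel{\perp}\oplus \mathcal{Q}X_2$). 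Now apply this with $A = \tilde I$: since $\tilde I X \in W \subseteq U_1 \stackrel{\perp}\oplus U_2$, and the $U_1$-part of $\tilde I X$ must be the orthogonal projection of $\tilde I X_1$ onto $U_1$ while the $U_2$-part is the projection of $\tilde I X_2$ onto $U_2$, we get that $\tilde I X_1$'s component orthogonal to $U_1$ inside $\mathcal{Q}X_1$ must vanish, i.e. $\tilde I X_1 \in U_1$, and similarly $\tilde I X_2 \in U_2$. By the maximality/pureness of $(U_1,I_1)$ together with point (\ref{A complex subspace does not contain a complex subspace by a different complex structure}) of Claim (\ref{properties of some quternionic,complex and real subspaces}) applied to the $\tilde I$-complex line $L(X_1,\tilde I X_1) \subseteq U_1$, we conclude $\tilde I = \pm I = \pm I_1$; arguing the same way in $U_2$ gives $\tilde I = \pm I_2$, so $I_1 = \pm I_2$, contradicting the hypothesis $I \neq \pm I'$.

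The one gap to fill carefully is the claim that the $U_1$-component of $\tilde I X$ equals the $U_1$-component of $\tilde I X_1$; this is immediate once one knows $\tilde I X_2$ has zero component in $U_1$, which follows from $U_1 \stackrel{\perp}\oplus U_2$ being an \emph{orthogonal} direct sum only if $\tilde I X_2 \in U_2$ is already known — so the logic must be set up as: project $\tilde I X \in U_1 \stackrel{\perp}\oplus U_2$, write $\tilde I X = Z_1 + Z_2$ with $Z_i \in U_i$; separately $\tilde I X = \tilde I X_1 + \tilde I X_2$ with $\tilde I X_i \in \mathcal{Q}X_i$ and $\mathcal{Q}X_1 \perp \mathcal{Q}X_2$; subtract to get $Z_1 - \tilde I X_1 = \tilde I X_2 - Z_2$ with the left side in $\mathcal{Q}X_1$ and the right side in $\mathcal{Q}X_2$ (using $U_i \subseteq \mathcal{Q}X_i$? — no, that is false in general).

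So the final, safe formulation avoids $\mathcal{Q}X_i$ and uses only $U_i^{\mathbb H}$: $\tilde I X_i \in U_i^{\mathbb H}$ trivially, $U_1^{\mathbb H}\perp U_2^{\mathbb H}$, hence $Z_1 - \tilde I X_1 \in U_1^{\mathbb H}$ equals $\tilde I X_2 - Z_2 \in U_2^{\mathbb H}$, forcing both to be zero; thus $\tilde I X_1 = Z_1 \in U_1$ and $\tilde I X_2 = Z_2 \in U_2$. The main obstacle is precisely this bookkeeping — making sure one invokes Hermitian orthogonality of the quaternionifications (Proposition \ref{orthogonal complex subspaces by different structures}) rather than merely orthogonality of $U_1, U_2$, since the latter does not survive application of $\tilde I$. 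Once $\tilde I X_1 \in U_1$ and $\tilde I X_2 \in U_2$ with both $X_1, X_2$ nonzero, point (\ref{A complex subspace does not contain a complex subspace by a different complex structure}) of the Claim finishes the argument as above. One should also handle the degenerate possibility that every vector of $W$ has zero $U_1$- or zero $U_2$-component; a short linear-algebra remark shows that if $W$ is not contained in $U_1$ and not in $U_2$ then some vector of $W$ has both components nonzero (take $X \in W \setminus U_1$ and $X' \in W \setminus U_2$; if $X$ has zero $U_2$-component then $X \in U_1$, contradiction, so $X$ already works), so this is not actually an issue.
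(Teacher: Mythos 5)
Your argument is correct and follows essentially the same route as the paper: both proofs rest on Proposition (\ref{orthogonal complex subspaces by different structures}) to get $U_1^{\mathbb H}\perp U_2^{\mathbb H}$, use the uniqueness of the decomposition of $\tilde I T$ in that orthogonal sum to force $\tilde I X_1\in U_1$ and $\tilde I X_2\in U_2$, and then invoke pureness (your point (\ref{A complex subspace does not contain a complex subspace by a different complex structure}) of Claim (\ref{properties of some quternionic,complex and real subspaces}); the paper's explicit no-real-eigenvalue computation) to conclude $\tilde I=\pm I$ and $\tilde I=\pm I'$ unless one component vanishes. Your final ``safe formulation'' is the right one, and your closing remark on producing a vector with both components nonzero disposes of the degenerate case correctly.
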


\begin{proof}
Let $(U_1,I)$ and $(U_2,I'), \; I \neq \pm I'$ be a pair of orthogonal  pure complex subspaces and consider their sum $U=U_1 \stackrel{\perp} \oplus U_2$.  From Proposition (\ref{orthogonal complex subspaces by different structures}) one has that $U_1^\mathbb{H} \perp U_2^\mathbb{H}$. Suppose there exists  $0 \neq T \in U$,  $T= X+Y, \; X \in U_1, \, Y \in U_2$ and   $\widetilde {I}=\alpha I +\beta J + \gamma K  \in S (\mathcal{Q})$ such that  $\widetilde {I} T \in U$.
The vector $\widetilde {I} T= (\alpha IX + \beta JX + \gamma KX) + (\alpha IY + \beta JY + \gamma KY)$  is orthogonal sum (in a unique way) of a vector  $T_1 \in U_1$ and a vector  $T_2 \in U_2$. 
 The vector $\alpha IX \in U_1$ whereas  the vector $\beta JX + \gamma KX \in U_1^\perp$ and also $\beta JX + \gamma KX  \perp (\alpha IY + \beta JY + \gamma KY) \in QY$. Then necessarily $\beta JX + \gamma KX=0$ i.e. $I(JX)= \frac{-\beta}{\gamma} JX$. Since $I$ has no real eigenvalues, such equation is satisfied if $X=0$ 
 or if $\beta=\gamma=0$ i.e. if $\widetilde {I}= \pm I$. Analogously, if $\widetilde {I}=\alpha' I' +\beta' J' + \gamma' K'$ one has that  $\beta' J' Y + \gamma' K'Y=0$ which implies that  $Y=0$  or  $\beta'=\gamma'=0$ i.e. if $\widetilde {I}= \pm I'$.
Then excluding the case $X=Y=0$ the only possibilities are $\tilde I=\pm I$ and $Y=0$ or  $\tilde I=\pm I'$ and $X=0$. The first implies that $T=X \in U_1$ and $\tilde T= IX \in U_1$; the second implies $T=Y \in U_2$ and $\tilde T=I'Y \in U_2$.
\end{proof}

 Extending the previous proof to a finite number of addends we can then state the following
\begin{prop} \label{unicity of the decomposition of a subspace with trivial real addend}
Let $U \subset V$ be a $\Sigma$-complex  subspace. Then all other subspaces in $U$ (not contained in the complex addends)  are totally real. In other words the given decomposition of $U$ into an Hermitian  orthogonal sum of maximal pure complex subspaces by different structure (up to sign) is unique.
\end{prop}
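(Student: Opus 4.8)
The plan is to reduce the statement to the two-addend Lemma (\ref{the direct sum of a pair of complex subspaces by different structure contains no other complex subspace}) by an inductive argument on the number $p$ of complex addends, and then to deduce uniqueness from the fact that no ``new'' complex subspace can hide inside the orthogonal sum. First I would set up the notation: write $U = (U_1,I_1) \stackrel{\perp}\oplus \cdots \stackrel{\perp}\oplus (U_p,I_p)$, where by hypothesis each $(U_i,I_i)$ is a maximal pure $I_i$-complex subspace and the $I_i$ are pairwise distinct up to sign; by Proposition (\ref{orthogonal complex subspaces by different structures}) the quaternionifications $U_i^{\mathbb H}$ are pairwise strictly orthogonal. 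The goal is: if $(W,\widetilde I) \subseteq U$ is any pure $\widetilde I$-complex subspace, then $W \subseteq U_i$ for some $i$ (and consequently $\widetilde I = \pm I_i$ by point (\ref{A complex subspace does not contain a complex subspace by a different complex structure}) of Claim (\ref{properties of some quternionic,complex and real subspaces})).

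The inductive step is the heart of the matter, and it is essentially the proof of Lemma (\ref{the direct sum of a pair of complex subspaces by different structure contains no other complex subspace}) carried out with the first addend $(U_1,I_1)$ playing one role and the orthogonal sum $U' := (U_2,I_2)\stackrel{\perp}\oplus\cdots\stackrel{\perp}\oplus(U_p,I_p)$ playing the other. The key observation to make rigorous is that $U'$ is itself a \emph{pure} subspace (it contains no quaternionic line, since any quaternionic line in it would, by the Hermitian orthogonality of the $U_i^{\mathbb H}$, have to lie inside a single $U_i^{\mathbb H}$, contradicting purity of $U_i$), and that $U_1^{\mathbb H} \perp (U')^{\mathbb H}$. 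Take $0 \neq T = X + Y \in W$ with $X \in U_1$, $Y \in U'$ and $\widetilde I = \alpha I_1 + \beta J + \gamma K$ for an adapted basis $(I_1,J,K)$. As in the Lemma, the component $\beta J X + \gamma K X$ lies in $U_1^\perp$ and is orthogonal to $\mathcal{Q}Y \supseteq \{\alpha I_1 Y + \beta J Y + \gamma K Y\}$, while $\alpha I_1 X \in U_1$; forcing $\widetilde I T \in U = U_1 \stackrel{\perp}\oplus U'$ gives $\beta J X + \gamma K X = 0$, hence (as $I_1$ has no real eigenvalue) $X = 0$ or $\widetilde I = \pm I_1$. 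If $\widetilde I = \pm I_1$, then $Y = 0$ by the same argument applied with roles of $X$ and $Y$ swapped (using purity of $U'$ together with $\widetilde I \in S(\mathcal Q)$ acting on $U'$), so $T = X \in U_1$ and $\widetilde I T = \pm I_1 X \in U_1$; thus $W \subseteq U_1$. If instead $X = 0$ for \emph{every} such $T$, then $W \subseteq U'$ and we apply the inductive hypothesis to the $\Sigma$-complex subspace $U'$ with $p-1$ addends. (A small point to address: one must do the above not for a single $T$ but uniformly, i.e. either all elements of $W$ have trivial $U_1$-component or one element has nontrivial component and forces $\widetilde I = \pm I_1$; since $W$ is $\widetilde I$-invariant and $\widetilde I$ is fixed, this dichotomy is clean.)

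Once every complex subspace of $U$ is shown to sit inside one of the $U_i$, the remaining claims follow quickly. Any subspace of $U$ not contained in a complex addend contains no complex subspace at all — for if it contained one, that complex subspace would lie in some $U_i$, hence be $\pm I_i$-complex, and combined with the structure of the original subspace this is excluded — so such a subspace is totally real by definition. For uniqueness of the decomposition: suppose $U = (U_1',I_1')\stackrel{\perp}\oplus\cdots\stackrel{\perp}\oplus(U_q',I_q')$ is another Hermitian orthogonal decomposition into maximal pure complex addends. Each $U_j'$ is a complex subspace of $U$, hence contained in some $U_{i(j)}$; by maximality of $U_j'$ as a complex subspace, and since $U_{i(j)}$ is itself complex (by $I_{i(j)}$) and $U_j'$ is $\pm I_{i(j)}$-complex, comparing dimensions and using maximality of the $U_i$ as well forces $U_j' = U_{i(j)}$ and the map $j \mapsto i(j)$ is a bijection. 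The main obstacle, as usual with these decomposition arguments, is the bookkeeping in the inductive step: making sure ``pure'' is preserved when passing to $U'$ and that the dichotomy ($X=0$ for all $T$, versus $\widetilde I = \pm I_1$) is handled without gaps; the underlying linear algebra is just the computation already performed in Lemma (\ref{the direct sum of a pair of complex subspaces by different structure contains no other complex subspace}).
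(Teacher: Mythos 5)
Your proposal is correct and follows essentially the same route as the paper, which proves the two-addend Lemma (\ref{the direct sum of a pair of complex subspaces by different structure contains no other complex subspace}) and then simply states that the proposition follows by ``extending the previous proof to a finite number of addends''. You have in fact supplied more detail than the paper does (the induction, the purity of $U'$, the uniformity of the dichotomy over $T\in W$, and the maximality argument for uniqueness), all of it consistent with the paper's intended argument.
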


This results applies in particular to the $\Sigma$-complex subspace $U^\Sigma$ of the decomposition given in (\ref{Decomposition of a generic subspace}). 

\vskip 0.3cm
We finally underline  that in general \textit{the Euclidean orthogonal sum $U^\Sigma \stackrel{\perp} \oplus U_R$ in (\ref{Decomposition of a subspace}) is \underline{not} orthogonal in Hermitian sense}. Moreover
the two addends are not canonically defined as can be easily seen by considering  for example the 4-dimensional pure subspace $U= (U_1,I_1) \oplus (U_2,I_2),  \; I_1 \neq I_2$  direct but not orthogonal sum of a pair of (totally) complex 2-planes. The decomposition $U= U_1 \stackrel{\perp} \oplus (U_1^\perp \cap U)$  and $U_2 \stackrel{\perp} \oplus (U_2^\perp \cap U)$ are two different orthogonal decomposition where $U_1^\perp \cap U$ and   $U_2^\perp \cap U$ are different totally real 2-planes.

\section{Preliminaries}

 We define the  (Euclidean) angle between two subspaces of dimension $p$ and $q$ of an Euclidean vector space $E^n$ by using exterior algebra (see \cite{J} among others).
 Let $(E^n, \, <\,,\,> )$ be an $n$-dimensional vector space endowed with an Euclidean scalar product.
 Any decomposable $p$-vector $\alpha= a_1 \wedge \ldots \wedge a_p \in \Lambda^p E^n$ corresponds to an oriented subspace $A^p \in E^n$ and precisely to the one spanned by $a_1,\ldots, a_p$. Conversely, for any basis  of $A^p$ the wedge of these vectors is a multiple of $\alpha$ (i.e. it is equal to $k \alpha$, with $k \in \R, k \neq 0$).
The scalar product  $<\,,\,>$ in $E^n$ induces  a scalar product  $ \; \cdot \; $    in the vector space  $\Lambda^p E^n$  by defining
\[
 \alpha \cdot \beta= det (<a_i, b_j>)
\]
for a pair of decomposable vectors $\alpha = a_1 \wedge \ldots, \wedge a_p; \qquad   \beta = b_1 \wedge \ldots, \wedge b_p, \quad a_i, b_i \in E^n$
and then extending for linearity to any pair of vectors of $\Lambda^p E^n$.

It is  definite positive and non degenerate then the pair $(\Lambda^p E^n, <>)$ is an Euclidean vector space.
In particular for the angle between $\alpha$ and $\beta$,
\begin{equation} \label{angles between p-vectors}
\cos \widehat{\alpha \beta}= \frac{\alpha \cdot \beta}{\sqrt{\alpha \cdot \alpha} \;  \sqrt{\beta \cdot \beta}}=\frac{\det(<a_i,b_i>)}{mis \; \alpha  \; mis \; \beta}.
\end{equation}

being
 \[mis \; \alpha= |\alpha|=  \sqrt{\alpha \cdot \alpha}.\]

 Given $A^p$ and $B^q$ and $\alpha= a_1 \wedge \ldots, \wedge a_p \in \Lambda^p E^n$ associated to $A$ and $\beta= b_1 \wedge \ldots, \wedge b_q \in \Lambda^q E^n$ associated to $B$, we consider the orthogonal projections of $a_1,\ldots, a_p$ on $B$ and $B^\perp$.
 Then $a_i= a_i^H + a_i^V$, and  $\alpha= \alpha_H + \alpha_V + \alpha_M$ (where $M$ stands for mixed part).

If we choose another basis in $A$ (then  $\alpha'=k \alpha$) we have \[ \alpha'_H= k \alpha_H, \quad    \alpha'_V= k \alpha_V, \quad \alpha'_M= k \alpha_M. \]

\begin{defi} \label{angle between non oriented subspaces}
The angle $\widehat{A,B}$ between  the non oriented subspaces $A^p$ and $B^q$, $p \leq q$ is the usual angle (between two lines, a line and a plane, two planes) i.e. the angle between
 one subspace and its orthogonal projection onto the other i.e.
\[  \theta= \arccos \frac{|\alpha_H|}{|\alpha|}.\]
\end{defi}

Then $\theta \in [0, \pi/2]$ and, from previous Lemma, it is independent from the chosen basis in $A$. In particular, if $p=q$   we can write
\begin{equation} \label{angle between subspaces of same dimension}
  \theta= \arccos \frac{|\det(<a_i, b_j>)|}{|\alpha| \; | \beta|}
\end{equation}
 i.e. \textit{the cosine of the angle between a pair of $p$-planes $A,B \subset E^n$
 equals the absolute value of the cosine of the angle between  any pair of $p$-vectors $\alpha,\beta  \in \Lambda^p E^n$ corresponding to $A$ and $B$}.

\begin{remk} \label{angle between oriented subspaces}
In case we consider oriented subspaces of  $E^n$  then we do not take the absolute value in the previous expressions and one has  $\theta \in [0, \pi]$. Unless expressively stated, no orientation will be defined on  the subspaces we consider in this paper.
\end{remk}

 We recall  the definition of the principal angles between a pair of subspaces of a real vector space $V$ (see \cite{Gala},\cite{Riz1} among others).
\begin{defi} \label{definition of principal angles}  Let $A,B \subseteq V$ be subspaces, \mbox{$\dim k= dim(A) \leq dim(B)=l \geq 1$.} The \textbf{principal angles} $\theta_i \in [0.\pi/2]$ between the subspaces $A$ and $B$ are recursively defined for $i= 1, \ldots,k$   by
\[\cos \theta_i=<a_i,b_i>= \max \{ <a,b> \, : ||a||=||b||=1, \; a \perp a_m, \, b \perp b_m, \,  m= 1,2, \ldots, i-1 \}.
\]
The unitary vectors $\{a_j \}, \{b_j \}, j=1, \ldots,k$  are called the \textbf{principal vectors} of the pair $(A,B)$, in particular
 $(a_j, b_j) \in (A \times  B), j=1, \ldots,k$ are  \underline{related}    principal vectors corresponding to $\theta_j$.
\end{defi}
In words, the procedure is to find the unit vector $a_1 \in A$ and  the unit vector $b_1 \in B$ which minimize the angle between them and call this angle $\theta_1$.
Then consider the orthogonal complement in $A$ to $a_1$   and the orthogonal complement in $B$ to $b_1$ and iterate.
The principal angles $ \theta_1, \ldots, \theta_k$ between the pair of subspaces $A,B$ are  some of the critical values of the angular function
\[ \phi_{A,B}= A \times B \rightarrow \R\]
associating with each pair of non-zero vectors $a \in A, \; b \in B$ the angle between them.
In the following, given a pair of subspaces $A,B$ we will denote by $Pr_A^B:A \rightarrow B$ (resp. $Pr_B^A$) the orthogonal projector of $A$ onto $B$ (resp. $B$ onto $A$).
The principal angles are the diagonal entries of the orthogonal projector $Pr_B^A$  stated in the theorem  of Afriat (\cite{GS}, \cite{Af}):
\begin{teor} \label{Afriat Theorem} \cite{Af}, \cite{GS}.
In any pair of subspaces $A^{k}$ and $B^{l}$ there exist orthonormal bases $\{u_i\}_{i=1}^{k}$ and $\{v_j\}_{j=1}^{l}$
such that $<u_i , v_i> \geq 0$ and $<v_i, v_j>=0$ if $i \neq j.$
\end{teor}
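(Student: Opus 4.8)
The final statement in the excerpt is the Afriat Theorem. Let me sketch how I would prove it.

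\medskip

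The plan is to prove Afriat's theorem by a direct application of the singular value decomposition (SVD) to the bilinear form obtained by restricting the ambient inner product to the pair $(A^k, B^l)$. First I would pick \emph{any} orthonormal basis $(e_1,\dots,e_k)$ of $A^k$ and \emph{any} orthonormal basis $(f_1,\dots,f_l)$ of $B^l$, and form the $k\times l$ matrix $G=(\,\langle e_i,f_j\rangle\,)$. Because $\dim A=k\le l=\dim B$, the SVD gives orthogonal matrices $P\in O(k)$ and $Q\in O(l)$ such that $P^{\mathsf T}GQ = \Sigma$, where $\Sigma$ is the $k\times l$ matrix whose only nonzero entries are the singular values $\sigma_1\ge\sigma_2\ge\cdots\ge\sigma_k\ge 0$ sitting in positions $(i,i)$. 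I then \emph{define} the new bases by $u_i=\sum_a P_{ai}e_a$ and $v_j=\sum_b Q_{bj}f_b$; since $P$ and $Q$ are orthogonal these are again orthonormal bases of $A$ and $B$ respectively, and by construction $\langle u_i,v_j\rangle = (P^{\mathsf T}GQ)_{ij}=\Sigma_{ij}$.

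\medskip

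From this the two assertions are immediate. For $i\neq j$ we get $\langle u_i,v_j\rangle=\Sigma_{ij}=0$ (this covers both $i<j\le l$ and $j<i\le k$; note that for $j>k$ the vector $v_j$ has no matching $u_j$, which is fine). For $i=j$ we get $\langle u_i,v_i\rangle=\sigma_i\ge 0$. Thus the pair of bases $\{u_i\}_{i=1}^k$, $\{v_j\}_{j=1}^l$ has exactly the stated properties, and moreover the common values $\sigma_i=\cos\theta_i$ are precisely the cosines of the principal angles, recovering the interpretation used in Definition~\ref{definition of principal angles}: the $\{u_i\},\{v_i\}$ are related principal vectors. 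One should also remark that the $\sigma_i$ are intrinsic — they are the singular values of the orthogonal projector $Pr^A_B$ (equivalently, the square roots of the eigenvalues of $Pr^A_B\circ Pr^B_A$, a symmetric positive semidefinite operator on $A$) — so the diagonal entries $\langle u_i,v_i\rangle$ do not depend on the initial choice of orthonormal bases, only the (non-unique) choice of $u_i,v_i$ when singular values coincide.

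\medskip

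The only genuinely delicate point is bookkeeping with the rectangular ($k\neq l$) shape: one must be careful that the ``diagonal'' condition $\langle v_i,v_j\rangle=0$ for $i\neq j$ in the statement is really $\langle u_i,v_j\rangle$ (the $v$'s are already orthonormal among themselves by construction), and that the condition is only asserted for indices where both sides make sense. An alternative, more self-contained route — which I would mention as a remark rather than carry out — is the inductive/variational argument mirroring Definition~\ref{definition of principal angles} directly: extract $(u_1,v_1)$ maximizing $\langle a,b\rangle$ over unit vectors, show by a first-variation argument that $u_1$ is an eigenvector of $Pr^A_B\circ Pr^B_A$ and that $Pr^B_A v_1\parallel u_1$, pass to the orthogonal complements $A\cap u_1^\perp$ and $B\cap v_1^\perp$ (which are again carried into each other by the projectors), and iterate; this produces the same bases without invoking SVD as a black box. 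Either way the proof is short, so I expect no real obstacle beyond the index bookkeeping just noted.
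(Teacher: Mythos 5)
Your proof is correct, but it takes a genuinely different route from the paper's. The paper deduces Theorem~\ref{Afriat Theorem} from Lemma~\ref{principal angles are singular values of projector}, i.e.\ by the variational/inductive argument that you only mention as an aside: one picks $(a_1,b_1)$ maximizing $\langle a,b\rangle$ over unit vectors, shows from the minimization of $\|a_1-\alpha b\|^2$ that $Pr_A^B a_1=\alpha b_1$ and $Pr_B^A b_1=\alpha a_1$, checks that $a_1\perp(b_1^\perp\cap B)$ and $b_1\perp(a_1^\perp\cap A)$ using self-adjointness of the projector, and then iterates on the orthogonal complements until $A$ and $B$ are exhausted. Your main line instead invokes the singular value decomposition of the Gram matrix $G=(\langle e_i,f_j\rangle)$ as a black box and reads off the bases from the orthogonal factors $P,Q$; the computation $\langle u_i,v_j\rangle=(P^{\mathsf T}GQ)_{ij}=\Sigma_{ij}$ is right, and the nonnegativity and vanishing of off-diagonal entries follow immediately. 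What each approach buys: the SVD route is shorter and makes the identification of the diagonal entries with the singular values of the projector (hence with the cosines of the principal angles) completely transparent, but it outsources the real work to the SVD existence theorem; the paper's variational route is self-contained, directly matches the recursive Definition~\ref{definition of principal angles} of principal angles, and is the form actually reused later in the paper when principal vectors of the pairs $(U,AU)$ are constructed by projecting and normalizing. Your remark that the inequality $\langle v_i,v_j\rangle=0$ in the statement is evidently meant to read $\langle u_i,v_j\rangle=0$ is well taken, as is the caveat about indices $j>k$ having no matching $u_j$; neither affects the substance.
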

\begin{proof}
It is a direct consequence of the following
\begin{lemma} \label{principal angles are singular values of projector}
Given finite dimensional subspaces $A,B$, let $a_1,b_1$ attain
\[
\max \{<a,b>, \quad a \in A,  \; b \in B, \quad ||a||=1, \; ||b||=1\}
\]
(i.e. the pair $(a_1,b_1)$ are the first pair of related principal vectors). Then
\begin{enumerate}
\item for some $\alpha \geq 0$,
\[Pr_A^B a_1= \alpha b_1, \qquad Pr_B^A b_1=\alpha a_1\]
\item  $a_1  \perp (b_1^\perp \cap B)$ and  $b_1  \perp (a_1^\perp \cap A)$ which leads the diagonal form of the matrix of the Projector $Pr_A^B$ (and $Pr_B^A$).
\end{enumerate}
\end{lemma}
To see that 1) holds, note that $Pr_A^B a_1= \alpha b$ where $\alpha, b$ minimize $||a_1-\alpha b||^2$ for $b \in B, ||b||=1$  and $\alpha$ a scalar. Thus to  minimize   $||a_1-\alpha b||^2= \alpha^2 -2 \alpha <a_1,b> +1$  we must maximize $<a_1,b>$. Moreover $\alpha=<a_1,b_1>$ is the cosine of the first principal angle.

For 2), let $A_1=a_1^\perp \cap A$ (resp. $B_1=b_1^\perp \cap B$). If $a \in A_1$, then $a \perp b_1$ since $ <a,b_1>= <Pr_A^A a,b_1>=<a, P_A^A b_1>=<a,\alpha a_1>= 0$.  Likewise if $b \in B_1$ then $b \perp a_1$. We proceed letting $a_2$ and $b_2$ attain
\[
\max \{<a,b>, \quad a \in A_1,  \; b \in B_1, \quad ||a||=1, \; ||b||=1\}
\]
and continue till we have exhausted $A$ and $B$.
\end{proof}

From (1) of (\ref{principal angles are singular values of projector}), one has that the cosines of the principal angles between the pair of subspaces $A,B$ of $V$ can  also defined as the singular values of the orthogonal projector $Pr_B^A$ (or equivalently  $Pr_A^B$).
If  $\alpha$ is a singular value, we call the pair $(a,b) \in (A \times B)$ such that  $Pr_A^B a= \alpha b, \qquad Pr_B^A b=\alpha a$ \textit{related} singular vectors   (associated to $\alpha$).

We underline the following relation between the  angle  and the principal angles between a pair of subspaces of a real vector space $V$ (see \cite{Vac}).
\begin{prop} \cite{SC} \label{cosine of angle between subspaces is the product of the cosines of the principal angles}
Let $A^p$ and $B^q$ be a pair of subspaces of $V^n$ with $1 \leq p \leq q \leq n$. Let $\theta$ be the angle between the subspaces $A^p$ and $B^q$  and   $\theta_1, \ldots, \theta_p$ the set of principal angles. Then
\[
\cos \theta= \cos \theta_1  \cdot \cos \theta_2 \cdot  \ldots \cdot \cos \theta_p.
\]
\end{prop}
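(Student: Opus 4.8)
The statement to prove is Proposition \ref{cosine of angle between subspaces is the product of the cosines of the principal angles}: the cosine of the Euclidean angle between $A^p$ and $B^q$ equals the product of the cosines of all $p$ principal angles.

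The plan is to use the singular value / principal vector description established in Lemma \ref{principal angles are singular values of projector} together with the exterior-algebra formula \eqref{angle between subspaces of same dimension}. First I would invoke Afriat's theorem (Theorem \ref{Afriat Theorem}) to fix orthonormal bases: let $\{a_1,\dots,a_p\}$ be principal vectors of $A$ and extend the related principal vectors $\{b_1,\dots,b_p\}$ of $B$ to an orthonormal basis $\{b_1,\dots,b_q\}$ of $B$, with $\langle a_i,b_i\rangle=\cos\theta_i\ge 0$ and $\langle a_i,b_j\rangle=0$ for $i\ne j$, $j\le p$. Form $\alpha=a_1\wedge\dots\wedge a_p$ and $\beta=b_1\wedge\dots\wedge b_q$; since the bases are orthonormal, $|\alpha|=|\beta|=1$, so by Definition \ref{angle between non oriented subspaces} I need $\cos\theta=|\alpha_H|$, where $\alpha_H$ is the component of $\alpha$ built from the orthogonal projections $a_i^H=Pr_A^B a_i$ onto $B$.

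The key computation: by part (2) of Lemma \ref{principal angles are singular values of projector}, $Pr_A^B a_i=\cos\theta_i\, b_i$ for each $i$ (the related principal vectors diagonalize the projector). Hence
\[
\alpha_H = (Pr_A^B a_1)\wedge\dots\wedge(Pr_A^B a_p)=\Big(\prod_{i=1}^p\cos\theta_i\Big)\, b_1\wedge\dots\wedge b_p,
\]
and since $b_1\wedge\dots\wedge b_p$ is a unit decomposable $p$-vector, $|\alpha_H|=\prod_{i=1}^p\cos\theta_i$, which is already non-negative. Therefore $\cos\theta=|\alpha_H|/|\alpha|=\prod_{i=1}^p\cos\theta_i$, as claimed. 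Alternatively, one can bypass exterior algebra and argue directly: the angle between $A^p$ and its orthogonal projection $Pr_A^B(A)$ inside $B$ is computed via \eqref{angle between subspaces of same dimension} applied to the $p$-planes $A$ and $\mathrm{span}(b_1,\dots,b_p)$, where the Gram determinant $\det(\langle a_i,b_j\rangle)_{i,j\le p}$ is diagonal with entries $\cos\theta_i$.

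The only mild obstacle is a bookkeeping point: one must make sure the "horizontal" part $\alpha_H$ really is the full wedge of the projected vectors and that mixed/vertical terms do not contribute to its norm — this is exactly guaranteed by the orthogonality relations $\langle a_i,b_j\rangle=0$ for $i\ne j$, which force $a_i^V=a_i-\cos\theta_i b_i\perp b_1,\dots,b_p$, so that $\alpha_H$, $\alpha_V$, $\alpha_M$ are mutually orthogonal and $\alpha_H$ is as written. No genuine difficulty arises beyond invoking the already-proved structure of the projector.
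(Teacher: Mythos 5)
Your argument is correct, but note that the paper does not actually prove this proposition: it is stated with a citation to \cite{SC} (and \cite{Vac}) and used as a known fact, so there is no in-paper proof to compare against. Your derivation is nevertheless exactly the natural one given the paper's own apparatus: Definition (\ref{angle between non oriented subspaces}) defines $\cos\theta=|\alpha_H|/|\alpha|$ with $\alpha_H=a_1^H\wedge\dots\wedge a_p^H$ by construction, and Lemma (\ref{principal angles are singular values of projector}) (iterated, as in the proof of Theorem (\ref{Afriat Theorem})) gives $a_i^H=Pr_A^B a_i=\cos\theta_i\, b_i$ with $\{a_i\}$ orthonormal in $A$ and $\{b_i\}$ orthonormal in $B$, whence $|\alpha|=1$ and $|\alpha_H|=\prod_i\cos\theta_i\geq 0$. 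The one point worth stating a bit more explicitly than you do is why $Pr_B(a_i)=\cos\theta_i b_i$ for $i\geq 2$ and not merely $Pr_{B_{i-1}}(a_i)=\cos\theta_i b_i$: this follows from part (2) of the Lemma, since $a_i\perp b_1,\dots,b_{i-1}$ and $a_i\perp b_j$ for $j>i$, so the projection onto $B$ has no component along the other $b_j$. Your ``bookkeeping'' worry about mixed terms is vacuous under the paper's conventions ($\alpha_H$ is defined as the all-horizontal term of the expansion, not extracted by an orthogonality argument), and your alternative route via the Gram determinant in (\ref{angle between subspaces of same dimension}) is also fine once one observes that the projection of $A$ into $B$ lies in $\mathrm{span}(b_1,\dots,b_p)$. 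In short: a correct, self-contained proof of a statement the paper only quotes.
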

In particular, if $p=q$, one has the well known result $|\det(G)|=  \cos \theta_1 \cdot \cos \theta_2 \cdot \ldots \cdot \cos \theta_p$ where $G$ is the matrix representing the Projector $Pr_A^B$ w.r.t. some orthonormal pair of bases. 

Let  recall the definition and some properties of isoclinic subspaces.
\begin{defi} \label{definition of isoclinicity}
A pair of non oriented subspaces $A$ and $B$ of same dimension are said to be \textbf{isoclinic} and the angle $\phi$  ($0 \leq \phi \leq \frac{\pi}{2}$)
is said to be angle of isoclinicity  between them  if either of the following conditions hold:\\
1) the angle between any non-zero vector of one of the subspaces and the other subspace is equal to $\phi$;\\
2) $G G^t= \cos^2 \phi \, Id$ for the matrix $G=<a_i,b_j>$ of the orthogonal projector $P_B^A: B \rightarrow A$  with respect to  any orthonormal basis $\{a_i\}$ of $A$ and $\{b_j\}$ of $B$;\\
3) all principal angles between $A$ and $B$ equal $\phi$.
\end{defi}

\begin{defi}
We denote by $\mathcal{IC}^{2m}$ the set of $2m$-dimensional subspaces of $V$ such that,  for any $A \in S(\mathcal{Q})$,   the pair $(U,AU)$ is isoclinic. When we do not need to specify the dimension we just use the notation $\mathcal{IC}$ and we call them simply \textbf{isoclinic subspaces}.
\end{defi}
 The fact that we consider only  even dimensions subspaces follows from the
\begin{prop}
 Let $U$ be an  odd dimension isoclinic subspace. Then $U$  is a real hermitian product subspace (r.h.p.s.). Namely
    $\mathcal{IC}^{2m+1}$ is the set of all and only the real Hermitian product $(2m+1)$-dimension subspaces.
 \end{prop}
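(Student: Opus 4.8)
The plan is to show that if $U$ is a subspace of odd dimension $2m+1$ with the property that $(U, AU)$ is isoclinic for every $A \in S(\mathcal{Q})$, then in fact the angle of isoclinicity must be $\pi/2$ for every $A$, which by the characterization of r.h.p.s. already recorded in the excerpt (a totally real subspace is a r.h.p.s. iff the pairs $(U,IU),(U,JU),(U,KU)$ are strictly orthogonal for any admissible basis) forces $U$ to be a real Hermitian product subspace. So the crux is a parity argument: an odd-dimensional subspace cannot be isoclinic to $AU$ with an angle strictly less than $\pi/2$.

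First I would fix $A \in S(\mathcal{Q})$ and consider the restriction of the $A$-Kähler form $\omega^A\colon (X,Y)\mapsto \langle X, AY\rangle$ to $U$. Since $A$ is skew-symmetric w.r.t. $\langle\,,\,\rangle$, the form $\omega^A|_U$ is a skew-symmetric bilinear form on the odd-dimensional space $U$, hence it is degenerate: there exists a nonzero $X_0 \in U$ with $\langle X_0, AY\rangle = 0$ for all $Y \in U$, i.e. $AX_0 \perp U$. Now I would translate this into a statement about principal angles. The cosines of the principal angles of $(U, AU)$ are the singular values of the orthogonal projector $Pr_U^{AU}$, equivalently (composing with $A^{-1}=-A$, an isometry) the singular values of the map $U \to U$, $Y \mapsto A^{-1}Pr_U^{AU}(AY)$; but more directly, $\langle AX_0, U\rangle = 0$ says exactly that $AX_0$ is orthogonal to $AU \cap$... — cleaner: the vector $X_0 \in U$ has zero projection onto $AU$, because $\langle X_0, Z\rangle$ for $Z=AY\in AU$ equals $\langle X_0, AY\rangle = 0$. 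Hence $X_0$ makes angle $\pi/2$ with the subspace $AU$, so at least one principal angle of the pair $(U,AU)$ equals $\pi/2$.

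Since $(U,AU)$ is isoclinic by hypothesis, \emph{all} principal angles equal the common value, and we have just shown one of them is $\pi/2$; therefore the angle of isoclinicity of $(U,AU)$ is $\pi/2$, i.e. $(U,AU)$ is strictly orthogonal. As $A \in S(\mathcal{Q})$ was arbitrary, $U \cap AU = \{0\}$ for all $A$, so $U$ is totally real, and moreover for any admissible basis $(I,J,K)$ the pairs $(U,IU),(U,JU),(U,KU)$ are strictly orthogonal; by the criterion quoted above this is precisely the condition for $U$ to be a r.h.p.s. Conversely, any r.h.p.s. has $(U,AU)$ strictly orthogonal — hence isoclinic with angle $\pi/2$ — for every $A$, so it lies in the (odd-dimensional) isoclinic family; this gives the claimed equality $\mathcal{IC}^{2m+1} = \{\text{r.h.p.\ subspaces of dimension } 2m+1\}$.

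The only delicate point is the very first step — deducing from degeneracy of $\omega^A|_U$ that a radical vector $X_0$ has vanishing projection onto $AU$ — which is immediate once one writes the projection pairing as $\langle X_0, AY\rangle$; everything after that is a direct application of the definition of isoclinicity together with the already-established characterization of r.h.p.\ subspaces, so I do not anticipate a genuine obstacle. One should just take care that the argument is run for \emph{every} $A \in S(\mathcal{Q})$ simultaneously (not merely for one admissible basis), which is automatic since the hypothesis quantifies over all $A$.
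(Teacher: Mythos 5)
Your argument is correct and coincides with the paper's own proof: the degeneracy of the skew-symmetric form $\omega^A|_U$ on an odd-dimensional space yields one principal angle equal to $\pi/2$, and isoclinicity then forces all of them to be $\pi/2$, so $U$ is a r.h.p.s.; the converse is immediate from the definitions. You have merely spelled out in more detail the step the paper states in one line.
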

   \begin{proof}
   If $U$ is a r.h.p.s. then by definition it  is  an isoclinic subspace. Viceversa $U$ is isoclinic and  $\dim U=2m +1$,  for any $A \in S(\mathcal{Q})$, by the skew-symmetry of $\omega^A$ one (and then all) principal angle is necessarily equal to $\pi/2$.

   \end{proof}

   Fixed an  admissible basis $(I,J,K)$  and, given $U \in \mathcal{IC}^{2m}$, we denote by $\theta^I, \theta^J, \theta^K$ the respective angles of isoclinicity.
In \cite{Vac_isoclinic} we introduced the following definitions. If  the pair $(U,IU)$  (resp. $(U,JU)$, resp. $(U,KU)$)  is strictly orthogonal (i.e. if all principal angles are $\pi/2$) we said that $U$ is $I$-orthogonal (resp. $J$-orthogonal, resp. $K$-orthogonal) and in general we spoke of single orthogonality (or 1-orthogonality).  When  two  (resp. three) of the above pair are strictly orthogonal we spoke of double  (resp. triple)-orthogonality. By saying that $U$ is \textbf{orthogonal} (without specifying the complex structures) we mean that at least one  among $\theta^I,\theta^J,\theta^K$ equals $\pi/2$.  Observe that only r.h.p. subspaces have a triple orthogonality.
In particular in this paper, the isoclinic subspaces we consider have no orthogonality unless they are totally complex in which case they have a double orthogonality.

Let $U$ be a subspace. Let fix an admissible basis $\mathcal{H}$ of $\mathcal{Q}$, and let   $f: U \times U \times \ldots \times U \rightarrow W$  some function
 where the codomain $W$ is some vector space. If $f$  is  constant  on its domain, we  will say that $f$ is an  \textit{invariant}  of $U$. If furthermore  the invariant $f$ does not depend on the chosen hypercomplex basis $\mathcal{H}$, we will say that $f$ is  an \textit{intrinsic property}  of $U$.

 Finally we recall the notion of  K\"{a}hler angle which is defined in a real  vector space $V$ endowed with a complex structure $I$.
 \begin{defi} Let $(V^{2n},I)$ be a real vector space endowed with a complex structure $I$.
 For any pairs of non parallel vectors  $X,Y \in V$ their  \textbf{K\"{a}hler angle} is given by
 \begin{equation} \label{definition of Kaehler angle}
 \Theta^I=\arccos \frac{<X,IY>}{|X| \, |Y| \sin \widehat{XY}}=\arccos \frac{<X,IY>}{mis \; (X \wedge Y)}.
 \end{equation}
  \end{defi}
Then $0 \leq \Theta^I \leq \pi$.
 It is straightforward to check that the K\"{a}hler angle is an intrinsic property of the oriented 2-plane $U=L(X,Y)$.  For this reason one  speaks of the K\"{a}hler angle  of a 2-plane. The K\"{a}hler angle measures the deviation of a 2-plane from holomorphicity. For instance the K\"{a}hler angle of a  r.h.p. subspace $U$   is  $\Theta^I(U)=\pi/2$  and  the one of a complex plane $(U,I)$ 
  is  $\Theta^I(U) \in \{0, \pi\}$.

The cosine of the K\"{a}hler angle of the pair of 2-planes  with opposite orientation $U$ and $\tilde U=L(Y,X)$  have opposite sign i.e. $\cos \Theta^I(U)=-\cos \Theta^I(\tilde U)$,
then, if one disregards the orientation of the 2-plane $U$, we can consider the absolute value of the right hand side of equation (\ref{definition of Kaehler angle})   restricting the K\"{a}hler angle to the interval $[0,\pi/2]$. In this case   the K\"{a}hler angle of the 2-plane $U$ coincides with one of the two identical principal angles, say $\theta^I(U)$, between the pairs of 2-plane $U$ and $IU$ (same as the ones of the pair  $(\tilde U,I \tilde U)$) which  are always isoclinic 
as one can immediately verify, then
\[\cos \theta^I(U)=|\cos \Theta^I(U)|\]
and from  (\ref{cosine of angle between subspaces is the product of the cosines of the principal angles}), one has
 \begin{equation} \label{angle between 2-planes U,IU}
   \cos (\widehat{U,IU})= \cos (\widehat{\tilde U,I \tilde U})= \cos^2 \theta^I(U) =  \cos^2 \Theta^I(U)=  \frac{<X,IY>^2}{mis^2 \; (X \wedge Y)}.
\end{equation}

Generalizing  the notion of K\"{a}hler angle,   in an Hermitian quaternionic vector space  $(V^{4n}, \mathcal{Q}, <,>)$ we will  speak of the  \textbf{$A$-K\"{a}hler angle}  of  an oriented  2-plane $U$ with   $A \in S(\mathcal{Q})$ and denote  it by $ \Theta^A(U)$.

In \cite{Vac_isoclinic} we proved the following
\begin{prop} \label{invariance of the sum of the 3 angles (U,IU),(U,JU),(U,KU)}
Let $U \subset (V^{4n}, \mathcal{Q},<,>)$ be a 2 plane. The sum of the cosines of the  angles between the pairs $(U,IU)$, $(U,JU)$, $(U,KU)$  is constant for any admissible basis $(I,J,K)$ of $\mathcal{Q}$.
\end{prop}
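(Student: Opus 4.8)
The plan is to compute the three cosines $\cos(\widehat{U,IU})$, $\cos(\widehat{U,JU})$, $\cos(\widehat{U,KU})$ explicitly in terms of a fixed basis of the $2$-plane $U$, and then show that the sum is unchanged when $(I,J,K)$ is replaced by $(I',J',K') = (I,J,K)C$ with $C \in SO(3)$. First I would fix an orthonormal basis $X,Y$ of $U$; then by formula (\ref{angle between 2-planes U,IU}) we have $\cos(\widehat{U,IU}) = \langle X, IY\rangle^2 / \mathrm{mis}^2(X\wedge Y)$, and since $X,Y$ are orthonormal $\mathrm{mis}^2(X\wedge Y) = 1$, so the quantity to study is simply
\[
S \;=\; \langle X, IY\rangle^2 + \langle X, JY\rangle^2 + \langle X, KY\rangle^2 .
\]
By the identity (\ref{relation between scalar product and hermitian product of H^n}), the three numbers $\langle X,IY\rangle$, $\langle X,JY\rangle$, $\langle X,KY\rangle$ are exactly the coordinates of the imaginary part of the Hermitian product $X\cdot Y$ with respect to the imaginary basis $(i,j,k)$ associated to $(I,J,K)$. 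Hence $S = |\,\mathrm{Im}(X\cdot Y)\,|^2$, the squared Euclidean norm in $\mathrm{Im}(\mathbb{H})$ of the imaginary part of the quaternion $X\cdot Y$.

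Now the key step: changing the admissible basis $(I,J,K) \mapsto (I,J,K)C$, $C\in SO(3)$, changes the associated imaginary basis to $(i,j,k)C$, so the triple $(\langle X,IY\rangle, \langle X,JY\rangle, \langle X,KY\rangle)$ is transformed by the same rotation $C$. Since $C\in SO(3)$ is orthogonal, the Euclidean norm of this triple is preserved; equivalently, $|\mathrm{Im}(X\cdot Y)|$ is the norm of the imaginary part of a quaternion, which does not depend on the choice of admissible (orthonormal imaginary) basis in which we express it. Therefore $S$ is the same for every admissible basis $(I,J,K)$ of $\mathcal{Q}$, which is the claim. (One should note that $\cos(\widehat{U,IU})$, being the squared cosine of a principal angle here, is automatically nonnegative, so no absolute-value subtleties arise: $S$ literally equals the sum of the three cosines of the angles between the pairs.)

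The only point requiring a little care — and the main thing to get right — is the basis-independence bookkeeping in (\ref{relation between scalar product and hermitian product of H^n}): one must invoke the fact, already recorded in the excerpt after that formula, that the Hermitian product has intrinsic meaning with respect to the canonical system of bases $\mathcal{B}$, so that under $(I,J,K)\mapsto (I,J,K)C$ the coordinates of $X\cdot Y$ are exactly the old coordinates acted on by $C$. Granting that, the argument reduces to the triviality that $SO(3)$ preserves Euclidean length in $\R^3 \cong \mathrm{Im}(\mathbb{H})$. I would also remark that this gives a cleaner restatement: the common value of the sum equals $|\mathrm{Im}(X\cdot Y)|^2$ for any orthonormal basis $X,Y$ of $U$, equivalently $|X\cdot Y|^2 - \langle X,Y\rangle^2 = |X\cdot Y|^2$, an intrinsic invariant of the $2$-plane $U$.
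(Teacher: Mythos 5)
Your proof is correct. The paper does not actually prove this proposition here (it defers to the reference \texttt{Vac\_isoclinic}), but your argument is exactly the one its own machinery suggests: writing $\cos(\widehat{U,IU})=<X,IY>^2$ for an orthonormal basis $X,Y$ of $U$, identifying the triple $(<X,IY>,<X,JY>,<X,KY>)$ with the coordinates of $Im(X\cdot Y)$ via (\ref{relation between scalar product and hermitian product of H^n}), and using that a change of admissible basis rotates these coordinates by the same $C\in SO(3)$, so the sum of squares is the squared norm $\mathcal{N}(Im(X\cdot Y))$ and is invariant. Your closing identification of the common value with $|Im(X\cdot Y)|^2$ is precisely the paper's characteristic deviation $\Delta(U)$ of (\ref{characteristic deviation}), whose intrinsic character the paper itself records immediately after this proposition, so the two presentations are fully consistent.
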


It follows that the quantity 
\begin{equation} \label{sum of square cosines of I,J,K-kaehòer angles of a 2-plane}
\cos^2 \Theta^I(U) + \cos^2\Theta^J(U)+ \cos^2 \Theta^K(U)= \cos^2 \theta^I(U) + \cos^2\theta^J(U)+ \cos^2 \theta^K(U)
\end{equation}
 is an intrinsic property of a 2-plane. 

Given a subspace $U$ of an Hermitian quaternionic vector space and generalizing a well known  notion relative to an Hermitian  complex vector space, for any $A \in S(\mathcal{Q})$ we will call  \textbf{$A$-K\"{a}hler form} of   $U$  the  skew-symmetric bilinear form
\[
\begin{array}{llll}
\omega^A|_U: & U \times U & \rightarrow & \R\\
 & (X,Y)& \mapsto &  <X,AY>.
 \end{array}
 \]

It is well known that the $A$-K\"{a}hler form admits a \textit{standard form}, namely  w.r.t. an orthonormal \textit{standard basis} $(X_1, \ldots,X_m)$ of the $m$-dimensional subspace $U$  one has
 \[
(\omega^A_{ij})=(<X_i,AX_j>)=
  \left\{
 \begin{array} {l}
 \geq 0  \quad  \mbox{  if $i$ is odd  and  } j=i+1,\\
     0  \quad \mbox{otherwise},
 \end{array}
 \right.
 \]
 for $i \leq j \leq k$.

A standard form   determines   some $\omega^A$-invariant subspaces $U_i^A$.\footnote{Using the same notation used in \cite {Vacpreprint} we call $\omega^A$-invariant subspaces the $T$-invariant subspaces of the endomorphism $T$  of $U$ represented by the same matrix of $\omega^A$.} Such subspaces are uniquely defined whereas  the standard bases of $\omega^A|_{U_i^A}$ are not: namely, given one of them, all others are obtained through an orthogonal transformations of $U_i^A$  commuting with the complex structure $A \in S(\mathcal{Q})$). Observe that all such bases have  the same orientation.

The following definitions  are specific of an Hermitian quaternionic vector space.
\begin{defi} \cite{BR1} \label{definitions of characteristic and Hermitain angle}
The \textbf{characteristic angle} $\theta \in [0, \frac{\pi}{2}]$ of a pair of vector $L,M$ is the angle of the 1-dimensional quaternionic subspace they generate  i.e. $\theta=\widehat{\mathcal{Q} L,\mathcal{Q} M}$ where $\mathcal{Q}L=L(L,IL,JL,KL)$, $\mathcal{Q} M=L(M,IM,JM,KM)$ and $(I,J,K)$ is any admissible basis. It is given by
\[\cos \theta (L,M)=\frac{[\mathcal{N}(L \cdot M)]^2}{mis L^4 \; mis M^4}= \frac{[<L,M>^2+ <L,IM>^2+<L,JM>^2+<L,KM>^2]^2}{{mis^4 L \; mis^4 M}}\]
The \textbf{Hermitian  angle} $\theta_h$ of a pair of vector $L,M$ is
\[\cos \theta_h (L,M)=\frac{|L \cdot M|}{mis L \; mis M}= \frac{\sqrt{<L,M>^2+ <L,IM>^2+<L,JM>^2+<L,KM>^2}}{mis L \; mis M}.\]
Then
\[\cos \theta (L,M)=\cos^4 \theta_h (L,M)\]
\end{defi}

We conclude this preliminary section recalling that the group $Sp(1)$ is  the group under multiplication of unitary quaternions.
It is a Lie group whose  Lie algebra $\mathfrak{sp}_1= Im \; \mathbb{H}$ is a quaternionic structure on $\mathbb{H}^n$. 

Let   $(V^{4n}, \mathcal{Q}, < , >)$  be an  Hermitian quaternionic  vector space. 
For any  quaternion $q \in Sp(1)$, let consider the unitary homothety in the $\mathbb{H}$-module $V \simeq \R^{4n}$. 
\[ R_q: X \mapsto Xq, \quad X \in V.\]
To these transformations belong for instance the automorphisms $I=R_{-i},J=R_{-j},K=R_{-k}$.
\begin{prop} \cite{BR2}
The unitary homotheties are rotations of $V^{4n}$ 
that preserve any quaternionic line. 
Moreover for any $X \in V$ the angle $\widehat{X,Xq}$ does not depend on $X$ and it is
\[
\cos \widehat{X,Xq}= Re(q)\]
\end{prop}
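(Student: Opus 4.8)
The statement to prove is the proposition attributed to \cite{BR2}: the unitary homotheties $R_q\colon X\mapsto Xq$ ($q\in Sp(1)$) are rotations of $V^{4n}$ preserving every quaternionic line, and for every $X\in V$ the angle $\widehat{X,Xq}$ is independent of $X$ with $\cos\widehat{X,Xq}=\mathrm{Re}(q)$.

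The plan is to work directly from the $\mathbb{H}$-module structure and the Hermitian property of $<,>$. First I would write $q=a+bi+cj+dk$ with $a^2+b^2+c^2+d^2=1$, so that $R_q=aI_0+bI+cJ+dK$ where $I_0=\mathrm{Id}$ and $(I,J,K)=(R_{-i},R_{-j},R_{-k})$ is the admissible basis obtained from the chosen basis $(1,i,j,k)\in\mathcal B$ (note the sign convention in the excerpt: $R_i=-I$ etc., so one must be careful to track signs, but this does not affect the final cosine since $\mathrm{Re}(q)=a$ is the coefficient of $1$, which is unaffected). To see $R_q$ is a rotation (an element of $SO(4n)$), I would check it is an isometry: using that $<,>$ is Hermitian w.r.t.\ $(I,J,K)$, i.e.\ $I,J,K$ are skew-symmetric orthogonal, and that $R_q R_{\bar q}=R_{q\bar q}=R_1=\mathrm{Id}$, one gets $R_q^{t}=R_q^{-1}=R_{\bar q}$, hence $R_q\in O(4n)$; connectedness of $Sp(1)$ (it is a sphere $S^3$) together with $R_1=\mathrm{Id}$ forces $R_q\in SO(4n)$. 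That $R_q$ preserves every quaternionic line is immediate: a quaternionic line $\mathcal{Q}X=\mathrm{span}_\R(X,IX,JX,KX)$ is exactly the right $\mathbb H$-submodule $X\mathbb H$, and right multiplication by $q$ maps $X\mathbb H$ onto itself (it is invertible on it, with inverse right multiplication by $\bar q$).

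Next, for the angle computation: for a unit vector $X$, $\cos\widehat{X,Xq}=<X,Xq>/(|X|\,|Xq|)$. Since $R_q$ is an isometry, $|Xq|=|X|$, so I need $<X,Xq>=|X|^2\,\mathrm{Re}(q)$. Expanding $Xq=aX+bR_iX+cR_jX+dR_kX$ and using skew-symmetry of $I,J,K$ (hence of $R_i=-I$, $R_j=-J$, $R_k=-K$), we get $<X,R_iX>=<X,R_jX>=<X,R_kX>=0$, so $<X,Xq>=a<X,X>=a|X|^2=\mathrm{Re}(q)|X|^2$. This gives $\cos\widehat{X,Xq}=\mathrm{Re}(q)$, manifestly independent of $X$; one should also remark, as in the excerpt's Remark~\ref{angle between oriented subspaces}, that since $Xq=-X\bar q\cdot(-1)$... rather, simply observe that replacing $X$ by a general nonzero vector changes nothing after normalization, and that the unoriented angle lies in $[0,\pi/2]$ while here we are comparing a line with its image under a rotation, so one may allow the signed value $\mathrm{Re}(q)\in[-1,1]$ as stated.

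I do not expect any serious obstacle here; the only thing requiring care is bookkeeping of the sign conventions relating right multiplication by $i,j,k$ to the endomorphisms $I,J,K=R_{-i},R_{-j},R_{-k}$, and the fact that the identity homothety $R_1$ is $\mathrm{Id}$ while $R_{-1}=-\mathrm{Id}$ is still a rotation (dimension $4n$ is even), which is what lets us conclude $R_q\in SO(4n)$ for all $q\in S^3$ by connectedness. The invariance of the Hermitian product under the canonical system of bases (Proposition~\ref{intrinsic meaning of Hermitian and scalar product in H^n}) guarantees the statement is basis-independent, but for the proof it suffices to fix one admissible basis and compute.
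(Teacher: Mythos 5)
Your argument is correct and complete. The paper does not actually prove this proposition --- it is quoted from \cite{BR2} without proof --- so there is no internal argument to compare against; your computation (orthogonality of $R_q$ via $R_q^t=R_{\bar q}=R_q^{-1}$, the determinant/connectedness argument for $\det R_q=1$, invariance of $X\mathbb{H}$ under right multiplication, and $<X,Xq>=Re(q)|X|^2$ from the skew-symmetry of $I,J,K$) is the standard one and establishes every assertion of the statement. The only cosmetic slip is writing $R_qR_{\bar q}=R_{q\bar q}$ where right multiplications compose as $R_qR_p=R_{pq}$, which is harmless here since $q\bar q=\bar q q=1$.
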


The action of $Sp(1)$ determines then an inclusion
\begin{equation}\label {inclusion Sp(1) in SO(4n)}
\begin{array}{lcll}
\lambda: &Sp(1)  &\hookrightarrow & SO(4n)\\
&q &\mapsto & R_q
\end{array}
\end{equation}

We define $Sp(n)$ to be the subgroup of $SO(4n)$ commuting with $\lambda(Sp(1))$ i.e. $Sp(n)$ is the centralizer of $\lambda Sp(1)$ in $SO(4n)$.

 For completeness we recall the following definition. Let  consider in $\mathbb{H}$-module $V$ the transformations   $T_{(A,q)}:X \mapsto AXq$ with $A \in Sp(n), \; q \in Sp(1), \; X \in V$. We denote by $Sp(n) \cdot Sp(1)$ the group of these transformations.

The group  $Sp(n) \cdot Sp(1)$ is the normalizer of $\lambda Sp(1)$ in $SO(4n)$ which is isomorphic to the quotient $Sp(n) \times_{\Z_2} Sp(1)$ where $\Z_2=\{1, -1\}$. Note that $Sp(1) \cdot Sp(1)$ is precisely $SO(4)$, whereas for $n \geq 2$  $ Sp(n) \cdot Sp(1)$ is a maximal Lie subgroup of
$SO(4n)$. Observe that $ Sp(n) \cdot Sp(1)$ is not a subgroup of $U(2n)$.
For a deeper understanding of the groups $Sp(n)$ and $Sp(n) \cdot Sp(1)$ one can refer among others to \cite{Sa} and {\cite{GR1}.

\section{$Sp(n)$-orbits of complex and $\Sigma$-complex subspaces in the real Grassmannians}

\subsection{Theorems about the $Sp(n)$-orbits of a generic subspace} \label{Theorems about the $Sp(n)$-orbits of a generic subspace}
Let $(V^{4n}, \mathcal{Q}, <, >)$) be  an Hermitian   quaternionic vector space.
Recalling the expression  of  the Hermitian product given in  (\ref{relation between scalar product and hermitian product of H^n}), in \cite{Vacpreprint} we find the following characterizations of the $Sp(n)$-orbits in the real Grassmannians $Gr^\R(m,4n)$.
\begin{teor} \cite{Vacpreprint} \label{transitivity of Sp(n) on subspaces with same Hermitian matrices}
Let $U$ and $W$ be a pair of subspaces of real dimension $m$ in the $\mathbb{H}$-module $V^{4n}$. Then there exists $A \in Sp(n)$ such that $AU=W$ iff
there exist 
  bases $\mathcal{ B}_U=(X_1, \ldots, X_m)$ and  $\mathcal {B}_W=(Y_1, \ldots, Y_m)$ of $U$ and $W$ respectively w.r.t. which for the Hermitian products one has  $(X_i \cdot X_j)= (Y_i \cdot Y_j), \; i=1, \ldots,m$ for one and hence any admissible basis of $\mathcal{Q}$.
\end{teor}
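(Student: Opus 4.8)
The plan is to prove the equivalence by exploiting the fact, established in Proposition~\ref{relation between scalar product and hermitian product of H^n}, that the Hermitian product $(X\cdot Y)$ packages together the four real quantities $\langle X,Y\rangle$, $\langle X,IY\rangle$, $\langle X,JY\rangle$, $\langle X,KY\rangle$. Thus, once an admissible basis $(I,J,K)$ of $\mathcal{Q}$ is fixed, the condition $(X_i\cdot X_j)=(Y_i\cdot Y_j)$ for all $i,j$ says precisely that the four Gram-type matrices $G=(\langle X_i,X_j\rangle)$, $G_I=(\langle X_i,IX_j\rangle)$, $G_J=(\langle X_i,JX_j\rangle)$, $G_K=(\langle X_i,KX_j\rangle)$ attached to $\mathcal{B}_U$ coincide with those attached to $\mathcal{B}_W$. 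So the theorem reduces to: two subspaces lie in the same $Sp(n)$-orbit iff they admit bases with the same quadruple of Gram matrices.

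For the easy direction ($\Rightarrow$), I would take $A\in Sp(n)$ with $AU=W$, pick any basis $\mathcal{B}_U=(X_1,\dots,X_m)$ of $U$, and set $Y_i=AX_i$. Since $A\in SO(4n)$ it preserves $\langle\,,\,\rangle$, and since $A$ commutes with $\lambda(Sp(1))$ — in particular with $I=R_{-i}$, $J=R_{-j}$, $K=R_{-k}$ — we get $\langle AX_i,A(IX_j)\rangle=\langle X_i,IX_j\rangle$ and likewise for $J,K$. Hence all four Gram matrices agree, i.e. $(X_i\cdot X_j)=(Y_i\cdot Y_j)$; and since this holds for one admissible basis it holds for any, because a change of admissible basis acts the same way on both sides by an $SO(3)$ rotation of the triple.

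For the substantive direction ($\Leftarrow$), suppose $\mathcal{B}_U$ and $\mathcal{B}_W$ have identical quadruples of Gram matrices. The idea is to build $A\in Sp(n)$ by first defining it on $U^{\mathbb{H}}$ and then extending. Concretely, I would try to define $A$ on the quaternionic span $U^{\mathbb{H}}=\mathrm{span}_{\mathbb{H}}(X_1,\dots,X_m)$ by $A(X_i q)=Y_i q$ for $q\in\mathbb{H}$, i.e. $A$ is the unique $\mathbb{H}$-linear map sending $X_i\mapsto Y_i$. The point is to check this is well defined and isometric: a quaternionic relation $\sum_i X_i q_i=0$ must force $\sum_i Y_i q_i=0$, and $\langle Ax,Ay\rangle=\langle x,y\rangle$ on $U^{\mathbb{H}}$. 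Both of these follow from the fact that the $\mathbb{H}$-valued "quaternionic Gram matrix'' $(X_i\cdot X_j)$ — whose real and imaginary parts are exactly the four agreeing real Gram matrices — controls both the $\mathbb{H}$-linear dependencies among the $X_i$ and the inner products of arbitrary $\mathbb{H}$-combinations, via $\langle Xp,Yq\rangle=\mathrm{Re}(\bar q(X\cdot Y)p)$. Since the quaternionic Gram matrices coincide, the map $X_i\mapsto Y_i$ respects both, so $A$ is a well-defined $\mathbb{H}$-linear isometry $U^{\mathbb{H}}\to W^{\mathbb{H}}$; in particular $\dim_{\mathbb{H}}U^{\mathbb{H}}=\dim_{\mathbb{H}}W^{\mathbb{H}}$, so their orthogonal complements in $V$ are $\mathbb{H}$-modules of equal rank and $A$ extends by any $\mathbb{H}$-linear isometry between them. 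An $\mathbb{H}$-linear map of $\mathbb{H}$-modules preserving $\langle\,,\,\rangle$ is exactly an element of $Sp(n)$ — it lies in $SO(4n)$ and commutes with right multiplication by $\mathbb{H}$, hence with $\lambda(Sp(1))$ — and by construction $AX_i=Y_i$, so $AU=W$.

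The main obstacle is the well-definedness and isometry check for $A$ on $U^{\mathbb{H}}$: one must verify carefully that agreement of the four real Gram matrices of $\mathcal{B}_U$ and $\mathcal{B}_W$ is genuinely equivalent to agreement of the single $\mathbb{H}$-valued Gram matrix, and that the latter encodes both $\mathbb{H}$-linear relations and $\mathbb{H}$-valued inner products of all $\mathbb{H}$-combinations. This hinges on the identity $Xp\cdot Yq=\bar p\,(X\cdot Y)\,q$ for the Hermitian product of $\mathbb{H}$-multiples (which is immediate from $\mathbb{H}$-sesquilinearity of $(\cdot)$), together with the observation that an $\mathbb{H}$-submodule and the $\mathbb{H}$-linear relations in it are determined by the $\mathbb{H}$-Gram matrix. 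The extension step and the identification of the resulting map with an element of $Sp(n)$ are then formal, using the definition of $Sp(n)$ as the centralizer of $\lambda(Sp(1))$ in $SO(4n)$ given in the Preliminaries.
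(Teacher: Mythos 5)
Your proposal is correct, but there is no internal proof to measure it against: Theorem \ref{transitivity of Sp(n) on subspaces with same Hermitian matrices} is imported from \cite{Vacpreprint} and this paper never proves it. On its own terms your argument is the natural quaternionic Witt-extension proof. The forward direction is immediate from the definition of $Sp(n)$ as the centralizer of $\lambda(Sp(1))$ in $SO(4n)$, exactly as you say. For the converse, the two checks you isolate do go through and can be done in one stroke: by sesquilinearity $(X_ip)\cdot(X_jq)=\bar p\,(X_i\cdot X_j)\,q$, so for any $q_1,\dots,q_m\in\mathbb{H}$ one has $\|\sum_i Y_iq_i\|^2=\mathrm{Re}\sum_{i,j}\bar q_i(Y_i\cdot Y_j)q_j=\mathrm{Re}\sum_{i,j}\bar q_i(X_i\cdot X_j)q_j=\|\sum_i X_iq_i\|^2$, which simultaneously gives well-definedness of the $\mathbb{H}$-linear map $X_i\mapsto Y_i$ on $U^{\mathbb{H}}$ (positive definiteness forces $\sum_i Y_iq_i=0$ whenever $\sum_i X_iq_i=0$) and the fact that it is an isometry; note equality of the single $\mathbb{H}$-valued Gram matrix is equivalent to equality of your four real Gram matrices by formula (\ref{relation between scalar product and hermitian product of H^n}). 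The extension step uses that $(U^{\mathbb{H}})^\perp$ and $(W^{\mathbb{H}})^\perp$ are quaternionic of equal rank (point (\ref{2}) of Claim \ref{properties of some quternionic,complex and real subspaces}), so they can be matched by pairing Hermitian-orthonormal bases. The one point you pass over silently is that the resulting $\mathbb{H}$-linear orthogonal map has determinant $+1$, which is needed because the paper defines $Sp(n)$ inside $SO(4n)$; this is automatic (the realification of an $\mathbb{H}$-linear map has non-negative determinant), but it deserves a sentence. The ``for one and hence any admissible basis'' clause is handled correctly: a rotation $C\in SO(3)$ of the admissible basis rotates the imaginary parts of both quaternionic Gram matrices by the same $C$, so their equality is independent of that choice.
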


Let $U \subseteq V$ be a subspace. For any $A \in S(\mathcal{Q})$ we denoted by $\mathcal{B}^A(U)$ the set of the standard bases of $\omega^A|_U$ and by $\bm{\theta}^A(U)$ the principal angles between the pair $(U,AU)$. Moreover,
fixed an admissible basis $(I,J,K)$,   $\mathcal{B}(U)=\{B_I,B_J,B_K\}$ is the set made of   triples of  bases of $U$    with $B_I \in \mathcal{B}^I(U), B_J \in \mathcal{B}^J(U), B_K \in \mathcal{B}^K(U)$.

Necessary and sufficient conditions for the pair $U,W$ to belong to the same $Sp(n)$-orbit in the real Grassmannian  are also stated in
  \begin{teor} \cite{Vacpreprint} \label{main_theorem 1_ existance of canonical bases with same mutual position}
Let $(I,J,K)$ be an admissible basis of $\mathcal{Q}$. The subspaces  $U^m$ and $W^m$ of $V$ are in the same $Sp(n)$-orbit iff
    \begin{enumerate}
    \item   they share the same  $I,J,K$-principal angles i.e.
      \[\bm{\theta}^I(U)=\bm{\theta}^I(W),\quad \bm{\theta}^J(U)=\bm{\theta}^J(W),\quad \bm{\theta}^K(U)=\bm{\theta}^K(W)\] for one and hence any hypercomplex basis $(I,J,K)$ or equivalently      the singular values of the projectors \\ $Pr^{IU},Pr^{JU},Pr^{KU}$ equals those of $Pr^{IW},Pr^{JW},Pr^{KW}$ for one and hence any admissible basis $(I,J,K)$;
    \item there exist $(\{\tilde X_i \} , \{\tilde Y_i \} ,\{\tilde Z_i \}) \in \mathcal{B}(U)$   and  $(\{\tilde X'_i \},\{\tilde Y'_i \},\{\tilde Z'_i \}) \in \mathcal{B}(W)$ such that   $A= A', \qquad B= B'$  where
          \[A=(<\tilde X_i, \tilde Y_j>), \qquad A'=(<\tilde X_i', \tilde Y_j'>), \qquad B=(<\tilde X_i, \tilde Z_j>),\qquad  B'=(<\tilde X_i', \tilde Z_j'>).\]
     \end{enumerate}
\end{teor}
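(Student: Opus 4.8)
The plan is to deduce this from Theorem~\ref{transitivity of Sp(n) on subspaces with same Hermitian matrices} by translating everything into real scalar products via the identity~(\ref{relation between scalar product and hermitian product of H^n}): for orthonormal families $\mathcal B_U=(X_1,\dots,X_m)$ and $\mathcal B_W=(X_1',\dots,X_m')$ one has $(X_a\cdot X_b)=(X_a'\cdot X_b')$ for all $a,b$ exactly when the four real matrices $(\langle X_a,X_b\rangle)$, $(\langle X_a,IX_b\rangle)$, $(\langle X_a,JX_b\rangle)$, $(\langle X_a,KX_b\rangle)$ coincide with their primed counterparts.

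For \textbf{necessity}, I would argue that if $g\in Sp(n)$ with $gU=W$ then, $Sp(n)$ being the centraliser of $\lambda(Sp(1))$, $g$ commutes with $I=R_{-i}$, $J=R_{-j}$, $K=R_{-k}$; hence $g$ carries $(U,AU)$ isometrically onto $(W,AW)$ for every $A\in S(\mathcal Q)$, which gives~(1), and it sends a standard basis of $\omega^I|_U$ (with non-increasing entries) to one of $\omega^I|_W$, and likewise for $J,K$, so for any triple $(\{X_i\},\{Y_i\},\{Z_i\})\in\mathcal B(U)$ the triple $(\{gX_i\},\{gY_i\},\{gZ_i\})$ lies in $\mathcal B(W)$ and, $g$ being orthogonal, has the same matrices $A,B$, which gives~(2).

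For \textbf{sufficiency}, assuming (1) and (2) with triples $(\{X_i\},\{Y_i\},\{Z_i\})\in\mathcal B(U)$, $(\{X_i'\},\{Y_i'\},\{Z_i'\})\in\mathcal B(W)$ realizing $A=A'$, $B=B'$, I would take $\mathcal B_U=(X_1,\dots,X_m)$, $\mathcal B_W=(X_1',\dots,X_m')$ and check the three matrix equalities. First, $(\langle X_a,X_b\rangle)=\mathrm{Id}=(\langle X_a',X_b'\rangle)$. Second, $(\langle X_a,IX_b\rangle)$ is the standard form of $\omega^I|_U$, which the non-increasing ordering convention determines from $\bm\theta^I(U)$ alone; the same holds for $W$, and these agree by~(1). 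Third, expanding $X_a=\sum_j A_{aj}Y_j$ in the orthonormal basis $\{Y_i\}$ gives
\[
\langle X_a,JX_b\rangle=\sum_{j,l}A_{aj}A_{bl}\,\langle Y_j,JY_l\rangle,
\]
where $(\langle Y_j,JY_l\rangle)$ is the standard form of $\omega^J|_U$, hence a function of $\bm\theta^J(U)$; analogously $X_a=\sum_j B_{aj}Z_j$ yields $\langle X_a,KX_b\rangle=\sum_{j,l}B_{aj}B_{bl}\langle Z_j,KZ_l\rangle$ with $(\langle Z_j,KZ_l\rangle)$ a function of $\bm\theta^K(U)$. Since $A=A'$, $B=B'$ and the three principal-angle vectors coincide, all four real matrices match their primed analogues, so $(X_a\cdot X_b)=(X_a'\cdot X_b')$, and Theorem~\ref{transitivity of Sp(n) on subspaces with same Hermitian matrices} produces $g\in Sp(n)$ with $gU=W$. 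The ``one and hence any admissible basis'' clause then follows a posteriori: once the equivalence is established, (1)+(2) for one $(I,J,K)$ forces $U,W$ into one orbit, and necessity returns (1)+(2) for every admissible basis.

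The step I expect to be the main obstacle is the careful bookkeeping for standard forms, namely verifying that the non-increasing ordering convention really makes the matrix of $\omega^A|_U$ a function of $\bm\theta^A(U)$ only --- so that equal principal-angle vectors force literally equal matrices --- while accounting for the extra $1\times 1$ zero block when $m$ is odd and for the fact that distinct standard bases of a fixed $\omega^A|_U$ (related by $A$-commuting orthogonal maps on each $\omega^A$-invariant subspace) still produce the same standard-form matrix. The remainder is the linear-algebra identity above combined with Theorem~\ref{transitivity of Sp(n) on subspaces with same Hermitian matrices}.
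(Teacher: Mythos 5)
Your proposal is correct. Note that the paper does not actually prove this theorem here --- it is quoted from the preprint \cite{Vacpreprint} --- but the surrounding text explicitly presents it as ``an equivalent condition'' to Theorem~(\ref{transitivity of Sp(n) on subspaces with same Hermitian matrices}), and your derivation (expanding the Hermitian product via~(\ref{relation between scalar product and hermitian product of H^n}) into the four real Gram matrices, identifying $(\langle X_a,IX_b\rangle)$ with the standard form of $\omega^I|_U$, and transporting $\omega^J,\omega^K$ through the change-of-basis matrices $A,B$) is exactly that reduction. The one point requiring care --- that the non-increasing ordering convention makes the standard-form matrix of $\omega^A|_U$ a function of $\bm{\theta}^A(U)$ alone, with the skew-symmetry forcing the non-$\pi/2$ principal angles to pair up into $2\times 2$ blocks --- you have identified and it does go through, so the argument is complete.
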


The determination of the  principal angles between a pair of subspaces $S,T$ is a well know problem solved by the singular value decomposition of the orthogonal projector  of $S$ onto $T$. Here, for a chosen $U \subset V$ we consider  the pairs $(U,AU),  \; A \in S(\mathcal{Q})$  and denote by  $Pr^{AU}$ the orthogonal projector of $U$ onto $AU$. In this case the singular values of  $Pr^{AU}$ are always degenerate which implies that they have non-unique singular vectors. In terms of the principal vectors of the pair $(U,AU)$ we can equivalently say that the  principal vectors are never uniquely defined not even if $U$ is  2-dimensional. We do not consider the ambiguity in sign of the principal vectors which is always existing also in dimension one.

Another way to obtain the principal angles and the associated principal vectors between the pair of subspaces $(U,AU)$,
for any $A \in S(\mathcal{Q})$,  is through the  standard decompositions of the restriction to $U$ of the $A$-K\"{a}hler skew-symmetric form  $\omega^A: (X,Y) \mapsto <X,AY>,  \; X,Y \in U$.

Recalling that  a \textit{standard basis} is  an orthonormal basis w.r.t. which $\omega^A|_U$ assumes standard form, in the following we will consider it ordered according to non increasing value of the non-negative entries.
Denoting by $\mathcal{B}^A(U)$ the set of all such bases, one has that any $B \in \mathcal{B}^A(U)$ consists of the  principal vectors  of the pair $(U,AU)$ (ordered as just explained).

The problem to determine the $Sp(n)$-orbits turns therefore into the one of determining the existence of such triple of  bases. In  \cite{Vacpreprint} we set up a procedure  to determine a triple of  \textit{canonical bases} w.r.t. which one  computes the matrices $A$ and $B$. Namely, fixed an admissible basis $(I,J,K)$, a triple of canonical bases of a subspace $U$ is constituted by
a triple of standard bases of $\omega^I,\omega^J,\omega^K$ that either  are  uniquely determined by the procedure aforementioned or, if this is not the case, nevertheless the associated matrices $A,B$ are unique. We call them \textit{canonical matrices} and denote by  $C_{IJ}$ and $C_{IK}$.
If the pair $U,W$ belong the same $Sp(n)$-orbit,  the action of the group maps the canonical basis of $U$ onto the ones of $W$. Therefore the determination of such standard bases let us restrict the statement of Theorem (\ref{main_theorem 1_ existance of canonical bases with same mutual position}) to the canonical matrices leading to the

  \begin{teor} \cite{Vacpreprint} \label{main_theorem 1_ with respect canonical bases}
Let $(I,J,K)$ be  an admissible  basis of $\mathcal{Q}$. The subspaces  $U$ and $W$ of $V$ are in the same $Sp(n)$-orbit  iff $Inv(U)=Inv(W)$ where
\[Inv(U)=\{\bm{\theta}^I(U), \, \bm{\theta}^J(U), \,   \bm{\theta}^K(U), C_{IJ}, \; C_{IK}\}.\]

\end{teor}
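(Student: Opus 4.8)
The plan is to reduce the statement to Theorem (\ref{main_theorem 1_ existance of canonical bases with same mutual position}): the canonical bases are a distinguished choice among the triples of standard bases appearing in condition (2) there, and the canonical matrices $C_{IJ}, C_{IK}$ are precisely the matrices $A, B$ computed with respect to them. Thus the only genuine content is that (i) with canonical bases condition (2) can be tested by the single comparison $C_{IJ}(U)=C_{IJ}(W)$, $C_{IK}(U)=C_{IK}(W)$, and (ii) these canonical matrices are honest $Sp(n)$-invariants; recall moreover that by construction in \cite{Vacpreprint} the matrices $C_{IJ}, C_{IK}$ are independent of which canonical triple one picks.

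First I would prove $Inv(U)=Inv(W)\Rightarrow$ same orbit. Suppose the invariants agree with respect to the fixed admissible basis $(I,J,K)$. Condition (1) of Theorem (\ref{main_theorem 1_ existance of canonical bases with same mutual position}) is immediate, since $\bm{\theta}^I(U),\bm{\theta}^J(U),\bm{\theta}^K(U)$ are part of $Inv(U)$. For condition (2), take $(\{\tilde X_i\},\{\tilde Y_i\},\{\tilde Z_i\})$ to be a triple of canonical bases of $U$ and $(\{\tilde X'_i\},\{\tilde Y'_i\},\{\tilde Z'_i\})$ a triple of canonical bases of $W$; by the very definition of the canonical matrices one has $(<\tilde X_i,\tilde Y_j>)=C_{IJ}(U)$, $(<\tilde X_i,\tilde Z_j>)=C_{IK}(U)$, and similarly for $W$. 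The hypotheses $C_{IJ}(U)=C_{IJ}(W)$ and $C_{IK}(U)=C_{IK}(W)$ then furnish exactly the triples required by condition (2) with $A=A'$ and $B=B'$, so $U$ and $W$ lie in the same $Sp(n)$-orbit.

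Next I would prove the converse. Let $g\in Sp(n)$ with $gU=W$. Since $Sp(n)$ is the centraliser of $\lambda(Sp(1))$ in $SO(4n)$ and $I=R_{-i}, J=R_{-j}, K=R_{-k}$ lie in $\lambda(Sp(1))$, the map $g$ is an isometry commuting with every $A\in\mathcal{Q}$; hence $g(AU)=A(gU)=AW$, so $g$ preserves all principal angles, and $\omega^A(gX,gY)=<gX,A(gY)>=<gX,g(AY)>=<X,AY>$ for all $X,Y\in U$ and all $A\in S(\mathcal{Q})$, so $g$ carries $\omega^A|_U$ isometrically onto $\omega^A|_W$. Consequently $g$ maps the $\omega^A$-invariant subspaces $U_i^A$ onto the corresponding ones of $W$ and, applied step by step, transforms a triple of canonical bases of $U$ produced by the procedure of \cite{Vacpreprint} into a triple of canonical bases of $W$; evaluating $(<g\tilde X_i,g\tilde Y_j>)=(<\tilde X_i,\tilde Y_j>)$ and likewise for the $Z$'s gives $C_{IJ}(U)=C_{IJ}(W)$, $C_{IK}(U)=C_{IK}(W)$, whence $Inv(U)=Inv(W)$. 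For the final clause, the property "$U$ and $W$ lie in the same $Sp(n)$-orbit" makes no reference to an admissible basis, so by the equivalence just established $Inv(U)=Inv(W)$ with respect to one admissible basis if and only if it holds with respect to all of them.

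The main obstacle is the assertion, used in the converse, that $g\in Sp(n)$ intertwines the canonical-basis constructions for $U$ and $W$ — equivalently, that $C_{IJ}$ and $C_{IK}$ are genuine invariants and not artefacts of the choices internal to the procedure of \cite{Vacpreprint}. Settling this means going through that procedure stage by stage — the isolation of the $\omega^A$-invariant subspaces, their reduction to standard form, and the tie-breaking rules that pin down $A$ and $B$ even when the standard bases themselves fail to be unique — and checking at each stage that it commutes with the action of an orthogonal, $\mathcal{Q}$-linear map. Since every elementary step is visibly equivariant, the verification is essentially bookkeeping, but it is the only point that is not a purely formal consequence of Theorem (\ref{main_theorem 1_ existance of canonical bases with same mutual position}).
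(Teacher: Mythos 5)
Your proposal is correct and follows essentially the same route as the paper, which likewise derives this theorem from Theorem (\ref{main_theorem 1_ existance of canonical bases with same mutual position}) by observing that the $Sp(n)$-action carries canonical bases of $U$ to canonical bases of $W$, so condition (2) reduces to comparing the uniquely determined matrices $C_{IJ}, C_{IK}$. The equivariance of the canonical-basis procedure that you flag as the one non-formal point is exactly what the paper delegates to \cite{Vacpreprint}, where the procedure is constructed.
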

It is straightforward to verify that if $Inv(U)=Inv(W)$ w.r.t. the admissible basis $(I,J,K)$ then $Inv(U)=Inv(W)$ for any admissible  basis.


\subsection{The 2-dimensional complex subspace }

Let $(V^{4n}, \mathcal{Q}, < , >)$ be an Hermitian quaternionic  vector space. To study  the $Sp(n)$-orbits of a $2m$-dimensional complex and $\Sigma$-complex subspace of $V$ in the real Grassmannian $Gr^\R(2m,4n)$ we will need the theory of the isoclinic subspaces developed in \cite{Vac_isoclinic}. In particular here we use some of the results obtained regarding the 4-dimensional case as we will see that complex and $\Sigma$-complex subspaces admit an orthogonal  decomposition into 4-dimensional  isoclinic addends (plus a totally complex 2-dimensional subspace if $m$ is odd). Furthermore such addends will be complex by some compatible complex structure and  Hermitian orthogonal in pairs.

In the following then we recall  some results about isoclinic subspaces  referring for a more extensive analysis to \cite{Vac_isoclinic}.

Let initiate our study by considering a 2-dimensional $I$-complex subspace.
 By the skew-symmetry of the $A$-K\"{a}hler form for any $A \in  S(\mathcal{Q})$, any  2-dimensional subspace of $U$ is isoclinic with $AU$.  Therefore  as a set one has that $G_\R(2,4n)= \mathcal{IC}^2$.

 In particular all 2-dimensional complex subspaces are totally complex. The invariants characterizing the $Sp(n)$-orbits in the Grassmannian of 2-planes are determined in  \cite{Vac} where we also studied the analogue problem for the group  $Sp(n) \cdot Sp(1)$.

 Fixed an admissible basis $(I,J,K)$, let consider an oriented 2-dimensional subspace $U \subset V$ generated by the  oriented basis  $(M,L)$. In  \cite{Vac} we introduced the  purely imaginary quaternion
\begin{equation} \label{definition of imaginary measure}
 \mathcal{IM}(U)=
 \frac{Im (L \cdot M)}{mis(L \wedge M)}, \quad  L,M \in U.
 \end{equation}
 We showed  that it is  an intrinsic property of  an oriented 2-plane \mbox{$U \subset (V^{4n}, \mathcal{Q}, <,>)$}  i.e. it does not depend neither on  the chosen generators $L,M$ nor on the admissible  basis $\mathcal{H}$ of $\mathcal{Q}$.  Moreover $Sp(n)$ preserves $\mathcal{IM}(U)$.
In particular, if the pair  $L,M$ is an orthonormal oriented  basis of $U$, then $\mathcal{IM}(U)=L \cdot M$.
   We called it \textbf{Imaginary measure} of the 2-plane $U$.
     Disregarding the orientation of  $U$  and being  $(L,M)$  some orthonormal basis,  it is $\mathcal{IM}(U)= \{\pm \, L \cdot M\}$ i.e.  it is the set made of a pair of conjugated pure imaginary quaternions.
  We  proved   that
   \begin{teor} \cite{Vac}  \label{Sp(n)-orbit of a 2-plane}
   The imaginary measure $\mathcal{IM}(U)$ represents  the full system of invariants for the $Sp(n)$-orbits in the real Grassmannian of 2-planes $G_\R(2,4n)$ as well as in $G_\R^+(2,4n)$ (the Grassmannian  of the oriented  2-planes) i.e. a pair of 2-planes $(U,W)$ of $(V^{4n}, \mathcal{Q}, <,>)$ are in the same $Sp(n)$-orbit iff $\mathcal{IM}(U)=\mathcal{IM}(W)$.
 \end{teor}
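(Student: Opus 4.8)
The plan is to prove the two implications separately, relying on Theorem (\ref{transitivity of Sp(n) on subspaces with same Hermitian matrices}) as the main engine. For the forward direction, suppose $AU=W$ for some $A\in Sp(n)$. Since $A\in SO(4n)$ commutes with the action of $Sp(1)$, it is an isometry of $V$ that preserves the Hermitian product; hence for any orthonormal oriented basis $(L,M)$ of $U$, the pair $(AL,AM)$ is an orthonormal oriented basis of $W$ with $AL\cdot AM=L\cdot M$. By the remark preceding the theorem, $\mathcal{IM}(W)=AL\cdot AM=L\cdot M=\mathcal{IM}(U)$ (up to the sign ambiguity coming from orientation, which is exactly what the set-valued convention $\mathcal{IM}(U)=\{\pm L\cdot M\}$ absorbs in the unoriented case; in $G_\R^+(2,4n)$ one keeps the orientation and gets equality on the nose). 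So $\mathcal{IM}(U)=\mathcal{IM}(W)$.

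For the converse, assume $\mathcal{IM}(U)=\mathcal{IM}(W)$. First I would reduce to a normalized situation: pick an orthonormal oriented basis $(X_1,X_2)$ of $U$ and $(Y_1,Y_2)$ of $W$; by intrinsicness of $\mathcal{IM}$ these choices do not affect $X_1\cdot X_2$ resp. $Y_1\cdot Y_2$ beyond the orientation sign, and (in the oriented Grassmannian, or after flipping one basis in the unoriented case) we may arrange $X_1\cdot X_2 = Y_1\cdot Y_2$ as purely imaginary quaternions. Now I want to produce bases of $U$ and $W$ with matching full Hermitian Gram data, i.e. matching $(X_i\cdot X_j)$ for $i,j=1,2$. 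Since $X_1,X_2$ are orthonormal and $U$ is $2$-dimensional, $X_1\cdot X_1 = X_2\cdot X_2 = 1$ (the real part of the Hermitian product is $\langle\cdot,\cdot\rangle$, and a $2$-plane contains no quaternionic line so there is no imaginary contribution to $X_i\cdot X_i$ — more directly, $\langle X_i, AX_i\rangle=0$ for all $A\in S(\mathcal{Q})$ by skew-symmetry of $\omega^A$, so $X_i\cdot X_i=1$). The same holds for $Y_1,Y_2$. Hence the entire Hermitian Gram matrix $(X_i\cdot X_j)$ is $\begin{pmatrix}1 & X_1\cdot X_2\\ \overline{X_1\cdot X_2} & 1\end{pmatrix}$, determined completely by the single off-diagonal entry, which equals $\mathcal{IM}(U)$. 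Thus $\mathcal{IM}(U)=\mathcal{IM}(W)$ forces $(X_i\cdot X_j)=(Y_i\cdot Y_j)$ for all $i,j$, and Theorem (\ref{transitivity of Sp(n) on subspaces with same Hermitian matrices}) hands us the required $A\in Sp(n)$ with $AU=W$.

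The step I expect to require the most care is the bookkeeping around orientation: $\mathcal{IM}$ is genuinely an invariant of the \emph{oriented} $2$-plane, and the theorem claims it is a complete invariant both in $G_\R^+(2,4n)$ and in the unoriented $G_\R(2,4n)$. In the oriented setting the argument above is clean. In the unoriented setting one has to check that $Sp(n)$ can realize the orientation reversal whenever $\mathcal{IM}(U)$ and $\mathcal{IM}(W)$ agree only up to the conjugation $q\mapsto -q$ — equivalently, that a $2$-plane and its orientation-reversal always lie in the same $Sp(n)$-orbit. This follows because reversing the orientation of $U=L(X_1,X_2)$ replaces the basis $(X_1,X_2)$ by $(X_2,X_1)$, whose Hermitian Gram matrix is the conjugate transpose of the original; since that conjugate-transposed matrix is again a valid Hermitian Gram matrix of $U$ itself (with the swapped basis), Theorem (\ref{transitivity of Sp(n) on subspaces with same Hermitian matrices}) applied with $W=U$ and the two orderings of the basis shows the orbit is insensitive to orientation. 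Everything else is the straightforward linear algebra of $2\times 2$ Hermitian matrices over $\mathbb{H}$, together with the two cited theorems.
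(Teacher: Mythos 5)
Your argument is correct, and it is a legitimate self-contained proof, but it is not the route the paper takes: the theorem is imported from \cite{Vac} without proof, and the only justification offered in this paper is an \emph{a posteriori} consistency check, namely the identity $\mathcal{IM}(U)= X_1 \cdot X_2= \pm( \cos \theta^I i + \xi \cos \theta^J j + \chi \cos \theta^K k)$ w.r.t.\ an $\omega^I$-standard basis, which shows that the datum $\mathcal{IM}(U)$ is equivalent to the invariants $(\theta^I,\theta^J,\theta^K,\xi,\chi)$ entering Theorem (\ref{main_theorem 1_ existance of canonical bases with same mutual position}). You instead derive the statement directly from Theorem (\ref{transitivity of Sp(n) on subspaces with same Hermitian matrices}) by observing that for an orthonormal basis of a $2$-plane the full Hermitian Gram matrix is $\left(\begin{smallmatrix}1 & q\\ \bar q & 1\end{smallmatrix}\right)$ with $q=\mathcal{IM}(U)$ purely imaginary, so equality of the imaginary measures is literally equality of the Gram data; this is cleaner and the paper itself gestures at it when it remarks, for complex $2$-planes, that "the same results follows from Theorem (\ref{transitivity of Sp(n) on subspaces with same Hermitian matrices})" via the matrix (\ref{Hermitian matrix of a 2 dimensional complex subspace}). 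Two small remarks. First, your closing paragraph on orientation is the one weak spot: the reduction "the unoriented statement is equivalent to a $2$-plane and its orientation-reversal lying in the same $Sp(n)$-orbit" is not right (in $G_\R^+(2,4n)$ that would force $\mathcal{IM}(U)=0$), and the conjugate-transpose argument there proves nothing beyond the trivial fact that $U$ lies in its own unoriented orbit; but this paragraph is also unnecessary, since the correct handling — replace $(Y_1,Y_2)$ by $(Y_2,Y_1)$ so that the representatives of $\mathcal{IM}$ match — already appears in your converse step. Second, for the oriented Grassmannian you implicitly use that the $A\in Sp(n)$ produced by Theorem (\ref{transitivity of Sp(n) on subspaces with same Hermitian matrices}) carries the chosen basis of $U$ to the chosen basis of $W$ (hence respects orientations); this is how that theorem is proved in \cite{Vacpreprint}, but it is worth stating explicitly since the bare "$AU=W$" of the statement does not record it.
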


Let   consider   a triple of  standard bases $(X_1,X_2), (X_1,Y_2),(X_1,Z_2)$ with a common leading vector $X_1$ of the non oriented 2-plane $U$. 
 By definition one has that  $\cos \theta^I=<X_1,IX_2>,\cos \theta^J=<X_1,JY_2>,\cos \theta^K=<X_1,KZ_2>$ are non negative, and computed $\xi=<X_2,Y_2>,\chi=<X_2,Z_2>, \eta=<Y_2,Z_2>$, where $(\xi,\chi,\eta) \in \{-1,1\}$ one has that
the matrices $C_{IJ}$ and  $C_{IK}$ w.r.t. the standard bases $(X_1,X_2),(X_1,Y_2)(X_1,Z_2)$  are given by 
\begin{equation} \label{canonical matrices of a 2-dimensional subspace}
C_{IJ}:
 \left(   \begin{array}{cc}
1 & 0 \\
0 & \xi
\end{array}      \right) \qquad
C_{IK}:
 \left(   \begin{array}{cc}
1 & 0 \\
0 & \chi
\end{array}
\right)
\end{equation}
It is straightforward to verify that  the pair $(\xi,\chi)$ is an invariant of $U$. Then any triple of standard bases of $\omega^I,\omega^J,\omega^k$ with a common leading vector are canonical bases of $U$ whose canonical matrices are given in (\ref{canonical matrices of a 2-dimensional subspace}).
 Therefore, according to Theorem (\ref{main_theorem 1_ existance of canonical bases with same mutual position}), the pair $(\xi,\chi)$ together with the triple $(\theta^I,\theta^J,\theta^K)$,
 determines  the $Sp(n)$-orbits of the (non oriented) 2-plane $U$.

This is accordance with the Theorem (\ref{Sp(n)-orbit of a 2-plane}). In fact,   If $U$ has a triple orthogonality then  clearly $\mathcal{IM}(U)=0$. In this case any orthonormal basis is at the same time a  standard basis of $\omega^I,\omega^J,\omega^K$ which implies $\xi=\chi=1$.   Else,  suppose without lack of generality that $\cos \theta^I \neq 0$ and  let
$(X_1,X_2)$ be an $\omega^I$-standard basis. Then
\[\mathcal{IM}(U)= X_1 \cdot X_2= \pm( \cos \theta^I i + \xi \cos \theta^J j + \chi \cos  \theta^K k).\]

 Given a pair  of 2-planes  $U,W$ with $\mathcal{IM}(U)=\mathcal{IM}(W)$,  according to Theorem (\ref{Sp(n)-orbit of a 2-plane}), they are in the same orbit. Since   they share the same  pair $(\xi,\eta)$ and the same triple $(\theta^I,\theta^J,\theta^K)$,   they are in the same $Sp(n)$-orbit also according to
Theorem (\ref{main_theorem 1_ existance of canonical bases with same mutual position}). Viceversa  if they share the same  pair $(\xi,\eta)$ and the same triple $(\theta^I,\theta^J,\theta^K)$  which implies that they belong to the same $Sp(n)$-orbit according to  Theorem (\ref{main_theorem 1_ existance of canonical bases with same mutual position}) then clearly  $\mathcal{IM}(U)=\mathcal{IM}(W)$.

A 2-dimensional $I$-complex subspace is totally complex being clearly  $U \perp JU=KU$. Then for any admissible basis $(I,J,K)$ one has $(\theta^I,\theta^J,\theta^K)=(1,0,0)$. Furthermore for $X \in U$ the $\omega^I$-standard basis  $(X,-IX)$ can be considered a standard basis of $\omega^{K'}$ centered on $X$ for any $K' \in I^\perp$. Then we assume $\xi=\chi=1$ and w.r.t. any standard and canonical $\omega^I$-basis, for the canonical matrices one has $C_{IJ}= C_{IK}=Id$. Then we can conclude affirming
\begin{teor} \label{Sp(n) orbita di un 2 piano complesso}
All and only the $I$-complex 2-dimensional subspaces of  $(V^{4n}, \mathcal{Q}, < , >)$  belong to one $Sp(n)$-orbit  in $Gr^\R(2,4n)$ or equivalently $Sp(n)$ is transitive on the set of the 2-dimensional $I$-complex subspaces of $V$.
\end{teor}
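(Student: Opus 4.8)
The plan is to verify the hypotheses of Theorem~(\ref{main_theorem 1_ with respect canonical bases}) for any two $I$-complex $2$-dimensional subspaces $U$ and $W$, i.e.\ to show that $Inv(U)=Inv(W)$. Fix an admissible basis $(I,J,K)$ once and for all. The first ingredient is that a $2$-dimensional $I$-complex subspace is automatically totally $I$-complex (this is already remarked in the excerpt: a $2$-dimensional $I$-complex subspace equals $L(X,IX)$ and $JU=KU$ is strictly orthogonal to $U$). Consequently the pair $(U,IU)$ coincides with $(U,U)$, which has both principal angles equal to $0$, while $(U,JU)$ and $(U,KU)$ are strictly orthogonal, so both principal angles are $\pi/2$. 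Hence $\bm{\theta}^I(U)=(0,0)$, $\bm{\theta}^J(U)=\bm{\theta}^K(U)=(\pi/2,\pi/2)$, and the same triple of principal-angle vectors is obtained for $W$. This disposes of condition~(1) in Theorem~(\ref{main_theorem 1_ existance of canonical bases with same mutual position}).

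Next I would compute the canonical matrices. Pick any unit vector $X_1\in U$ and set $X_2=-IX_1$; then $(X_1,X_2)$ is an orthonormal basis of $U$ with $\omega^I(X_1,X_2)=\langle X_1, IX_2\rangle=\langle X_1, X_1\rangle=1\ge 0$, so $(X_1,X_2)$ is an $\omega^I$-standard basis of $U$. Since $\omega^J|_U\equiv 0$ (because $U\perp JU$ means $\langle X,JY\rangle=0$ for all $X,Y\in U$) and likewise $\omega^K|_U\equiv 0$, \emph{every} orthonormal basis of $U$ is simultaneously an $\omega^J$- and an $\omega^K$-standard basis. In particular $(X_1,X_2)$ itself serves as a common leading-vector triple of standard bases for $\omega^I,\omega^J,\omega^K$. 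With the notation of the excerpt, $Y_2 = J^{-1}Pr^{JU}_U X_1/\cos\theta^J$ is ill-defined as written because $\cos\theta^J=0$; the correct reading, consistent with the $2$-dimensional discussion preceding Theorem~(\ref{Sp(n) orbita di un 2 piano complesso}), is that one simply \emph{chooses} $Y_2=X_2$ and $Z_2=X_2$, whence $\xi=\langle X_2,Y_2\rangle=1$, $\chi=\langle X_2,Z_2\rangle=1$. Formula~(\ref{canonical matrices of a 2-dimensional subspace}) then gives $C_{IJ}=\mathrm{Id}$ and $C_{IK}=\mathrm{Id}$ for $(U,I)$, and the identical computation yields $C_{IJ}=C_{IK}=\mathrm{Id}$ for $(W,I)$.

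Putting the two parts together, $Inv(U)=\{(0,0),(\pi/2,\pi/2),(\pi/2,\pi/2),\mathrm{Id},\mathrm{Id}\}=Inv(W)$, so by Theorem~(\ref{main_theorem 1_ with respect canonical bases}) there is an element of $Sp(n)$ carrying $U$ to $W$; thus all $I$-complex $2$-planes form a single orbit. Conversely, since $Sp(n)$ commutes with every $A\in\mathcal Q$, it preserves the property of being $I$-complex, so no other $2$-plane lies in this orbit. One may equally route the argument through Theorem~(\ref{Sp(n)-orbit of a 2-plane}): for an $I$-complex $2$-plane with orthonormal basis $(X_1,-IX_1)$ one has $\mathcal{IM}(U)=X_1\cdot(-IX_1)=i$ (up to sign and up to the choice of adapted basis), which is the same for all such planes. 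The only point needing care is the degenerate-projector issue flagged above — that the bracketed formulas for $Y_2,Z_2$ must be replaced by a direct choice when the corresponding K\"ahler angle is $\pi/2$ — but once that is acknowledged the verification is immediate, and I expect no genuine obstacle here; this theorem is essentially the base case that feeds the higher-dimensional inductions later in the paper.
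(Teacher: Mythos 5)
Your argument is essentially the paper's own: the author likewise observes that a $2$-dimensional $I$-complex subspace is totally complex, so that the cosines of the angles of isoclinicity are $(1,0,0)$, takes $\xi=\chi=1$ by declaring the $\omega^I$-standard basis $(X,-IX)$ to be a standard basis of $\omega^{K'}$ for every $K'\in I^\perp$ (exactly your resolution of the degenerate-projector issue), obtains $C_{IJ}=C_{IK}=\mathrm{Id}$, and invokes Theorem~(\ref{main_theorem 1_ with respect canonical bases}); the paper also records your alternative route via $\mathcal{IM}(U)=\pm i$ and Theorem~(\ref{Sp(n)-orbit of a 2-plane}). The proposal is correct and adds only a slightly more explicit justification of the "only" direction via the fact that $Sp(n)$ commutes with $\mathcal{Q}$.
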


We obtain the same result applying the Theorem (\ref{Sp(n)-orbit of a 2-plane}) since  if $U$ is an  $I$-complex  2-plane then  $\mathcal{IM}(U)= \pm i$.

Furthermore, w.r.t. any $\omega^I$-standard basis, the matrix of the Hermitian product  is given by
\begin{equation} \label{Hermitian matrix of a 2 dimensional complex subspace}
H_\mathcal{B}(U)= \left(   \begin{array}{cc}
0  &  i \\
\\
-i &  0
\end{array}    \right).
\end{equation}

  Then again the same results follows from Theorem (\ref{transitivity of Sp(n) on subspaces with same Hermitian matrices}). 

\vskip .4cm

For completeness  we report the corresponding result appearing in   \cite{Vac} for the group $Sp(n) \cdot Sp(1)$.  There, we first
extended to the Hermitian quaternionic vector space $V$  some notions and results of a vector space endowed with a complex structure (see \cite{Riz2}).  In \cite{BR1},  considering a 2-plane $U\subset V$ spanned by  the pair $(L,M)$, it has been introduced
\begin{equation} \label{characteristic deviation}
\Delta(U)=\mathcal{N}(\mathcal{IM}(U))=\frac{\mathcal{N} [Im(L \cdot M)]}{mis^2(L \wedge M)}=\frac{<L,IM>^2+<L,JM>^2+<L,KM>^2}{mis^2(L \wedge M)}.
\end{equation}
In particular, in case the basis $L,M$ is orthonormal, $\Delta(U)= \mathcal{N} (L \cdot M)$.
It  is a real number  belonging to the close interval $[0,1]$ and equals 1 iff  $\dim U^\mathbb{H}=1$.
It has been proved that the quantity $\Delta(U)$  is an intrinsic property of a 2-plane (see (\ref{sum of square cosines of I,J,K-kaehòer angles of a 2-plane}))  preserved by the action of the group $Sp(n) \cdot Sp(1)$ on $V$.

We called the angle $\delta(U) \in [0,\pi/2]$ such that  $\cos^2 \delta(U)= \Delta(U)$
 the  \textbf{characteristic deviation of the real 2-plane} $U \subset V$. Moreover
\[\Delta (U)= cos^2 \delta(U)= \cos (\widehat{U,IU})  + \cos (\widehat{U,JU}) + \cos (\widehat{U,KU})\]
where $\cos(\widehat{U,IU})$ (resp. $\cos(\widehat{U,JU})$, $\cos(\widehat{U,KU})$) denotes the cosine of the angle between the pairs  of 2-planes $(U,IU)$ (resp. $(U,JU)$, $(U,KU)$).

In Proposition (\ref{invariance of the sum of the 3 angles (U,IU),(U,JU),(U,KU)}) we showed that such  quantity does not depend on the admissible basis $(I,J,K)$ of $\mathcal{Q}$.

 Generalizing the definition of the characteristic deviation given for a 2-plane,  in  (\cite{Vac}) we find the following
\begin{defi} \label{definition of characteristic deviation of any subspace}
Let $(X_1, \ldots, X_m)$ be an orthonormal basis of an $m$-dimensional subspace $U$. Denote by  $U_{rs}=L(X_r, X_s), \, r \neq s=1, \ldots, m$. We call the quantity
\begin{equation}\label{characteristic deviation of a subspace}
\Delta(U)=\left( \begin{array}{c} m \\ 2 \end{array} \right)^{-1} \sum_{r<s}  \Delta(U_{rs}) \end{equation}
the \textbf{characteristic deviation of the subspace $U^m$}.
\end{defi}

There we proved that the characteristic deviation   is an intrinsic property of a subspace $U \subset V$ i.e. depends neither on the  admissible basis  of  $\mathcal{Q}$  (it is a consequence of Proposition (\ref{invariance of the sum of the 3 angles (U,IU),(U,JU),(U,KU)})) nor on the chosen orthonormal basis of $U$ which determines the 2-planes $U_{rs}$ in   (\ref{characteristic deviation of a subspace}).  We proved the following

\begin{teor} \cite{Vac}
The characteristic deviation $\delta$ determines completely the orbit of the  2-plane $U\subset \mathbb{H}^n$ in the real Grassmannian $G_\R(2,4n)$ under the action of $Sp(n) \cdot Sp(1)$
\end{teor}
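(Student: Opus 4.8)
The plan is to leverage Theorem (\ref{Sp(n)-orbit of a 2-plane}), which already identifies the imaginary measure $\mathcal{IM}$ as a complete invariant for the $Sp(n)$-orbits of $2$-planes, together with the description of $Sp(n)\cdot Sp(1)$ as the group generated by $Sp(n)$ and the unitary homotheties $R_q\colon X\mapsto Xq$, $q\in Sp(1)$. The first step I would carry out is to record the transformation law of $\mathcal{IM}$ under $R_q$: choosing an orthonormal basis $(L,M)$ of $U$, the pair $(Lq,Mq)$ is an orthonormal basis of $R_q(U)$ since $R_q\in SO(4n)$, and a direct computation from the defining formula $\mathbf{h}\cdot\mathbf{h'}=\sum_\alpha\overline{h_\alpha}h'_\alpha$ of the Hermitian product gives $(Lq)\cdot(Mq)=\bar q\,(L\cdot M)\,q$; since $L\cdot M=\mathcal{IM}(U)$ for an orthonormal basis, this means $\mathcal{IM}(R_q U)=\bar q\,\mathcal{IM}(U)\,q$, i.e. $R_q$ acts on the (purely imaginary) imaginary measure by quaternionic conjugation by $q$.

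Necessity is then immediate: by (\ref{characteristic deviation}) one has $\Delta(U)=\mathcal{N}(\mathcal{IM}(U))$, and $Sp(n)$ fixes $\mathcal{IM}$ (Theorem (\ref{Sp(n)-orbit of a 2-plane})) while conjugation by a unit quaternion preserves $\mathcal{N}$ (as $\mathcal{N}(p)=|p|^2$ and $|\bar q p q|=|p|$), so every element of $Sp(n)\cdot Sp(1)$ preserves $\Delta$, hence $\delta$. This is just the already recalled fact that $\Delta$ is an intrinsic property of a $2$-plane preserved by $Sp(n)\cdot Sp(1)$.

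For sufficiency, suppose $\delta(U)=\delta(W)$, equivalently $\mathcal{N}(\mathcal{IM}(U))=\mathcal{N}(\mathcal{IM}(W))$. Then $\mathcal{IM}(U)$ and $\mathcal{IM}(W)$ are purely imaginary quaternions lying on the same sphere of $Im\,\mathbb{H}\cong\R^3$, so by transitivity of the conjugation action of $Sp(1)$ on that sphere (the standard double cover $Sp(1)\to SO(3)$) there is $q\in Sp(1)$ with $\bar q\,\mathcal{IM}(U)\,q=\mathcal{IM}(W)$; when $\mathcal{IM}$ is understood as the unordered pair $\{\pm L\cdot M\}$ of a non-oriented $2$-plane, the same $q$ still works since both signs lie on that sphere. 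By the transformation law above, $\mathcal{IM}(R_q U)=\mathcal{IM}(W)$, so Theorem (\ref{Sp(n)-orbit of a 2-plane}) gives $A\in Sp(n)$ with $A\bigl(R_q(U)\bigr)=W$; then $T_{(A,q)}\colon X\mapsto AXq$ lies in $Sp(n)\cdot Sp(1)$ and maps $U$ onto $W$. The only genuine content is the middle step — that two pure imaginary quaternions of equal norm are conjugate under $Sp(1)$ — and it is elementary; the degenerate case $\Delta=1$ (equivalently $\dim U^{\mathbb{H}}=1$) requires no separate treatment, $\mathcal{IM}(U)$ being then merely a unit pure imaginary quaternion.
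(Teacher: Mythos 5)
Your proof is correct. The paper itself gives no proof of this theorem --- it is only cited from \cite{Vac} --- but your argument (the transformation law $\mathcal{IM}(R_qU)=\bar q\,\mathcal{IM}(U)\,q$, preservation of $\mathcal{N}$ under conjugation for necessity, transitivity of the $Sp(1)$-conjugation action on spheres in $Im\,\mathbb{H}$ to reduce sufficiency to Theorem (\ref{Sp(n)-orbit of a 2-plane})) is precisely the route the surrounding text sets up, including the remark that $\Delta=\mathcal{N}(\mathcal{IM})$ is preserved by $Sp(n)\cdot Sp(1)$ and the description of that group as the transformations $T_{(A,q)}:X\mapsto AXq$.
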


Since for of a 2-dimensional complex subspace  $U$ one has $\Delta(U)=1$ we can state the
\begin{coro}
The group $Sp(n) \cdot Sp(1)$ acts transitively on the set of 2-dimensional complex subspaces.
\end{coro}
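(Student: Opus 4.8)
The plan is to read the statement off the theorem just proved, namely that the characteristic deviation $\delta$ (equivalently $\Delta=\cos^{2}\delta$) is a complete invariant of the $Sp(n)\cdot Sp(1)$-orbit of a $2$-plane in $G_\R(2,4n)$: it therefore suffices to show that $\Delta(U)=1$, i.e. $\delta(U)=0$, for \emph{every} $2$-dimensional complex subspace $U$, since then all of them carry the same invariant and hence lie in one orbit.

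First I would write $U=L(X,AX)$ for a unit vector $X\in V$ and some $A\in S(\mathcal{Q})$ with $AU=U$ (such an $A$ exists by the definition of complex subspace, and $X,AX$ are independent because a complex structure has no real eigenvalue). Since $<,>$ is Hermitian, $A$ is skew-symmetric, so $<X,AX>=0$ and $|AX|=|X|=1$; thus $(X,AX)$ is an orthonormal basis of $U$ and, by the definition of $\Delta$ on an orthonormal pair, $\Delta(U)=\mathcal{N}(X\cdot AX)$. Because $\Delta$ is an intrinsic property of the $2$-plane, i.e. it does not depend on the admissible basis of $\mathcal{Q}$ (Proposition (\ref{invariance of the sum of the 3 angles (U,IU),(U,JU),(U,KU)})), I may choose the admissible basis $(I,J,K)$ so that $I=A$; expanding $X\cdot IX$ by formula (\ref{relation between scalar product and hermitian product of H^n}) and using $<X,IX>=0$, $<X,I^{2}X>=-1$, $<X,JIX>=-<X,KX>=0$, $<X,KIX>=<X,JX>=0$, one finds $X\cdot IX=-i$, hence $\Delta(U)=\mathcal{N}(-i)=1$. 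Equivalently, and even more quickly, $U=L(X,AX)\subseteq\mathcal{Q}X$ forces the quaternionification $U^{\mathbb{H}}$ to equal the quaternionic line $\mathcal{Q}X$, which by the characterization of the extreme value $\Delta=1$ recalled after (\ref{characteristic deviation}) already gives $\Delta(U)=1$.

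There is no real obstacle here; the only point worth a word is that $\mathcal{IM}(U)$ is defined only up to sign on a non-oriented $2$-plane, but this ambiguity is killed by taking the norm in $\Delta(U)=\mathcal{N}(\mathcal{IM}(U))$, so the invariant is genuinely well defined and the quoted theorem applies unchanged. Combining $\Delta(U)=1$ for all $2$-dimensional complex $U$ with that theorem yields that these subspaces form a single $Sp(n)\cdot Sp(1)$-orbit. As a cross-check, one may also reach the conclusion without the deviation, by combining Theorem (\ref{Sp(n) orbita di un 2 piano complesso}), which says the $I$-complex $2$-planes form one $Sp(n)$-orbit for fixed $I$, with the fact that a unitary homothety $R_{q}\in\lambda(Sp(1))\subset Sp(n)\cdot Sp(1)$ conjugates $I$ to an arbitrary complex structure of $S(\mathcal{Q})$, since $q\mapsto R_{q}^{-1}IR_{q}$ realizes the full $SO(3)$-action on $S(\mathcal{Q})$.
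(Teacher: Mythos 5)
Your proposal is correct and follows essentially the same route as the paper: the paper derives the corollary directly from the preceding theorem (the characteristic deviation is a complete $Sp(n)\cdot Sp(1)$-invariant of a $2$-plane) together with the observation that $\Delta(U)=1$ for any $2$-dimensional complex subspace, which is exactly what you prove. Your explicit verification that $X\cdot IX=-i$ (equivalently that $U^{\mathbb{H}}=\mathcal{Q}X$ is a quaternionic line) merely fills in the detail the paper leaves implicit, and the well-definedness remark about the sign of $\mathcal{IM}(U)$ is a sound, if minor, addition.
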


\subsection{Some results for isoclinic 4-dimensional  subspaces}
Let $I \in S(\mathcal{Q})$. We now consider the case of a pure $I$-complex subspace $(U,I)$ of the Hermitian quaternionic vector space $(V^{4n}, \mathcal{Q},<,>)$.  In point (\ref{for any K in I perp  KU=JU is I-complex}) of the Claim (\ref{properties of some quternionic,complex and real subspaces}) we  observed that, if $K,K' \in I^\perp \cap  S(\mathcal{Q})$  then $KU=K'U$. Furthermore the subspace $KU$ is $I$-complex. We will deal first with a 4-dimensional pure complex  subspace since, we will show later, it is the fundamental brick to   determine the $Sp(n)$-orbit of The complex and $\Sigma$-complex subspaces.

As we will soon show, a 4-dimensional complex subspace $U$ is isoclinic i.e.  $U \in \mathcal{IC}^4$.  The study of isoclinic subspaces is carried on in \cite{Vac_isoclinic}. Here we briefly recall the results we need in this paper 
referring to  \cite{Vac_isoclinic} both  for proofs and a deeper analysis.

\begin{prop} \cite{Vac_isoclinic} \label{general form for the matrix of projector omega^A of an isoclinic 4 dimensional subspace}
Let $A \in S(\mathcal{Q})$. The pair $(U,AU)$ of 4-dimensional subspaces is isoclinic iff the matrix of $\omega^A$ w.r.t. the orthonormal basis $(X_1,X_2,X_3,X_4)$  has the form
\begin{equation}\label{matrix of projector omega^A of an isoclinic 4 dimensional subspace}
\omega^A:
 \left(   \begin{array}{cccc}
0 & a & b & c\\
-a & 0 & \pm c & \mp b\\
-b & \mp  c & 0 & \pm a\\
-c & \pm b & \mp a & 0
\end{array}      \right).
\end{equation}
\end{prop}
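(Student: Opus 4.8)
The plan is to prove both implications by a direct computation using the definition of isoclinicity in the form $GG^t=\cos^2\phi\,\mathrm{Id}$, where $G$ is the matrix of the orthogonal projector $Pr_{U}^{AU}$ with respect to orthonormal bases. First I would relate $\omega^A|_U$ to this projector: for $X,Y\in U$ we have $\omega^A(X,Y)=\langle X,AY\rangle=-\langle AX,Y\rangle$, and since $A$ is an orthogonal complex structure, $AX$ decomposes as its orthogonal projection onto $U$ plus a component in $U^\perp$; hence $\langle AX,Y\rangle=\langle Pr_U^{AU}\,\text{(something)},\ldots\rangle$. More precisely, writing $A X = (Pr_{AU}^{U}$-type projection$)$, one checks that the matrix $M=(\omega^A_{ij})=(\langle X_i,AX_j\rangle)$ of $\omega^A|_U$ in an orthonormal basis $(X_1,\dots,X_4)$ of $U$ satisfies $MM^t = -M^2$ being related to $G G^t$ for the projector; indeed the composite $Pr_U^{AU}\circ A|_U$ has matrix $M$ (up to transpose/sign conventions), so the singular values of $M$ coincide with the cosines of the principal angles of $(U,AU)$. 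Therefore $(U,AU)$ is isoclinic with angle $\phi$ if and only if $M M^t = \cos^2\phi\,\mathrm{Id}$, i.e.\ $M$ is (a nonnegative multiple of) an orthogonal matrix, where moreover $M$ is skew-symmetric.

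Next I would classify the $4\times 4$ skew-symmetric matrices $M$ with $MM^t=\lambda\,\mathrm{Id}$ for some $\lambda\ge 0$. Writing
\[
M=\begin{pmatrix} 0 & a & b & c\\ -a & 0 & d & e\\ -b & -d & 0 & f\\ -c & -e & -f & 0\end{pmatrix},
\]
the condition $MM^t=\lambda\,\mathrm{Id}$ (equivalently $-M^2=\lambda\,\mathrm{Id}$) gives, from the diagonal entries, $a^2+b^2+c^2=b^2+d^2+e^2=\ldots=\lambda$, and from the off-diagonal entries three bilinear relations of Plücker type, namely $af-be+cd=0$ together with the cancellations forcing $d^2=a^2$, $e^2=a^2$ is not quite it — the correct outcome is $d=\pm c$, $e=\mp b$, $f=\pm a$ with a single consistent choice of signs, plus $af-be+cd=0$ which is then automatically satisfied. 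This is exactly the announced form~(\ref{matrix of projector omega^A of an isoclinic 4 dimensional subspace}). Conversely, a matrix of that form satisfies $MM^t=(a^2+b^2+c^2)\,\mathrm{Id}$ by a direct check, so $(U,AU)$ is isoclinic with $\cos^2\phi=a^2+b^2+c^2$.

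The main obstacle I expect is the bookkeeping in the sign analysis: one must show that the three off-diagonal equations of $MM^t=\lambda\,\mathrm{Id}$ do not merely constrain $d^2,e^2,f^2$ but actually pin down $(d,e,f)=(\pm c,\mp b,\pm a)$ with the signs \emph{linked}, and that no ``mixed-sign'' solution survives. This is the linear-algebra heart of the statement; it is a finite computation but needs care. I would organize it by first using the diagonal relations to get all of $a^2+b^2+c^2$ equal pairwise, then subtracting pairs to deduce e.g.\ $c^2=d^2$, $b^2=e^2$, $a^2=f^2$, and finally plugging back into one off-diagonal relation such as $(MM^t)_{12}=0$, i.e.\ $bd+ce=0$ wait—$(MM^t)_{12}= b f$-type terms—to force the sign correlation. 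Once the admissible sign patterns are enumerated (they reduce, after an overall irrelevant choice, to the two cases $\pm$ displayed), the converse direction is the routine verification that such $M$ is a scalar multiple of an orthogonal matrix, completing the equivalence via the reformulation of isoclinicity in terms of $\omega^A|_U$ established in the first paragraph.
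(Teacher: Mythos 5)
Your proposal is essentially correct, and there is nothing in this paper to compare it against: the proposition is imported verbatim from \cite{Vac_isoclinic} and no proof is given here. Your route is the natural one. Two small points would tighten it. First, the hedge ``up to transpose/sign conventions'' is unnecessary: since $A$ is orthogonal, $(AX_1,\dots,AX_4)$ is an orthonormal basis of $AU$, so the Gram matrix $G=(\langle X_i,AX_j\rangle)$ of the projector $Pr^{U}:AU\to U$ w.r.t.\ these bases \emph{is} exactly the matrix $M$ of $\omega^A|_U$, and the cosines of the principal angles are its singular values; isoclinicity is then literally condition (2) of Definition (\ref{definition of isoclinicity}), i.e.\ $MM^t=-M^2=\cos^2\phi\,\mathrm{Id}$. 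Second, the sign analysis does close up as you claim: the four diagonal equations give $f^2=a^2$, $e^2=b^2$, $d^2=c^2$, and writing $f=\epsilon_1 a$, $e=\epsilon_2 b$, $d=\epsilon_3 c$, the six off-diagonal equations reduce to $(\epsilon_1+\epsilon_2)ab=(\epsilon_2+\epsilon_3)bc=(\epsilon_1-\epsilon_3)ac=0$ together with $(1+\epsilon_1\epsilon_2)ab=(1+\epsilon_2\epsilon_3)bc=(1-\epsilon_1\epsilon_3)ac=0$, forcing $\epsilon_1=\epsilon_3=-\epsilon_2$ whenever $a,b,c$ are all nonzero (and imposing no constraint on the irrelevant signs when some vanish); the two surviving patterns are exactly the upper and lower choices in (\ref{matrix of projector omega^A of an isoclinic 4 dimensional subspace}), and the converse is the one-line check $MM^t=(a^2+b^2+c^2)\,\mathrm{Id}$.
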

It is a matrix with orthogonal rows and columns  
whose square norms  evidently  equal the square cosine of  the angle of isoclinicity  $\theta^A$ between the pair $(U,AU)$ i.e. 
\[
\cos \theta^A= \sqrt{a^2 + b^2 + c^2}.
\]

Then we proved the following result which is valid only in dimension  4 (besides obviously  in dimension 2 being all 2-planes  isoclinic).
\begin{prop} \cite{Vac_isoclinic} \label{isoclinicity w.r.t. one hypercomplex basis implies isoclinicity w.r.t. any compatible complex structure}
Let $U$ be  a 4 dimensional subspace and $(I,J,K)$ an admissible basis. Suppose the pairs $(U,IU)$, $(U,JU)$, $(U,KU)$ are isoclinic and $\theta^I, \theta^J, \theta^K$ the respective angles of isoclinicity. Then for any  $A= \alpha_1 I + \alpha_2 J + \alpha_3 K \in S\mathcal(Q)$  the pair $(U,AU)$ is isoclinic and therefore $U \in \mathcal{IC}^4$.
The angle of isoclinicity $\theta^A$ between the pair $(U,AU)$  is given by
\begin{equation} \label{angle of isoclinicity of a 4 dimensional subspace isoclinic w.r.t. an hypercomplex basis 1}
\cos^2 \theta^A =-\frac{1}{4} Tr [(\alpha_1 \omega^I + \alpha_2 \omega^J + \alpha_3 \omega^K)^2]= -\frac{1}{4} Tr [(\omega^A)^2]
\end{equation}
\end{prop}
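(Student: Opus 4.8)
The plan is to reduce the statement to the normal form of Proposition \ref{general form for the matrix of projector omega^A of an isoclinic 4 dimensional subspace}, together with the elementary observation that $A \mapsto \omega^A|_U$ is $\R$-linear on $\mathcal{Q}$: for $A = \alpha_1 I + \alpha_2 J + \alpha_3 K$ one has $AY = \alpha_1 IY + \alpha_2 JY + \alpha_3 KY$, hence $<X,AY> = \alpha_1<X,IY> + \alpha_2<X,JY> + \alpha_3<X,KY>$ and so $\omega^A|_U = \alpha_1\,\omega^I|_U + \alpha_2\,\omega^J|_U + \alpha_3\,\omega^K|_U$ as skew-symmetric forms on the $4$-dimensional space $U$. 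Fix an orientation of $U$ and split $\Lambda^2 U = \Lambda^2_+ U \oplus \Lambda^2_- U$ into self-dual and anti-self-dual $2$-forms. The matrices (\ref{matrix of projector omega^A of an isoclinic 4 dimensional subspace}) taken with the upper, respectively lower, choice of signs span exactly $\Lambda^2_+ U$, respectively $\Lambda^2_- U$, so the content of Proposition \ref{general form for the matrix of projector omega^A of an isoclinic 4 dimensional subspace} is precisely that $(U,AU)$ is isoclinic iff $\omega^A|_U$ is self-dual or anti-self-dual. (Directly: if $T$ is a skew endomorphism of a $4$-dimensional Euclidean space with parts $T_\pm$, then $TT^t = -T^2 = (|T_+|^2+|T_-|^2)\,Id - 2T_+T_-$, and $T_+T_-$ is symmetric, traceless and invertible whenever both $T_\pm \ne 0$; so condition (2) of Definition \ref{definition of isoclinicity} holds iff $T_+ = 0$ or $T_- = 0$.)

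By hypothesis each of $\omega^I|_U,\ \omega^J|_U,\ \omega^K|_U$ is then of pure type. The key step I would carry out is to prove that, for the fixed orientation, these three forms all have the \emph{same} type. Granting this, every real combination $\omega^A|_U = \alpha_1\,\omega^I|_U + \alpha_2\,\omega^J|_U + \alpha_3\,\omega^K|_U$ lies in the same summand $\Lambda^2_\pm U$, and therefore, by the reformulation of Proposition \ref{general form for the matrix of projector omega^A of an isoclinic 4 dimensional subspace}, the pair $(U,AU)$ is isoclinic for every $A \in S(\mathcal{Q})$, i.e. $U \in \mathcal{IC}^4$.

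The main obstacle is exactly this coincidence of types: a mixed pattern (two self-dual forms and one anti-self-dual, say) must be excluded, and this genuinely uses the quaternionic relation $IJ = K$ and not merely the isoclinicity of the three pairs --- otherwise $\omega^{(I+J)/\sqrt2}|_U$ would already fail to be isoclinic. I would argue as follows: choose an orthonormal basis $(X_1,X_2,X_3,X_4)$ of $U$ in which $\omega^I|_U$ is in the normal form (\ref{matrix of projector omega^A of an isoclinic 4 dimensional subspace}) --- possible since $(U,IU)$ is isoclinic --- reordering the basis if necessary so that the upper sign occurs, and then use the identities linking $<X,IY>,\ <X,JY>,\ <X,KY>$ that follow from $IJ = K$ on $V$, together with the requirement that $(\omega^J|_U)^2$ and $(\omega^K|_U)^2$ be scalars, to force $\omega^J|_U$ and $\omega^K|_U$ into the same form with the same sign. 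An alternative is to compare the restriction to $U$ of the admissible-basis-independent $4$-form $\omega^I\wedge\omega^I + \omega^J\wedge\omega^J + \omega^K\wedge\omega^K$ under the changes of admissible basis that permute and reflect $(I,J,K)$; this rules out every sign pattern except the one in which all three agree. Both routes need a real computation; everything else is formal.

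Finally, the trace formula is immediate. In the basis just described $\omega^A|_U$ is the matrix (\ref{matrix of projector omega^A of an isoclinic 4 dimensional subspace}) with entries $(a,b,c)$ that depend linearly on $(\alpha_1,\alpha_2,\alpha_3)$; multiplying it by its transpose yields $(a^2+b^2+c^2)\,Id$, so $(\omega^A|_U)^2 = -(a^2+b^2+c^2)\,Id$ and $Tr[(\omega^A|_U)^2] = -4(a^2+b^2+c^2) = -4\cos^2\theta^A$, the last equality being the square-norm computation recorded just after Proposition \ref{general form for the matrix of projector omega^A of an isoclinic 4 dimensional subspace}. The first equality in the statement is then the linearity $\omega^A|_U = \alpha_1\,\omega^I|_U + \alpha_2\,\omega^J|_U + \alpha_3\,\omega^K|_U$ already noted.
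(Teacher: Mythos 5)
Your overall framework is the right one, and the parts you do prove are correct: the linearity $\omega^A|_U=\alpha_1\omega^I|_U+\alpha_2\omega^J|_U+\alpha_3\omega^K|_U$, the identification of the two sign choices in (\ref{matrix of projector omega^A of an isoclinic 4 dimensional subspace}) with $\Lambda^2_\pm U$, and the argument via $TT^t=(|T_+|^2+|T_-|^2)Id-2T_+T_-$ showing that $(U,AU)$ is isoclinic iff $\omega^A|_U$ is of pure type. (For what it is worth, the paper itself gives no proof of this proposition --- it is quoted from \cite{Vac_isoclinic} --- so there is no internal proof to compare against.) But your argument stops exactly where the proposition begins: the coincidence of the three duality types is the entire content of the statement, you name it as the main obstacle, sketch two possible strategies, and carry out neither. ``Both routes need a real computation'' is an acknowledgement of a gap, not a proof.

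The more serious problem is that the missing step cannot be supplied from the hypotheses as written: isoclinicity of the three pairs $(U,IU)$, $(U,JU)$, $(U,KU)$ does \emph{not} force $\omega^I|_U,\omega^J|_U,\omega^K|_U$ to have the same type, and the relation $IJ=K$ does not exclude the mixed pattern. Take $V=\mathbb{H}^4$ with $I=R_{-i}$, $J=R_{-j}$, $K=R_{-k}$ and the orthonormal vectors
\[
u_1=(1,0,0,0),\quad u_2=\left(\tfrac{i}{2}+\tfrac{j}{4},\ \tfrac{\sqrt{11}}{4},\ 0,\ 0\right),\quad u_3=(0,0,1,0),\quad u_4=\left(0,\ 0,\ \tfrac{i}{2}-\tfrac{j}{4},\ \tfrac{\sqrt{11}}{4}\right).
\]
Then $u_1\cdot u_2=\tfrac{i}{2}+\tfrac{j}{4}$, $u_3\cdot u_4=\tfrac{i}{2}-\tfrac{j}{4}$, and all other Hermitian products vanish. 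Hence on $U=L(u_1,u_2,u_3,u_4)$ the only nonzero entries of the three forms are $\omega^I_{12}=\omega^I_{34}=\tfrac12$ (self-dual type), $\omega^J_{12}=-\omega^J_{34}=\tfrac14$ (anti-self-dual type), and $\omega^K|_U=0$; all three pairs are isoclinic, with $\cos\theta^I=\tfrac12$, $\cos\theta^J=\tfrac14$, $\theta^K=\tfrac{\pi}{2}$. Yet for $A=\tfrac{1}{\sqrt2}(I+J)\in S(\mathcal{Q})$ the matrix $(\langle u_i,Au_j\rangle)$ has singular values $\tfrac{3}{4\sqrt2}$ (twice) and $\tfrac{1}{4\sqrt2}$ (twice), so $(U,AU)$ is not isoclinic. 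The statement therefore needs a stronger hypothesis than the one transcribed here --- in effect that $\omega^I,\omega^J,\omega^K$ assume the normal form (\ref{matrix of projector omega^A of an isoclinic 4 dimensional subspace}) in a \emph{common} orthonormal basis with a \emph{consistent} choice of signs, which is exactly your ``same type'' condition taken as a hypothesis rather than derived. That condition does hold in every case where the proposition is invoked later (quaternionic lines and $4$-dimensional complex subspaces), and under it your argument, including the trace identity at the end, goes through by pure linearity. As written, though, neither of your proposed routes can close the gap, because the claim they are meant to establish is false.
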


We recall that a 4-dimensional complex subspace is never orthogonal unless it is totally complex  (in which case it has a double orthogonality).
 Suppose that $U$ is not an orthogonal subspace (w.r.t. $(I,J,K)$)  and let
   \begin{equation} \label{$X_2,Y_2,Z_2$}
  X_2=\frac{I^{-1} Pr^{IU}X_1}{\cos \theta^I}, \; Y_2=\frac{J^{-1} Pr^{JU}X_1}{\cos \theta^J}, \; Z_2=\frac{K^{-1} Pr^{KU}X_1}{\cos \theta^K}
  \end{equation}
 be the unitary vectors such that $(X_1,X_2)$, $(X_1,Y_2)$, $(X_1,Z_2)$ are  (orthonormal) standard bases of the  standard 2-planes $U^I,U^J,U^K$ of $\omega^I,\omega^J,\omega^K$  respectively they generate. The quantities  $<X_1,IX_2>, <X_1,JY_2>,<X_1,KZ_2>$   are the (non negative) cosines of the principal angles of the pairs $(U^I,IU^I),(U^J,JU^J),(U^K,KU^K)$ or equivalently the absolute value of the cosine of the $I,J,K$-K\"{a}hler angles of the 2-planes $U^I,U^J,U^K$ respectively.
 Let $(X_1,X_2,X_3,X_4)$ (resp. $(X_1,Y_2,Y_3,Y_4)$, resp. $(X_1,Z_2,Z_3,Z_4)$) be a standard basis  of the  forms  $\omega^I$ (resp. $\omega^J$, resp. $\omega^K$ ) with the common leading vector $X_1$. Let moreover  denote by
 \[\xi=<X_2,Y_2>,  \quad \chi=<X_2,Z_2>, \quad \eta=<Y_2,Z_2>\]
   where $\xi, \chi,\eta \in [-1,1]$.

\begin{prop} \cite{Vac_isoclinic} \label{invariance of <X_2,Y_2>}
  The cosines  $\xi=<X_2,Y_2>,  \; \chi=<X_2,Z_2>, \; \eta=<Y_2,Z_2>$ are invariants of $U$.
\end{prop}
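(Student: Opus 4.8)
The plan is to show that each of the three cosines $\xi, \chi, \eta$, although defined using a particular choice of leading vector $X_1$ and particular standard $2$-planes $U^I, U^J, U^K$, does not actually depend on these choices, hence is a genuine invariant of $U$. I would treat $\xi = \langle X_2, Y_2\rangle$ as the model case; $\chi$ and $\eta$ follow by the symmetric roles of $I, J, K$. The key observation is that the vectors $X_2, Y_2$ are not independent unit vectors but are \emph{canonically produced} from $X_1$ by the formulas in (\ref{$X_2,Y_2,Z_2$}): $X_2 = I^{-1} Pr^{IU} X_1 / \cos\theta^I$ and $Y_2 = J^{-1} Pr^{JU} X_1 / \cos\theta^J$. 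So once $X_1$ is fixed, $X_2$ and $Y_2$ are uniquely determined (up to the overall orientation conventions already built into the sign of $\cos\theta^I, \cos\theta^J$, which are non-negative). Thus the only freedom to worry about is the choice of $X_1 \in U$ itself.

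First I would rewrite $\langle X_2, Y_2 \rangle$ in a form that makes the $X_1$-dependence transparent. Using skew-symmetry of $I, J$ with respect to $\langle\,,\,\rangle$ and $I^{-1} = -I$, $J^{-1} = -J$, one gets
\[
\cos\theta^I \cos\theta^J \, \langle X_2, Y_2\rangle = \langle I^{-1} Pr^{IU} X_1,\; J^{-1} Pr^{JU} X_1\rangle = \langle Pr^{IU} X_1,\; I J^{-1} Pr^{JU} X_1 \rangle,
\]
and since $Pr^{IU} X_1$ already lies in $U$ and the composition $I^{-1}\circ Pr^{IU}$ restricted to $U$ is (up to the constant $\cos\theta^I$) the endomorphism $T^I$ of $U$ whose matrix is $\omega^I|_U$, I can express $\cos\theta^I\cos\theta^J\,\xi$ purely in terms of the self-maps $T^I = I^{-1}Pr^{IU}|_U$ and $T^J = J^{-1}Pr^{JU}|_U$ of $U$ evaluated at $X_1$: concretely $\xi = \langle T^I X_1, T^J X_1\rangle /(\cos\theta^I\cos\theta^J)$, where I am using that $T^I, T^J$ are skew-symmetric (being the endomorphisms attached to $\omega^I|_U$, $\omega^J|_U$). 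Now the point is that for a $4$-dimensional isoclinic subspace, by Proposition (\ref{general form for the matrix of projector omega^A of an isoclinic 4 dimensional subspace}) the matrix of each $\omega^A|_U$ has orthogonal rows of common square-length $\cos^2\theta^A$; equivalently $T^A (T^A)^t = \cos^2\theta^A \cdot Id$ on $U$, so $\widetilde T^A := T^A/\cos\theta^A$ is an \emph{orthogonal} skew-symmetric endomorphism of $U$, i.e.\ a compatible complex structure on $U^4$. Hence $\xi = \langle \widetilde T^I X_1, \widetilde T^J X_1\rangle$ for the unit vector $X_1$, and $\widetilde T^I, \widetilde T^J$ are fixed complex structures on $U$ intrinsically attached to $I, J$ and $U$ — no choice of $X_1$ enters their definition.

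It then remains to show $\langle \widetilde T^I X_1, \widetilde T^J X_1\rangle$ is independent of the unit vector $X_1 \in U$. Set $S := (\widetilde T^I)^t \widetilde T^J = -\widetilde T^I \widetilde T^J$, a composition of two compatible complex structures on the $4$-dimensional space $U$; then $\xi = \langle X_1, S X_1\rangle$ and I need the symmetric part of $S$ to be a scalar multiple of the identity. This is exactly where the isoclinicity in dimension $4$ does the work: a product of two orthogonal complex structures on $\R^4$ that is not $\pm Id$ has no real eigenvalue except in degenerate cases, but more usefully, in $\R^4 \cong \HH$ the complex structures $\widetilde T^I, \widetilde T^J$ are realized as left (or right) multiplications by unit imaginary quaternions $u, v$, so $S$ is $\mp$(multiplication by $uv$), and $Re(uv) = -\langle u,v\rangle$ is a scalar; thus the symmetric part of $S$ is $Re(uv)\cdot Id$ and $\xi = Re(uv)$ is constant over the unit sphere of $U$. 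I expect the main obstacle to be precisely this last step — justifying that the two canonical complex structures $\widetilde T^I, \widetilde T^J$ on $U^4$ are simultaneously modeled by quaternion multiplications (equivalently, that they generate an $\mathfrak{sp}_1$-action on $U$, or at least that their anticommutator is scalar), which is where one must invoke the classification of pairs of compatible complex structures on a $4$-dimensional Euclidean space and the isoclinic normal form (\ref{matrix of projector omega^A of an isoclinic 4 dimensional subspace}) with its uniform $\pm$ sign pattern. Once $\xi = Re(uv)$ is established independent of $X_1$, the same argument verbatim gives $\chi = Re(uw)$ and $\eta = Re(vw)$ (with $w$ the unit imaginary quaternion modeling $\widetilde T^K$), completing the proof that $(\xi,\chi,\eta)$ is an invariant of $U$.
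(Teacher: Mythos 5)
The paper does not actually prove this proposition: it is imported verbatim from \cite{Vac_isoclinic} ("referring to \cite{Vac\_isoclinic} for proofs"), so there is no in-paper argument to compare yours against; what follows is an assessment of your plan on its own terms.

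Your reduction is correct and is the right way to set the problem up. The map $T^A=A^{-1}Pr^{AU}|_U$ is indeed the skew-symmetric endomorphism of $U$ metrically associated to $\omega^A|_U$, since $\langle T^AX,Y\rangle=\langle Pr^{AU}X,AY\rangle=\langle X,AY\rangle$; the isoclinic normal form of Proposition (\ref{general form for the matrix of projector omega^A of an isoclinic 4 dimensional subspace}) gives $T^A(T^A)^t=\cos^2\theta^A\,Id$, so $\widetilde T^A=T^A/\cos\theta^A$ is a compatible complex structure on $U^4$ canonically attached to $A$ and $U$ (the standing assumption that $U$ is not orthogonal guarantees $\cos\theta^A\neq 0$), and $\xi=\langle\widetilde T^IX_1,\widetilde T^JX_1\rangle$, whose constancy in the unit vector $X_1$ is equivalent to the anticommutator $\widetilde T^I\widetilde T^J+\widetilde T^J\widetilde T^I$ being a multiple of $Id$.

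The gap is exactly the one you flag at the end, and it is a genuine gap rather than a routine verification, because the fact you need is \emph{false} for a general pair of compatible complex structures on a Euclidean $\R^4$: if $\widetilde T^I$ and $\widetilde T^J$ induce opposite orientations of $U$ (one a left, the other a right multiplication by a unit imaginary quaternion under an identification $U\cong\HH$), then they commute, their product is a symmetric orthogonal traceless involution (eigenvalues $+1,+1,-1,-1$), and $\langle\widetilde T^IX,\widetilde T^JX\rangle$ genuinely varies over the unit sphere of $U$. So inspecting the normal form of each $\omega^A|_U$ separately cannot suffice: the $\pm$ sign pattern in (\ref{matrix of projector omega^A of an isoclinic 4 dimensional subspace}) must be shown to be the \emph{same} for $\omega^I|_U$, $\omega^J|_U$, $\omega^K|_U$ with respect to one common orthonormal basis. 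This can be closed, but only by using the full strength of $U\in\mathcal{IC}^4$ rather than isoclinicity of the three pairs individually: by Proposition (\ref{isoclinicity w.r.t. one hypercomplex basis implies isoclinicity w.r.t. any compatible complex structure}) every combination $(\alpha\omega^I+\beta\omega^J+\gamma\omega^K)|_U=\omega^A|_U$ is isoclinic; the isoclinic skew forms on $U^4$ are precisely $\Lambda^+\cup\Lambda^-$ (self-dual and anti-self-dual forms), and a linear subspace contained in a union of two subspaces meeting only at $0$ must lie in one of them; hence the span of the three restricted forms lies entirely in $\Lambda^+$ or entirely in $\Lambda^-$, i.e. $\widetilde T^I,\widetilde T^J,\widetilde T^K$ have the same chirality and your quaternion computation $\xi=-Re(\bar uv)=\mathrm{const}$ goes through (likewise for $\chi,\eta$). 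Until an argument of this kind is supplied, the proof is incomplete; with it, it works.
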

They are not an intrinsic properties of $U$  i.e. they depend on the  chosen admissible basis (see \cite{Vac_isoclinic}).

\begin{defi}
Let $(I,J,K)$ be an admissible basis. The subspace  $U \in \mathcal{IC}^4$ with angles $(\theta^I,\theta^I,\theta^K)$ is said to be a \textit{2-planes decomposable} subspace  (or simply 2-planes decomposable) if  it admits an orthogonal decomposition into  a pair of 2-planes both isoclinic with their $I,J,K$-images with angles $(\theta^I,\theta^I,\theta^K) $ respectively.
\end{defi}
In a 2-plane decomposable  subspace  the values of $\xi,\chi,\eta=\xi \cdot \chi$ are all clearly equal to $\pm 1$. In case of a 4 dimensional complex subspace $U$ this happens iff $U$ is totally complex.
It is straightforward to verify that if $U \in \mathcal{IC}^4$ is 2-plane decomposable w.r.t $(I,J,K)$ then it is 2-planes decomposable w.r.t. any admissible basis.

By using the invariants $(\xi,\chi,\eta)$, in \cite{Vac_isoclinic} we determined  the following equivalent expression for the angle of isoclinicity
 \begin{equation}\label{equivalent form for the angle of isoclinicity of a 4 dimensional subspace isoclinic w.r.t. an hypercomplex basis}
\begin{array}{l}
\cos^2 \theta^A  = \alpha_1^2 \cos^2 \theta^I + \alpha_2^2\cos^2 \theta^J + \alpha_3^2\cos^2 \theta^K +
  2  \xi \alpha_1 \alpha_2 \cos \theta^I \cos \theta^J + 2 \chi \alpha_1 \alpha_3 \cos \theta^I  \cos \theta^K+ 2 \eta \alpha_2
\alpha_3 \cos \theta^J \cos \theta^K.
\end{array}
\end{equation}


\begin{prop} \cite{Vac_isoclinic} \label{$S$ sum of square cosined of the angles of isoclinicity (U,IU),(U,JU),(U,KU) of isoclinic subspaces}
Let $(I,J,K)$ be an admissible hypercomplex basis and  $U \in \mathcal{IC}^4$
 with angles  of isoclinicity equal  respectively to $(\theta^I,\theta^J,\theta^K)$. Then $S= \cos^2 \theta^I + \cos^2 \theta^J + \cos^2 \theta^K$  is an intrinsic property of $U$  not depending on the admissible basis.
\end{prop}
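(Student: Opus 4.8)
The plan is to rewrite $S$ purely in terms of traces of products of the K\"{a}hler forms restricted to $U$, and then to observe that these traces transform under a change of admissible basis exactly like the entries of a quadratic form under an orthogonal change of coordinates. First I would use Proposition~(\ref{isoclinicity w.r.t. one hypercomplex basis implies isoclinicity w.r.t. any compatible complex structure}): since $U\in\mathcal{IC}^4$, for every $A=\alpha_1 I+\alpha_2 J+\alpha_3 K\in S(\mathcal{Q})$ one has $\cos^2\theta^A=-\tfrac14\,\mathrm{Tr}\big[(\omega^A)^2\big]$, and directly from $\omega^A(X,Y)=<X,AY>$ the $A$-K\"{a}hler form of $U$ satisfies $\omega^A|_U=\alpha_1\,\omega^I|_U+\alpha_2\,\omega^J|_U+\alpha_3\,\omega^K|_U$. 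Taking $A=I,J,K$ in turn,
\[
S=\cos^2\theta^I+\cos^2\theta^J+\cos^2\theta^K=-\tfrac14\Big(\mathrm{Tr}\big[(\omega^I)^2\big]+\mathrm{Tr}\big[(\omega^J)^2\big]+\mathrm{Tr}\big[(\omega^K)^2\big]\Big),
\]
which, the trace being conjugation invariant, does not depend on the chosen orthonormal basis of $U$.

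Next I would package this into a quadratic form on $\mathcal{Q}$. Fixing an admissible basis $(J_1,J_2,J_3)=(I,J,K)$, set $M_{kl}=-\tfrac14\,\mathrm{Tr}\big[\omega^{J_k}\omega^{J_l}\big]$, a symmetric $3\times3$ matrix. By the bilinearity of $(A,B)\mapsto\mathrm{Tr}\big[\omega^A\omega^B\big]$ together with $\omega^A=\sum_k\alpha_k\omega^{J_k}$ on $U$, we obtain $\cos^2\theta^A=\sum_{k,l}\alpha_k\alpha_l M_{kl}$ for $A=\sum_k\alpha_k J_k\in S(\mathcal{Q})$; that is, $A\mapsto\cos^2\theta^A$ is the restriction to the unit sphere of the quadratic form with matrix $M$. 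Consequently
\[
S=M_{11}+M_{22}+M_{33}=\mathrm{Tr}(M).
\]

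Finally I would invoke the transformation law for admissible bases. Two such bases are related by $J'_k=\sum_i c_{ki}J_i$ with $C=(c_{ki})\in SO(3)$, hence $\omega^{J'_k}|_U=\sum_i c_{ki}\,\omega^{J_i}|_U$ and therefore $M'_{kl}=-\tfrac14\,\mathrm{Tr}\big[\omega^{J'_k}\omega^{J'_l}\big]=\sum_{i,j}c_{ki}c_{lj}M_{ij}$, i.e. $M'=CMC^{t}$. Orthogonality of $C$ gives $\mathrm{Tr}(M')=\mathrm{Tr}(CMC^{t})=\mathrm{Tr}(M)$, so $S'=S$, which is exactly the assertion.

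I do not expect a genuine obstacle here: the only points needing care are the elementary remarks that $A\mapsto\omega^A|_U$ is $\R$-linear (immediate from the definition of $\omega^A$) and that passing to a new admissible basis acts on the forms $\omega^{J_k}|_U$ through the same $SO(3)$ matrix; after that, the invariance of $S$ is simply the invariance of the trace of a quadratic form under an orthogonal change of variables. One could instead substitute $A=J'_k$ into the explicit formula~(\ref{equivalent form for the angle of isoclinicity of a 4 dimensional subspace isoclinic w.r.t. an hypercomplex basis}) and sum over $k$, but this drags in the basis-dependent quantities $\xi,\chi,\eta$ and is considerably more cumbersome.
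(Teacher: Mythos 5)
Your argument is correct. The present paper does not actually reprove this proposition (it is quoted from \cite{Vac_isoclinic}), so there is no in-text proof to match against; but your derivation is a clean, self-contained argument built precisely on the identity $\cos^2 \theta^A = -\tfrac{1}{4}\,Tr[(\omega^A)^2]$ that the paper records in (\ref{angle of isoclinicity of a 4 dimensional subspace isoclinic w.r.t. an hypercomplex basis 1}), and every ingredient you use is available: $U \in \mathcal{IC}^4$ guarantees the trace formula applies to \emph{every} $A \in S(\mathcal{Q})$ (Proposition (\ref{isoclinicity w.r.t. one hypercomplex basis implies isoclinicity w.r.t. any compatible complex structure})), the map $A \mapsto \omega^A|_U$ is visibly $\R$-linear, and the paper states explicitly that two admissible bases differ by an $SO(3)$ rotation. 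Packaging $A \mapsto \cos^2\theta^A$ as the quadratic form $M_{kl}=-\tfrac14 Tr[\omega^{J_k}\omega^{J_l}]$ and observing $S=Tr(M)$ with $M'=CMC^{t}$ is exactly the right mechanism, and it also explains \emph{why} $S$ is intrinsic while the individual angles and the off-diagonal data $(\xi,\chi,\eta)$ are not: they are the non-trace entries of $M$, which do change under conjugation. Your remark that one could instead sum the explicit formula (\ref{equivalent form for the angle of isoclinicity of a 4 dimensional subspace isoclinic w.r.t. an hypercomplex basis}) over an orthonormal triple is also sound, and you are right that it is messier; the trace formulation subsumes it. No gap.
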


 The previous property is more general in the sense that it is valid  for all subspaces $U \in \mathcal{IC}$ regardless their  dimension.

 Dealing with a complex subspace $U$ we have two possibilities: either  $U$ is not totally complex in which case none among $\xi,\chi,\eta$ equals $\pm 1$ or $U$ has a double orthogonality,  is 2-planes decomposable and  in particular $\xi=\chi= \eta=1$.

In order to determine the canonical bases in this two cases, let consider first the case that none among $\xi,\chi,\eta$ equals $\pm 1$.
    Let $X_1 \in U$ unitary  and   $(X_2,Y_2,Z_2)$ as in  (\ref{$X_2,Y_2,Z_2$}).
The subspaces $L(X_1,X_2),L(X_1,Y_2),L(X_1,Z_2)$ are respectively standard subspaces of $\omega^I, \omega^J,\omega^K$ restricted to $U$.
Consider  $L(X_2,Y_2)$ and the vectors $X_4,Y_4$ of such 2-plane such that $(X_2,X_4$) and $(Y_2,Y_4)$ are   a pair of orthonormal basis consistently oriented   with $(X_2,Y_2)$ (then  $<X_2,Y_4> <0$).

Let then $X_3=-\frac{I^{-1} Pr^{IU} X_4}{\cos \theta^I}$ be the  unique vector such that $<X_3,IX_4>= \cos \theta^I$ i.e. $L(X_3,X_4)$ is an $\omega^I$-standard 2-plane and analogously  $Y_3=-\frac{J^{-1}Pr^{JU}Y_4}{\cos \theta^J}$   the unique vector such that $<Y_3,JY_4>= \cos \theta^J$. Clearly the vectors $X_3$ and $Y_3$ belong to $U$.

Analogously  we consider   $L(X_2,Z_2)$ and the vectors $\tilde X_4,Z_4$ of such 2-plane such that $(X_2,\tilde X_4$) and $(Z_2,Z_4)$ are   a pair of orthonormal basis consistently oriented  with the pair $(X_2,Z_2)$ (then  $<X_2,Z_4> <0$). Again, let $\tilde X_3=-\frac{I^{-1} Pr^{IU} \tilde X_4}{\cos \theta^I}$ be the unique vector such that $<\tilde X_3,I\tilde X_4>= \cos \theta^I$ i.e. $L(\tilde X_3,\tilde X_4)$ is an  $\omega^I$ standard 2-plane and $Z_3=-\frac{K^{-1}Pr^{KU}Z_4}{\cos \theta^K}$  the unique vector such that $<Z_3,K Z_4>= \cos \theta^K$ i.e.  $L (Z_3,Z_4)$ is an  $\omega^K$ standard 2-plane. The vectors $\tilde X_3$ and $Z_3$ belong to $U$. Proceeding in the same way considering the oriented 2-plane $L(Y_2,Z_2)$ we determine the pair $(\tilde Y_4, \tilde Z_4)$ and consequently $(\tilde Y_3,\tilde Z_3)$.
With the above choices in \cite{Vac_isoclinic} we proved the following

\begin{prop} \cite{Vac_isoclinic} \label{equality of thirs element $X_3=Y_3$ of the chains} $ $\\
$X_3= \frac{I Pr^{IU} X_4}{\cos \theta^I}= \frac{JPr^{JU}Y_4}{\cos \theta^J}=Y_3, \qquad
\tilde X_3= \frac{IPr^{IU} \tilde X_4}{\cos \theta^I}= \frac{KPr^{KU}Z_4}{\cos \theta^K}=Z_3, \qquad \tilde Y_3= \frac{JPr^{IU} \tilde Y_4}{\cos \theta^J}= \frac{KPr^{KU}\tilde Z_4}{\cos \theta^K}=\tilde Z_3$.
\end{prop}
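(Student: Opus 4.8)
Proof proposal. The plan is to package the three ``chains'' into a single family of operators on $U$ and reduce all three asserted equalities to the scalarity of an anticommutator. First I would introduce, for $A\in\{I,J,K\}$, the operator
\[
\Phi^A:=\tfrac1{\cos\theta^A}\,A^{-1}Pr^{AU},
\]
which makes sense because $U$ is not orthogonal (so $\cos\theta^A\neq0$) and because $Pr^{AU}X\in AU$ for $X\in U$, hence $A^{-1}Pr^{AU}X\in U$, i.e. $\Phi^A\colon U\to U$. For $X,Y\in U$ one has $\langle\Phi^AX,\Phi^AY\rangle=\tfrac1{\cos^2\theta^A}\langle Pr^{AU}X,Pr^{AU}Y\rangle=\langle X,Y\rangle$, the last equality being exactly the isoclinicity of $(U,AU)$ (condition (2) of Definition \ref{definition of isoclinicity}); so $\Phi^A$ is an orthogonal transformation of $U$. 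By the very definitions, $X_2=\Phi^IX_1$, $Y_2=\Phi^JX_1$, $Z_2=\Phi^KX_1$ and $X_3=-\Phi^IX_4$, $Y_3=-\Phi^JY_4$, $Z_3=-\Phi^KZ_4$ (likewise $\tilde X_3=-\Phi^I\tilde X_4$, etc.). Thus the first identity is equivalent to $\Phi^IX_4=\Phi^JY_4$, and the other two are the same statement for $(\Phi^I,\Phi^K)$ on $L(X_2,Z_2)$ and for $(\Phi^J,\Phi^K)$ on $L(Y_2,Z_2)$.

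Next I would establish the basic identity
\[
\langle\Phi^AX,\,AW\rangle=-\cos\theta^A\,\langle X,W\rangle\qquad(X,W\in U),
\]
which follows from $\langle A^{-1}a,Ab\rangle=-\langle a,b\rangle$ together with $\langle Pr^{AU}X,Pr^{AU}W\rangle=\cos^2\theta^A\langle X,W\rangle$. Rewriting the right-hand side as $-\cos\theta^A\langle AX,AW\rangle$ shows that $\Phi^AX+\cos\theta^A\,AX\perp AU$, i.e. $Pr^{AU}(\Phi^AX)=-\cos\theta^A\,AX$, whence $(\Phi^A)^2X=\tfrac1{\cos\theta^A}A^{-1}Pr^{AU}(\Phi^AX)=-X$. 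Therefore $(\Phi^A)^2=-Id_U$, so $\Phi^A$ is a skew-symmetric complex structure on $U$; in particular $\Phi^IX_2=(\Phi^I)^2X_1=-X_1$ and $\Phi^JY_2=-X_1$. (Feeding $W=X_3,X_4$ into the identity above, and into the companion identity $\langle X_1,AW\rangle=\cos\theta^A\langle X_2,W\rangle$ obtained from $Pr^{AU}X_1=\cos\theta^A\,AX_2$, one also checks that $(X_1,X_2,X_3,X_4)$ is an $\omega^I$-standard basis, as claimed around the statement.)

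Now, writing $\xi=\langle X_2,Y_2\rangle$ and using $\xi\neq\pm1$ (the case under consideration), the orientation conventions ($(X_2,X_4)$ and $(Y_2,Y_4)$ orthonormal and consistently oriented with $(X_2,Y_2)$, so $\langle X_2,Y_4\rangle<0$) force
\[
X_4=\frac{Y_2-\xi X_2}{\sqrt{1-\xi^2}},\qquad Y_4=\frac{\xi Y_2-X_2}{\sqrt{1-\xi^2}}
\]
inside $L(X_2,Y_2)$. Substituting $X_2=\Phi^IX_1$, $Y_2=\Phi^JX_1$ and using $\Phi^IX_2=\Phi^JY_2=-X_1$,
\[
\Phi^IX_4-\Phi^JY_4=\frac{1}{\sqrt{1-\xi^2}}\Big((\Phi^I\Phi^J+\Phi^J\Phi^I)X_1+2\xi X_1\Big),
\]
so everything reduces to proving $\Phi^I\Phi^J+\Phi^J\Phi^I=-2\xi\,Id_U$. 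This operator is symmetric (both $\Phi^I$, $\Phi^J$ are skew-symmetric), and its quadratic form at a unit vector $X$ is $-2\langle\Phi^IX,\Phi^JX\rangle$; by Proposition \ref{invariance of <X_2,Y_2>} the quantity $\langle\Phi^IX,\Phi^JX\rangle$ equals $\xi$ for every unit $X\in U$. A symmetric operator with constant quadratic form on the unit sphere is scalar, hence $\Phi^I\Phi^J+\Phi^J\Phi^I=-2\xi\,Id_U$, giving $\Phi^IX_4=\Phi^JY_4$, i.e. $X_3=Y_3$. Running the same computation with $(\xi,\Phi^I,\Phi^J,X_2,Y_2)$ replaced by $(\chi,\Phi^I,\Phi^K,X_2,Z_2)$ and by $(\eta,\Phi^J,\Phi^K,Y_2,Z_2)$ yields $\tilde X_3=Z_3$ and $\tilde Y_3=\tilde Z_3$.

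The only genuine obstacle is the scalarity of the anticommutator $\Phi^I\Phi^J+\Phi^J\Phi^I$; the argument above trades it for the already-established invariance of $\xi$ (Proposition \ref{invariance of <X_2,Y_2>}). A self-contained proof of that scalarity would instead have to be read off from the normal form of the forms $\omega^A$ on a subspace of $\mathcal{IC}^4$ (Proposition \ref{general form for the matrix of projector omega^A of an isoclinic 4 dimensional subspace}), which is where the real computation sits. One must also keep the orientation choices straight so that $\sqrt{1-\xi^2}$, $\sqrt{1-\chi^2}$, $\sqrt{1-\eta^2}$ do not vanish — precisely the non-orthogonal, $\xi,\chi,\eta\neq\pm1$ situation treated here.
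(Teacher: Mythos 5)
The paper states this proposition without proof, deferring entirely to \cite{Vac_isoclinic}, so there is no in-paper argument to measure yours against; judged on its own terms, your proof is correct. The operators $\Phi^A=\tfrac{1}{\cos\theta^A}A^{-1}Pr^{AU}$ are well defined and orthogonal on $U$ precisely because of isoclinicity, the identity $\langle\Phi^AX,AW\rangle=-\cos\theta^A\langle X,W\rangle$ and the resulting $(\Phi^A)^2=-Id_U$ check out (so each $\Phi^A$ is a skew-symmetric complex structure on $U$, and $X_2=\Phi^IX_1$, $X_3=-\Phi^IX_4$, etc.\ agree with Definition (\ref{associated chains})), and the reduction of $X_3=Y_3$ to $(\Phi^I\Phi^J+\Phi^J\Phi^I)X_1=-2\xi X_1$ is a clean computation once $X_4$ and $Y_4$ are written out in $L(X_2,Y_2)$ with the stated orientation conventions. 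The one substantive input is the scalarity of the anticommutator, which you deduce from Proposition (\ref{invariance of <X_2,Y_2>}): the constancy of $\langle\Phi^IX,\Phi^JX\rangle=\xi$ on the unit sphere of $U$, together with the symmetry of $\Phi^I\Phi^J+\Phi^J\Phi^I$, forces it to equal $-2\xi\,Id_U$. Since that proposition precedes the present one in the paper's logical order, this is a legitimate, non-circular use, and you are right to flag it as the point where the real content of isoclinicity (ultimately Proposition (\ref{general form for the matrix of projector omega^A of an isoclinic 4 dimensional subspace})) is concentrated. The only step left implicit is $\langle Pr^{AU}X,W\rangle=\langle Pr^{AU}X,Pr^{AU}W\rangle$ for $X,W\in U$, immediate from $Pr^{AU}X\in AU$ and $W-Pr^{AU}W\perp AU$; the hypotheses $\cos\theta^A\neq 0$ and $\xi,\chi,\eta\neq\pm1$ are invoked exactly where they are needed.
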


\begin{defi} \label{associated chains} Let $U \in \mathcal{IC}^4$, $(I,J,K)$  be an admissible basis and $(\theta^I, \theta^J, \theta^K)$ the respective angles of isoclinicity. In case none among $\xi,\chi,\eta$ is equal to $\pm 1$  (in particular if $U$ is  neither orthogonal nor 2-planes decomposable), for any unitary $X_1 \in U$, that we call leading vector, we define  the following  standard bases of $\omega^I|_U$ and $\omega^J|_U$ respectively
\[
\begin{array}{l}
\{X_i\}=\{X_1, X_2= \frac{I^{-1} Pr^{IU}X_1}{\cos \theta^I}, X_3=-\frac{I^{-1} Pr^{IU}X_4}{\cos \theta^I}, X_4=  \frac{Y_2- \xi X_2}{\sqrt{1-\xi^2}}\},\\
\{Y_i\}=\{X_1, Y_2= \frac{(J^{-1} Pr^{JU}X_1)}{\cos \theta^J}, Y_3=X_3=-\frac{I^{-1} Pr^{JU}Y_4}{\cos \theta^J}, Y_4  =  \frac{-X_2+ \xi Y_2}{\sqrt{1-\xi^2}} \}
\end{array}
\]
 the  $\omega^I$ and $\omega^J$-\textbf{chains} of $U$  centered on $X_1$,  and the following  standard bases of $\omega^I|_U$ and $\omega^K|_U$ respectively
\[
\begin{array}{l}
\{\tilde X_i\}=\{X_1, X_2=   \frac{I^{-1} Pr^{IU}X_1}{\cos \theta^I}, \tilde X_3=-\frac{I^{-1} Pr^{IU} \tilde X_4}{\cos \theta^I}, \tilde X_4=  \frac{Z_2- \chi X_2}{\sqrt{1-\chi^2}}\},\\
 \{Z_i\}=\{X_1, Z_2=   \frac{K{-1} Pr^{KU}X_1}{\cos \theta^K}, Z_3=\tilde X_3=-\frac{K^{-1} Pr^{KU}  Z_4}{\cos \theta^K}, Z_4  =   \frac{-X_2+ \chi Z_2}{\sqrt{1-\chi^2}}\}
\end{array}
\]
the  $\omega^I$ and  $\omega^K$-\textbf{chains} of $U$ centered on  $X_1$ and the following  standard bases of $\omega^J|_U$ and $\omega^K|_U$ respectively.
\[
\begin{array}{l}
\{\tilde Y_i\}=\{X_1, Y_2=   \frac{J^{-1} Pr^{JU}X_1}{\cos \theta^J}, \tilde Y_3=-\frac{J^{-1} Pr^{JU} \tilde Y_4}{\cos \theta^J}, \tilde Y_4=\frac{Z_2- \eta Y_2}{\sqrt{1-\eta^2}}\},\\
 \{\tilde Z_i\}=\{X_1, Z_2=   \frac{K^{-1} Pr^{KU}X_1}{\cos \theta^K}, \tilde Z_3=\tilde Y_3=-\frac{K^{-1} Pr^{KU} \tilde  Z_4}{\cos \theta^K}, \tilde Z_4  =   \frac{-Y_2+ \eta Z_2}{\sqrt{1-\eta^2}}\}
\end{array}
\]
the  $\omega^J$ and  $\omega^K$-\textbf{chains} of $U$ centered on  $X_1$.
We denote by  $\Sigma (X_1)$  the set of the six chains with leading vector $X_1$.
\end{defi}

Clearly $\Sigma (X_1)$ is  uniquely determined by the leading vector $X_1$.

 In case a  pair among $(\xi,\chi,\eta)$  and hence all three pairs are equal to  $\pm 1$,  (namely either $\eta=\xi= \chi=1 $ or  two of them are equal to  -1 and the other  to 1),  we give the following
 \begin{defi} \label{The chains if all the 3 invariants is one or minus one}
In case  $U$ is a 2-planes decomposable subspace i.e. $\xi,\chi,\eta$ are all equal to  $\pm 1$ we define the following chains:
   \[\{X_i \}= \{\tilde X_i \}, \; \{Y_i \}=(X_1, \xi X_2,X_3,\xi X_4)= \{\tilde Y_i \}, \; \{Z_i \}=(X_1, \chi X_2,X_3,\chi X_4)=(X_1,\eta Y_2,X_3,\eta Y_4)= \{\tilde Z_i \}.\]
   In particular  if  $U$ has a double  orthogonality (which happens in particular if $U$ is totally complex)    or a triple orthogonality (iff $U$ is a totally real subspace) one has   $\{X_i \}=\{\tilde X_i \}=\{Y_i \}= \{\tilde Y_i \}=\{Z_i \}= \{\tilde Z_i \}$.
\end{defi}

Clearly $L(X_3,X_4)=L(\tilde X_3,\tilde X_4)$. The bases $(X_3,X_4)$ and $(\tilde X_3,\tilde X_4)$, being $\omega^I$-standard bases, are consistently oriented. Let
 \[
 C: \left(   \begin{array}{cc}
 <X_3,\tilde X_3> & <X_3,\tilde X_4>\\
<X_4,\tilde X_3> & <X_4,\tilde X_4>
\end{array}      \right)
 = \left(   \begin{array}{cc}
 \Gamma & -\Delta\\
\Delta & \Gamma
\end{array}
\right)
\]
the orthogonal matrix of the change of basis. 
The orthogonal matrices $C_{IJ}=(<X_i,Y_j>)$ and  $C_{IK}=(<X_i,Z_j>)$ of the relative position of the basis $\{X_i\}=(X_1,X_2,X_3,X_4)$,  $\{Y_i\}=(X_1,Y_2,X_3,Y_4)$ and $\{Z_i\}=(X_1,Z_2,\tilde X_3,Z_4)$   are given by
 \begin{equation} \label{canonical matrices $C_{IJ}$ $C_{IK}$ and of 4-dimensional isoclinic subspace w.r.t. the associated chains}
C_{IJ}=\left(
\begin{array} {cccc}
1 & 0 & 0 & 0\\
0 & \xi & 0 & - \sqrt{1-\xi^2} \\
0 & 0 & 1 & 0 \\
0 &  \sqrt{1-\xi^2} & 0 & \xi
\end{array}
\right), \qquad
  C_{IK}=\left(
  \begin{array} {cccc}
1 & 0 & 0 & 0\\
0 & \chi & 0 & - \sqrt{1-\chi^2}\\
0 & -\Delta \sqrt{1-\chi^2} &\Gamma & -\chi \Delta \\
0 &  \Gamma  \sqrt{1-\chi^2} & \Delta & \chi \Gamma.
\end{array}
\right).
\end{equation}

To determine $\Gamma=<X_3,\tilde X_3>$,  being $<Y_2,Z_2>= <Y_2,X_2><Z_2,X_2>+  <Y_2,X_4><Z_2,X_4>$,
 we get $\eta= \xi \chi  + \sqrt{1-\xi^2}\sqrt{1-\chi^2} \; \Gamma$. From the above expression, in case neither $\xi$ nor $\chi$ equal 1, we get:
  \begin{equation} \label{cos phi}
      \Gamma= \frac{\eta-\xi \chi}{\sqrt{1-\xi^2} \; \sqrt{1-\chi^2} } \in [-1,1].
    \end{equation}

\begin{prop} \cite{Vac_isoclinic} \label{Gamma is an invariant of an isoclinic 4 dimensional subspace 1}
If none among $\xi,\chi,\eta$ is equal to $\pm 1$
 the value of   $\Gamma \in [-1,1]$ is  given in (\ref{cos phi}). 
 If instead at least one among $\xi,\chi,\eta$ is equal to $\pm 1$ then $\Gamma=1$.  In particular this happens if  $U$ is orthogonal or is a 2-planes decomposable subspace.
  In all cases,  the pair $(\Gamma,\Delta)$ is an invariant of $U$.   
\end{prop}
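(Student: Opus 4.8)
The plan is to compute $\Gamma=\langle X_3,\tilde X_3\rangle$ explicitly in terms of the invariants $\xi,\chi,\eta$ of Proposition (\ref{invariance of <X_2,Y_2>}), after which the invariance of the pair $(\Gamma,\Delta)$ will follow from theirs. First I would record the two decompositions built into Definition (\ref{associated chains}): from $X_4=\frac{Y_2-\xi X_2}{\sqrt{1-\xi^2}}$ and $\tilde X_4=\frac{Z_2-\chi X_2}{\sqrt{1-\chi^2}}$ one has $Y_2=\xi X_2+\sqrt{1-\xi^2}\,X_4$ and $Z_2=\chi X_2+\sqrt{1-\chi^2}\,\tilde X_4$, with $\langle X_2,X_4\rangle=\langle X_2,\tilde X_4\rangle=0$. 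Next I would invoke Proposition (\ref{equality of thirs element $X_3=Y_3$ of the chains}), together with the fact that all $\omega^I$-standard bases of an $\omega^I$-invariant subspace share their orientation, to conclude that $(\tilde X_3,\tilde X_4)$ is once more an $\omega^I$-standard basis of the same $\omega^I$-invariant $2$-plane $L(X_3,X_4)=L(X_1,X_2)^{\perp}\cap U$; hence the change-of-basis matrix $C$ is a rotation,
\[C=\left(\begin{array}{cc}\Gamma & -\Delta\\ \Delta & \Gamma\end{array}\right),\qquad \Gamma^2+\Delta^2=1,\qquad \langle X_4,\tilde X_4\rangle=\Gamma.\]

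With this in hand the computation is immediate: expanding $\eta$ by means of the two decompositions and the orthogonality relations,
\[\eta=\langle Y_2,Z_2\rangle=\xi\chi+\sqrt{1-\xi^2}\,\sqrt{1-\chi^2}\,\langle X_4,\tilde X_4\rangle=\xi\chi+\sqrt{1-\xi^2}\,\sqrt{1-\chi^2}\,\Gamma,\]
so that, as soon as $\xi^{2}\neq1$ and $\chi^{2}\neq1$, $\Gamma=\dfrac{\eta-\xi\chi}{\sqrt{1-\xi^2}\,\sqrt{1-\chi^2}}$, which is formula (\ref{cos phi}); and $\Gamma\in[-1,1]$, being a scalar product of the unit vectors $X_3,\tilde X_3$ (equivalently, from the positivity $(\eta-\xi\chi)^2\le(1-\xi^2)(1-\chi^2)$ valid for $U\in\mathcal{IC}^4$). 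Since $\xi,\chi,\eta$ are invariants of $U$, this formula displays $\Gamma$ as an invariant; then $\Delta^2=1-\Gamma^2$ is an invariant and the sign of $\Delta$ is determined by the common orientation of the $\omega^I$-chains (a determinant computation on the associated chains, carried out in \cite{Vac_isoclinic}), so $(\Gamma,\Delta)$ is an invariant of $U$.

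It then remains to handle the degenerate configurations in which at least one of $\xi,\chi,\eta$ is equal to $\pm1$: there one uses the chains of Definition (\ref{The chains if all the 3 invariants is one or minus one}), for which $\{X_i\}=\{\tilde X_i\}$, so that $C=Id$, i.e. $\Gamma=1$ and $\Delta=0$; this covers in particular the orthogonal subspaces and the $2$-planes decomposable ones. I expect the only real obstacle to lie in this last item — together with its prerequisite, the verification that $(\tilde X_3,\tilde X_4)$ is genuinely an $\omega^I$-standard basis of the same invariant $2$-plane $L(X_3,X_4)$, i.e. that $C$ is orientation-preserving rather than merely orthogonal — everything else reducing to the one-line expansion of $\eta$ above.
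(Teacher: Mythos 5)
Your proposal is correct and follows essentially the same route as the paper: the text immediately preceding the proposition derives $\eta=\xi\chi+\sqrt{1-\xi^2}\sqrt{1-\chi^2}\,\Gamma$ by exactly the expansion of $Y_2$ and $Z_2$ in the bases $(X_2,X_4)$ and $(X_2,\tilde X_4)$ that you use, solves for $\Gamma$ to obtain (\ref{cos phi}), and deduces invariance of $(\Gamma,\Delta)$ from the invariance of $(\xi,\chi,\eta)$ together with the orientation of the $\omega^I$-standard bases (the full details being deferred to \cite{Vac_isoclinic}). Your treatment of the degenerate case via Definition (\ref{The chains if all the 3 invariants is one or minus one}) likewise matches the paper's convention that such subspaces are $2$-planes decomposable with $\{X_i\}=\{\tilde X_i\}$, hence $C=Id$.
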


\begin{prop} \cite{Vac_isoclinic} \label{invariance of $C_{IJ}$ and $C_{IK}$ w.r.t. any leading vector}
The matrices $C_{IJ}$ and $C_{IK}$  given in  (\ref{canonical matrices $C_{IJ}$ $C_{IK}$ and of 4-dimensional isoclinic subspace w.r.t. the associated chains})
w.r.t. the   chains $\{X_i \}, \{Y_i \}$ and $\{X_i \}, \{Z_i \}$ centered on a common leading vector  are invariant of $U$.
\end{prop}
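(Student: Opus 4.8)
The plan is to deduce the statement directly from the invariance results already available, the point being that the explicit expressions in (\ref{canonical matrices $C_{IJ}$ $C_{IK}$ and of 4-dimensional isoclinic subspace w.r.t. the associated chains}) present $C_{IJ}$ as a function of $\xi$ alone and $C_{IK}$ as a function of the triple $(\chi,\Gamma,\Delta)$ alone, with the leading vector $X_1$ nowhere appearing in the final formulas.

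First I would recompute the two matrices starting from an \emph{arbitrary} unit leading vector $X_1\in U$. By Definition (\ref{associated chains}) — or Definition (\ref{The chains if all the 3 invariants is one or minus one}) when $U$ is 2-planes decomposable — the chains $\{X_i\}$, $\{Y_i\}$, $\{Z_i\}$ all start at $X_1$, and by construction $Y_1=Z_1=X_1$, $Y_3=X_3$, $Z_3=\tilde X_3$, while $L(X_2,X_4)=L(X_2,Y_2)$ and $L(X_2,\tilde X_4)=L(X_2,Z_2)$. Hence every chain vector of index $\ge 2$ is orthogonal to $X_1$; the vector $X_3=Y_3$ is orthogonal to $Y_2$ and $Y_4$; and $X_2$ is orthogonal to $\tilde X_3=\Gamma X_3+\Delta X_4$. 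The remaining scalar products $<X_i,Y_j>$ and $<X_i,Z_j>$ then follow at once from the defining relations $X_4=\frac{Y_2-\xi X_2}{\sqrt{1-\xi^2}}$, $Y_4=\frac{-X_2+\xi Y_2}{\sqrt{1-\xi^2}}$, $\tilde X_4=\frac{Z_2-\chi X_2}{\sqrt{1-\chi^2}}$, $Z_4=\frac{-X_2+\chi Z_2}{\sqrt{1-\chi^2}}$, together with $<X_3,\tilde X_3>=\Gamma$ and $<X_4,\tilde X_3>=\Delta$. These are one-line computations and they reproduce precisely the matrices in (\ref{canonical matrices $C_{IJ}$ $C_{IK}$ and of 4-dimensional isoclinic subspace w.r.t. the associated chains}); the essential observation is that the outcome involves only $\xi$, $\chi$, $\Gamma$, $\Delta$ (and the non-negative roots $\sqrt{1-\xi^2}$, $\sqrt{1-\chi^2}$ forced by the construction), and never the chosen leading vector.

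It remains only to recall that these four scalars are invariants of $U$: $\xi$ and $\chi$ by Proposition (\ref{invariance of <X_2,Y_2>}), and the pair $(\Gamma,\Delta)$ by Proposition (\ref{Gamma is an invariant of an isoclinic 4 dimensional subspace 1}) — in the generic case $\Gamma$ equals the expression (\ref{cos phi}) in $\xi,\chi,\eta$, hence is already an invariant, while if at least one of $\xi,\chi,\eta$ equals $\pm1$ one has $(\Gamma,\Delta)=(1,0)$. Substituting these invariants into the two displayed matrices shows that $C_{IJ}$ and $C_{IK}$ are unchanged when $X_1$ is replaced by any other unit vector of $U$, which is the assertion. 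I expect the only delicate ingredient to be the sign of $\Delta$ — that is, the fact that the rotation carrying the $\omega^I$-standard frame $(X_3,X_4)$ to $(\tilde X_3,\tilde X_4)$ does not change its sense as the leading vector varies — but this is exactly the content already secured by Proposition (\ref{Gamma is an invariant of an isoclinic 4 dimensional subspace 1}), so here it need only be invoked, not re-established; everything else is the routine bookkeeping of orthonormal frames indicated above.
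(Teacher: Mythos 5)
Your argument is correct and is precisely the one the paper intends: the explicit forms in (\ref{canonical matrices $C_{IJ}$ $C_{IK}$ and of 4-dimensional isoclinic subspace w.r.t. the associated chains}) depend only on $(\xi,\chi,\Gamma,\Delta)$, whose invariance is supplied by Propositions (\ref{invariance of <X_2,Y_2>}) and (\ref{Gamma is an invariant of an isoclinic 4 dimensional subspace 1}) (the paper itself defers the detailed verification to \cite{Vac_isoclinic} and, after Definition (\ref{definiton of canonical bases and matrices of Is(4)}), states exactly this reduction). Your frame computations reproducing the matrix entries check out, including the sign of $\Delta$, which you rightly invoke rather than re-derive.
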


Since if   $U$ is orthogonal one has $(\Gamma,\Delta)=(1,0)$,  we can deduce the following
\begin{coro} \label{some of the pair $(U,IU), (U,JU),(U,KU)$ is strictly orthogonal}
In case of double or triple orthogonality  one has  $C_{IJ}=C_{IK}= Id$.

\end{coro}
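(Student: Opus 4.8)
The plan is to reduce the statement to the observation that, under either hypothesis, at least two of the three K\"{a}hler forms $\omega^I|_U,\omega^J|_U,\omega^K|_U$ vanish identically, so that a single orthonormal basis of $U$ can be used simultaneously as a standard basis of all three; the canonical matrices then come out to be the identity by inspection.

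First I would record the elementary equivalence that, for $A\in S(\mathcal{Q})$, the pair $(U,AU)$ is strictly orthogonal if and only if $\omega^A|_U\equiv 0$: since $\omega^A(X,Y)=<X,AY>$ with $X,Y\in U$ and $AY$ ranging over $AU$, the relation $U\perp AU$ forces $\omega^A|_U=0$, and conversely $\omega^A|_U=0$ says exactly that every line of $U$ is orthogonal to $AU$. (The same follows from Proposition (\ref{general form for the matrix of projector omega^A of an isoclinic 4 dimensional subspace}), the common square norm of the rows of the matrix (\ref{matrix of projector omega^A of an isoclinic 4 dimensional subspace}) being $\cos^{2}\theta^A$, which is $0$ precisely when $\theta^A=\pi/2$.) Hence triple orthogonality means $\omega^I|_U=\omega^J|_U=\omega^K|_U=0$, and double orthogonality means exactly two of them vanish; in either case \emph{at most one} of the three forms is non-zero.

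Next I would fix a common basis $\mathcal{B}=(X_1,X_2,X_3,X_4)$ of $U$: take it to be a standard basis of whichever of $\omega^I|_U,\omega^J|_U,\omega^K|_U$ is non-zero, or any orthonormal basis of $U$ if all three vanish. Because the null form is in standard form with respect to every orthonormal basis, $\mathcal{B}$ is then a standard basis of all three forms at once. This is precisely the $2$-planes decomposable situation, and the freedom in the construction of the chains prescribed in Definition (\ref{The chains if all the 3 invariants is one or minus one}) allows --- and in fact forces --- all six chains of $U$ centered on $X_1$ to be taken equal to $\mathcal{B}$; in particular $\xi=\chi=\eta=1$ and, by Proposition (\ref{Gamma is an invariant of an isoclinic 4 dimensional subspace 1}), $\Gamma=1$, $\Delta=0$.

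It then only remains to read off $C_{IJ}=(<X_i,Y_j>)$ and $C_{IK}=(<X_i,Z_j>)$ with $\{X_i\}=\{Y_i\}=\{Z_i\}=\mathcal{B}$ orthonormal, which gives $C_{IJ}=C_{IK}=\mathrm{Id}$ immediately; equivalently one substitutes $\xi=\chi=1$, $\Gamma=1$, $\Delta=0$ into the explicit expressions (\ref{canonical matrices $C_{IJ}$ $C_{IK}$ and of 4-dimensional isoclinic subspace w.r.t. the associated chains}). I do not anticipate a real obstacle: the only subtlety is purely formal, namely that when one of the forms is zero the individual vectors $X_2,Y_2,Z_2$ of (\ref{$X_2,Y_2,Z_2$}) are no longer defined, which is exactly why the argument has to proceed through the degenerate branch of the chain construction, Definition (\ref{The chains if all the 3 invariants is one or minus one}), rather than through Definition (\ref{associated chains}). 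The corollary is thus a direct consequence of the normalizations already put in place.
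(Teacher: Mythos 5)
Your proposal is correct and follows essentially the same route as the paper: the corollary is read off from the degenerate branch of the chain construction (Definition (\ref{The chains if all the 3 invariants is one or minus one})), where all six chains coincide with a single orthonormal basis that is simultaneously standard for the three forms, equivalently from substituting $\xi=\chi=1$, $(\Gamma,\Delta)=(1,0)$ into (\ref{canonical matrices $C_{IJ}$ $C_{IK}$ and of 4-dimensional isoclinic subspace w.r.t. the associated chains}). Your preliminary observation that strict orthogonality of $(U,AU)$ is equivalent to $\omega^A|_U\equiv 0$ is a correct and welcome justification of a step the paper leaves implicit.
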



Following the definition  given in \cite{Vacpreprint}, we give the
\begin{defi} \label{definiton of canonical bases and matrices of Is(4)}
 Let $U \in \mathcal{IC}^4$. Fixed  an admissible basis $(I,J,K)$, for any leading vector $X_1$, we call the chains $\{X_i\},\{Y_i\},\{Z_i\}$ (resp. the matrices $C_{IJ}$ and $C_{IK}$)  determined above \textbf{canonical bases}  (resp. \textbf{canonical matrices}) of the subspaces  $U \in \mathcal{IC}^4$.
\end{defi}
 Clearly for any leading vector we have  a different set of canonical bases. As explained  beforehand,   we denote them "canonical" since, by the invariance of  $(\xi,\chi,\eta,\Delta)$,  the matrices $C_{IJ}$ and $C_{IK}$ are invariants of $U \in \mathcal{IC}^4$ having the unique forms  given in (\ref{canonical matrices $C_{IJ}$ $C_{IK}$ and of 4-dimensional isoclinic subspace w.r.t. the associated chains})
 regardless the leading vector $X_1$.  We summarize the results obtained in the following
\begin{prop} \cite{Vac_isoclinic} \label{canonical matrices of any  subspace of ${IC}^(4)$}
Fixed an admissible basis $(I,J,K)$, to any $U \in \mathcal{IC}^4$   we can associate  the orthogonal canonical matrices $C_{IJ}$ and $C_{IK}$   given in
(\ref{canonical matrices $C_{IJ}$ $C_{IK}$ and of 4-dimensional isoclinic subspace w.r.t. the associated chains})
 representing the mutual position of the canonical (standard) bases $\{X_i\},\{Y_i\},\{Z_i\}$ of $\omega^I,\omega^J,\omega^K$. Such matrices depend  on
the triple of invariants $(\xi,\chi,\eta)$ and on the sign of $\Delta=<X_4,\tilde X_3>= \pm \sqrt{1-\Gamma^2}$  where $\Gamma= \Gamma(\xi,\chi,\eta)$ is  given in (\ref{cos phi}) if  none among $\xi,\chi,\eta$ is equal to $\pm 1$  else $\Gamma=1$. The second case happens in particular    if  $U$ is orthogonal or a 2-planes decomposable subspace. 
\end{prop}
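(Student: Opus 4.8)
The plan is to reduce the statement to a bookkeeping computation with the explicit chains of Definitions (\ref{associated chains}) and (\ref{The chains if all the 3 invariants is one or minus one}), and then to quote the invariance results already established. First I would fix an admissible basis $(I,J,K)$ and a unit leading vector $X_1 \in U$; in the generic case in which $U$ is not orthogonal and none among $\xi,\chi,\eta$ equals $\pm 1$, I would use Definition (\ref{associated chains}) to write down the chains $\{X_i\}$, $\{Y_i\}$, $\{Z_i\}$. Inverting the defining relations (using $I^{-1}=-I$) I would re-express every chain vector in the orthonormal basis $(X_1,X_2,X_3,X_4)$ of $U$: this gives $Y_2=\xi X_2+\sqrt{1-\xi^2}\,X_4$, $Y_4=-\sqrt{1-\xi^2}\,X_2+\xi X_4$ and $Y_3=X_3$, the last being Proposition (\ref{equality of thirs element $X_3=Y_3$ of the chains}).

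The one structural point is that, because $\tilde X_2=X_2$, the $2$-plane $L(X_3,X_4)$ equals $U\cap L(X_1,X_2)^{\perp}$ and hence equals $L(\tilde X_3,\tilde X_4)$; since $(X_3,X_4)$ and $(\tilde X_3,\tilde X_4)$ are both $\omega^I|_U$-standard bases of this fixed $2$-plane they differ by a rotation, namely by the matrix $C$ introduced just before the statement, so that $\tilde X_3=\Gamma X_3+\Delta X_4$ and $\tilde X_4=-\Delta X_3+\Gamma X_4$. Substituting, $Z_2=\chi X_2+\sqrt{1-\chi^2}\,\tilde X_4$ and $Z_4=-\sqrt{1-\chi^2}\,X_2+\chi\tilde X_4$, with $Z_3=\tilde X_3$. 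Computing the entries $<X_i,Y_j>$ and $<X_i,Z_j>$ is then a routine use of orthonormality and reproduces exactly the matrices $C_{IJ}$ and $C_{IK}$ of (\ref{canonical matrices $C_{IJ}$ $C_{IK}$ and of 4-dimensional isoclinic subspace w.r.t. the associated chains}); being changes between orthonormal bases of $U$, they are orthogonal. Along the way, expanding $\eta=<Y_2,Z_2>$ in the basis containing $X_2$ gives $\eta=\xi\chi+\sqrt{1-\xi^2}\sqrt{1-\chi^2}\,\Gamma$, which yields the expression (\ref{cos phi}) for $\Gamma$ when none of $\xi,\chi,\eta$ is $\pm1$.

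Finally I would handle the remaining cases through Definition (\ref{The chains if all the 3 invariants is one or minus one}): when at least one of $\xi,\chi,\eta$ equals $\pm1$ one has $\Gamma=1$ and $\Delta=0$ by Proposition (\ref{Gamma is an invariant of an isoclinic 4 dimensional subspace 1}) — in particular if $U$ is orthogonal or $2$-planes decomposable — and in the double or triple orthogonality case $C_{IJ}=C_{IK}=Id$ by Corollary (\ref{some of the pair $(U,IU), (U,JU),(U,KU)$ is strictly orthogonal}). The dependence of the canonical matrices only on $(\xi,\chi,\eta,\Delta)$, and their independence of the leading vector $X_1$, then follow from the invariance of $(\xi,\chi,\eta)$ (Proposition (\ref{invariance of <X_2,Y_2>})) and of $(\Gamma,\Delta)$ (Proposition (\ref{Gamma is an invariant of an isoclinic 4 dimensional subspace 1})), i.e. from Proposition (\ref{invariance of $C_{IJ}$ and $C_{IK}$ w.r.t. any leading vector}). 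The step carrying the real content is the identification $L(X_3,X_4)=L(\tilde X_3,\tilde X_4)$ together with the fact that two $\omega^I$-standard bases of a fixed $2$-plane differ by a rotation: this is what collapses the a priori large freedom in the third and fourth chain vectors into the single invariant pair $(\Gamma,\Delta)$ and makes the forms in (\ref{canonical matrices $C_{IJ}$ $C_{IK}$ and of 4-dimensional isoclinic subspace w.r.t. the associated chains}) canonical.
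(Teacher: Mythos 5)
Your proposal is correct and follows essentially the same route as the paper: you reconstruct the chains of Definition (\ref{associated chains}), use the identification $L(X_3,X_4)=L(\tilde X_3,\tilde X_4)$ and the rotation matrix $C=\left(\begin{smallmatrix}\Gamma & -\Delta\\ \Delta & \Gamma\end{smallmatrix}\right)$ to compute the entries of $C_{IJ}$ and $C_{IK}$, derive $\Gamma$ from the expansion of $\eta=<Y_2,Z_2>$ exactly as the paper does before equation (\ref{cos phi}), and dispose of the degenerate and invariance claims by the same cited propositions. The computations check out against (\ref{canonical matrices $C_{IJ}$ $C_{IK}$ and of 4-dimensional isoclinic subspace w.r.t. the associated chains}), so this matches the argument the paper summarizes from \cite{Vac_isoclinic}.
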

Then according to Theorem (\ref{main_theorem 1_ existance of canonical bases with same mutual position}) we state the
\begin{teor}\cite{Vac_isoclinic}  \label{Sp(n) orbit of a 4 dimensional isoclinic subpace}
The invariants $(\xi,\chi,\eta, \Delta)$ together with the angles $(\theta^I, \theta^J, \theta^K)$ determine the orbit of any $U \in \mathcal{IC}^4$. In particular if $U$ is orthogonal or 2-planes decomposable (in which case $(\Gamma,\Delta)=(1,0)$) the first set reduces to  the pair $(\xi, \chi)$.
\end{teor}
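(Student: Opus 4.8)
The plan is to obtain the statement as a direct consequence of Theorem (\ref{main_theorem 1_ with respect canonical bases}), given that for $U\in\mathcal{IC}^4$ every ingredient of $Inv(U)$ has already been expressed through $(\theta^I,\theta^J,\theta^K)$ and $(\xi,\chi,\eta,\Delta)$. First I would record that, by Proposition (\ref{isoclinicity w.r.t. one hypercomplex basis implies isoclinicity w.r.t. any compatible complex structure}), a subspace $U\in\mathcal{IC}^4$ is isoclinic with each of $IU$, $JU$, $KU$, so the principal--angle vectors degenerate, $\bm{\theta}^I(U)=(\theta^I,\theta^I,\theta^I,\theta^I)$ and similarly for $J$ and $K$; hence the first three entries of $Inv(U)$ are completely encoded by the triple $(\theta^I,\theta^J,\theta^K)$.

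Next I would invoke the canonical bases: fixing an admissible basis $(I,J,K)$ and an arbitrary leading vector $X_1$, the chains $\{X_i\},\{Y_i\},\{Z_i\}$ of Definition (\ref{associated chains}) --- or of Definition (\ref{The chains if all the 3 invariants is one or minus one}) when $U$ is $2$-planes decomposable or orthogonal --- are, by Definition (\ref{definiton of canonical bases and matrices of Is(4)}), canonical standard bases of $\omega^I,\omega^J,\omega^K$ restricted to $U$. By Proposition (\ref{invariance of $C_{IJ}$ and $C_{IK}$ w.r.t. any leading vector}) their mutual--position matrices $C_{IJ}=(<X_i,Y_j>)$ and $C_{IK}=(<X_i,Z_j>)$ are independent of $X_1$ and equal the ones displayed in (\ref{canonical matrices $C_{IJ}$ $C_{IK}$ and of 4-dimensional isoclinic subspace w.r.t. the associated chains}); by Proposition (\ref{canonical matrices of any  subspace of ${IC}^(4)$}), built on the invariance statements of Proposition (\ref{invariance of <X_2,Y_2>}) and Proposition (\ref{Gamma is an invariant of an isoclinic 4 dimensional subspace 1}), these matrices are functions of $(\xi,\chi,\eta)$ and of the sign of $\Delta$ alone, with $\Gamma=\Gamma(\xi,\chi,\eta)$ given by (\ref{cos phi}) when none of $\xi,\chi,\eta$ is $\pm1$, and $(\Gamma,\Delta)=(1,0)$ otherwise. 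Therefore $Inv(U)=\{\bm{\theta}^I(U),\bm{\theta}^J(U),\bm{\theta}^K(U),C_{IJ},C_{IK}\}$ is a function of $(\theta^I,\theta^J,\theta^K,\xi,\chi,\eta,\Delta)$, and conversely these quantities are recoverable from $Inv(U)$: the angles from the principal--angle vectors, $\xi=(C_{IJ})_{22}$, $\chi=(C_{IK})_{22}$, $\Delta=(C_{IK})_{43}$, and $\eta=\xi\chi+\sqrt{1-\xi^2}\sqrt{1-\chi^2}\,(C_{IK})_{33}$ in the generic case.

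With this in hand both directions follow: if $U,W\in\mathcal{IC}^4$ share $(\theta^I,\theta^J,\theta^K)$ and $(\xi,\chi,\eta,\Delta)$ then $Inv(U)=Inv(W)$, so $U$ and $W$ lie in one $Sp(n)$-orbit by Theorem (\ref{main_theorem 1_ with respect canonical bases}), and the conclusion is independent of the chosen admissible basis because $Inv$ is. Conversely, two subspaces in the same orbit have $Inv(U)=Inv(W)$, hence the listed invariants coincide by the recovery formulas above. For the ``in particular'' clause, if $U$ is orthogonal or $2$-planes decomposable then $(\Gamma,\Delta)=(1,0)$ by Proposition (\ref{Gamma is an invariant of an isoclinic 4 dimensional subspace 1}) and $\xi,\chi,\eta\in\{-1,1\}$ with $\eta=\xi\chi$, so (\ref{canonical matrices $C_{IJ}$ $C_{IK}$ and of 4-dimensional isoclinic subspace w.r.t. the associated chains}) reduces to a pair of diagonal matrices depending only on $(\xi,\chi)$, whence $(\xi,\chi)$ together with $(\theta^I,\theta^J,\theta^K)$ is already a complete invariant. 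The only real care needed is the bookkeeping of the degenerate cases, where $\Delta$ drops out of the canonical matrices and must be read consistently as $0$; since Proposition (\ref{canonical matrices of any  subspace of ${IC}^(4)$}) already records exactly this, no genuine obstacle remains beyond a clean case split.
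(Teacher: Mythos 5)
Your proposal is correct and follows essentially the same route as the paper: the paper likewise derives this theorem by combining Proposition (\ref{canonical matrices of any  subspace of ${IC}^(4)$}) (the canonical matrices $C_{IJ},C_{IK}$ are functions of $(\xi,\chi,\eta)$ and the sign of $\Delta$ alone) with the degeneracy of the principal-angle vectors for isoclinic subspaces, and then applies Theorem (\ref{main_theorem 1_ with respect canonical bases}). Your added bookkeeping (explicit recovery of $\xi,\chi,\eta,\Delta$ from the entries of $C_{IJ},C_{IK}$ and the clean case split for the orthogonal/2-planes decomposable case) only makes explicit what the paper leaves implicit.
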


 A logical consequence of the above theorem is the following
 \begin{coro} \label{same invariants w.r.t. one admissible basis implies same invariants w.r.t. any admissible basis}
 If the  pair of subspaces $(U,W)$ share the same invariants  $(\xi,\chi,\eta,  \Gamma, \Delta)$  and the same angles of isoclinicity $(\theta^I, \theta^J, \theta^K)$ w.r.t. the admissible basis $(I,J,K)$ then they share the same invariants and angles w.r.t. any admissible basis.
 \end{coro}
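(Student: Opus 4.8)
The plan is to reduce the statement to Theorem (\ref{Sp(n) orbit of a 4 dimensional isoclinic subpace}) together with the elementary fact that every element of $Sp(n)$ is an isometry of $V$ commuting with the whole quaternionic structure $\mathcal{Q}$. First I would observe that $Sp(n)$ is defined intrinsically, as the centralizer of $\lambda(Sp(1))$ in $SO(4n)$, so the notion of $Sp(n)$-orbit does not refer to any admissible basis. Now, if $U$ and $W$ share the invariants $(\xi,\chi,\eta,\Gamma,\Delta)$ and the angles $(\theta^I,\theta^J,\theta^K)$ with respect to the fixed admissible basis $(I,J,K)$, then by Theorem (\ref{Sp(n) orbit of a 4 dimensional isoclinic subpace}) they lie in the same $Sp(n)$-orbit; hence there is $A \in Sp(n)$ with $AU=W$, and this $A$ is orthogonal and satisfies $AL=LA$ for every $L \in \mathcal{Q}$, in particular for every $L \in S(\mathcal{Q})$.

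Next I would fix an arbitrary admissible basis $(I',J',K')$ and verify that $A$ carries the canonical data of $U$ onto that of $W=AU$. Since $A$ is an isometry mapping $U$ onto $W$ and $LU$ onto $LW$ for each $L \in S(\mathcal{Q})$, one has $A\,Pr^{LU}=Pr^{LW}\,A$; consequently $Pr^{LW}$ is conjugate to $Pr^{LU}$ by an isometry, so the pair $(W,LW)$ is isoclinic with the same angle as $(U,LU)$ and $\theta^{I'}(W)=\theta^{I'}(U)$, likewise for $J'$ and $K'$. Moreover, picking a leading vector $X_1$ of $U$ and building the associated chains $\{X_i\},\{Y_i\},\{Z_i\},\{\tilde X_i\},\{\tilde Y_i\},\{\tilde Z_i\}$ of Definition (\ref{associated chains}) relative to $(I',J',K')$, one checks by applying $A$ term by term — using linearity, orthogonality and $AL=LA$ — that $AX_1$ is a leading vector of $W$ and that $\{AX_i\},\{AY_i\},\{AZ_i\},\ldots$ are exactly the corresponding chains of $W$ centered on $AX_1$; indeed the normalizations by $\cos\theta^L$ and the Gram--Schmidt choices of $X_4,\tilde X_4,\ldots$ inside the $2$-planes $L(X_2,Y_2), L(X_2,Z_2), L(Y_2,Z_2)$ are all manifestly equivariant under such an $A$. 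Hence the scalars $\xi=<X_2,Y_2>$, $\chi=<X_2,Z_2>$, $\eta=<Y_2,Z_2>$, $\Gamma=<X_3,\tilde X_3>$ and $\Delta=<X_4,\tilde X_3>$ computed for $U$ coincide, signs included, with those computed for $W$.

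Combining the two steps, $U$ and $W$ have the same angles of isoclinicity and the same invariants $(\xi,\chi,\eta,\Gamma,\Delta)$ with respect to the arbitrary admissible basis $(I',J',K')$, which is the assertion; note also that the orthogonal and $2$-planes decomposable cases are precisely those in which some of $\xi,\chi,\eta$ equal $\pm1$ and $(\Gamma,\Delta)=(1,0)$, features clearly transported by $A$, so the reduced invariant set $(\xi,\chi)$ appearing in Theorem (\ref{Sp(n) orbit of a 4 dimensional isoclinic subpace}) is preserved as well. I do not expect a genuine obstacle here: the single point deserving care is to confirm that every ingredient entering the definitions — the orthogonal projectors $Pr^{LU}$, the normalizations, and the Gram--Schmidt type choices inside the relevant $2$-planes — is equivariant under an isometry of $V$ commuting with $\mathcal{Q}$, so that $A$ truly maps a full set of canonical data of $U$ to one of $W$.
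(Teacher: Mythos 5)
Your argument is correct and is exactly the reasoning the paper intends: the corollary is stated there as a "logical consequence" of Theorem (\ref{Sp(n) orbit of a 4 dimensional isoclinic subpace}), namely that equality of the invariants w.r.t. one admissible basis forces $U$ and $W$ into the same $Sp(n)$-orbit, and then the equivariance of all the canonical constructions under an isometry commuting with $\mathcal{Q}$ transports the invariants for every other admissible basis. Your write-up simply makes explicit the equivariance checks that the paper leaves implicit.
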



\subsection{The 4-dimensional  complex subspace}

\begin{prop}\label{isoclinic complex 4-plane}
Let $(U,I)$ be a 4-dimensional complex  subspace. Then $U \in \mathcal{IC}^4$.  
Moreover for any pair $X,Y \in U$  belonging to orthogonal $I$-complex 2-planes (i.e. $L(X,IX) \perp L(Y,IY)$ ), the characteristic angle (i.e. the angle $\widehat{\mathcal{Q} X,\mathcal{Q} Y}$)
is an invariant of $U$ and equals the Euclidean angle of the pair $(U,KU)$.
\end{prop}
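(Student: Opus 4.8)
The plan is to handle the two assertions in turn after fixing an adapted basis $(I,J,K)$ of $\mathcal{Q}$. Note first that by point (\ref{for any K in I perp  KU=JU is I-complex}) of Claim (\ref{properties of some quternionic,complex and real subspaces}) the subspace $JU=KU$ is again $I$-complex, and that since $IU=U$ the pair $(U,IU)$ is trivially isoclinic with all principal angles $0$. Hence, by Proposition (\ref{isoclinicity w.r.t. one hypercomplex basis implies isoclinicity w.r.t. any compatible complex structure}), to prove the first assertion it suffices to show that $(U,JU)$ is isoclinic.

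For this I would regard $V$ as a Hermitian vector space over $\mathbb{C}_I=\R+\R I$: then $U$ is a $\mathbb{C}_I$-subspace of complex dimension $2$, the orthogonal projector $Pr^U_V$ commutes with $I$ (as $U$ and $U^\perp$ are $I$-invariant), and $J$ is a $\mathbb{C}_I$-antilinear isometry of $V$. Consequently $T:=Pr^U_V\circ J|_U\colon U\to U$ is $\mathbb{C}_I$-antilinear and, since $\langle TX,Y\rangle=\langle JX,Y\rangle=-\langle X,JY\rangle=-\langle X,TY\rangle$ for $X,Y\in U$, it is skew-symmetric for $\langle\,,\,\rangle$; moreover $T$ is $Pr^U_{JU}$ precomposed with the isometry $J\colon U\to JU$, so its singular values are exactly the cosines of the principal angles of $(U,JU)$ (Lemma (\ref{principal angles are singular values of projector})). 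Writing $T$ in a $\mathbb{C}_I$-orthonormal basis $(e_1,e_2)$ of $U$ as $X\mapsto C\bar X$ with $C$ a $2\times2$ matrix over $\mathbb{C}_I$, skew-symmetry forces $C$ to be antisymmetric, i.e. $C=\left(\begin{smallmatrix}0&c\\-c&0\end{smallmatrix}\right)$; hence $T^2=C\bar C=-|c|^2\,\mathrm{Id}_U$ and $T^tT=-T^2=|c|^2\,\mathrm{Id}_U$, so all four singular values of $T$ equal $|c|$. Thus $(U,JU)=(U,KU)$ is isoclinic with $\cos\theta^K=|c|$, and Proposition (\ref{isoclinicity w.r.t. one hypercomplex basis implies isoclinicity w.r.t. any compatible complex structure}) gives $U\in\mathcal{IC}^4$, with angles of isoclinicity $(\theta^I,\theta^J,\theta^K)=(0,\theta^K,\theta^K)$.

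For the second assertion, take unit vectors $X,Y\in U$ with $L(X,IX)\perp L(Y,IY)$, so that $(X,IX,Y,IY)$ is an orthonormal basis of $U$ and in particular $\langle X,Y\rangle=\langle X,IY\rangle=0$. Then (\ref{relation between scalar product and hermitian product of H^n}) gives $X\cdot Y=\langle X,JY\rangle\,j+\langle X,KY\rangle\,k$, so by Definition (\ref{definitions of characteristic and Hermitain angle}) the characteristic angle satisfies $\cos\widehat{\mathcal{Q}X,\mathcal{Q}Y}=\big(\langle X,JY\rangle^2+\langle X,KY\rangle^2\big)^2$. On the other hand, expanding $Pr^U_V(KY)$ in the orthonormal basis $(X,IX,Y,IY)$ and using that $I$ is skew-symmetric, that $IK=-J$, and that $\omega^J$ and $\omega^K$ are skew, only the components along $X$ and $IX$ survive, with coefficients $\langle X,KY\rangle$ and $\langle IX,KY\rangle=\langle X,JY\rangle$; hence $|Pr^U_V(KY)|^2=\langle X,KY\rangle^2+\langle X,JY\rangle^2$. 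Since $KY\in KU$ is a unit vector and, by the first part, $(U,KU)$ is isoclinic with angle $\theta^K$, condition $1)$ of Definition (\ref{definition of isoclinicity}) yields $|Pr^U_V(KY)|=\cos\theta^K$. Combining, $\cos\widehat{\mathcal{Q}X,\mathcal{Q}Y}=\cos^4\theta^K$, which is the same for every such pair $X,Y$; and by Proposition (\ref{cosine of angle between subspaces is the product of the cosines of the principal angles}) applied to the isoclinic $4$-planes $(U,KU)$ one has $\cos\widehat{U,KU}=\prod_{i=1}^4\cos\theta^K=\cos^4\theta^K$, so the characteristic angle equals the Euclidean angle $\widehat{U,KU}$. (When $U$ is a single quaternionic line the same identity holds trivially with $\theta^K=0$.)

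The main obstacle is the one antilinear-algebra point in the first part: a skew-symmetric $\mathbb{C}_I$-antilinear endomorphism of a $2$-complex-dimensional Hermitian space is automatically a scalar multiple of an antilinear complex structure, precisely because every $2\times2$ antisymmetric matrix is proportional to $\left(\begin{smallmatrix}0&1\\-1&0\end{smallmatrix}\right)$; this is where $\dim_{\R}U=4$ (complex dimension $2$) is indispensable, and the statement fails in higher dimension. The remaining ingredients — identifying the singular values of a projector with cosines of principal angles, the quaternionic identities $IJ=K$ and $IK=-J$, and the skew-symmetry of the K\"{a}hler forms — are routine. As an alternative to citing Proposition (\ref{isoclinicity w.r.t. one hypercomplex basis implies isoclinicity w.r.t. any compatible complex structure}), one could instead diagonalize $T$ to produce an orthonormal basis of $U$ in which $\omega^K|_U$ takes the standard isoclinic form (\ref{matrix of projector omega^A of an isoclinic 4 dimensional subspace}) and then invoke Proposition (\ref{general form for the matrix of projector omega^A of an isoclinic 4 dimensional subspace}).
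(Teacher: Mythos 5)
Your proof is correct and follows essentially the same route as the paper's: both establish isoclinicity of $(U,KU)$ from the skew-symmetry and $I$-antilinearity of the projection composed with $J$ (the paper writes out the resulting $4\times 4$ Gram matrix with off-diagonal blocks $\left(\begin{smallmatrix} a & b\\ b & -a\end{smallmatrix}\right)$, $a=\langle X,KY\rangle$, $b=\langle X,JY\rangle$, and matches it against Proposition (\ref{general form for the matrix of projector omega^A of an isoclinic 4 dimensional subspace}) --- which is exactly the real form of your $2\times 2$ antisymmetric complex matrix $C$), and both then identify $\cos\widehat{\mathcal{Q}X,\mathcal{Q}Y}=(\langle X,JY\rangle^{2}+\langle X,KY\rangle^{2})^{2}=\cos^{4}\theta^{K}=\cos\widehat{U,KU}$. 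The only presentational difference is that you package the Gram-matrix computation as antilinear algebra over $\mathbb{C}_I=\R+\R I$; the substance is the same.
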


\begin{proof}
For any unitary pair $X,Y \in U$  with $Y \in L(X,IX)^\perp$, the set  $\{X,IX,Y,IY\}$ is an orthonormal basis of $U$. 
Let $(I,J,K)$ be an adapted basis.  Being $U=IU$, the pair $(U,IU)$ is isoclinic with cosine of the angle of isoclinicity equal to 1. Moreover,  from point (\ref{for any K in I perp  KU=JU is I-complex})of the Proposition (\ref{properties of some quternionic,complex and real subspaces}),  one has  $JU=KU$.
We compute the principal angles $\theta_i \in [0,\pi/2], \; i=1, \ldots,4$ between the $I$-complex 4-planes $U$ and $KU$ recalling that their square cosines are the eigenvalues of the symmetric matrix $G G^t$ where by $G$ we denote the Gram matrix  of $U \times KU$ (matrix of the orthogonal projector $Pr^U: KU \rightarrow U$). Such matrix, w.r.t. the orthonormal basis  $(X,IX,Y,IY)$ of $U$ and $(KX,JX,KY,JY)$ of $KU$, assumes the form
\[
G= \left(   \begin{array}{cccc}
KX & JX & KY & JY\\
\hline\\
\\
 0 & 0 &  <X,KY> &  <X,JY> \\
\\
 0 & 0 & <X,JY> & -<X,KY>\\
\\
-<X,KY> & -<X,JY> &  0 & 0 \\
\\
-<X,JY> & <X,KY> &0 & 0
\end{array}    \right)
\]
  Therefore, according to Proposition (\ref{general form for the matrix of projector omega^A of an isoclinic 4 dimensional subspace}), the pair  of 4-dimensional $I$-complex subspaces $(U,KU)$ is isoclinic and  according to the Proposition (\ref{isoclinicity w.r.t. one hypercomplex basis implies isoclinicity w.r.t. any compatible complex structure}) one has that $U \in \mathcal{IC}^4$.
Denoting by  $a=<X,KY>$ and $b=<X,JY>$, one has
\[
G^t G= \left(   \begin{array}{cccc}

a^2 + b^2 & 0 &  0 &  0 \\
\\
 0 & a^2 + b^2 & 0 & 0\\
\\
0 & 0 &  a^2 + b^2 & 0 \\
\\
0 & 0 & 0 & a^2 + b^2
\end{array}    \right).
\]
So, according to  the Definition (\ref{definition of isoclinicity}),   the angle of  isoclinicity $\theta^K$ of  the complex 4-planes $U$ and $KU$ or equivalently  one of the four identical
principal angles is given by
\[\cos \theta^K=\sqrt{ <X,KY>^2 + <X,JY>^2}.\] 

From Proposition (\ref{cosine of angle between subspaces is the product of the cosines of the principal angles}) one has
\begin{equation} \label{angle of isoclinicity}
\cos \theta_1 \cdot \cos \theta_2 \cdot  \cos \theta_3 \cdot \cos \theta_4= \cos \widehat{(U,KU)}=(<X,KY>^2 + <X,JY>^2)^2
\end{equation}
which clearly depends only on $U$ and $KU$. We deduce that
$(<X,JY>^2 +<X,KY>^2)$ is  an invariant of $U$ for any $I$-orthonormal pair $X,Y \in U$ (i.e. for any $Y \in L(X,IX)^\perp \cap U$).
This implies that, for any $I$-orthonormal pair $X,Y \in U$, 
the characteristic angle $\theta=\widehat{\mathcal{Q} X,\mathcal{Q} Y}$
is an invariant of $U$ as well. In fact, if  $X'=AX, \; Y'= AY$ 
where $A$ is an  orthogonal map commuting with $I$ in order to preserve the $I$-orthogonality between $X',Y'$,
\begin{equation} \label{characteristic angle}
\begin{array}{ll} \cos \widehat {\mathcal{Q} X,\mathcal{Q} Y}&=  (<X,Y>^2 + <X,IY>^2 +<X,JY>^2 +<X,KY>^2)^2=(<X,JY>^2 +<X,KY>^2)^2\\
&= \cos \widehat{(U,KU)}= (<X',JY'>^2 +<X',KY'>^2)^2=\cos \widehat {\mathcal{Q}X',\mathcal{Q}Y'}.
\end{array}
\end{equation}

\end{proof}

\begin{defi}
Let $(U,I)$ be a 4-dimensional $I$-complex subspace. We call 
\textbf{$I^\perp$-K\"{a}hler angle} of a 4-dimensional $I$-complex subspace $U$,  and denote it by $\theta^{I^\perp}(U)$, the angle of isoclinicity of the pair $(U,KU)$, for any $K \in I^\perp$.   
 One has
\[
\cos \theta^{I^\perp}(U)= \sqrt{ <X,KY>^2 + <X,JY>^2}= \cos \theta^J= \cos \theta^K
\]
 where $(I,J,K)$ is any adapted basis and $(X,Y)$ unitary with $Y \in L(X,IX)^\perp \cap U$. We denote  such subspace by the triple $(U,I,\theta^{I^\perp})$.
\end{defi}



Let consider the case that $(U,I)$ is not totally complex i.e. $\theta^K \neq \pi/2$.
it is easily seen that given the unitary vectors $X,Y$ of $U$ with $Y \in L(X,IX)^\perp \cap U$,  
and denoted by   $a= <X,KY>$ and $b= <X,JY>$, the unitary vector $Z_2 \in U$ such that
 $X$ and $KZ_2$ are associated left and right singular vectors of the pair $(U,KU)$ i.e.  $Pr^{KU} X= \cos \theta^K K Z_2$ and  $Pr^U (K Z_2)= \cos \theta^K X$ is given by
\begin{equation}\label{singular vector Z} Z_2= \frac{a}{\cos \theta^K}Y +  \frac{b}{\cos \theta^K}IY.
\end{equation}
 Clearly it  does not depend on the orthonormal pair $(Y,IY)$;  it belongs to the $\omega^I$-standard $I$-complex 2-plane $L(X,IX)^\perp=L(Y,IY)$.
We can then state the following
\begin{coro} \label{related singular vectors are I-orthogonal}
Consider $(U,I,\theta^{I^\perp}\neq \pi/2)$. 
For any complex structure $K \in  \mathcal{Q}$ anticommuting with $I$, the orthogonal projection $Pr^{KU} X \in KU$ of a unitary vector $X \in U$ is such that the  unitary vectors  $(X,Z_2=\frac{K^{-1}Pr^{KU}X}
{\cos \theta^K})$
belong to strictly orthogonal $I$-complex 2-planes.
\end{coro}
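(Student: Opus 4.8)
The plan is short, because most of the work is already contained in formula (\ref{singular vector Z}) and in Proposition (\ref{isoclinic complex 4-plane}); the corollary essentially repackages them. I would start from the explicit description of $Z_2$: for a unitary $X\in U$ and a unitary $Y\in L(X,IX)^\perp\cap U$, setting $a=<X,KY>$ and $b=<X,JY>$, formula (\ref{singular vector Z}) gives
\[
Z_2=\frac{K^{-1}Pr^{KU}X}{\cos\theta^K}=\frac{a}{\cos\theta^K}\,Y+\frac{b}{\cos\theta^K}\,IY .
\]
First I would note that this is a well-defined unit vector: the hypothesis $\theta^{I^\perp}(U)\neq\pi/2$ means $\cos\theta^K=\cos\theta^{I^\perp}(U)\neq0$, so the division is legitimate, and $|Z_2|^2=(a^2+b^2)/\cos^2\theta^K=1$ since $\cos\theta^K=\sqrt{a^2+b^2}$ by Proposition (\ref{isoclinic complex 4-plane}).

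The main step is to identify the $I$-complex $2$-plane carrying $Z_2$. From the displayed formula, $Z_2\in L(Y,IY)$. By point (\ref{3}) of Claim (\ref{properties of some quternionic,complex and real subspaces}), the orthogonal complement $L(X,IX)^\perp\cap U=L(Y,IY)$ of the $I$-complex $2$-plane $L(X,IX)$ inside the $I$-complex subspace $U$ is itself $I$-complex; hence $IZ_2\in L(Y,IY)$ as well, so $L(Z_2,IZ_2)$ is a $2$-dimensional subspace of the $2$-plane $L(Y,IY)$ and therefore equals it. This reduces the claim to showing that $L(X,IX)$ and $L(Y,IY)$ are strictly orthogonal.

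For the strictness I would simply compute the Gram matrix of the orthogonal projector between the orthonormal bases $(X,IX)$ of $L(X,IX)$ and $(Y,IY)$ of $L(Y,IY)$: the choice of $Y$ gives $<X,Y>=<IX,Y>=0$, hermitianity of the metric then yields $<IX,IY>=<X,Y>=0$ and $<X,IY>=-<IX,Y>=0$, so this matrix vanishes identically, i.e.\ every principal angle of the pair equals $\pi/2$ --- which is exactly strict orthogonality (and is consistent with the footnote's remark that for $2$-planes orthogonality and strict orthogonality coincide). I do not anticipate a genuine obstacle: the only delicate point is that the assumption $\theta^{I^\perp}\neq\pi/2$ is precisely what makes $Z_2$ meaningful, since when $\theta^{I^\perp}=\pi/2$, i.e.\ $U$ totally complex, one has $Pr^{KU}X=0$ and there is nothing to normalize.
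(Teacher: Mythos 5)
Your proposal is correct and follows essentially the same route as the paper: the corollary is stated there as an immediate consequence of formula (\ref{singular vector Z}), which exhibits $Z_2$ as a unit vector of $L(Y,IY)=L(X,IX)^\perp\cap U$, an $I$-complex $2$-plane strictly orthogonal to $L(X,IX)$. Your added verifications (unit norm of $Z_2$ via $\cos\theta^K=\sqrt{a^2+b^2}$, and the vanishing of all four Gram entries to get strict orthogonality) are exactly the details the paper leaves implicit.
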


Clearly $(X,KZ_2)$ are  related principal vectors of the pair of  4-dimensional $I$-complex subspaces $(U,KU)$.  Considering a different complex structure $\bar K \in I^\perp$, the
   vector $Z_2= \frac{\bar K^{-1} Pr^{\bar K U}X}{\cos \theta^K}$ changes inside the $I$-complex 2-plane   $L(X,IX)^\perp \cap U$.
In particular one has that, for any adapted basis $(I,J,K)$, the  vector   $Y_2=\frac{J^{-1}Pr^{KU}(X)}{\cos \theta^J}=-IZ_2$  and $Z_2= \frac{K^{-1} Pr^{K U}X}{\cos \theta^K}$
form an orthonormal basis of $L(X,IX)^\perp$.



From the isoclinicity of $U$ and $KU$ any orthonormal basis in $U$ is a  basis of  singular vectors as well as any orthonormal basis in $KU$ is a basis of right singular vectors.
Clearly to any such basis in $U$ corresponds only one basis of right singular vector in $KU$ in order to have a pair of \textit{related bases} (\cite{Riz1}).

With respect to these  related bases, the Gram matrix $G(U \times KU)$
is diagonal with non negative diagonal entries equal to $\cos \theta^K=\cos \theta^J= \cos \theta^{I^\perp}$.  The existence of such diagonal form is stated  in \cite{Af} by   Theorem (\ref{Afriat Theorem})    and  follows from the theory of the singular values decomposition applied to the matrix $G$.

In case instead  $(U,I)$  is totally complex, for any $X \in U$, one has that $Pr^{KU} X= \bf{0}$.
Being the $\omega^I$ standard 2-plane with  leading vector $X_1$ given by $L(X_1,X_2=-IX_1)$, in this case we assume that $Z_2=X_2$ for any $K \in I^\perp$


Observe that 
 the cosine of the characteristic angle $\cos \widehat {\mathcal{Q}X,\mathcal{Q}Y}$
  equals the square cosine of the angle between the $I$-complex planes $\C X=L(X,IX)$ and $\C (KY)=L(KY,JY)$
  i.e. for any $X,Y \in U$ such that $\C X \perp \C Y$ it is
\[\cos \widehat {\mathcal{Q} X,\mathcal{Q} Y}=\cos \widehat{(U,KU)}=(<X,KY>^2 + <X,JY>^2)^2=\cos^2 \widehat{(\C X, \C (KY))}\]
for any adapted basis $(I,J,K)$. 
In particular if $(U,I)$ is totally complex one has that  $\cos \widehat {\mathcal{Q} X,\mathcal{Q} Y}=\cos \widehat{(U,KU)}=0$.


 Recalling the definition of $S$ introduced in Proposition (\ref{$S$ sum of square cosined of the angles of isoclinicity (U,IU),(U,JU),(U,KU) of isoclinic subspaces}), one has the following
\begin{prop}
 There exists a 1:1 correspondence between the  characteristic deviation $\Delta(U)$  of a 4-dimensional $I$-complex subspace $(U,I, \theta^{I^\perp})$
 and the $I^{\perp}$-K\"{a}hler angle $\theta^{I^\perp} \in [0,\pi /2]$. It is given by
\begin{equation} \label{characteristic deviation of a 4-dimensional complex subspace}
\Delta(U,I,\theta^{I^\perp})=\frac{2 \cos^2 \theta +1}{3}=\frac{\cos^2(U,IU) + \cos^2(U,JU) + \cos^2(U,KU)}{3}= \frac{S}{3}
\end{equation}
\end{prop}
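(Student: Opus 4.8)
The plan is to compute $\Delta(U)$ directly from its definition (\ref{characteristic deviation of a subspace}), recognise the outcome as $S/3$, and then read off the formula in the $I$-complex case; the $1{:}1$ correspondence will follow from monotonicity of $\cos^2$ on $[0,\pi/2]$.

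First I would pick \emph{any} orthonormal basis $(X_1,X_2,X_3,X_4)$ of $U$ — this is legitimate because, as recalled above, $\Delta(U)$ is an intrinsic property and in particular does not depend on the chosen orthonormal basis of $U$. Writing $U_{rs}=L(X_r,X_s)$, equation (\ref{angle between 2-planes U,IU}) applied to each of the orthonormal pairs $(X_r,X_s)$ gives $\Delta(U_{rs})=<X_r,IX_s>^2+<X_r,JX_s>^2+<X_r,KX_s>^2$, so that
\[
6\,\Delta(U)=\sum_{r<s}\Big(<X_r,IX_s>^2+<X_r,JX_s>^2+<X_r,KX_s>^2\Big).
\]
Now I would handle each complex structure $A\in\{I,J,K\}$ separately. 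Since a $4$-dimensional $I$-complex subspace lies in $\mathcal{IC}^4$ by Proposition (\ref{isoclinic complex 4-plane}), the pair $(U,AU)$ is isoclinic with some angle of isoclinicity $\theta^A$, and by Proposition (\ref{general form for the matrix of projector omega^A of an isoclinic 4 dimensional subspace}) the matrix $\big(<X_i,AX_j>\big)$ of $\omega^A|_U$ has all of its rows of square norm $\cos^2\theta^A$; hence $\sum_{i,j}<X_i,AX_j>^2=4\cos^2\theta^A$, and the skew-symmetry of $\omega^A$ then yields $\sum_{r<s}<X_r,AX_s>^2=2\cos^2\theta^A$. Summing over $A=I,J,K$ gives $6\,\Delta(U)=2\,(\cos^2\theta^I+\cos^2\theta^J+\cos^2\theta^K)=2S$, i.e. $\Delta(U)=S/3$, which is the last equality in (\ref{characteristic deviation of a 4-dimensional complex subspace}).

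Then I would specialise to the $I$-complex case: here $\theta^I=0$ and, by the very definition of the $I^\perp$-K\"ahler angle, $\theta^J=\theta^K=\theta:=\theta^{I^\perp}$, so $S=1+2\cos^2\theta$ and therefore $\Delta(U)=\tfrac{2\cos^2\theta+1}{3}$, which is the asserted formula. Finally, the $1{:}1$ correspondence is immediate once the formula is in hand: $\theta\mapsto\cos^2\theta$ is a strictly decreasing bijection of $[0,\pi/2]$ onto $[0,1]$, hence $\theta^{I^\perp}\mapsto\Delta(U)$ is a strictly decreasing bijection of $[0,\pi/2]$ onto $[1/3,1]$, so $\Delta(U)$ and $\theta^{I^\perp}$ determine one another.

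I expect the second step to be the only delicate point: it is precisely there that isoclinicity of the $4$-plane enters, since without the special block structure of $\omega^A|_U$ the sum $\sum_{r<s}<X_r,AX_s>^2$ (a multiple of the squared Frobenius norm of $\omega^A|_U$) would not collapse to a single multiple of $\cos^2\theta^A$. I would also note that the identity $\Delta(U)=S/3$ in fact holds for every $U\in\mathcal{IC}^4$, the complex hypothesis being used only to identify the triple of isoclinicity angles as $(0,\theta^{I^\perp},\theta^{I^\perp})$.
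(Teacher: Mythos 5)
Your proof is correct, and it takes a genuinely different route from the paper's. The paper proves the identity by choosing the particular orthonormal basis $(X,Z_2,IX,IZ_2)$ adapted to the singular vectors of the pair $(U,KU)$ and evaluating the six terms $\Delta(U_{rs})$ one by one: the two $I$-complex planes $L(X,IX)$ and $L(Z_2,IZ_2)$ contribute $1$ each and the four remaining planes contribute $\cos^2\theta^{I^\perp}$ each, giving $\tfrac{1}{6}(4\cos^2\theta^{I^\perp}+2)$ directly. You instead work with an arbitrary orthonormal basis and observe that $\sum_{r<s}\Delta(U_{rs})$ is half the sum of the squared Frobenius norms of $\omega^I|_U,\omega^J|_U,\omega^K|_U$, each of which collapses to $4\cos^2\theta^A$ by the isoclinic block form of Proposition (\ref{general form for the matrix of projector omega^A of an isoclinic 4 dimensional subspace}); this yields $\Delta(U)=S/3$ for \emph{every} $U\in\mathcal{IC}^4$, with the complex hypothesis entering only to identify the angle triple as $(0,\theta^{I^\perp},\theta^{I^\perp})$. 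Your version buys generality (it is precisely the statement the paper alludes to, in a commented-out remark, as holding for all $4$-dimensional isoclinic subspaces) and it makes the bijectivity claim explicit via monotonicity of $\cos^2$ on $[0,\pi/2]$, which the paper's proof leaves implicit; the paper's version buys a shorter, fully concrete computation that does not need the Frobenius-norm observation. One cosmetic point: the identity $\Delta(U_{rs})=\langle X_r,IX_s\rangle^2+\langle X_r,JX_s\rangle^2+\langle X_r,KX_s\rangle^2$ for an orthonormal pair comes from the definition (\ref{characteristic deviation}) rather than from (\ref{angle between 2-planes U,IU}), which only records the $I$-component; the citation should be adjusted, but the mathematics is unaffected.
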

\begin{proof}
If follows from (\ref{characteristic deviation of a subspace}). Consider the orthonormal  basis $(X,Z_2,IX,IZ_2)$ where the pair $(X,KZ_2)$ are related left and right singular vectors of $(U,KU)$. One has
\[
\begin{array}{lll}
\Delta(U,I,\theta^{I^\perp})&= & \frac{1}{6}( \Delta(X,Z_2)_\R +\Delta(X,IX)_\R +\Delta(X,IZ_2)_\R +\Delta(Z_2,IX)_\R +\Delta(Z_2,IZ_2)_\R +\Delta(IX,IZ_2)_\R)=\\
& = & \frac{1}{6}(<X,KZ_2>^2 + 1 +  <X,KZ_2>^2 + <X,KZ_2>^2 + 1  + <X,KZ_2>^2)=\frac{2 \cos^2 \theta^{I^\perp} +1}{3}= \frac{S}{3} 
\end{array}
\]
\end{proof}

\newpage
\subsection{The associated plane of a  4-dimensional complex subspace}

The standard form of the skew-symmetric form $\omega^K: (X \times Y) \rightarrow <X,KY>$ restricted to the 4-dimensional pure subspace $(U,I, \theta^{I^\perp})$, for any $K \in I^\perp$, determines a decomposition $U=U_1 \stackrel{\perp} \oplus  U_2$ into an orthogonal  sum of a pair of $K$-orthogonal $\omega^K$-standard 2-planes (i.e. $U_1 \perp KU_2$). If $U_1=L(X_1,X_2)$ and $U_2=L(X_3,X_4)$ where the bases are orthonormal, clearly  $(X_1,X_2,X_3,X_4)$ and $(KX_2,-KX_1,KX_4,-KX_3)$  are  bases of related left and right singular vectors of the projector $Pr^{KU}:U \rightarrow KU$
or equivalently of related principal vectors of the pair of subspaces $(U,KU)$.

Let now consider the case that  the 4-dimensional pure subspace $(U,I)$ is not totally complex. In this case,
from the Corollary (\ref{related singular vectors are I-orthogonal}) it is clearly $U_2=IU_1$ and consequently $U_1 \perp IU_1$. Then, if $J$ completes the adapted basis $(I,J,K)$,  it is also  $U_1 \perp JU_1$.
Therefore, for any $K' \in I^\perp$, given an $\omega^{K'}$-standard 2-plane $U_1 \subset U$ one has that $U=U_1 \stackrel{\perp} \oplus  IU_12$   is an $\omega^{K'}$-standard decomposition of $U$ into orthogonal standard 2-planes.

\begin{defi}
 We call \textbf{associated plane} to the  $I$-complex 4-dimensional pure subspace $(U,I,\theta^{I^\perp}\neq \pi/2)$ 
any 2-dimensional plane $U' \subset U$ characterized by the existence of an orthonormal basis,  say   $(X,Z)$,  such that the  vectors  $(X,KZ)$ are a pair of related principal vectors of the  pair $(U,KU)$  for some  $ K \in I^\perp$.
\end{defi}

If any such basis exists, all  consistently oriented orthonormal bases have the same property. Clearly $<X,KZ>= \cos \theta^{I^\perp}$. Being $U$ pure, one has that   $U'$ is  necessarily totally real. 
From the Corollary (\ref{related singular vectors are I-orthogonal}), $Z(K)= \frac{K^{-1}Pr^{KU}X}{\cos \theta^K} \in    L(X,IX)^\perp$.\\
For any $X \in U$ and $K \in I^\perp$, all subspaces $U'(K)= L(X,Z(K))$  where  $Z(K)= \frac{K^{-1}Pr^{KU}X}{\cos \theta^K}$ are associated plane of $(U,I,\theta^{I^\perp}\neq \pi/2)$.
When we need to specify the structure $K \in S(\mathcal{Q})$, we denote the associated plane $U'$ as  $U'(K)$.


\begin{prop} \label{equivalent definitions of associated planes}
Let  $(U,I,\theta^{I^\perp})$ be a 4-dimensional 
$I$-complex subspace with $I^\perp$-K\"{a}hler angle $\theta^{I^\perp} \neq \pi/2$. The  2-plane  $U'=L(X,Z) \subset U$
is an associated plane iff any of the following equivalent conditions are satisfied.


\begin{enumerate}
\item  \label{la proiezione di un piano associato coincide con la sua $K$-immegine} There exists $K \in I^\perp$ such that $Pr_U^{KU}U'=KU'$.
\begin{proof}
If $U'(K)$ is an associated plane and $(X,Z)$ an orthonormal basis such that the pair $(X,KZ)$ are related principal vectors of the pair $(U,KU)$, then
$Pr^{KU} X=  \cos \theta^{I^\perp} KZ$  and by the skew-symmetry of $\omega^K$ one has $Pr^{KU} Z=  \cos \theta^{I^\perp} (-KX)$.  Then  $Pr^{KU}U'=KU'$.
 Viceversa, if there exists some $K \in I^\perp$ such that $Pr^{KU}U'=KU'$, let $(X,Y)$ be an orthonormal basis of $U'$. One has that $Pr^{KU} X=<X,KY> KY$ with clearly $<X,KY>=\pm \cos \theta^{I^\perp}$. According to the sign, either $(X,KY)$ or $(Y,KX)$ are   a pair of related principal vectors of the pair $(U,KU)$.
\end{proof}

\item $(U,I,\theta^{I^\perp})= U'   \stackrel{\perp} \oplus IU'$. 
\begin{proof}
Let $U'=U'(K)=L(X,Z)$ be an associated plane of  $U$. Clearly also $IU'=IU'(K)$ is an associated plane. Furthermore, by  the Corollary (\ref{related singular vectors are I-orthogonal}), the vectors $(X,Z,IX,IZ)$ form an orthonormal basis which implies that the direct sum $U=U' \oplus IU'$ is orthogonal. 
Viceversa, let $(U,I,\theta^{I^\perp})= U'   \stackrel{\perp} \oplus IU'$. By the Proposition (\ref{properties of some quternionic,complex and real subspaces})  point (\ref{A complex subspace does not contain a complex subspace by a different complex structure}), the 2-plane $U'=L(X,Z)$ is totally real.
  W.r.t. the orthonormal basis $(X,Z,IX,IZ)$ one has  $Pr^{KU}X= <X,JZ>Z + <X,KZ>IZ$ then $<X,K'Z>= \cos \theta^{I^\perp}$ for $K'=\frac{1}{\cos \theta^{I^\perp}}<X,JZ> J +<X,KZ> K$.
 \end{proof}

\item   There exists some $K \in I^\perp$ w.r.t. which $U'$ is a standard 2-plane of $\omega^K|_U$.
\begin{proof}
  If $U'(K)$ is an associated plane then, w.r.t. to the decomposition $U=U' \stackrel{\perp }\oplus IU'$,  $\omega^K|_U$ assumes standard form.
Viceversa in the decomposition $U=U_1 \oplus IU_1$ associated to the form $\omega^K|_U$, the standard 2-plane  $U_1$ is  an associated plane of $(U,I,\theta^{I^\perp})$ since $Pr^{KU}U_1=KU_1$ and the conclusion follows from point (\ref{la proiezione di un piano associato coincide con la sua $K$-immegine}).
\end{proof}

\item There exists $K \in I^\perp$ such that the  $K$-K\"{a}hler angle of $U'$ equals the $I^\perp$-K\"{a}hler angle $\theta^{I^\perp}$.
\begin{proof}
By the isoclinicity of the pair $(U,KU)$, the angle between any vector $X \in U$ and the subspace $KU$ equals the $I^\perp$-K\"{a}hler angle $\theta^{I^\perp}$.
if $U'(K)$ is an associated plane, from 1)  $Pr^{KU}U'=KU'$, then both singular values of the projector $Pr^{KU}$ restricted to $U'$ equal $\cos \theta^{I^\perp}$. Viceversa if there exists $K \in I^\perp$ such that the  $K$-K\"{a}hler angle of $U'$ equals  $\theta^{I^\perp}$  clearly  $Pr^{KU}U'=KU'$ i.e.,  from previous point (\ref{la proiezione di un piano associato coincide con la sua $K$-immegine}), $U'$  is an associated plane.
\end{proof}

\item  \label{Immaginary measure of an associated plane} $\mathcal{IM}(U')= aj+ bk$ where $j,k \in i^\perp$ and orthonormal  with $a^2+ b^2= \cos^2 \theta^{I^\perp}$  i.e. iff $\mathcal{IM}(U') \in i^\perp, \; \Delta(U')= a^2+ b^2= \cos^2 \theta^{I^\perp}$.
\begin{proof}
If $U'(K)=L(X,Z)$ is an associated plane  of  $(U,I,\theta^{I^\perp})$  with $<X,KZ>= \cos \theta^{I^\perp}$, then  $U' \perp IU',\; U' \perp JU'$  where $J$ completes the adapted basis $(I,J,K)$.
 After identifying the adapted  hypercomplex  structures  $(I,J,K)$ of $V$  with $(R_{-i},R_{-j},R_{-k})$ of $\mathbb{H}$, one has   $\mathcal{IM}(U')= <X,IZ>i + <X,JZ>j + <X,KZ>k  = \cos \theta k$.  Using a different adapted basis the results follows.

Viceversa if $U' \subset (U,I, \theta^{I^\perp})$, with  $\mathcal{IM}(U')= aj+ bk$ w.r.t. the adapted basis $(I,J,K)$, one has that $U'$ is an associated plane w.r.t. $K'= aJ + bK, \; a^2 + b^2=1$. In fact  $<X,K'Z>= \cos \theta^{I^\perp}$. In case $<X,KZ > <0$ we consider the orthonormal basis $(X,-Z)$ or any other with the same orientation.
    From 3),  $U'(K')$ is an associated plane.
\end{proof}

\item One and hence any orthonormal basis of $U'$, say $(X,Z)$ is $I$-orthogonal i.e. $Z \in L(X,IX)^\perp$. 
\begin{proof}
If $U'(K)=L(X,Z)$ is an associated plane then for any $X \in U'$ from the Corollary (\ref{related singular vectors are I-orthogonal}), one has $Z=\frac{K^{-1}Pr^{KU}X}{\cos \theta^{K}} \in L(X,-IX)^\perp$. Then $Pr^{KU}(U')= KU'$ and the conclusion follows from point (\ref{la proiezione di un piano associato coincide con la sua $K$-immegine}).
Viceversa if the unitary basis $(X,Z)$  is $I$-orthogonal, then $(X,Z,IX,IZ)$ is an orthonormal basis of $U$  and $(KX,KZ,JX,JZ)$ of $KU$ w.r.t. which $Pr^{KU}X= <X,KZ>KZ+ <X,JZ>JZ$ with $<X,KZ>^2+ <X,JZ>^2= \cos ^2 \theta^{I^\perp}$. Then $U'=U'(K')$ with $K'= \frac{1}{\cos \theta^{I^\perp}} <X,KZ>K + <X,JZ>J$ is an associated plane  since $<X,K'Z>=\cos \theta^{I^\perp}$.
\end{proof}

\end{enumerate}
\end{prop}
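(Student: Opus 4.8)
The plan is to prove that each of the six conditions is equivalent to the definition of an associated plane, organised as one cycle through the three ``geometric'' conditions (1), (2), (6), with (3), (4), (5) folded in afterwards. Two facts are used throughout. First, by Proposition (\ref{isoclinic complex 4-plane}) the pair $(U,KU)$ is isoclinic with all four principal angles equal to $\theta^{I^\perp}$ for every $K\in I^\perp$; hence $|Pr^{KU}X|=\cos\theta^{I^\perp}$ for every unit $X\in U$, and by point (\ref{for any K in I perp  KU=JU is I-complex}) of Claim (\ref{properties of some quternionic,complex and real subspaces}) the subspace $KU$ does not depend on $K\in I^\perp$. Second, since $\theta^{I^\perp}\neq\pi/2$ we have $\cos\theta^{I^\perp}\neq 0$, so $Z:=K^{-1}Pr^{KU}X/\cos\theta^{I^\perp}$ is a well-defined unit vector and, by Corollary (\ref{related singular vectors are I-orthogonal}), $Z\in L(X,IX)^\perp$.

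First I would prove ``associated $\Rightarrow$ (1)'': if $(X,Z)$ is an orthonormal basis of $U'$ with $(X,KZ)$ related principal vectors of $(U,KU)$, then $Pr^{KU}X=\cos\theta^{I^\perp}KZ$, and skew-symmetry of $\omega^K$ gives $Pr^{KU}Z=-\cos\theta^{I^\perp}KX$, so $Pr^{KU}U'=KU'$. Next, ``(1) $\Rightarrow$ (6)'': for any orthonormal basis $(X,Y)$ of $U'$ the vector $Pr^{KU}X$ lies in $KU'$ and is orthogonal to $KX$ since $<X,KX>=0$, hence $Pr^{KU}X=<X,KY>KY$ with $|<X,KY>|=\cos\theta^{I^\perp}\neq 0$; after replacing $Y$ by $-Y$ if necessary $(X,KY)$ are related principal vectors, so $Y=K^{-1}Pr^{KU}X/\cos\theta^{I^\perp}\in L(X,IX)^\perp$ by the Corollary, i.e. $(X,Y)$ is $I$-orthogonal, and a one-line computation shows $I$-orthogonality is stable under any orientation-preserving change of orthonormal basis of the $2$-plane, giving the ``one and hence any''. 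Finally ``(6) $\Rightarrow$ (2)'': if $(X,Z)$ is an $I$-orthogonal orthonormal basis of $U'$ then $<IZ,X>=-<Z,IX>=0$ and the remaining pairings are immediate, so $\{X,IX,Z,IZ\}$ is an orthonormal basis of the $4$-dimensional $U$; hence $U=L(X,IX)\stackrel{\perp}\oplus L(Z,IZ)=U'\stackrel{\perp}\oplus IU'$, and $U'$ is totally real by point (\ref{A complex subspace does not contain a complex subspace by a different complex structure}) of the same Claim since $U$ is pure $I$-complex.

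To close the cycle I would prove ``(2) $\Rightarrow$ associated'' and extract (3), (4), (5) simultaneously. Given (2) with orthonormal basis $(X,Z)$ of $U'$, complete $(I,J,K)$ to an adapted basis; expanding $Pr^{KU}X$ in the orthonormal basis $\{KX,JX,KZ,JZ\}$ of $KU=JU$ and using skew-symmetry of $\omega^J,\omega^K$ gives $Pr^{KU}X=<X,KZ>KZ+<X,JZ>JZ$ with $<X,KZ>^2+<X,JZ>^2=\cos^2\theta^{I^\perp}$. Setting $K':=(<X,KZ>K+<X,JZ>J)/\cos\theta^{I^\perp}\in I^\perp$ one gets $Pr^{K'U}X=\cos\theta^{I^\perp}K'Z$, so $(X,K'Z)$ are related principal vectors and $U'$ is associated; moreover a direct computation of the matrix of $\omega^{K'}|_U$ in the basis $(X,Z,IX,IZ)$ shows it is block diagonal with blocks $L(X,Z)=U'$ and $L(IX,IZ)=IU'$, each of the isoclinic type with factor $\cos\theta^{I^\perp}$. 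From this one reads off $Pr^{K'U}U'=K'U'$ (so $U'$ is a standard $2$-plane of $\omega^{K'}|_U$, giving (3)), that both singular values of $Pr^{K'U}|_{U'}$ equal $\cos\theta^{I^\perp}$ (so the $K'$-K\"ahler angle of $U'$ equals $\theta^{I^\perp}$, giving (4)), and, after identifying $(I,J,K)$ with $(R_{-i},R_{-j},R_{-k})$, that $\mathcal{IM}(U')=<X,IZ>i+<X,JZ>j+<X,KZ>k$ with $<X,IZ>=0$, whence $\mathcal{IM}(U')\in i^\perp$ and $\Delta(U')=<X,JZ>^2+<X,KZ>^2=\cos^2\theta^{I^\perp}$, giving (5). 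The reverse implications are short: (3) $\Rightarrow$ (1) is immediate; (4) $\Rightarrow$ (1) because a $K$-K\"ahler angle of $U'$ equal to $\theta^{I^\perp}$ makes $(U',KU')$ isoclinic with factor $\cos\theta^{I^\perp}=|Pr^{KU}X|$, so $Pr^{KU}X$ has no component outside $KU'$ for $X\in U'$; and (5) $\Rightarrow$ (6) because $\mathcal{IM}(U')\in i^\perp$ literally says $<X,IZ>=0$.

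The hard part will be the step ``(2) $\Rightarrow$ associated'' together with the extraction of (3)--(5): the $K$ whose existence conditions (1), (3), (4) assert is generally \emph{not} a member of a fixed adapted basis but the particular rotation $K'$ of $(J,K)$ aligning the projection direction of $X$ with $K'Z$, and one must verify that $\omega^{K'}|_U$ then splits exactly as $U'\stackrel{\perp}\oplus IU'$ with equal isoclinic factors. That verification is routine quaternion bookkeeping --- the relations $KI=J$, $IK=-J$ and skew-symmetry of the forms $\omega^A$ --- and uses $\theta^{I^\perp}\neq\pi/2$ only to normalise by $\cos\theta^{I^\perp}$; it is not deep, but the existential quantifier on $K$ is what requires care. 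The conceptual content is entirely in Corollary (\ref{related singular vectors are I-orthogonal}), which pins the singular partner $Z$ of $X$ into $L(X,IX)^\perp$; the rest of the proposition is its systematic restatement.
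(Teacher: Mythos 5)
Your proof is correct and rests on exactly the same ingredients as the paper's: the isoclinicity of the pair $(U,KU)$, Corollary (\ref{related singular vectors are I-orthogonal}) pinning the singular partner of $X$ into $L(X,IX)^\perp$, and the construction of the rotated structure $K'=(\langle X,KZ\rangle K+\langle X,JZ\rangle J)/\cos\theta^{I^\perp}\in I^\perp$ that aligns $Pr^{KU}X$ with $K'Z$. The only difference is organizational --- you chain the conditions in a cycle (associated $\Rightarrow$ (1) $\Rightarrow$ (6) $\Rightarrow$ (2) $\Rightarrow$ associated, extracting (3)--(5) from the last step) where the paper proves each of the six biconditionals separately against the definition --- so the mathematical content is the same.
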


We summarize the above characterizations of the associated subspace in the following

\begin{prop} \label{angle of isoclinicity of an associated plane}
Let $(U,I,\theta^{I^\perp} \neq \pi/2)$ be a 4-dimensional pure $I$-complex subspace. Then, for any $X \in U$,  and $K \in I^\perp$ one has  $U=U_1(K) \oplus IU_1(K)$ is direct orthogonal sum of the uniquely defined  $K$-orthogonal  associated planes $U_1=L(X,Z)$ and $U_2=IU_1=L(IZ,IX)$ where $Z= \frac{K^{-1} Pr_U^{KU} (X)}{\cos \theta^{I^\perp}}$. The $I^\perp$-K\"{a}hler angle $\theta^{I^\perp}$  is the same as the $K$-K\"{a}hler angle  $\Theta^{K}(U_1)=\Theta^{K}(U_2)$ of the associated planes $U_1$ and  $U_2=IU_1$ i.e.
\[\cos \theta^{I^\perp}= \cos \Theta^{K}(U_1)=  <X,KZ>=<IZ,KIX>=\cos \Theta^{K}(U_2).\]
Moreover  
\[\mathcal{IM}(U_1(K))=\mathcal{IM}(U_2(K))= \cos \theta k=\cos \theta^{I^\perp}.\]
\end{prop}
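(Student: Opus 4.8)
The plan is to assemble Proposition \ref{angle of isoclinicity of an associated plane} essentially as a summary that collects the pieces already proved in the preceding development, rather than as a new computation. The statement has three assertions: (i) that $U = U_1(K) \oplus IU_1(K)$ is a direct orthogonal decomposition into $K$-orthogonal associated planes, with $U_1 = L(X,Z)$, $U_2 = IU_1 = L(IZ,IX)$ and $Z = K^{-1}Pr_U^{KU}(X)/\cos\theta^{I^\perp}$; (ii) that the $I^\perp$-K\"ahler angle $\theta^{I^\perp}$ equals the $K$-K\"ahler angle of each of $U_1$, $U_2$; and (iii) the formula $\mathcal{IM}(U_1(K)) = \mathcal{IM}(U_2(K)) = \cos\theta^{I^\perp}k$.

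First I would fix a unitary $X \in U$ and $K \in I^\perp$, and define $Z = K^{-1}Pr_U^{KU}(X)/\cos\theta^{I^\perp}$ as in \eqref{singular vector Z}; this is legitimate because $\theta^{I^\perp} \neq \pi/2$. By Corollary \ref{related singular vectors are I-orthogonal}, $(X,Z)$ is an $I$-orthogonal pair, so $(X,Z,IX,IZ)$ is an orthonormal basis of the $4$-dimensional space $U$; this already gives the orthogonal direct sum $U = L(X,Z) \oplus L(IX,IZ) = U_1 \oplus IU_1$. That $U_1$ is an associated plane is condition (\ref{Immaginary measure of an associated plane}) (or equivalently (6)) of Proposition \ref{equivalent definitions of associated planes}; that $IU_1$ is an associated plane follows from the same proposition since $(IZ,IX)$ (or $(-IX, IZ)$, chosen with the correct orientation) is again an $I$-orthogonal unitary pair, being obtained from $(X,Z)$ by the orthogonal map $I$ which commutes with $I$. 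The $K$-orthogonality $U_1 \perp KU_2$ is condition (1) applied together with the skew-symmetry of $\omega^K$: $Pr^{KU}U_1 = KU_1$, hence $Pr^{KU}(IU_1) = K(IU_1)$, and the decomposition into the two standard $2$-planes of $\omega^K|_U$ is precisely the one produced at the start of the subsection ``The associated plane of a $4$-dimensional complex subspace''. Uniqueness of the pair $(U_1, U_2)$ once $X$ (hence $Z$) and $K$ are fixed is immediate: $Z$ is determined by the displayed formula and then $U_2 = IU_1$.

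For assertion (ii): by Proposition \ref{equivalent definitions of associated planes}(4), an associated plane $U'(K)$ has $K$-K\"ahler angle equal to $\theta^{I^\perp}$; alternatively, directly, $\cos\Theta^K(U_1) = <X,KZ>$ since $(X,Z)$ is orthonormal and $Z \in L(X,IX)^\perp$, and $<X,KZ> = \cos\theta^{I^\perp}$ by the very definition of $Z$ via $Pr^{KU}X = \cos\theta^{I^\perp}KZ$; applying $K$ (an isometry) to the orthonormal pair and using $\omega^K$-skew-symmetry gives $<IZ,KIX> = <X,KZ> = \cos\theta^{I^\perp}$, so $\Theta^K(U_2) = \theta^{I^\perp}$ as well. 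For assertion (iii): identify the adapted basis $(I,J,K)$ with $(R_{-i},R_{-j},R_{-k})$; then $\mathcal{IM}(U_1) = <X,IZ>i + <X,JZ>j + <X,KZ>k$, and since $(X,Z)$ is $I$-orthogonal one has $<X,IZ> = 0$, while $<X,JZ> = 0$ because $U_1$ is an associated plane and hence $U_1 \perp JU_1$ (this is precisely what the computation in the proof of (\ref{Immaginary measure of an associated plane}) established, with $Z$ chosen so that $Pr^{KU}X$ has no $JZ$-component). Thus $\mathcal{IM}(U_1) = \cos\theta^{I^\perp}k$, and the same computation for $U_2 = IU_1$ gives the identical value.

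The main obstacle here is not a hard argument but bookkeeping: one must be careful that the ``associated plane'' $U_1 = L(X,Z)$ produced from a generic $X$ via the projector formula is exactly a standard $2$-plane of $\omega^K|_U$ with no cross-terms, so that $\mathcal{IM}(U_1)$ picks up only the $k$-component. This is guaranteed because $Z$ is defined as $K^{-1}Pr^{KU}X/\cos\theta^{I^\perp}$ rather than an arbitrary vector in $L(X,IX)^\perp$; a generic $I$-orthogonal partner $Y$ of $X$ would give $\mathcal{IM}(L(X,Y)) = <X,JY>j + <X,KY>k$ with both components nonzero, which is why the proposition is stated for the specific $Z$. I would flag this point explicitly so the reader sees why the $k$-axis (equivalently $K$) is singled out. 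Everything else is a direct citation of Corollary \ref{related singular vectors are I-orthogonal}, Proposition \ref{equivalent definitions of associated planes}, and the opening paragraphs of the current subsection, together with the skew-symmetry of $\omega^K$.
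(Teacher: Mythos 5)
Your proposal is correct and follows essentially the same route as the paper, which states this proposition explicitly as a summary of the preceding characterizations (Corollary (\ref{related singular vectors are I-orthogonal}) and Proposition (\ref{equivalent definitions of associated planes})) and the standard-form discussion of $\omega^K|_U$, offering no separate proof. Your explicit verification that $<X,JZ>=0$ for the specific $Z=K^{-1}Pr_U^{KU}X/\cos\theta^{I^\perp}$ (so that $\mathcal{IM}(U_1)$ has only a $k$-component) is exactly the point the paper relies on implicitly, and it is a useful clarification rather than a deviation.
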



Observe that  the strictly orthogonal associated planes  $U=U_1(K)$ and $IU_1= U_2(K)$ 
are  only $K$-orthogonal but clearly never $I$ orthogonal (furthermore they are not  $J$ orthogonal unless $U$ is totally complex i.e. $\cos \theta^K=0$).  In particular they are never orthogonal in Hermitian sense. 
Namely, since any  totally real 2-plane   never belong to a quaternionic line but to a quaternionic 2-plane, we have that  the pair of quaternionic planes containing $U_1$ and $U_2$ (eventually coinciding) are never orthogonal.  We conclude this section with the

\begin{prop} \label{same quaternionic angle of 4-dimensional complex subspaces}
  Given $(U,I,\theta^{I^\perp})$ and the associated plane $U_1(K'),\; K' \in I^\perp$ of $(U,I,\theta^{I^\perp})$, the $A^\perp$-K\"{a}hler angle $\theta^{A^\perp}$ of the $A$-complex subspace $\bar U= U_1(K') \oplus AU_1(K')$ with $A= aI+  b(\alpha J + \beta K), \; a^2 + b^2= \alpha^2 + \beta^2=1$  equals $\theta^{I^\perp}$.
\end{prop}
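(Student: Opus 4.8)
The plan is to realise the associated plane $U_1(K')$ of $(U,I,\theta^{I^\perp})$ as an associated plane of the new subspace $\bar U$ as well, and then to read the $A^\perp$-K\"ahler angle off its (intrinsic) imaginary measure. We use throughout that the structure $K'$ occurring here, being in $I^\perp$ and orthogonal to the $I^\perp$-part $\alpha J+\beta K$ of $A$, satisfies $A\perp K'$ in $\mathcal Q$; this orthogonality is exactly what makes the roles of $I$ and $A$ interchangeable for $U_1$. We also assume $\theta^{I^\perp}\neq\pi/2$, as throughout this section.

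First I would fix an oriented orthonormal basis $(X,Z)$ of $U_1:=U_1(K')$. Since $U_1$ is an associated plane of $(U,I,\theta^{I^\perp})$, Proposition \ref{angle of isoclinicity of an associated plane} gives $\mathcal{IM}(U_1)=X\cdot Z=\cos\theta^{I^\perp}\,k'$, a pure imaginary quaternion of norm $\cos\theta^{I^\perp}$, where $k'$ is the imaginary unit corresponding to $K'$. By the expression (\ref{relation between scalar product and hermitian product of H^n}) of the Hermitian product, for any admissible structure $B=\sum_\gamma B_\gamma J_\gamma$ one then has $\langle X,BZ\rangle=\sum_\gamma B_\gamma\langle X,J_\gamma Z\rangle=\cos\theta^{I^\perp}\,\langle B,K'\rangle$, the inner product being taken in $\mathcal Q$; in particular $\langle X,Z\rangle=0$.

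Next I would verify that $\bar U=U_1\oplus AU_1$ is a $4$-dimensional $A$-complex subspace admitting $U_1$ as an associated plane. Since $A$ is skew-symmetric, $\langle X,AX\rangle=\langle Z,AZ\rangle=0$; since $A$ is an isometry, $\langle AX,AZ\rangle=\langle X,Z\rangle=0$; and $\langle X,AZ\rangle=\cos\theta^{I^\perp}\langle A,K'\rangle=0$ because $A\perp K'$, whence also $\langle Z,AX\rangle=-\langle X,AZ\rangle=0$. Thus $(X,Z,AX,AZ)$ is orthonormal, so $\bar U$ has dimension $4$, $A\bar U=\bar U$, and the orthonormal basis $(X,Z)$ of the $2$-plane $U_1\subset\bar U$ is $A$-orthogonal (i.e.\ $Z\in L(X,AX)^\perp$). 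Moreover $\bar U$ is not totally $A$-complex: otherwise it would be a c.h.p.s.\ and $X\cdot Z=\mathcal{IM}(U_1)$ would be a real multiple of the imaginary unit of $A$, impossible since $\mathcal{IM}(U_1)=\cos\theta^{I^\perp}k'\neq 0$ with $k'\perp a$. Therefore the characterization of associated planes through $A$-orthogonal bases (the last item of Proposition \ref{equivalent definitions of associated planes}), applied to $(\bar U,A,\theta^{A^\perp})$, shows that $U_1$ is an associated plane of $\bar U$.

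It then remains to conclude: applying Proposition \ref{angle of isoclinicity of an associated plane} to $\bar U$ and its associated plane $U_1$ yields $\cos\theta^{A^\perp}(\bar U)=\lvert\mathcal{IM}(U_1)\rvert=\cos\theta^{I^\perp}$, so $\theta^{A^\perp}(\bar U)=\theta^{I^\perp}$ since both angles lie in $[0,\pi/2]$. The main obstacle is the middle step, and it rests entirely on the orthogonality $A\perp K'$: it is only this that makes $\langle X,AZ\rangle$ vanish and hence $(X,Z)$ an $A$-orthogonal basis. Without it a direct Gram-matrix computation in the (then merely linearly independent) family $(X,Z,AX,AZ)$ gives instead $\cos^2\theta^{A^\perp}(\bar U)=\cos^2\theta^{I^\perp}\,(1-b^2\rho^2)/(1-b^2\cos^2\theta^{I^\perp}\rho^2)$ with $\rho=\langle\alpha J+\beta K,K'\rangle$, which collapses to $\cos^2\theta^{I^\perp}$ precisely when $\rho=0$. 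Everything else is routine manipulation of the Hermitian product together with the properties of associated planes recalled above.
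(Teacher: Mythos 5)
Your proposal is correct and takes essentially the same route as the paper: both arguments reduce the claim to showing that $U_1(K')$ is an associated plane of $\bar U$ as well, which hinges precisely on the orthogonality of $A$ and $K'$ in $\mathcal{Q}$, and then read the $A^\perp$-K\"{a}hler angle off the associated plane. You do this via the imaginary-measure characterization in Proposition (\ref{equivalent definitions of associated planes}) while the paper uses the orthogonal-projection characterization, but this is a cosmetic difference within the same strategy.
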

\begin{proof}
It follows from the fact that for $A \in {K'}^\perp$, $U_1(K')$ is an associated plane  of $\bar U$ as well. In fact,  let  $(I,J,K)$ be an  adapted basis and consider for instance $U_1(K)$. Being $U_1(K) \perp IU_1(K)$ and $U_1(K) \perp JU_1(K)$ one has that  $U_1(K) \perp J'U_1(K), \; \forall J' \in L(I,J)$. Consequently $Pr^{KU} U_1(K)= KU_1(K)$. Extending such results to all $K' \in L(J,K)$ the conclusion follows.
\end{proof}

Then, given  $(U,I,\theta^{I^\perp})$, for any $A \in S(\mathcal{Q})$ we can build an $A$-complex 4-dimensional subspace with  $\theta^{A^\perp}= \theta^{I^\perp}$.

As an example, let consider $I'= \frac{1}{3} I + \frac{2}{3}J + \frac{2}{3}K$. Then $I' \in L(I,J')$ with $J'= \frac{1}{\frac{2 \sqrt{2}}{3}} (\frac{2}{3}  J + \frac{2}{3} K)$, In this case,  $a=\frac{1}{3}$, $b=\frac{2 \sqrt{2}}{3}$, $\alpha=\beta= \frac{1}{\sqrt{2}}$. The complex structure in $L(J,K)$ orthogonal to $J'$ is $K'= \frac{1}{\frac{2 \sqrt{2}}{3}}(-\frac{2}{3} J + \frac{2}{3} K)$ and consequently $\bar Z_2=\frac{{K'}^{-1}Pr^{KU}X_1}{\cos \theta^{I^\perp}}=  \frac{1}{\frac{2 \sqrt{2}}{3}}(\frac{2}{3} IZ + \frac{2}{3} Z)=\frac{1}{\sqrt{2}}(IZ + Z)$ and the associated plane is $U(K')=L(X_1,\bar Z_2)$.
It is straightforward to  verify that $\bar U= U_1(K') \oplus I'U_1(K')$ is $I'$-complex with ${I'}^\perp=I^\perp$ i.e. $\bar U=(\bar U,I',\theta^{I^\perp})$. In fact,
considering the adapted basis $(I',\bar J, \bar K)$ with $\bar J= \frac{2}{3} I + \frac{1}{3}J  -\frac{2}{3}K$ and  $\bar K= -\frac{2}{3} I + \frac{2}{3}J  -\frac{1}{3}K$ one has
\[
\begin{array}{l}
<X, \bar K \bar Z_2>= <X,(-\frac{2}{3} I + \frac{2}{3}J  -\frac{1}{3}K)(\frac{1}{\sqrt{2}}(IZ + Z))> \frac {1}{\cos \theta^{I^\perp}}= -\frac{1}{\sqrt{2}}<X_1,KZ>=-\frac{1}{\sqrt{2}}\cos \theta^{I^\perp}\\
<X,\bar J \bar Z_2>= <X,(\frac{2}{3} I + \frac{1}{3}J + -\frac{2}{3}K)(\frac{1}{\sqrt{2}}(IZ + Z))> \frac {1}{\cos \theta^{I^\perp}}= -\frac{1}{\sqrt{2}}<X_1,KZ>=-\frac{1}{\sqrt{2}}\cos \theta^{I^\perp}\\
\end{array}
\]
which implies that $\cos \theta^{I'^\perp}= \sqrt{<X,\bar J \bar Z_2>^2 + <X,\bar K \bar Z_2>^2}=\cos \theta^{I^\perp}$.

What stated in Proposition (\ref{same quaternionic angle of 4-dimensional complex subspaces}) will be relevant when studying the $Sp(n) \cdot Sp(1)$-orbits in the real Grassmannian. In an article that we will publish soon we will show that the $I^\perp$-K\"{a}hler angle $\theta^{I^\perp}$  of  an $I$-complex  4-dimensional subspace of a quaternionic Hermitian vector space constitutes the full system of invariant for its $Sp(n) \cdot Sp(1)$-orbit. Then, from  Proposition (\ref{same quaternionic angle of 4-dimensional complex subspaces}), we have that,
given a  2-plane $U'$  with  $\mathcal{IM}(U')= \cos \theta k$, which as stated in point (\ref{Immaginary measure of an associated plane}) of the Proposition (\ref{equivalent definitions of associated planes}) is an associated plane of $U=U' \oplus IU'$,
all subspaces $U_A=U' \oplus AU'$ with  $A= aI+  b(\alpha J + \beta K), \; a^2 + b^2= \alpha^2 + \beta^2=1$  are in the same $Sp(n) \cdot Sp(1)$-orbit.

\subsection{Canonical bases and canonical matrices of a 4-dimensional complex subspace}

Let $(U,I,\theta^{I^\perp})$ be a 4-dimensional $I$-complex subspace and $(I,J,K)$ an adapted basis. Recall that $JU=KU$.
Using the same notations that appear in \cite{Vac_isoclinic},  for any  unitary $X_1 \in U$, we denote by $X_2= I^{-1}Pr^{IU} X$, by $Y_2= \frac{J^{-1}Pr^{JU} X}{\cos \theta^{I^\perp}}$ and by  $Z_2= \frac{K^{-1}Pr^{KU} X} {\cos \theta^{I^\perp}}$.
Clearly $X_2=-IX_1$. The pair $(X_1,-IX_1)$  is an  $\omega^I$-standard  basis   of    
 $U_1=L(X,-IX)$. 
 Clearly  $Pr^{IU}U_1= IU_1=U_1$. 
Furthermore we denote by $\xi=<X_2,Y_2>, \quad \chi=<X_2,Z_2>, \quad \eta=<Y_2,Z_2>$. According to  the Proposition (\ref{invariance of <X_2,Y_2>}) such triple is an invariant of $U$.

\begin{prop} \label{the vectors $X_2,Y_2,Z2$ for an orthonormal nasis of thge orthogonal complement of $X_1$}
Let $(U,I,\theta^{I^\perp})$ be a 4-dimensional $I$-complex subspace not totally complex and $(I,J,K)$ an adapted basis. Choose $X_1 \in U$ unitary and let $U_1=L(X_1,-IX_1)$ be the $\omega^I$-standard plane. The vectors
$ Y_2= \frac{J^{-1}Pr^{JU} X}{\cos \theta^{I^\perp}} \in U_1^\perp, \quad    Z_2= \frac{K^{-1}Pr^{KU} X} {\cos \theta^{I^\perp}}  \in U_1^\perp$. Moreover one has $\xi=\chi=\eta=0$ and consequently
\[(X_1, X_2= I^{-1}Pr^{IU} X=-IX_1, \quad  Y_2= \frac{J^{-1}Pr^{JU} X}{\cos \theta^{I^\perp}}, \quad    Z_2= \frac{K^{-1}Pr^{KU} X} {\cos \theta^{I^\perp}})\]
 is an orthonormal basis of $U$ and, upon reordering,  
  they form a triple of 
 standard bases $\{X_i \}, \{Y_i \},\{Z_i \}$ centered on $X_1$ of respectively $\omega^I|_U,\omega^J|_U,\omega^K|_U$. Furthermore such triple are exactly the $\{X_i\}, \{Y_i\}, \{Z_i\}$ chains  centered on $X_1$ defined in (\ref{associated chains}).
W.r.t. such chains the  orthogonal matrices  $C_{IJ}={<X_i,Y_j>}, \; C_{IK}={<X_i,Z_j>}$  are given in (\ref{canonical matrices of a 4 dimensional complex subspace}) and, as stated in the Proposition (\ref{invariance of $C_{IJ}$ and $C_{IK}$ w.r.t. any leading vector}), are invariants of $(U,I,\theta^{I^\perp})$.
 Furthermore they do not depend on the adapted basis. 
\end{prop}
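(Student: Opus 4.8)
The plan is to specialise the construction of the chains of Definition (\ref{associated chains}) to a $4$-dimensional $I$-complex subspace and to show that in this situation all three invariants $\xi,\chi,\eta$ vanish, so that the chains collapse to the orthonormal quadruple $(X_1,-IX_1,Y_2,Z_2)$.

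First I would record the elementary facts. Since $U=IU$, the projection $Pr^{IU}X_1$ equals $X_1$, so $X_2=I^{-1}Pr^{IU}X_1=-IX_1$ is automatically a unit vector, $(X_1,X_2)$ is an $\omega^I$-standard basis of the $I$-invariant plane $U_1=L(X_1,-IX_1)$, and $\cos\theta^I=1$. Since $J,K\in I^\perp$, one has $JU=KU$ by point (\ref{for any K in I perp  KU=JU is I-complex}) of the Claim (\ref{properties of some quternionic,complex and real subspaces}), hence $Pr^{JU}X_1=Pr^{KU}X_1$, and using $IK=-J$ one checks that $Y_2=-IZ_2$. The crucial input is then Corollary (\ref{related singular vectors are I-orthogonal}): applied with $K$ (resp. with $J$) it states precisely that $Z_2$ (resp. $Y_2$) lies in $L(X_1,IX_1)^\perp=U_1^\perp$. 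Since $X_2=-IX_1\in U_1$, this forces $\xi=<X_2,Y_2>=0$ and $\chi=<X_2,Z_2>=0$ immediately, while $\eta=<Y_2,Z_2>=<-IZ_2,Z_2>=0$ by the skew-symmetry of $\omega^I$. Orthonormality of $(X_1,X_2,Y_2,Z_2)$ then follows at once ($X_1\perp X_2$ by skew-symmetry; the mixed pairs by $U_1\perp U_1^\perp$; $Y_2\perp Z_2$ since $\eta=0$), and these four unit vectors span the $4$-dimensional $U$.

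Next I would identify this quadruple with the chains of Definition (\ref{associated chains}). As $\xi=\chi=\eta=0$, none equals $\pm1$, so that definition applies; plugging in these values collapses its formulas to $X_4=Y_2$ and $X_3=-I^{-1}Pr^{IU}X_4=IY_2=Z_2$, giving $\{X_i\}=(X_1,X_2,Z_2,Y_2)$, and similarly $\{Y_i\}=(X_1,Y_2,Z_2,-X_2)$ and $\{Z_i\}=(X_1,Z_2,-Y_2,-X_2)$ — that is, exactly $X_1,-IX_1,Y_2,Z_2$ up to reordering and sign, and standard bases of $\omega^I|_U,\omega^J|_U,\omega^K|_U$ centred on $X_1$ by the very construction in that definition. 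For the canonical matrices I would then observe that $\xi=\chi=\eta=0$ yields $\Gamma=0$ by (\ref{cos phi}) and $\Delta=<X_4,\tilde X_3>=<Y_2,-Y_2>=-1$; substituting these into the general expressions (\ref{canonical matrices $C_{IJ}$ $C_{IK}$ and of 4-dimensional isoclinic subspace w.r.t. the associated chains}) reproduces the matrices displayed in (\ref{canonical matrices of a 4 dimensional complex subspace}). Invariance under the choice of leading vector is Proposition (\ref{invariance of $C_{IJ}$ and $C_{IK}$ w.r.t. any leading vector}), and independence of the adapted basis follows because the entire computation used only $U=IU$, $JU=KU$ and Corollary (\ref{related singular vectors are I-orthogonal}), none of which distinguishes among the choices of $J,K\in I^\perp$ (alternatively, apply Corollary (\ref{same invariants w.r.t. one admissible basis implies same invariants w.r.t. any admissible basis}) with $W=U$).

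I do not expect a real obstacle: the entire conceptual content is carried by Corollary (\ref{related singular vectors are I-orthogonal}), which puts $Z_2$ in $U_1^\perp$ and hence kills $\xi,\chi,\eta$ in one stroke. The only thing that needs care is the sign bookkeeping — keeping $IJ=K$ and $I^{-1}=-I$ straight when verifying $Y_2=-IZ_2$ and when reading off the chains and the sign of $\Delta$ from Definition (\ref{associated chains}).
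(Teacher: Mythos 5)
Your proposal is correct and follows essentially the same route as the paper: both rest on Corollary (\ref{related singular vectors are I-orthogonal}) to place $Y_2,Z_2$ in $U_1^\perp$ (killing $\xi,\chi$), on the identity $Y_2=-IZ_2$ (killing $\eta$), and then on reading the chains of Definition (\ref{associated chains}) with $\xi=\chi=\eta=0$ and $\Delta=\langle X_4,\tilde X_3\rangle=-1$ to recover the matrices in (\ref{canonical matrices of a 4 dimensional complex subspace}). Your sign bookkeeping ($X_3=IY_2=Z_2$, $\tilde X_3=IZ_2=-Y_2$) matches the paper's displayed chains, so no gap remains.
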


\begin{proof}
From Corollary (\ref{related singular vectors are I-orthogonal}) one has that the  unitary vectors $Y_2= \frac{J^{-1}Pr^{JU} X}{\cos \theta^J}  \in U_1^\perp$ and $Z_2= \frac{K^{-1}Pr^{KU} X}{\cos \theta^K}  \in U_1^\perp$ which implies that $\xi=<X_2,Y_2>=0$ as well as $\chi=<X_2,Z_2>=0$.  Moreover $Y_2= \frac{J^{-1}Pr^{JU}X_1}{\cos \theta^J}= \frac{J^{-1}KX_1 \cos \theta^K}{\cos \theta^J}=-IZ_2$ which implies that   $\eta= <Y_2,Z_2>=0$.
To obtain the chains $\{X_i \},\{Y_i \},\{\tilde X_i,\{Z_i \}\}$ it is straightforward to verify that
\[X_4=Y_2, \; Y_4= -X_2, \; \tilde X_4= Z_2,  \; Z_4= -X_2.\]
Furthermore \[\tilde X_3= -I^{-1} Pr^{IU}\tilde X_4=I\tilde X_4= IZ_2.\]
\[Y_2=\frac{J^{-1}Pr^{JU}X_1}{\cos \theta}=\frac{J^{-1}Pr^{KU}X_1}{\cos \theta}=J^{-1}KZ_2= -IZ_2.\] Then
\[ \Delta= <X_4,\tilde X_3>= -<IZ_2,IZ_2>=-1.\]

The  chains $\{X_i \},\{Y_i \},\{\tilde X_i \},\{Z_i \}$ of an $I$-complex subspace with leading vector $X_1 \in U$ w.r.t. the adapted basis  $(I,J,K)$ are:
\begin{equation} \label{the chains of a 4dimensionalcomplex subspace}
\begin{array}{lllllll}
\{X_i \} &=&\{X_1,X_2,X_3,X_4 \}&=& \{X_1,-IX_1,Z_2,-IZ_2 \}&=&\{X_1,X_2,Z_2,Y_2\} \\
\{Y_i \}&=&\{X_1,Y_2,X_3,Y_4 \}&=&\{X_1,-IZ_2,Z_2,IX_1 \}&=&\{X_1,Y_2,Z_2,-X_2\}\\
\{\tilde X_i \}&=&\{X_1,X_2,\tilde X_3,\tilde X_4 \}&=&\{X_1,-IX_1,IZ_2,Z_2 \}&=&\{X_1,X_2,-Y_2,Z_2 \}\\
\{Z_i \}&=&\{X_1,Z_2,X_3,Z_4 \}&=&\{X_1,Z_2,IZ_2,IX_1\}&=&\{X_1,Z_2,-Y_2,-X_2\}
\end{array}
\end{equation}

Therefore the set $(\xi,\chi,\eta,\Gamma,\Delta)=(0,0,0,0,-1)$ is an  invariant (resp. an intrinsic property) of an $I$-complex subspace (resp. quaternionic subspace).

In  particular for a quaternionic subspace it is $Y_2= -JX_1$ and $Z_2=-KX_1$ then
\[
\begin{array}{lll}
\{X_i \} &=& \{X_1,-IX_1,-KX_1,-JX_1 \}\\
\{Y_i \}&=&\{X_1,-JX_1,-KX_1,IX_1 \}\\
\{\tilde X_i \}&=&\{X_1,-IX_1,JX_1,-KX_1 \} \\
\{Z_i \}&=&\{X_1,-KX_1,JX_1,IX_1 \}
\end{array}
\]

W.r.t. the canonical bases $\{X_i \},\{Y_i \},\{Z_i \}$  the canonical matrices
(\ref{canonical matrices $C_{IJ}$ $C_{IK}$ and of 4-dimensional isoclinic subspace w.r.t. the associated chains})
 of an $I$-complex 4-dimensional subspace not totally complex are
 \begin{equation} \label{canonical matrices of a 4 dimensional complex subspace}
C_{IJ}=C'_{IK}
=\left(
\begin{array} {cccc}
1 & 0 & 0 & 0\\
0 & 0 & 0 & -1 \\
0 & 0 & 1 & 0 \\
0 & 1 & 0 & 0
\end{array}
\right),
\qquad \qquad
  C_{IK}=\left(
\begin{array} {cccc}
1 & 0 & 0 & 0\\
0 & 0 & 0 & -1\\
0 & 1   & 0 & 0 \\
0 & 0 & -1 & 0
\end{array}
\right).
\end{equation}
\end{proof}

In case  the 4-dimensional $I$-complex subspace $(U,I)$ is totally complex i.e. $U=(U,I,\pi/2)$ we are in a case of double orthogonality. In this case   we can always assume $X_2=Y_2=Z_2$ (see \cite{Vac_isoclinic}) and consequently $\xi=\chi=\eta=1$. From the Definition (\ref{The chains if all the 3 invariants is one or minus one}) one has $\{X_i \}=\{Y_i \}=\{\tilde X_i \}=\{Z_i \}$ and consequently $C_{IJ}=C_{IK}=Id$ as stated in the Corollary (\ref{some of the pair $(U,IU), (U,JU),(U,KU)$ is strictly orthogonal}). In this case clearly $U$ is a 2-planes decomposable subspace.

\vskip .3cm
Let denote by $Gr_{(I,\theta^{I^\perp})}^\R(4,4n)$ the subset  of 4-dimensional $I$-complex subspaces of $V$   with $I^\perp$-K\"{a}hler angle $\theta^{I^\perp}$ of the Grassmannian $G^\R(4,4n)$.

\begin{teor} \label{orbit of a complex 4-plane under Sp(n)}
Let $(U,I,\theta^{I^\perp}) \in Gr_{(I,\theta^{I^\perp})}^\R(4,4n)$.   The pair $(I,\theta^{I^\perp})$ composed by the complex structure $I \in S(\mathcal{Q})$  and the $I^\perp$-K\"{a}hler angle $\theta^{I^\perp}$ determines completely the $Sp(n)$-orbit of  $U$  in the Grassmannian $Gr^\R(4,4n)$ i.e.  the group $Sp(n)$ acts transitively on $Gr_{(I,\theta^{I^\perp})}^\R(4,4n)$.
In particular then all totally complex subspaces  form one $Sp(n)$-orbit in $Gr^\R(4,4n)$.
\end{teor}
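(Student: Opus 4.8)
The plan is to apply Theorem \ref{main_theorem 1_ with respect canonical bases}: it suffices to show that any two subspaces $(U,I,\theta^{I^\perp})$ and $(W,I,\theta^{I^\perp})$ in $Gr_{(I,\theta^{I^\perp})}^\R(4,4n)$ have the same invariant $Inv(\,\cdot\,)=\{\bm{\theta}^I,\bm{\theta}^J,\bm{\theta}^K,C_{IJ},C_{IK}\}$ computed with respect to one fixed adapted basis $(I,J,K)$. First I would record that, since $U=IU$, the pair $(U,IU)$ is isoclinic with angle $0$, so $\bm{\theta}^I(U)=(0,0,0,0)$; and by Proposition \ref{isoclinic complex 4-plane} (together with the fact $JU=KU$ from Claim \ref{properties of some quternionic,complex and real subspaces}) the pairs $(U,JU)$ and $(U,KU)$ are both isoclinic with common angle $\theta^J=\theta^K=\theta^{I^\perp}$, so $\bm{\theta}^J(U)=\bm{\theta}^K(U)=(\theta^{I^\perp},\theta^{I^\perp},\theta^{I^\perp},\theta^{I^\perp})$. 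These depend only on the datum $(I,\theta^{I^\perp})$, hence agree for $U$ and $W$; this handles condition (1) of Theorem \ref{main_theorem 1_ existance of canonical bases with same mutual position}.

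For condition (2), the work has already been done in Proposition \ref{the vectors $X_2,Y_2,Z2$ for an orthonormal nasis of thge orthogonal complement of $X_1$}. Split into two cases. If $\theta^{I^\perp}\neq \pi/2$, that proposition gives, for any unitary leading vector $X_1\in U$, a canonical triple of chains $\{X_i\},\{Y_i\},\{Z_i\}$ with invariants $(\xi,\chi,\eta,\Gamma,\Delta)=(0,0,0,0,-1)$, and the canonical matrices are the explicit matrices in (\ref{canonical matrices of a 4 dimensional complex subspace}), which do not depend on $X_1$ nor on the adapted basis. Doing the same for $W$ with any leading vector $X_1'\in W$ yields the identical $C_{IJ},C_{IK}$. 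If $\theta^{I^\perp}=\pi/2$, then $U$ is totally complex, we are in a doubly-orthogonal (hence 2-planes decomposable) case, and one may take $X_2=Y_2=Z_2$, so $\xi=\chi=\eta=1$ and by Corollary \ref{some of the pair $(U,IU), (U,JU),(U,KU)$ is strictly orthogonal} the canonical matrices are $C_{IJ}=C_{IK}=Id$; again the same for $W$. In either case $Inv(U)=Inv(W)$, so by Theorem \ref{main_theorem 1_ with respect canonical bases} there is $A\in Sp(n)$ with $AU=W$, proving transitivity on $Gr_{(I,\theta^{I^\perp})}^\R(4,4n)$.

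Conversely, if $AU=W$ for some $A\in Sp(n)$ then $A$ commutes with every element of $\mathcal{Q}$, so $W=AU=AIU=IAU=IW$ is again $I$-complex, and $A$ carries the $\omega^K$-standard decomposition of $U$ to that of $W$ isometrically, preserving all principal angles of $(U,KU)$; hence $W$ has the same $I^\perp$-K\"ahler angle. Thus the $Sp(n)$-orbit is exactly $Gr_{(I,\theta^{I^\perp})}^\R(4,4n)$, and the pair $(I,\theta^{I^\perp})$ is a complete invariant. The final sentence about totally complex subspaces is the special case $\theta^{I^\perp}=\pi/2$, noting that $Sp(n)$ also acts transitively on the set of compatible complex structures $I\in S(\mathcal{Q})$, so all totally complex $4$-planes, regardless of $I$, form a single orbit.

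The main obstacle is essentially bookkeeping rather than a genuine difficulty: one must be careful that the canonical bases in Proposition \ref{the vectors $X_2,Y_2,Z2$ for an orthonormal nasis of thge orthogonal complement of $X_1$} really are well-defined for \emph{any} leading vector and \emph{any} adapted basis, so that the matrices $C_{IJ},C_{IK}$ genuinely depend only on $(I,\theta^{I^\perp})$ and not on auxiliary choices — this is exactly the content of Proposition \ref{invariance of $C_{IJ}$ and $C_{IK}$ w.r.t. any leading vector} and the invariance of $(\xi,\chi,\eta,\Delta)$, which I would invoke rather than reprove. The only subtlety worth flagging explicitly is the clean separation of the $\theta^{I^\perp}=\pi/2$ (totally complex) case, where the chains degenerate and one must appeal to the 2-planes decomposable conventions of Definition \ref{The chains if all the 3 invariants is one or minus one}.
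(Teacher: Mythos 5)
Your main argument is correct and is essentially the paper's own proof. The paper simply invokes Theorem (\ref{Sp(n) orbit of a 4 dimensional isoclinic subpace}) and Proposition (\ref{canonical matrices of any  subspace of ${IC}^(4)$}), recording that a non--totally-complex $(U,I,\theta^{I^\perp})$ has $(\xi,\chi,\eta,\Delta)=(0,0,0,-1)$ with angles of isoclinicity $(0,\theta^{I^\perp},\theta^{I^\perp})$, and a totally complex one has $(1,1,1,0)$ with $(0,\pi/2,\pi/2)$; you unwind this one level further to Theorem (\ref{main_theorem 1_ with respect canonical bases}) and add the converse explicitly, which is fine.

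The one genuine error is your closing claim that ``$Sp(n)$ acts transitively on the set of compatible complex structures $I\in S(\mathcal{Q})$, so all totally complex $4$-planes, regardless of $I$, form a single orbit.'' By definition $Sp(n)$ is the \emph{centralizer} of $\lambda(Sp(1))$ in $SO(4n)$: it commutes with every $J\in\mathcal{Q}$ and hence acts trivially, not transitively, on $S(\mathcal{Q})$. Your own converse paragraph proves exactly the opposite of what you then assert: if $AU=W$ with $A\in Sp(n)$, then $W=IW$, so a totally $I$-complex $4$-plane can never be carried to a totally $I'$-complex one with $I'\neq\pm I$ (such a $W$, being pure, cannot be complex for two essentially different structures by point (\ref{A complex subspace does not contain a complex subspace by a different complex structure}) of Claim (\ref{properties of some quternionic,complex and real subspaces})); equivalently, the principal angles $\bm{\theta}^I$ would differ. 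The final sentence of the theorem must be read, consistently with the summary and with Theorem (\ref{orbit of a complex 2m-plane under Sp(n)}), as ``all totally $I$-complex $4$-dimensional subspaces, for the fixed $I$, form one orbit,'' which is just the case $\theta^{I^\perp}=\pi/2$ of the first assertion and needs no additional argument; transitivity across different $I\in S(\mathcal{Q})$ would require the larger group $Sp(n)\cdot Sp(1)$.
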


\begin{proof}
The proof follows from  the Theorem (\ref{Sp(n) orbit of a 4 dimensional isoclinic subpace}) and the Proposition (\ref{canonical matrices of any  subspace of ${IC}^(4)$}). In fact in case $(U,I,\theta^{I^\perp} \neq \pi/2)$  one bas $(\xi,\chi,\eta, \Delta)=(0,0,0,-1)$ and the angles of isoclinicity  are $(0,{I^\perp},{I^\perp})$.

On the other hand, all totally 4 dimensional $I$-complex subspace are characterized by  $(\xi,\chi,\eta, \Delta)=(1,1,1,0)$ and $(\theta^I,\theta^J,\theta^K)=(0,\pi/2,\pi/2)$.
\end{proof}

We terminate the analysis of the 4-dimensional complex subspaces with the
\begin{prop} \label{xi=chi=eta=0 of a complex 4-plane  only w.r.t. an adapted basis}
Let $U$ be a  4-dimensional complex subspace  not totally complex. Then  $\xi=\chi=\eta=0$  only w.r.t. an adapted basis.
\end{prop}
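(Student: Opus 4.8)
The plan is to prove the two implications of ``$\xi=\chi=\eta=0$ iff the basis is adapted'' separately, the nontrivial one being the converse. The ``if'' part is already available: by Proposition (\ref{the vectors $X_2,Y_2,Z2$ for an orthonormal nasis of thge orthogonal complement of $X_1$}), whenever the admissible basis used to compute $\xi,\chi,\eta$ is adapted to $(U,I)$ one has $\xi=\chi=\eta=0$. So the content is: if $(I',J',K')$ is an admissible basis with respect to which $\xi'=\chi'=\eta'=0$, then $(I',J',K')$ must contain $\pm I$, i.e. it is adapted to $U$.

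First I would record a formula for the angle of isoclinicity $\theta^A$ that is manifestly independent of the admissible basis. Set $\theta:=\theta^{I^\perp}$, which satisfies $\theta\neq\pi/2$ since $U$ is not totally complex; in particular $\sin^2\theta>0$ and $\cos^2\theta>0$. By Proposition (\ref{isoclinic complex 4-plane}), $U\in\mathcal{IC}^4$, so (\ref{equivalent form for the angle of isoclinicity of a 4 dimensional subspace isoclinic w.r.t. an hypercomplex basis}) applies; evaluating it in an \emph{adapted} basis $(I,J,K)$, for which $\cos\theta^I=1$, $\cos\theta^J=\cos\theta^K=\cos\theta$ and $\xi=\chi=\eta=0$, and using $\alpha_1^2+\alpha_2^2+\alpha_3^2=1$, gives for every $A=\alpha_1I+\alpha_2J+\alpha_3K\in S(\mathcal{Q})$
\[
\cos^2\theta^A=\alpha_1^2+(\alpha_2^2+\alpha_3^2)\cos^2\theta=\cos^2\theta+\alpha_1^2\sin^2\theta .
\]
In particular $\cos^2\theta^A\ge\cos^2\theta>0$ for every $A$, so $U$ is non-orthogonal with respect to \emph{every} admissible basis, and the invariants $\xi',\chi',\eta'$ of Proposition (\ref{invariance of <X_2,Y_2>}) are all defined for the basis $(I',J',K')$.

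Next I would compare this with (\ref{equivalent form for the angle of isoclinicity of a 4 dimensional subspace isoclinic w.r.t. an hypercomplex basis}) written in the basis $(I',J',K')$. Expanding $A=\alpha_1'I'+\alpha_2'J'+\alpha_3'K'$ and $I=c_1I'+c_2J'+c_3K'$ with $c_1^2+c_2^2+c_3^2=1$, the coefficient $\alpha_1$ above equals $\sum_i\alpha_i'c_i$ (the change of admissible basis is orthogonal), so the boxed formula reads $\cos^2\theta^A=\cos^2\theta+\bigl(\sum_i\alpha_i'c_i\bigr)^2\sin^2\theta$. Viewing this and the right-hand side of (\ref{equivalent form for the angle of isoclinicity of a 4 dimensional subspace isoclinic w.r.t. an hypercomplex basis}) as homogeneous quadratic forms in $(\alpha_1',\alpha_2',\alpha_3')$ — legitimate, since two quadratic forms agreeing on the unit sphere agree everywhere, and there $\cos^2\theta=\cos^2\theta\sum_i(\alpha_i')^2$ — the diagonal coefficients merely reproduce $\cos^2\theta^{I'}=\cos^2\theta+c_1^2\sin^2\theta$ and the analogues for $J',K'$, while the mixed coefficients yield
\[
\xi'\cos\theta^{I'}\cos\theta^{J'}=c_1c_2\sin^2\theta,\qquad
\chi'\cos\theta^{I'}\cos\theta^{K'}=c_1c_3\sin^2\theta,\qquad
\eta'\cos\theta^{J'}\cos\theta^{K'}=c_2c_3\sin^2\theta .
\]

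To conclude: since $\sin^2\theta\neq0$ and all of $\cos\theta^{I'},\cos\theta^{J'},\cos\theta^{K'}$ are nonzero, $\xi'=\chi'=\eta'=0$ is equivalent to $c_1c_2=c_1c_3=c_2c_3=0$, i.e. to at least two of $c_1,c_2,c_3$ vanishing, which together with $c_1^2+c_2^2+c_3^2=1$ means exactly $I=\pm I'$ or $I=\pm J'$ or $I=\pm K'$ — that is, $(I',J',K')$ is adapted to $U$. The one step that warrants care is the coefficient-matching: it rests on the two formulas for $\cos^2\theta^A$ being literally the same quadratic form on $\mathcal{Q}$, which follows from homogeneity; everything else is bookkeeping.
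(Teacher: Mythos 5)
Your argument is correct and follows essentially the same route as the paper: both rest on comparing the general isoclinicity formula (\ref{equivalent form for the angle of isoclinicity of a 4 dimensional subspace isoclinic w.r.t. an hypercomplex basis}) with the explicit values $\cos^2\theta^{I'}=c_1^2\sin^2\theta+\cos^2\theta$ (and its analogues for $J',K'$) coming from Corollary (\ref{isoclinicity of a 4-dimensional I-complex subspace U and I'U}). The only difference in execution is that the paper evaluates the formula at the single structure $A=I$ (where $\cos\theta^I=1$) and reads off one scalar identity, from which $\xi'=\chi'=\eta'=0$ forces $\sin^2\theta\,(c_1^2c_2^2+c_1^2c_3^2+c_2^2c_3^2)=0$; you instead polarize and match the mixed coefficients of the two quadratic forms, obtaining the three separate identities $\xi'\cos\theta^{I'}\cos\theta^{J'}=c_1c_2\sin^2\theta$, etc., which is slightly stronger (it yields both implications at once and identifies each invariant $\xi',\chi',\eta'$ in the new basis individually). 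One small point to patch: $\sin^2\theta>0$ does not follow from $\theta^{I^\perp}\neq\pi/2$ (that only gives $\cos^2\theta>0$); it follows from $U$ being a \emph{pure} complex subspace, hence not quaternionic, hence $\theta^{I^\perp}\neq 0$ --- the same implicit exclusion the paper makes when it sets aside the case $\cos\theta=1$.
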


\begin{proof}
 Let consider an $I$-complex subspace  $(U,I,\theta)$ and  a hypercomplex basis $(I',J',K')$ with $I= \alpha_1 I' + \beta_1 J' + \gamma_1 K'$. Then
\[I'= \alpha_1 I + \ldots, \qquad J'= \beta_1 I + \ldots, \qquad K'=\gamma_1 I + \ldots\]
 and square cosine of the  angles of isoclinicity  $\cos^2 \theta^{I'} , \cos^2 \theta^{J'} , \cos^2 \theta^{K'}$ between the pairs $(U,I'U),(U,J'U),(U,K'U)$.
 From  Proposition (\ref{isoclinicity of a 4-dimensional I-complex subspace U and I'U}) we have
\[
\begin{array} {llll}
\cos \widehat{(U,I'U)} & = \cos^2 \theta^{I'} &=& \alpha_1^2  \sin^2 \theta + \cos^2 \theta \\
\cos \widehat{(U,J'U)} & = \cos^2 \theta^{J'}&=& \beta_1^2  \sin^2 \theta + \cos^2 \theta \\
\cos \widehat{(U,K'U)} & = \cos^2 \theta^{K'}&=& \gamma_1^2  \sin^2 \theta + \cos^2 \theta.
\end{array}
 \]
We can verify that   $S=\cos^2 \theta^{I'} + \cos^2 \theta^{J'} + \cos^2 \theta^{K'}= 1 + 2 \cos^2 \theta =3 \Delta(U)$.

From (\ref{equivalent form for the angle of isoclinicity of a 4 dimensional subspace isoclinic w.r.t. an hypercomplex basis}),
one has
\[
\begin{array}{lll}
\cos \theta^{I}=1 &=& \alpha_1^2(\alpha_1^2  \sin^2 \theta + \cos^2 \theta)     + \beta_1^2(\beta_1^2  \sin^2 \theta + \cos^2 \theta)     + \gamma_1^2(\gamma_1^2  \sin^2 \theta + \cos^2 \theta) +\\
& & 2  <X_2,Y_2> \alpha_1 \beta_1 \cos \theta^{I'} \cos \theta^{J'} + 2 <X_2,Z_2> \alpha_1 \gamma_1 \cos \theta^{I'} \cos \theta^{K'}+ 2 <Y_2,Z_2> \beta_1 \gamma_1 \cos \theta^{J'} \cos \theta^{K'}
\end{array}
\]
Being
\[
(\alpha_1^4 + \beta_1^4 + \gamma_1^4) \sin^2 \theta + (\alpha_1^2 + \beta_1^2 + \gamma_1^2) \cos^2 \theta= 1-2 \sin^2 \theta(\alpha_1^2 \beta_1^2 + \alpha_1^2 \gamma_1^2 + \beta_1^2 \gamma_1^2)
\]
we get
\[
\begin{array}{l}
2 \alpha_1 \beta_1 ( \alpha_1 \beta_1 \sin^2 \theta  -  <X_2,Y_2>  \cos \theta^{I'} \cos \theta^{J'}) +\\
+ 2 \alpha_1 \gamma_1 ( \alpha_1 \gamma_1 \sin^2 \theta  -  <X_2,Z_2>  \cos \theta^{I'} \cos \theta^{K'}) + \\
+ 2 \beta_1 \gamma_1 ( \beta_1 \gamma_1 \sin^2 \theta  -  <Y_2,Z_2>  \cos \theta^{J'} \cos \theta^{K'})=0
\end{array}
\]
Then  $\xi=\chi=\eta=0$ if $ \sin^2 \theta(\alpha_1^2 \beta_1^2 + \alpha_1^2 \gamma_1^2 + \beta_1^2 \gamma_1^2 )=0$ that is either if
$\cos \theta=1$ which  implies  $U$ quaternionic or if
  two among $(\alpha_1,\beta_1,\gamma_1)$ are zero i.e. if  $I'=\pm I$ or $J'=\pm I$ or $K'=\pm I$.
\end{proof}

Applying the expression (\ref{equivalent form for the angle of isoclinicity of a 4 dimensional subspace isoclinic w.r.t. an hypercomplex basis}) for the determination of the angle of isoclinic of the pair $(U,AU)$ for $A \in S(\mathcal{Q})$, we conclude this section with  the following

\begin{coro} \label{isoclinicity of a 4-dimensional I-complex subspace U and I'U}
Given a 4-dimensional $I$-complex subspace $(U,I,\theta^{I^\perp})$ and the compatible complex structure
$A= \alpha I + \beta J + \gamma K$,  the cosine of the angle of isoclinicity $\theta^{A}$ 
between the pair of subspaces $U$ and $AU$ is equal to
\begin{equation} \label{angle of isoclinicity of a 4-dimensional complex subspace}
\cos \theta^{A}=\sqrt{\alpha^2 + (1-\alpha^2)\cos^2 \theta^{I^\perp}}.
\end{equation}
Then  $U$  is never orthogonal unless it is totally-complex.
\end{coro}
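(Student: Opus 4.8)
The plan is to apply the general formula (\ref{equivalent form for the angle of isoclinicity of a 4 dimensional subspace isoclinic w.r.t. an hypercomplex basis}) with the adapted basis $(I,J,K)$ and the specific data of a $4$-dimensional $I$-complex subspace. Since $(U,I,\theta^{I^\perp})\in\mathcal{IC}^4$ by Proposition (\ref{isoclinic complex 4-plane}), and the invariants of an $I$-complex subspace are by Proposition (\ref{the vectors $X_2,Y_2,Z2$ for an orthonormal nasis of thge orthogonal complement of $X_1$}) equal to $(\xi,\chi,\eta)=(0,0,0)$ together with $(\theta^I,\theta^J,\theta^K)=(0,\theta^{I^\perp},\theta^{I^\perp})$, formula (\ref{equivalent form for the angle of isoclinicity of a 4 dimensional subspace isoclinic w.r.t. an hypercomplex basis}) collapses: every mixed term $2\xi\alpha_1\alpha_2\cos\theta^I\cos\theta^J$ etc.\ vanishes because $\xi=\chi=\eta=0$, and $\cos^2\theta^I=1$, $\cos^2\theta^J=\cos^2\theta^K=\cos^2\theta^{I^\perp}$. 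Writing $A=\alpha I+\beta J+\gamma K$ with $\alpha^2+\beta^2+\gamma^2=1$, we then get
\[
\cos^2\theta^A=\alpha^2\cdot 1+\beta^2\cos^2\theta^{I^\perp}+\gamma^2\cos^2\theta^{I^\perp}
=\alpha^2+(\beta^2+\gamma^2)\cos^2\theta^{I^\perp}=\alpha^2+(1-\alpha^2)\cos^2\theta^{I^\perp},
\]
which is exactly (\ref{angle of isoclinicity of a 4-dimensional complex subspace}).

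For the totally complex case $\theta^{I^\perp}=\pi/2$ one should note that there $(\xi,\chi,\eta)=(1,1,1)$ rather than $(0,0,0)$, so strictly speaking the formula must be rechecked; but in that case $\cos^2\theta^{I^\perp}=0$ and $(\theta^I,\theta^J,\theta^K)=(0,\pi/2,\pi/2)$, and (\ref{equivalent form for the angle of isoclinicity of a 4 dimensional subspace isoclinic w.r.t. an hypercomplex basis}) gives $\cos^2\theta^A=\alpha_1^2$, consistent with setting $\cos^2\theta^{I^\perp}=0$ in the claimed formula. So the stated identity holds uniformly in $\theta^{I^\perp}\in[0,\pi/2]$.

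The final assertion that $U$ is never orthogonal unless totally complex follows immediately: the pair $(U,AU)$ is orthogonal iff $\cos\theta^A=0$, i.e.\ iff $\alpha^2+(1-\alpha^2)\cos^2\theta^{I^\perp}=0$. Since $\alpha^2\ge 0$ and $(1-\alpha^2)\cos^2\theta^{I^\perp}\ge 0$, this forces both $\alpha=0$ and $(1-\alpha^2)\cos^2\theta^{I^\perp}=\cos^2\theta^{I^\perp}=0$, hence $\theta^{I^\perp}=\pi/2$, i.e.\ $U$ is totally $I$-complex (and then the orthogonality is only a double orthogonality, occurring for $A\in I^\perp$). The only real point requiring care — the ``main obstacle'', such as it is — is making sure the invariant values $(\xi,\chi,\eta)$ and $(\theta^I,\theta^J,\theta^K)$ are correctly substituted and that the bookkeeping of the coefficients $\alpha_1=\alpha,\alpha_2=\beta,\alpha_3=\gamma$ in (\ref{equivalent form for the angle of isoclinicity of a 4 dimensional subspace isoclinic w.r.t. an hypercomplex basis}) matches the normalization $\alpha^2+\beta^2+\gamma^2=1$; everything else is a one-line algebraic simplification.
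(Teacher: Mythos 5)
Your proposal is correct and follows exactly the route the paper intends: the corollary is introduced by ``Applying the expression (\ref{equivalent form for the angle of isoclinicity of a 4 dimensional subspace isoclinic w.r.t. an hypercomplex basis})'', and substituting $(\xi,\chi,\eta)=(0,0,0)$ from Proposition (\ref{the vectors $X_2,Y_2,Z2$ for an orthonormal nasis of thge orthogonal complement of $X_1$}) together with $(\theta^I,\theta^J,\theta^K)=(0,\theta^{I^\perp},\theta^{I^\perp})$ is precisely the computation the author leaves implicit. Your separate check of the totally complex case and the deduction of the final non-orthogonality claim are both sound.
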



\subsection{Decomposition of a $2m$-dimensional pure complex subspace}
Given a $2m$-dimensional complex  subspace $(U,I)\subset V$, let consider  the skew-symmetric form $\omega^K: (X,Y) \rightarrow <X,KY>$ for $K \in I\perp \cap S(\mathcal{Q})$   and denote by $\omega^K|_U$ its restriction to  $U$.
\begin{prop} \label{bases of invariat 2-planes of omega are I-ortogonal}
 Let $(U,I)$ be a $2m$-dimensional $I$-complex subspace and consider the principal angles of the pair $(U,KU)$.
 A principal angles $\theta \neq \pi/2$   has multiplicity $4k$; if instead $\theta = \pi/2$ its multiplicity is $2k$.
 \end{prop}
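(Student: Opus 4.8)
The plan is to encode the pair $(U,KU)$ by the skew-symmetric operator $A\colon U\to U$ representing $\omega^{K}|_{U}$, i.e. $A=\pi_{U}\circ K|_{U}$ where $\pi_{U}$ denotes the orthogonal projection of $V$ onto $U$, and then to let the complex structure $I$ (which preserves $U$ because $U$ is $I$-complex) act on the eigenspaces of $-A^{2}$. Recall that the cosines of the principal angles of $(U,KU)$ are exactly the singular values of $A$, i.e. the non-negative square roots of the eigenvalues of $A^{t}A=-A^{2}$, and that the multiplicity of a principal angle $\theta$ equals the dimension of the eigenspace $E_{\cos^{2}\theta}$ of $-A^{2}$; this is precisely the standard-form dictionary for $\omega^{K}|_{U}$ recalled earlier in the paper.

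First I would collect the elementary algebra. Since $\langle\,,\,\rangle$ is Hermitian, $I$ is orthogonal and preserves $U$, hence also $U^{\perp}$, so $I$ commutes with $\pi_{U}$; from $IK=-KI$ it then follows that $AI=\pi_{U}KI=-\pi_{U}IK=-I\pi_{U}K=-IA$ on $U$, while $A^{t}=-A$ because $\omega^{K}|_{U}$ is skew-symmetric. Therefore $I$ commutes with $-A^{2}=A^{t}A$ and so preserves every eigenspace $E_{\lambda}$; being orthogonal with $I^{2}=-\mathrm{Id}$, it endows each $E_{\lambda}$ with a complex structure, whence $\dim E_{\lambda}$ is even. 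This already yields the statement for $\theta=\pi/2$, i.e. the eigenvalue $\lambda=0$: the multiplicity is $2k$. (Here one also notes that, $A$ being real skew-symmetric, $\ker A^{2}=\ker A$, so $A$ vanishes identically on $E_{0}$ and no further structure is available there, which is why only evenness survives.)

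For $\theta\neq\pi/2$, i.e. $\lambda=\cos^{2}\theta\neq 0$, I would upgrade ``even'' to ``divisible by four'' by producing a genuine quaternionic structure on $E_{\lambda}$. Since $A$ commutes with $A^{2}$ it preserves $E_{\lambda}$, and on $E_{\lambda}$ one has $A^{2}=-\lambda\,\mathrm{Id}$, so $\tilde A:=\lambda^{-1/2}A$ restricts to an orthogonal complex structure of $E_{\lambda}$; together with $I|_{E_{\lambda}}$ it satisfies $\tilde A^{2}=I^{2}=-\mathrm{Id}$ and $\tilde A I=-I\tilde A$, so $\{I,\tilde A,I\tilde A\}$ is a quaternionic structure on $E_{\lambda}$ and $\dim E_{\lambda}$ is a multiple of $4$; hence the multiplicity is $4k$.

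The one point requiring care is the bookkeeping identifying the eigenvalue multiplicities of $-A^{2}$ with the multiplicities of the principal angles of $(U,KU)$, together with checking that $A$ genuinely maps $U$ into $U$ and is skew there; once this is in place the argument is essentially the observation that the operator attached to $\omega^{K}|_{U}$ itself supplies, after rescaling on each nonzero eigenspace, exactly the quaternion missing to complete $I$ to a full quaternionic structure. I do not expect a serious obstacle beyond this.
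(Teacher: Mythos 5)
Your proof is correct, but it takes a genuinely different route from the paper's. The paper argues through the standard-form decomposition of $\omega^K|_U$: it takes a standard $2$-plane $U_i=L(X,Z)$ with $\cos\theta^K(U_i)\neq 0$, invokes the associated-plane machinery developed for the $4$-dimensional case (Corollary \ref{related singular vectors are I-orthogonal}, Proposition \ref{equivalent definitions of associated planes}) to conclude that $(X,Z,IX,IZ)$ is orthonormal and that $IU_i$ is again a standard $2$-plane with the same angle, so the standard $2$-planes carrying a given angle $\theta\neq\pi/2$ pair off into $4$-dimensional blocks $U_i\oplus IU_i$; for $\theta=\pi/2$ the corresponding invariant subspace is totally $I$-complex, hence even-dimensional. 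You instead work directly with the operator $A=\pi_U\circ K|_U$ and observe that the anticommutation $AI=-IA$ (together with $A^t=-A$) forces $I$ to preserve each eigenspace $E_\lambda$ of $-A^2$, which already gives evenness, and that for $\lambda\neq 0$ the rescaled operator $\lambda^{-1/2}A$ is a second orthogonal complex structure on $E_\lambda$ anticommuting with $I|_{E_\lambda}$, so that $E_\lambda$ becomes an $\mathbb{H}$-module and its dimension is divisible by $4$. Your bookkeeping identifying the multiplicity of $\theta$ with $\dim E_{\cos^2\theta}$ is exactly the standard singular-value dictionary and is sound. Your version is more self-contained --- it does not rely on the associated-plane propositions, which the paper states only for $4$-dimensional subspaces, nor does it use pureness of $U$ --- and it isolates the structural reason for the two multiplicities, namely the presence or absence of the second anticommuting complex structure on the eigenspace. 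What the paper's constructive route buys in exchange is the explicit orthonormal chains $(X,Z,IZ,IX)$, which are reused immediately afterwards in the proof of Theorem \ref{canonical decomposition of a complex subspace}.
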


\begin{proof}
Let $(U,I)$ be a $2m$-dimensional  pure $I$-complex subspace. Consider first the case that  $U$ is not totally complex and let $U_i$ be a standard 2-plane of $\omega^K|_U$ with the cosine of the angle of isoclinicity  $\cos \theta^K(U_i)$ of the pair $(U_i,KU_i) \neq 0$. Observe that assuming on $U_i$ the orientation induced by a standard basis one has $\cos \theta^K(U_i)= \cos \Theta^K(U_i)$.

Let $(X,Z)$ be an $\omega^K$-standard basis of $U_i$ and consider
 the $I$-complexification $\tilde U_i=U_i \oplus IU_i \subset U$. 
From the Proposition (\ref{equivalent definitions of associated planes}) the  generators $(X,Z,IX,IZ)$ are orthonormal which implies that the above direct sum is orthogonal and that
 $\{Z_i\}=(X,Z,IZ,IX)$ is the $\omega^K$-chain of $\tilde U_i$  centered on $X$. The conclusion follows observing that for the angles of isoclinicity one has  $\theta^K(U_i)=<X,KZ>=<IZ,KIX>=\theta^K(IU_i)$.

In case a principal angle  $\theta= \pi/2$, let $\bar U$ the associated $\omega^K$-standard subspace. From point (\ref{3}) of the Claim (\ref{properties of some quternionic,complex and real subspaces}), it is clearly totally $I$-complex. Then $\bar U$ is 2-planes decomposable and its dimension is necessarily $2k$.
\end{proof}






\begin{teor} \label{canonical decomposition of a complex subspace}
Any  pure $I$-complex subspace  $(U^{2m},I)$ admits a  decomposition into an \underline{Hermitian orthogonal} sum of 4-dimensional  pure  $I$-complex subspaces plus, in case the dimension is not multiple of 4, an Hermitian orthogonal  (totally) $I$-complex 2-plane.
\end{teor}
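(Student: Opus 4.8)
Any pure $I$-complex subspace $(U^{2m},I)$ admits a decomposition into an Hermitian orthogonal sum of 4-dimensional pure $I$-complex subspaces, plus an Hermitian orthogonal totally $I$-complex 2-plane when $m$ is odd.

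The plan is to read the decomposition off the standard form of the $K$-K\"{a}hler form $\omega^K|_U$ for $K\in I^\perp$, and then to upgrade the Euclidean orthogonality of the resulting addends to Hermitian orthogonality using that each addend is $I$-complex together with the block structure of $\omega^K$. So I would fix an adapted basis $(I,J,K)$ (hence $JU=KU$) and put $\omega^K|_U$ in standard form. Grouping the resulting $\omega^K$-invariant $2$-planes according to the value of the principal angle of the pair $(U,KU)$ they carry gives an orthogonal decomposition $U=\bigoplus_{\theta}U^{(\theta)}$ over the distinct principal angles; here $\theta=0$ cannot occur, for a $\theta=0$ block would be simultaneously $K$-invariant and $I$-invariant, hence quaternionic, contradicting purity of $U$. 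Each $U^{(\theta)}$ is $I$-invariant: the skew endomorphism $S$ of $U$ defined by $\langle X,SY\rangle=\omega^K(X,Y)$ satisfies $S(IY)=-IS(Y)$ (because $Pr_U$ commutes with $I$ while $KI=-IK$), so $-S^2=S^tS$ commutes with $I$, and $U^{(\theta)}$ is precisely its $\cos^2\theta$-eigenspace. By Proposition (\ref{bases of invariat 2-planes of omega are I-ortogonal}) one has $\dim U^{(\theta)}=4k_\theta$ for $\theta\neq\pi/2$ and $\dim U^{(\pi/2)}=2k_{\pi/2}$, and $U^{(\pi/2)}$ is totally $I$-complex.

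Next I would decompose each block. For $\theta\neq\pi/2$: choose a unit $X_1\in U^{(\theta)}$ and set $Z_1=\tfrac{K^{-1}Pr^{KU}X_1}{\cos\theta}$; since $\langle X_1,KW\rangle=\omega^K(X_1,W)=0$ for $W$ in any other block, $Pr^{KU}X_1\in KU^{(\theta)}$, so $Z_1\in U^{(\theta)}$. By Corollary (\ref{related singular vectors are I-orthogonal}), $L(X_1,Z_1)$ is an associated plane, whence $\tilde U_1:=L(X_1,Z_1)\oplus IL(X_1,Z_1)=L(X_1,IX_1,Z_1,IZ_1)$ is a $4$-dimensional pure $I$-complex subspace with $I^\perp$-K\"{a}hler angle $\theta$ by Propositions (\ref{equivalent definitions of associated planes}) and (\ref{angle of isoclinicity of an associated plane}), and its two associated planes $L(X_1,Z_1)$, $IL(X_1,Z_1)$ are two $2$-planes of an $\omega^K$-standard decomposition of $U$. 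The orthogonal complement of $\tilde U_1$ in $U^{(\theta)}$ is then $I$-complex by point (\ref{3}) of the Claim (\ref{properties of some quternionic,complex and real subspaces}), has $\omega^K$ equal to $\cos\theta$ times a standard form, hence is again isoclinic with angle $\theta$ with its $K$-image by item $2$ of Definition (\ref{definition of isoclinicity}); so I iterate within the block. For $\theta=\pi/2$: split the totally $I$-complex space $U^{(\pi/2)}$ into an orthogonal sum of $2$-dimensional totally $I$-complex planes $L(X,IX)$ and pair them into $4$-dimensional totally $I$-complex subspaces, with exactly one $2$-plane left over precisely when $k_{\pi/2}$ is odd. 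Since $\dim U=2m\equiv 2k_{\pi/2}\pmod 4$, this leftover $2$-plane appears exactly when $2m\not\equiv 0\pmod4$, i.e. when the dimension is not a multiple of $4$.

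Finally I would verify that all the resulting $4$-dimensional addends $\tilde U_i$ (and the exceptional $2$-plane, if present) are mutually Hermitian orthogonal. Each is $I$-complex, so for $X\in\tilde U_i$ and $Y\in\tilde U_j$ with $i\neq j$ one has $\langle X,Y\rangle=0$ and $\langle X,IY\rangle=0$ by Euclidean orthogonality; $\langle X,KY\rangle=\omega^K(X,Y)=0$ because distinct addends are distinct blocks of a single $\omega^K$-standard decomposition of $U$; and, using $J=-IK$, $\langle X,JY\rangle=\langle IX,KY\rangle=\omega^K(IX,Y)=0$ since $IX\in\tilde U_i$. Hence $\tilde U_i^{\mathbb{H}}\perp\tilde U_j^{\mathbb{H}}$ for all $i\neq j$, which is exactly Hermitian orthogonality, and the decomposition has the asserted form.

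I expect the main obstacle to be precisely this last upgrade: one must arrange the $4$-dimensional pieces as genuine blocks of one common $\omega^K$-standard basis so that $\omega^K$ vanishes identically between distinct pieces, and then exploit the relation $J=-IK$ to transfer that vanishing to the $J$-direction; once this is in place, the rest is the combinatorics packaged in Proposition (\ref{bases of invariat 2-planes of omega are I-ortogonal}) together with the elementary counting modulo $4$.
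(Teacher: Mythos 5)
Your proof is correct and follows essentially the same route as the paper: both read the decomposition off the standard form of $\omega^K|_U$ for $K\in I^\perp$, invoke Proposition (\ref{bases of invariat 2-planes of omega are I-ortogonal}) for the multiplicities, build the $4$-dimensional addends as $L(X,Z,IX,IZ)$ with $Z=\frac{K^{-1}Pr^{KU}X}{\cos\theta}$, pair up $2$-planes in the totally complex ($\theta=\pi/2$) block, and obtain Hermitian orthogonality from $W_1\perp W_2=IW_2$ together with $W_1\perp KW_2=JW_2$. Your write-up is if anything slightly more careful than the paper's (the explicit anticommutation argument for the $I$-invariance of the angle-eigenspaces, the exclusion of $\theta=0$ by purity, and the counting modulo $4$), but it introduces no genuinely new idea.
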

\begin{proof}
From Proposition (\ref{bases of invariat 2-planes of omega are I-ortogonal}) one has that each principal angles $\theta \neq \pi/2$ between the pair $(U,KU)$ has multiplicity $4k$. Let denote by $\bar U_i$ the $\omega^K$-invariant $4k_i$-dimensional subspaces associated to $\theta_i \neq \pi/2$ and denote by $d$ the sum of their dimensions.
Any such subspace  is $I$-complex. In fact  by the uniqueness of such invariant $\omega^K$-subspaces and from   Proposition (\ref{bases of invariat 2-planes of omega are I-ortogonal}), one has the following decomposition into 4-dimensional $I$-complex subspaces $(U_{ij},I,\theta_i)$
\[\bar U_i= \bigoplus_{j=1}^{k_i} (U_{ij},I,\theta_i)= \bigoplus_{j=1}^{k_i}   L(X_{ij}, Z_{ij}=\frac{K^{-1}Pr_U^{KU}X_{ij}}{\cos \theta^K}, IX_{ij},IZ_{ij}), \qquad X_{ij} \in \bar U \cap (\bigoplus_{p=1}^{j-1} U_{ip})^\perp.\]
The union of the bases above is an $\omega^K$-standard basis of the $I$-complex  subspace $\bar U_i$.

Denoting by $W= \bigoplus \bar U_i$, from Claim  (\ref{properties of some quternionic,complex and real subspaces}), we have that $W$ is a $d$-dimensional $I$-complex subspace, it admits a decomposition into 4-dimensional $I$-complex subspaces  and, w.r.t. the orthonormal bases given above, the form $\omega^K|_W$ assumes standard form.
The 4-dimensional complex addends of $W$ are Hermitian orthogonal as can be easily seen. In fact, if $W_1,W_2$ are a pair of such addends, one has that  $W_1 \perp  W_2=IW_2$ and $W_1 \perp JW_2=KW_2$  i.e. $ W_1^\mathbb H \perp  W_2^\mathbb H$. In other words, all different 4-dimensional $I$-complex addends $U_{ij}$ of the pure $I$-complex subspace $U$ belong to 8-dimensional quaternionic subspaces orthogonal in pairs. 

Finally if $2m-d>0$, the  $(2m-d)$-dimension subspace $W'=W^\perp \cap  U$ is $I$-complex being the orthogonal complement to a complex subspace in a complex space (see claim (\ref{properties of some quternionic,complex and real subspaces})). In particular it is totally $I$-complex since the angle of isoclinicity between the pair $(W',KW')$ is $\pi/2$. It is easy to see that  $W'$ is Hermitian orthogonal to $W$. Furthermore the $\omega^I$ standard form restricted to $W'$  determine a decomposition of $W'$ into Hermitian orthogonal 2-dimensional totally $I$-complex addends.
Summing them in pairs the conclusion follows.

\end{proof}

Although the decomposition on the previous Theorem is  unique only if
all $\cos \theta_i>0$  have multiplicity 4 and eventually present  $\cos \theta_i=0$ have multiplicity 2, we can state the following corollary  whose proof is straightforward.
\begin{coro} \label{Quaternionic Kehler multiangle}
To a pure  $I$-complex subspace  
 $(U^{4m},I)$ 
 we can  canonically associate the vector $\boldsymbol{\theta^{I^\perp}}=(\theta_1^{I^\perp}, \ldots, \theta_{m}^{I^\perp})$  where $\theta_i^{I^\perp}$ (ordered in increasing order) are the  $I^\perp$-K\"{a}hler angles of the Hermitian orthogonal 4-dimensional $I$-complex subspaces of Theorem (\ref{canonical decomposition of a complex subspace}).   If $\dim U=4m+2$, the angle $\theta_{m+1}^{I^\perp}= \pi/2$ is the $K$-K\"{a}hler angle of  an Hermitian orthogonal  totally complex 2-plane.
\end{coro}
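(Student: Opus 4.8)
The statement to prove is Corollary~\ref{Quaternionic Kehler multiangle}: to a pure $I$-complex subspace $(U^{4m},I)$ one can canonically associate the vector $\boldsymbol{\theta^{I^\perp}}=(\theta_1^{I^\perp},\ldots,\theta_m^{I^\perp})$ of $I^\perp$-K\"ahler angles of the $4$-dimensional Hermitian orthogonal complex addends of Theorem~\ref{canonical decomposition of a complex subspace}, with an extra $\pi/2$ in the $4m+2$ case. The plan is to argue that although the decomposition itself is not unique, the \emph{multiset} of $I^\perp$-K\"ahler angles occurring in any such decomposition is uniquely determined, so the ordered vector $\boldsymbol{\theta^{I^\perp}}$ is a genuine invariant of $U$.

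The main point is to tie the decomposition of Theorem~\ref{canonical decomposition of a complex subspace} to the intrinsically defined data of $U$, namely the principal angles of the pair $(U,KU)$ for $K\in I^\perp\cap S(\mathcal Q)$ (which by point~(\ref{for any K in I perp  KU=JU is I-complex}) of Claim~\ref{properties of some quternionic,complex and real subspaces} do not depend on the choice of $K\in I^\perp$). First I would invoke Proposition~\ref{bases of invariat 2-planes of omega are I-ortogonal}: the distinct principal angles $\theta_i\neq\pi/2$ of $(U,KU)$ occur with multiplicity $4k_i$, and $\pi/2$ occurs with multiplicity $2k_0$ (possibly $k_0=0$). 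The corresponding $\omega^K$-invariant subspaces $\bar U_i$ (of dimension $4k_i$) and $\bar U_0$ (of dimension $2k_0$) are \emph{canonically} defined, being the isotypic/invariant components of the standard form $\omega^K|_U$ associated to a fixed eigenvalue of $(\omega^K)^2$; this uniqueness was already noted in the discussion of standard forms in the Preliminaries. Then I would note that each $\bar U_i$ with $\theta_i\neq\pi/2$ decomposes (non-uniquely) into $k_i$ copies of a $4$-dimensional $I$-complex subspace, each with $I^\perp$-K\"ahler angle equal to exactly $\theta_i$ — this is forced, since the $I^\perp$-K\"ahler angle of a $4$-dimensional $I$-complex addend is by definition the common angle of isoclinicity of the pair (addend, $K$-image), and that angle must coincide with the principal angle $\theta_i$ on $\bar U_i$. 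Hence the multiset of $4$-dimensional angles appearing in \emph{any} decomposition of $\bar U_i$ is $\{\theta_i$ with multiplicity $k_i\}$, independent of the decomposition. Summing over $i$, the multiset $\{\theta_1^{I^\perp},\ldots,\theta_m^{I^\perp}\}$ is exactly the multiset of distinct principal angles of $(U,KU)$ that are $\neq\pi/2$, each repeated a quarter of its multiplicity; in the $4m+2$ case, the leftover $\omega^K$-invariant totally complex part $W'$ is $2$-dimensional and contributes the single extra $\pi/2$.

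Therefore ordering these angles in increasing order produces a well-defined vector $\boldsymbol{\theta^{I^\perp}}$ that depends only on $U$ (and on $I$, which is itself intrinsic to $U$ by point~(\ref{A complex subspace does not contain a complex subspace by a different complex structure}) of Claim~\ref{properties of some quternionic,complex and real subspaces}). One should also remark that the value is independent of the choice of $K\in I^\perp\cap S(\mathcal Q)$: this is immediate since $KU=K'U$ for all $K,K'\in I^\perp$, so the pair $(U,KU)$ and its principal angles do not depend on $K$. I would close by observing that the $4$-dimensional addends are Hermitian orthogonal in pairs (already proved inside Theorem~\ref{canonical decomposition of a complex subspace}, via $W_1\perp IW_2$ and $W_1\perp KW_2$, i.e. $W_1^{\mathbb H}\perp W_2^{\mathbb H}$), which is what makes $\boldsymbol{\theta^{I^\perp}}$ the natural complete datum to feed into Theorem~\ref{main_theorem 1_ with respect canonical bases}.

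The only genuine obstacle is the uniqueness claim for the invariant $\omega^K$-subspaces $\bar U_i$ and for the count $k_i$ of $4$-dimensional pieces inside each $\bar U_i$: one must be careful that, although a single $4k_i$-dimensional isotypic block can be cut into $4$-dimensional $I$-complex addends in many ways (the ambiguity being an orthogonal change of basis commuting with $I$ and $K$), every such cut yields exactly $k_i$ pieces all carrying the \emph{same} $I^\perp$-K\"ahler angle $\theta_i$. This follows because on $\bar U_i$ the projector $Pr_U^{KU}$ restricted has all singular values equal to $\cos\theta_i$ (isoclinicity on the block), so \emph{any} $4$-dimensional $I$-complex sub-addend inherits that same angle of isoclinicity; there is simply no room for a different angle to appear. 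Once this is granted the corollary is, as the authors say, straightforward.
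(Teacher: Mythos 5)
Your argument is correct and is essentially the route the paper intends: the paper dismisses the proof as ``straightforward,'' and what you supply --- that the multiset of $I^\perp$-K\"ahler angles in any decomposition is forced to coincide with the principal angles of $(U,KU)$ (independent of $K\in I^\perp$ since $KU=K'U$), each counted a quarter of its multiplicity, because the $\omega^K$-isotypic blocks are canonical and isoclinic --- is exactly the content of Proposition~(\ref{bases of invariat 2-planes of omega are I-ortogonal}) and Theorem~(\ref{canonical decomposition of a complex subspace}) that the corollary rests on. The only slight imprecision is your remark that the totally complex part $W'$ ``is $2$-dimensional'' in the $4m+2$ case: $W'$ may have any even dimension $2k_0$ and then contributes $\lceil k_0/2\rceil$ entries equal to $\pi/2$ (only the leftover $2$-plane after pairing accounts for the extra $\theta_{m+1}^{I^\perp}=\pi/2$), but this does not affect the validity of the argument.
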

The increasing order   of the  $I^\perp$-K\"{a}hler angles in $\boldsymbol{\theta}$ determines a corresponding order of the relative  Hermitian orthogonal 4-dimensional $I$-complex subspaces.




 \begin{defi}
Let $(U,I)$ be a $2m$-dimensional $I$-complex subspace. We call the vector  $\boldsymbol{\theta}^{I^\perp}=(\theta_1^{I^\perp}, \ldots, \theta_{[m/2]}^{I^\perp})$ ($\boldsymbol{\theta}=(\theta_1^{I^\perp}, \ldots, \theta_{{m/2}^{I^\perp}}, \pi/2)$ if $m$ is odd)  with $\theta_i^{I^\perp}$ ordered in increasing order, the \textbf{$I^\perp$-K\"{a}hler multipleangle} of the $I$-complex $2m$-dimensional subspace $(U,I)$ that we will denote by $(U^{2m},I,\boldsymbol{\theta^{I^\perp}})$.
 \end{defi}

For any leading vector $X_1=Y_1=Z_1$, we associate to any 4-dimensional complex subspace the chains $\{X_i\},\{Y_i\},\{Z_i\}$ of the  Definition (\ref{associated chains}) and given in (\ref{the chains of a 4dimensionalcomplex subspace}).  We recall that in case $U$ is totally complex 
one has that  $\{X_i \}=\{\tilde X_i \}=\{Y_i \}= \{\tilde Y_i \}=\{Z_i \}= \{\tilde Z_i \}$.

\begin{defi} \label{canonical bases of a 2m complex subspace}
The unions of the chains $\{X_i\}$ (resp. $\{Y_i\},\{Z_i\}$ of the 4-dimensional $I$-complex addends $(U_{ij},I,\theta_i^{I^\perp})$ form the triple of    the  canonical bases of $(U^{2m},I,\boldsymbol{\theta^{I^\perp}})$.
\end{defi}
\begin{prop} \label{4 by 4 diagonal form of the canonical matrices of a 2m complex subspace}
Let $(U^{2m},I,\boldsymbol{\theta}^{I^\perp})$ a $2m$-dimensional $I$-complex subspace.  If $m$ is even, the canonical matrix $C_{IJ}$ (resp. $C_{IK}$), w.r.t. the canonical bases,
is given by a diagonal block matrices with $4 \times 4$-blocks given by the  first (resp. the second) of the (\ref{canonical matrices of a 4 dimensional complex subspace}) (plus an order  2 identity block if $m$ is odd).
\end{prop}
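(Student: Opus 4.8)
The plan is to reduce the statement to the 4-dimensional case already handled in Proposition (\ref{the vectors $X_2,Y_2,Z2$ for an orthonormal nasis of thge orthogonal complement of $X_1$}) by exploiting the Hermitian orthogonal decomposition of Theorem (\ref{canonical decomposition of a complex subspace}). First I would write $(U^{2m},I,\boldsymbol{\theta}^{I^\perp})$ as an Hermitian orthogonal sum of 4-dimensional pure $I$-complex addends $(U_{ij},I,\theta_i^{I^\perp})$ (plus, if $m$ is odd, a single Hermitian orthogonal totally $I$-complex 2-plane), and recall from Definition (\ref{canonical bases of a 2m complex subspace}) that the canonical bases $\{X_i\},\{Y_i\},\{Z_i\}$ of $U$ are the concatenations of the chains of the individual addends, ordered according to the increasing order of the $I^\perp$-K\"ahler angles fixed in Corollary (\ref{Quaternionic Kehler multiangle}). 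So the canonical bases have a block structure, the blocks being groups of four consecutive vectors coming from a single 4-dimensional addend (and one final group of two in the odd case).

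The key step is then to observe that the matrices $C_{IJ}=(<X_i,Y_j>)$ and $C_{IK}=(<X_i,Z_j>)$ are block-diagonal with respect to this grouping. This is precisely where Hermitian orthogonality of the addends is used: if $X_i$ belongs to a chain of $U_{ij}$ and $Y_j$ (respectively $Z_j$) belongs to a chain of a \emph{different} addend $U_{i'j'}$, then $Y_j \in U_{i'j'} \subset U_{i'j'}^{\mathbb H}$ while $X_i \in U_{ij} \subset U_{ij}^{\mathbb H}$, and since $U_{ij}^{\mathbb H} \perp U_{i'j'}^{\mathbb H}$ strictly (Theorem \ref{canonical decomposition of a complex subspace}), we get $<X_i,Y_j>=0$ (and similarly $<X_i,Z_j>=0$). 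Hence all off-diagonal blocks of $C_{IJ}$ and $C_{IK}$ vanish. It remains only to identify each diagonal block: the $4\times 4$ block corresponding to the addend $(U_{ij},I,\theta_i^{I^\perp})$ is exactly the matrix $(<X_i,Y_j>)$ (resp. $(<X_i,Z_j>)$) computed from the chains $\{X_i\},\{Y_i\},\{Z_i\}$ of that 4-dimensional subspace, which by Proposition (\ref{the vectors $X_2,Y_2,Z2$ for an orthonormal nasis of thge orthogonal complement of $X_1$}) is the first (resp. second) matrix of (\ref{canonical matrices of a 4 dimensional complex subspace}) when $\theta_i^{I^\perp}\neq\pi/2$, and is the identity when the addend is totally $I$-complex (Corollary \ref{some of the pair $(U,IU), (U,JU),(U,KU)$ is strictly orthogonal}). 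In the odd case the final $2\times 2$ block is the canonical matrix of a totally $I$-complex 2-plane, which is the identity by Theorem (\ref{Sp(n) orbita di un 2 piano complesso}).

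I do not expect a serious obstacle here; the only point requiring a little care is the bookkeeping of the ordering of the chains, i.e. making sure the concatenation of canonical bases really does group the four vectors of each 4-dimensional addend into consecutive positions, so that the block-diagonal form is literally a \emph{diagonal block} matrix and not merely block-diagonal after a permutation. This follows from the convention in Corollary (\ref{Quaternionic Kehler multiangle}) and Definition (\ref{canonical bases of a 2m complex subspace}) that the addends are listed in increasing order of $\theta_i^{I^\perp}$ and the chain $\{X_i\}$ of $U$ is built by listing, for each addend in that order, its four chain vectors $X_1,X_2,X_3,X_4$ in order; the same order is used for $\{Y_i\}$ and $\{Z_i\}$, since they share the common leading vectors. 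With this convention in place the statement follows immediately, and the totally $I$-complex case (where every addend contributes an identity block) recovers in particular that $C_{IJ}=C_{IK}=\mathrm{Id}$ for totally $I$-complex subspaces.
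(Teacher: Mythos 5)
Your proposal is correct and follows essentially the same route as the paper: decompose $U$ via Theorem (\ref{canonical decomposition of a complex subspace}) into Hermitian orthogonal 4-dimensional addends, use that orthogonality to kill the off-diagonal blocks of $C_{IJ}$ and $C_{IK}$, and identify each diagonal block via Proposition (\ref{the vectors $X_2,Y_2,Z2$ for an orthonormal nasis of thge orthogonal complement of $X_1$}). Your version merely spells out the vanishing of the off-diagonal blocks and the ordering bookkeeping more explicitly than the paper does.
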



\begin{proof}
From Theorem (\ref{canonical decomposition of a complex subspace}) we have that $U$ admits a standard decomposition into 4-dimensional $I$-complex subspaces. The addends are Hermitian orthogonal then every orthogonal change of basis preserving such decomposition is represented by a diagonal block matrix with $4 \times 4$ blocks.  The conclusion follows from Proposition (\ref{the vectors $X_2,Y_2,Z2$ for an orthonormal nasis of thge orthogonal complement of $X_1$}).
\end{proof}

Let denote by $Gr_{(I,\boldsymbol{\theta}^{I^\perp})}^\R(2m,4n)$ the set of $2m$-dimensional  pure $I$-complex subspaces in $(V^{4n}, <,>, \mathcal{Q})$ with $I^\perp$-K\"{a}hler multipleangle  $\boldsymbol{\theta}^{I^\perp}$.

\begin{teor} \label{orbit of a complex 2m-plane under Sp(n)}

The group $Sp(n)$ acts transitively on $Gr_{(I,\boldsymbol{\theta}^{I^\perp})}^\R(2m,4n)$ i.e.
the pair $(I, \boldsymbol{\theta}^{I^\perp})$ composed by the complex structure $I \in \mathcal{Q}$ and the $I^\perp$-K\"{a}hler multipleangle  $\boldsymbol{\theta}^{I^\perp}$ of the $I$-complex subspace $U$ determines completely its $Sp(n)$-orbit in the Grassmannian $Gr^\R(2m,4n)$.
\end{teor}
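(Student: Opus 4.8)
The plan is to derive everything from Theorem (\ref{main_theorem 1_ with respect canonical bases}): once an adapted basis $(I,J,K)$ has been fixed, it suffices to show that the invariant $Inv(U)=\{\bm{\theta}^I(U),\bm{\theta}^J(U),\bm{\theta}^K(U),C_{IJ},C_{IK}\}$ of any $U\in Gr_{(I,\boldsymbol{\theta}^{I^\perp})}^\R(2m,4n)$ is completely determined by the pair $(I,\boldsymbol{\theta}^{I^\perp})$. First I would read off the principal angles. Since $U=IU$ one has $\bm{\theta}^I(U)=(0,\dots,0)$. By Theorem (\ref{canonical decomposition of a complex subspace}) write $U=\bigl(\bigoplus_{i,j}(U_{ij},I,\theta_i^{I^\perp})\bigr)\stackrel{\perp}{\oplus}P$ as an Hermitian orthogonal sum of $4$-dimensional pure $I$-complex addends, listed in the increasing order of their $I^\perp$-K\"ahler angles $\theta_i^{I^\perp}$, with $P$ a totally $I$-complex $2$-plane present exactly when $m$ is odd. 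Because the addends are orthogonal in Hermitian sense, the subspaces $JU_{ij}=KU_{ij}$ (and $JP$) are mutually orthogonal, so $Pr^{JU}$ and $Pr^{KU}$ restricted to $U$ are block diagonal for this decomposition and $\bm{\theta}^J(U)$ (resp.\ $\bm{\theta}^K(U)$) is the union of the $\bm{\theta}^J(U_{ij})$ (resp.\ $\bm{\theta}^K(U_{ij})$) and of $\bm{\theta}^J(P)$. By definition of the $I^\perp$-K\"ahler angle each pair $(U_{ij},JU_{ij})$ is isoclinic of angle $\theta_i^{I^\perp}$, so $\bm{\theta}^J(U_{ij})=\bm{\theta}^K(U_{ij})=(\theta_i^{I^\perp},\theta_i^{I^\perp},\theta_i^{I^\perp},\theta_i^{I^\perp})$ and $\bm{\theta}^J(P)=\bm{\theta}^K(P)=(\pi/2,\pi/2)$; hence $\bm{\theta}^J(U)=\bm{\theta}^K(U)$ is the multiset in which every entry $\theta_i^{I^\perp}$ occurs four times, with two extra $\pi/2$'s when $m$ is odd, and this is visibly determined by $\boldsymbol{\theta}^{I^\perp}$.

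Next I would compute the canonical matrices. By Definition (\ref{canonical bases of a 2m complex subspace}) the canonical bases $\{X_i\},\{Y_i\},\{Z_i\}$ of $U$ are obtained by concatenating, in the fixed order of the addends, the chains of the $4$-dimensional complex addends (and the chains of $P$ when $m$ is odd). By Proposition (\ref{4 by 4 diagonal form of the canonical matrices of a 2m complex subspace}) the matrices $C_{IJ}=(<X_i,Y_j>)$ and $C_{IK}=(<X_i,Z_j>)$ are block diagonal: the $4\times4$ block attached to an addend $(U_{ij},I,\theta_i^{I^\perp})$ is the first (resp.\ the second) matrix of (\ref{canonical matrices of a 4 dimensional complex subspace}) when $\theta_i^{I^\perp}\neq\pi/2$, it is the identity matrix $Id$ when $\theta_i^{I^\perp}=\pi/2$ (totally complex addend, cf.\ Corollary (\ref{some of the pair $(U,IU), (U,JU),(U,KU)$ is strictly orthogonal})), and there is a final order-$2$ identity block coming from $P$ when $m$ is odd. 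In every case the block pattern, the order of the blocks and the blocks themselves are fixed once $\boldsymbol{\theta}^{I^\perp}$ is given, so $C_{IJ}$ and $C_{IK}$ depend only on $(I,\boldsymbol{\theta}^{I^\perp})$.

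Combining the two computations, $Inv(U)$ is the same for all $U\in Gr_{(I,\boldsymbol{\theta}^{I^\perp})}^\R(2m,4n)$; hence Theorem (\ref{main_theorem 1_ with respect canonical bases}) provides an element of $Sp(n)$ carrying any such subspace onto any other, i.e.\ $Sp(n)$ acts transitively on $Gr_{(I,\boldsymbol{\theta}^{I^\perp})}^\R(2m,4n)$. For the remaining assertion, that $(I,\boldsymbol{\theta}^{I^\perp})$ determines the whole orbit, I would observe that $Sp(n)$, being the centralizer of $\lambda(Sp(1))$, commutes with $I,J,K$: thus it sends $I$-complex subspaces to $I$-complex subspaces and intertwines $Pr^{JU}$ with $Pr^{J(AU)}$, so it preserves the principal angles of $(U,JU)$ and therefore the $I^\perp$-K\"ahler multipleangle, which shows that the orbit of $U$ is exactly $Gr_{(I,\boldsymbol{\theta}^{I^\perp})}^\R(2m,4n)$.

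The point that will require care is the block-diagonal structure of the projectors and of the canonical matrices. It rests on the Hermitian orthogonality of the $4$-dimensional addends guaranteed by Theorem (\ref{canonical decomposition of a complex subspace}): although the splitting of $U$ into $4$-dimensional addends is not unique inside a fixed $\theta_i^{I^\perp}$-isoclinic component of $\omega^K|_U$, each block is one of the isoclinic $4$-dimensional subspaces of $\mathcal{IC}^4$, so by Proposition (\ref{canonical matrices of any  subspace of ${IC}^(4)$}) its contribution to $C_{IJ},C_{IK}$ is the fixed matrix (\ref{canonical matrices of a 4 dimensional complex subspace}) independent of the choice, and the ambiguity in the decomposition therefore leaves $Inv(U)$ untouched. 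Verifying that these block-by-block invariants really reassemble into the block-diagonal matrices of Proposition (\ref{4 by 4 diagonal form of the canonical matrices of a 2m complex subspace}), together with the bookkeeping of the ordering imposed by the increasing $\theta_i^{I^\perp}$, is the technical heart of the argument.
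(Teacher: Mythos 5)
Your proposal is correct and follows essentially the same route as the paper: the Hermitian orthogonal decomposition of Theorem (\ref{canonical decomposition of a complex subspace}), the block-diagonal canonical matrices of Proposition (\ref{4 by 4 diagonal form of the canonical matrices of a 2m complex subspace}), and the conclusion via Theorem (\ref{main_theorem 1_ with respect canonical bases}). You merely spell out more explicitly than the paper the computation of the principal-angle vectors and the converse inclusion (that $Sp(n)$ preserves the multipleangle), both of which are left implicit in the original proof.
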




\begin{proof}
The  proof follows directly from Proposition  (\ref{canonical decomposition of a complex subspace}). The Hermitian orthogonality of all the addends of the decomposition of a $2m$-dimensional pure $I$-complex subspace  $(U^{2m},I, \boldsymbol{\theta}^{I^\perp})$ there stated,   allows us to deal separately with each addend since the group $Sp(n)$ preserves such orthogonality.  Therefore the canonical matrices w.r.t. the canonical bases given in the Definition (\ref{canonical bases of a 2m complex subspace}) have  the  unique  form  stated  in the Proposition (\ref{4 by 4 diagonal form of the canonical matrices of a 2m complex subspace}), then  from the Theorem (\ref{main_theorem 1_ with respect canonical bases}),  the pair $(I, \boldsymbol{\theta}^{I^\perp})$ determines the $Sp(n)$-orbits of $U$.

In case $\dim U$ is not multiple of 4, the last addend   of the Hermitian orthogonal decomposition stated in Proposition (\ref{canonical decomposition of a complex subspace}) is a  totally $I$-complex 2-plane and the conclusion follows from Proposition (\ref{Sp(n) orbita di un 2 piano complesso}).
\end{proof}

 If   $U$ is $A$-complex  with $A \in S(\mathcal{Q})$ the triple $(\xi,\chi,\eta) \neq (0,0,0)$  w.r.t. an the admissible basis $(I,J,K)$  as stated in the Proposition (\ref{xi=chi=eta=0 of a complex 4-plane  only w.r.t. an adapted basis}) unless $A= \pm I$. Then the canonical matrices never have the form stated  in the Proposition (\ref{4 by 4 diagonal form of the canonical matrices of a 2m complex subspace}).

\subsection{$Sp(n)$-orbit of a $\Sigma$-complex subspace}
In Proposition (\ref{unicity of the decomposition of a subspace with trivial real addend}) we stated that a $\Sigma$-complex subspace  $U$ admits a unique decomposition into Hermitian orthogonal sum of maximal pure complex subspaces by different complex structure. Although in Theorem  (\ref{canonical decomposition of a complex subspace}) we stated that the decomposition of each $I_i$-complex $2m$-dimensional addend into  4-dimensional Hermitian orthogonal complex addends (if $m \geq 2$) plus eventually an Hermitian orthogonal totally complex plane (if $m$ is odd)  is in general not unique, we have that the $I_i^\perp$- K\"{a}hler multipleangle is canonically defined.
In this case, from Corollary (\ref{orthogonal complex subspaces by different structures}),  we can determine the $Sp(n)$-orbit of $U$ by determining separately the orbit of each complex addend.

Let $U=\bigoplus_{i=1}^s (U_i^{2m_i}, I_i, \boldsymbol\theta^{I_i^\perp})$
 where $\boldsymbol \theta^{I_i^\perp}=(\theta_1^{I_i^\perp}, \theta_2^{I_i^\perp}, \ldots, \theta_{[m_i/2]}^{I_i^\perp})$  is the $I_i^\perp$-K\"{a}hler multipleangle of the $I_i$-complex subspace $(U_i,I_i,\boldsymbol\theta^{I_i^\perp})$   whose elements are the  $\theta^{I_i^\perp}$-K\"{a}hler angles of the 4-dimensional Hermitian orthogonal addends 
(plus eventually an Hermitian orthogonal  totally $I_i$-complex  plane if $m_i$ is odd). Denote by $\mathcal{I}=:(I_1,I_2, \ldots, I_s)$ the vector of the complex structures of the different complex addends $(U_i,I_i)$ ordered as stated in section (\ref{Decomposition})  and by  $\Theta:=(\boldsymbol\theta^{I_1^\perp}, \ldots, \boldsymbol\theta^{I_s^\perp})$ the vector whose elements are the  respective $I_i^\perp$-K\"{a}hler multipleangle of each $U_i$.  We can then state the
 \begin{teor} \label{orbit af a generic subspace with trivial real part under Sp(n)}
The $Sp(n)$-orbit of the $\Sigma$-complex subspace $U$ is completely determined by the pair $(\mathcal{I}, \Theta)$.
 \end{teor}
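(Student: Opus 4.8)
The plan is to reduce the statement to the case of a single complex addend, which has already been settled in Theorem (\ref{orbit of a complex 2m-plane under Sp(n)}), and then to assemble the addends using the Hermitian orthogonality established in Proposition (\ref{orthogonal complex subspaces by different structures}) together with the uniqueness of the decomposition from Proposition (\ref{unicity of the decomposition of a subspace with trivial real addend}). First I would observe that the pair $(\mathcal{I},\Theta)$ is indeed an invariant of the $Sp(n)$-orbit of $U$: an element $A\in Sp(n)$ is an isometry commuting with every $J\in\mathcal{Q}$, hence it sends an $I_i$-complex subspace to an $I_i$-complex subspace (the same structure, since $A$ fixes $\mathcal{Q}$ pointwise in the relevant sense), it preserves Hermitian orthogonality of the quaternionifications, and it preserves all principal angles of pairs $(U',KU')$; therefore it carries the canonical decomposition of $U$ into the canonical decomposition of $AU$ and matches the complex structures and the $I_i^\perp$-K\"ahler multipleangles. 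Thus $(\mathcal{I},\Theta)$ is constant on each orbit.

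Next, for the converse, suppose $U=\bigoplus_{i=1}^s (U_i^{2m_i},I_i,\boldsymbol\theta^{I_i^\perp})$ and $W=\bigoplus_{i=1}^s (W_i^{2m_i},I_i,\boldsymbol\theta^{I_i^\perp})$ share the same pair $(\mathcal{I},\Theta)$. By Proposition (\ref{orthogonal complex subspaces by different structures}) the quaternionifications $U_i^{\mathbb H}$ are strictly orthogonal in pairs, and likewise the $W_i^{\mathbb H}$; moreover by Proposition (\ref{unicity of the decomposition of a subspace with trivial real addend}) these decompositions are the unique maximal ones, so matching must be done addend by addend with the prescribed structures $I_i$. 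The key step is then to apply Theorem (\ref{orbit of a complex 2m-plane under Sp(n)}) to each pair $(U_i,I_i,\boldsymbol\theta^{I_i^\perp})$ and $(W_i,I_i,\boldsymbol\theta^{I_i^\perp})$: since they have the same complex structure and the same $I_i^\perp$-K\"ahler multipleangle, there is $A_i\in Sp(n)$ with $A_iU_i=W_i$. What remains is to glue the $A_i$ into a single element of $Sp(n)$ carrying all of $U$ onto all of $W$ simultaneously.

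The main obstacle — and the heart of the argument — is precisely this gluing. One cannot simply take the $A_i$ on the $U_i^{\mathbb H}$ and extend arbitrarily, because the different $U_i^{\mathbb H}$, while mutually orthogonal, need not be $Sp(n)$-equivariantly ``independent''; the clean way is to work with the canonical bases. Concretely, I would invoke Theorem (\ref{main_theorem 1_ with respect canonical bases}): fix one admissible basis $(I,J,K)$ of $\mathcal{Q}$ and form the canonical bases of $U$ as the ordered union, over $i$ and over the 4-dimensional Hermitian orthogonal addends of each $U_i$ (plus the totally complex 2-planes when $m_i$ is odd), of the chains $\{X_\cdot\},\{Y_\cdot\},\{Z_\cdot\}$ from Definition (\ref{canonical bases of a 2m complex subspace}); do the same for $W$. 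Because all addends across all $i$ are Hermitian orthogonal in pairs (here one uses Proposition (\ref{orthogonal complex subspaces by different structures}) for the inter-addend orthogonality and the analysis inside Theorem (\ref{canonical decomposition of a complex subspace}) for the intra-addend 4-dimensional pieces), the canonical matrices $C_{IJ},C_{IK}$ of $U$ with respect to these bases are block-diagonal, with each block being the canonical matrix of the corresponding 4-dimensional complex subspace as computed in (\ref{canonical matrices of a 4 dimensional complex subspace}) — or the identity $2\times2$ block for a totally complex plane — and these blocks depend only on the $I_i^\perp$-K\"ahler angles, hence only on $\Theta$ (the $\bm\theta^I,\bm\theta^J,\bm\theta^K$ of $U$ are likewise determined by $\mathcal I$ and $\Theta$ via Corollary (\ref{isoclinicity of a 4-dimensional I-complex subspace U and I'U}) and the block structure). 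The same computation for $W$ yields the same $Inv(W)=Inv(U)$, and Theorem (\ref{main_theorem 1_ with respect canonical bases}) then gives a single $A\in Sp(n)$ with $AU=W$. The one point requiring care is the ordering convention on $\mathcal I$ and on the components of each $\bm\theta^{I_i^\perp}$ (lexicographic on the coefficients, increasing on the angles, as fixed in Section (\ref{Decomposition}) and Corollary (\ref{Quaternionic Kehler multiangle})), which is exactly what makes the block-diagonal canonical matrices of $U$ and $W$ literally equal rather than merely conjugate by a permutation.
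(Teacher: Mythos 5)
Your proposal is correct and follows essentially the same route as the paper's proof: the Hermitian orthogonality of all the (4-dimensional) addends makes the canonical matrices of $U$ block-diagonal with respect to the union of the canonical bases of the addends, these blocks together with the principal angles are determined by $(\mathcal I,\Theta)$, and Theorem (\ref{main_theorem 1_ with respect canonical bases}) then yields a single element of $Sp(n)$, exactly as in the paper. The only point to tighten is that the explicit block form (\ref{canonical matrices of a 4 dimensional complex subspace}) is valid only with respect to a basis adapted to $I_i$ (by Proposition (\ref{xi=chi=eta=0 of a complex 4-plane  only w.r.t. an adapted basis})), so for a single fixed admissible basis the blocks of the addends with $I_i\neq\pm I$ take the general isoclinic form with nonzero $(\xi,\chi,\eta)$; they are nonetheless still uniquely determined by the invariants, which is precisely what the paper's appeal to Corollary (\ref{same invariants w.r.t. one admissible basis implies same invariants w.r.t. any admissible basis}) is meant to cover.
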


\begin{proof}
Again the Hermitian orthogonality of the complex 4-dimensional subspaces allows us to consider the orbit of each of them separately.  The canonical matrices of each $(U_i,I_i)$, w.r.t. an adapted basis  and    w.r.t. a canonical basis have the form stated in the Proposition (\ref{4 by 4 diagonal form of the canonical matrices of a 2m complex subspace}).
For any admissible basis  and from Corollary (\ref{same invariants w.r.t. one admissible basis implies same invariants w.r.t. any admissible basis}) the canonical matrices of the $\Sigma$-complex subspace $U$ w.r.t. the union of the canonical basis of each $I_i$-complex subspace have then a unique form.
The conclusion follows from the Theorem (\ref{main_theorem 1_ with respect canonical bases}).
\end{proof}

 In particular this is true for any $I_1$-complex subspace in which case  $\mathcal{I}=I_1$ and $\Theta=\boldsymbol \theta^{I_1^\perp}$.


\end{document}